\documentclass[11pt,dvipsnames]{amsart}
\usepackage{xy}
\usepackage{graphicx, amsmath, amssymb, amsthm,amsfonts}
\usepackage[mathscr]{eucal}
\usepackage[pagebackref, colorlinks, citecolor=Red, linkcolor=NavyBlue, urlcolor=NavyBlue]{hyperref}
\hypersetup{linktoc=all}
\usepackage{comment}
\usepackage{hyperref}
\usepackage[capitalise, nameinlink]{cleveref}
\usepackage{microtype}
\usepackage{tikz-cd}
\usepackage{spectralsequences}

\setcounter{tocdepth}{1}


\newcommand{\Q}{\mathbb{Q}}
\newcommand{\Z}{\mathbb{Z}}
\newcommand{\A}{\mathcal{A}}
\newcommand{\F}{\mathbb{F}}

\crefname{lemma}{Lemma}{Lemmas}
\crefname{theorem}{Theorem}{Theorems}
\crefname{definition}{Definition}{Definitions}
\crefname{proposition}{Proposition}{Propositions}
\crefname{remark}{Remark}{Remarks}
\crefname{corollary}{Corollary}{Corollaries}
\crefname{equation}{Equation}{Equations}
\crefname{construction}{Construction}{Constructions}
\crefname{ex}{Example}{Examples}
\crefname{example}{Example}{Example}
\crefname{appsec}{Appendix}{Appendices}
\crefname{subsection}{Subsection}{Subsections}
\crefname{letterthm}{Theorem}{Theorems}

\newtheorem{theorem}{Theorem}[section]
\newtheorem{letterthm}{Theorem}

\newtheorem{lemma}[theorem]{Lemma}

\newtheorem{proposition}[theorem]{Proposition}
\newtheorem{corollary}[theorem]{Corollary}

\newtheoremstyle{BoldRemark} 
{10pt}                    
{10pt}                    
{\upshape}                   
{}                           
{\bfseries}                  
{.}                          
{.5em}                       
{}  
\theoremstyle{BoldRemark}
\newtheorem{remark}[theorem]{Remark}
\newtheorem{construction}[theorem]{Construction}
\newtheorem{example}[theorem]{Example}
\newtheorem{definition}[theorem]{Definition}
\newtheorem{problem}[theorem]{Problem}
\newtheorem{notation}[theorem]{Notation}

\usepackage[textwidth=25mm, textsize=tiny]{todonotes}


\newcommand\restr[2]{{
		\left.\kern-\nulldelimiterspace 
		#1 
		\vphantom{\big|} 
		\right|_{#2} 
}}

\newcommand{\Res}{\mathrm{Res}}
\newcommand{\res}{\mathrm{res}}
\newcommand{\Ind}{\mathrm{Ind}}
\newcommand{\ind}{\mathrm{ind}}

\newcommand{\Hom}{\mathrm{Hom}}
\newcommand{\Tr}{\mathrm{Tr}}
\newcommand{\tr}{\mathrm{tr}}
\newcommand{\FP}{\mathrm{FP}}
\newcommand{\G}{\mathbb{G}}

\newcommand{\Mod}{\mathrm{Mod}}
\newcommand{\Mack}{\mathrm{Mack}}

\newcommand{\Set}{\mathrm{Set}}

\newcommand{\kModfg}{k\textrm{-}\Mod^{\mathrm{f.g.}}}
\newcommand{\RModfg}{R\textrm{-}\Mod^{\mathrm{f.g.}}}


\newcommand{\fg}{\mathrm{f.g.}}

\renewcommand{\A}{\mathcal{A}}

\usepackage[margin=1in]{geometry}

\author[D. Chan]{David Chan}
\address{Department of Mathematics,
         Michigan State University,
         East Lansing, MI, 48824
         USA}
\email{chandav2@msu.edu}

\author[N. Wisdom]{Noah Wisdom}
\address{Department of Mathematics,
         Northwestern University,
         Evanston, IL, 60208
         USA}
\email{NoahAnkney2026@u.northwestern.edu}

\keywords{algebraic $K$-theory, algebraic $G$-theory, Green functors, Mackey functors, group rings}

\subjclass[2020]{
19D50, 
19A22  
55P91,  
16D40 
}

\title{The algebraic $K$-theory of Green functors}
\date{}

\begin{document}
\begin{abstract}
    In this paper we develop computational tools to study the higher algebraic $K$-theory of Green functors.  We construct a spectral sequence converging to the algebraic $\mathbb{G}$-theory of any $G$-Green functor, for $G$ a cyclic $p$-group. From the spectral sequence we deduce a complete calculation of the algebraic $K$-theory of the constant $C_2$-Green functor associated to the field with two elements, and a calculation of the $p$-completion of the algebraic $K$-theory of the constant $G$-Green functor associated to the integers when $G$ is a cyclic $p$-group.  Additionally, we introduce the notion of a Green meadow to abstract the Green functor structure underlying clarified Tambara fields, and show, under mild conditions, that every finitely generated projective module over a $G$-Green meadow is free when $G$ is a cyclic $p$-group. This gives a computation of $K_0$ for such Green functors.
\end{abstract}
\maketitle
\tableofcontents

\section{Introduction}

        In this paper we develop computational tools for the algebraic $K$-theory of certain generalizations of rings, called Green functors, which play a prominent role in equivariant stable homotopy theory and representation theory.  The notion of a Green functor was introduced by Dress and axiomatizes the fundamental operations of induction, restriction, conjugation, and tensor product \cite{Dress}. The connection to stable homotopy theory comes from the fact that Green functors arise naturally as the coefficients of multiplicative (Bredon) equivariant cohomology theories. Said another way, the zeroth homotopy groups of an equivariant ring spectrum organize into a Green functor.


        Just like ordinary rings, Green functors have a notion of module, and we can seek to understand and compare different Green functors by looking at their categories of modules.  For instance, a natural question to ask is when two Green functors have equivalent categories of modules; that is, when are they Morita equivalent?  A highly successful framework for studying questions like this in the context of ordinary rings is algebraic $K$-theory, and the work in this paper extends these ideas to the study of Green functors.  

        Let $G$ be a finite group, and let $R$ be a Green functor for the group $G$. In some nice cases, we can make sense of whether or not the integer $|G|$ is a unit for $R$.  In the case that $|G|$ is invertible, Maschke's theorem implies that the category of $R$-modules is equivalent to a product of module categories over certain group rings.  This splitting is often called the Greenlees--May splitting, as the first result of this kind appears in \cite{GreenleesMay}; see \cite{BoucDellAmbrogioMartos} for the general case. It follows that the algebraic $K$-theory of $R$ is completely understood, as least insofar as we understand the algebraic $K$-theory of group rings.  
        There are several examples in the literature \cite{Thevenaz--Webb,Gre92} of classifications of projective $R$-modules for certain $G$-Green functors $R$ in which $|G|$ is not invertible. These computations lead naturally to computations of the zeroth algebraic $K$-groups of these Green functors. When $|G|$ is not invertible in $R$, the authors are not aware of any computations of the higher algebraic $K$-groups for $R$.  Our first main result gives a complete computation for $\underline{\F_2}$, the constant $C_2$-Green functor at the field with two elements.
        
        \begin{letterthm}\label{introthm: F2}
                Let $\underline{\F_2}$ denote the constant $C_{2}$-Green functor on the field $\F_2$. There is an isomorphism in the stable homotopy category
                \[
                    K(\underline{\F_2})\cong  K(\F_2)\vee K(\F_2).
                \]
            \end{letterthm}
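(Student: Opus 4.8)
The plan is to reduce the computation to ordinary algebra, using two classical inputs: the agreement of $K$-theory and $\G$-theory for a regular ring, and Quillen's dévissage theorem. (The spectral sequence constructed elsewhere in the paper would also produce the $\G$-theory computation below; I indicate both routes.) First I would unwind the definition of an $\underline{\F_2}$-module into concrete data: a quadruple $(V,W,\res,\tr)$ with $V=M(C_2/C_2)$ a finite-dimensional $\F_2$-vector space, $W=M(C_2/e)$ a finite-dimensional $\F_2[C_2]$-module, and $\F_2$-linear maps $\res\colon V\to W^{C_2}$ and $\tr\colon W_{C_2}\to V$ subject to the Mackey double coset relation $\res\,\tr=1+\gamma$ and the relation $\tr\,\res=0$. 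The last relation is where $|C_2|=2$ being a non-unit enters: the Burnside element $[C_2/e]=\tr^{C_2}_e\res^{C_2}_e$ acts on any $\underline{\F_2}$-module through its image $0\in\underline{\F_2}(C_2/C_2)$. Equivalently, $\underline{\F_2}\text{-}\Mod$ is the module category of an explicit $5$-dimensional $\F_2$-algebra $\Lambda$, so it is Noetherian with enough projectives, its indecomposable projectives being $P_1=\underline{\F_2}$ and $P_2$, the free $\underline{\F_2}$-module on the orbit $C_2/e$, which in the quadruple description is $\bigl(\F_2,\ \F_2[C_2],\ \res=\iota,\ \tr=\varepsilon\bigr)$ with $\iota$ the inclusion of the norm element and $\varepsilon$ the augmentation.

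Second, I would show $\underline{\F_2}$ is regular, i.e.\ that $\Lambda$ has finite global dimension, so that the Resolution Theorem gives $K(\underline{\F_2})\simeq\G(\underline{\F_2})$. There are exactly two simple modules, $S_1=(\F_2,0,0,0)$ and $S_2=(0,\F_2,0,0)$ with trivial $C_2$-action, and one checks directly that
\[
0\to P_1\to P_2\to S_2\to 0
\qquad\text{and}\qquad
0\to P_1\to P_2\to P_1\to S_1\to 0
\]
are projective resolutions, so $\operatorname{gl.dim}\Lambda=2$.

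Third, I would invoke dévissage. The radical $\operatorname{rad}\Lambda$ — generated by $\gamma-1$ together with the off-diagonal elements $\res$ and $\tr$ — is nilpotent with $\Lambda/\operatorname{rad}\Lambda\cong\F_2\times\F_2$; concretely, every finitely generated $\underline{\F_2}$-module admits a finite filtration with semisimple subquotients, and the full subcategory of semisimple modules is $\F_2\text{-}\Mod^{\fg}\times\F_2\text{-}\Mod^{\fg}$ since $\operatorname{End}(S_i)=\F_2$ and $\Hom(S_1,S_2)=\Hom(S_2,S_1)=0$. Dévissage then yields $\G(\underline{\F_2})\simeq\G(\F_2\times\F_2)\simeq\G(\F_2)\times\G(\F_2)\simeq K(\F_2)\times K(\F_2)$, and since binary products agree with binary coproducts in the stable homotopy category this is $K(\F_2)\vee K(\F_2)$; combined with the previous step this is the theorem. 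Alternatively, the paper's spectral sequence for $\G$ applied to $\underline{\F_2}$ should collapse onto a copy of $K(\F_2)$ in each of two spots with split extension, giving $\G(\underline{\F_2})\simeq K(\F_2)\vee K(\F_2)$ directly; the regularity of $\underline{\F_2}$ is then still needed to pass from $\G$ back to $K$.

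The main obstacle is the regularity step. The presentation of the module category and the dévissage are essentially bookkeeping, but verifying finite global dimension requires genuinely controlling the module category of $\underline{\F_2}$, and it uses features special to this example: in characteristic $2$ one has $\tr\,\res=0$, and $\F_2[C_2]$ is local with residue field $\F_2$, so the possible indecomposable projectives and simple modules are very constrained. One should not expect an analogous regularity statement for a general $G$-Green functor, and this $K$-versus-$\G$ distinction is exactly the phenomenon that must be handled separately in the integral case.
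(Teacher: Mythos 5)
Your proposal is correct, and it reaches the theorem by a somewhat different route than the paper. The paper establishes the $\G$-theory splitting $\G(\underline{\F_2}) \simeq \G(\F_2) \oplus \G(\F_2[C_2])$ as a special case of its general splitting theorem (\cref{thm:G-theory-splitting}, via \cref{corollary: zero transfer splitting} since all transfers in $\underline{\F_2}$ vanish), then applies d\'evissage once to the local Artin ring $\F_2[C_2]$ to identify $\G(\F_2[C_2])\simeq K(\F_2)$, and imports the finite global dimension from \cite{DHM24} to get $K\simeq\G$. You instead identify $\underline{\F_2}\text{-}\Mod$ directly with modules over the explicit five-dimensional Hecke algebra $\Lambda$, verify $\operatorname{gl.dim}\Lambda = 2$ by exhibiting projective resolutions of the two simples, and then collapse the whole $\G$-theory computation into a single d\'evissage along the radical filtration $\Lambda/\operatorname{rad}\Lambda\cong\F_2\times\F_2$. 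Your computation of $\tr\circ\res = 0$ from $[C_2/e]\mapsto 2 = 0$, your identification of the indecomposable projectives with $\underline{\F_2}$ and $\Ind_e^{C_2}\F_2$, and your projective resolutions of $S_1$ and $S_2$ are all correct; the resulting d\'evissage to $\F_2\text{-}\Mod^{\fg}\times\F_2\text{-}\Mod^{\fg}$ is valid because semisimple modules form a full abelian subcategory closed under subobjects and quotients, and $\Lambda$ is Artinian so radical filtrations are finite. What you lose relative to the paper is reusability: your radical d\'evissage exploits Artinian-ness and is specific to this example, whereas the paper's isotropy filtration and splitting theorem also handle $\underline{\Z}$ (where the associated graded pieces are genuinely different rings, $\Z[C_{p^n}]$, not semisimple). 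What you gain is self-containment: you avoid invoking the general splitting machinery and supply an elementary replacement for the citation to \cite{DHM24}.
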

       
        The algebraic $K$-groups of $\F_2$ were computed by Quillen in \cite{Qui72}, thus we have the following computation of the $K$-groups:
        \[
            K_{n}(\underline{\F_2}) \cong \begin{cases}
                \Z^{\oplus 2} & n=0\\
                (\mathbb{Z}/(2^i-1))^{\oplus 2} & n = 2i-1\\
                0 & \mathrm{else}.
            \end{cases}
        \]
        The Green functor $\underline{\F_2}$ plays the role of mod $2$ coefficients in genuine equivariant cohomology theories and is perhaps the most commonly used coefficient ring in equivariant homotopy theory. For applications and more discussion see \cite{DHM24,May,Hazel:fundamentalclass,BehrensWilson,HahnWilson,Petersen}.

        Just as $\underline{\F_2}$ is used for the study of mod $2$ cohomology, the role of integral coefficients is often played by a Green functor we denote by $\underline{\Z}$.  Our next main result gives a comparison between the algebraic $K$-theory of $\underline{\Z}$ and the algebraic $\G$-theory of a group ring. 

        \begin{letterthm}[\cref{thm:K-theory-of-FP(Z)}]\label{introthm: K of constant Z}
        Let $p>0$ be prime, $n\geq 1$, and let $G = C_{p^n}$ be the cyclic group of order $p^n$. There is a map of spectra
        \[ 
            K(\underline{\Z}) \rightarrow \G(\Z[C_{p^n}])
        \] 
        which becomes an equivalence 
        \[ 
            \tau_{\geq 2} K(\underline{\Z})_p^\wedge \cong \tau_{\geq 2} \G(\Z[C_{p^n}])_p^\wedge 
        \]
        upon $p$-completing and taking $2$-connective covers.
    \end{letterthm}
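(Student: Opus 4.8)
The plan is to model $K(\underline{\Z})$ by a category of trivial source $\Z[C_{p^n}]$-modules, localize that onto $\G(\Z[C_{p^n}])$, and then kill the fiber after $p$-completion in positive degrees by dévissage. For the first step, consider $\mathrm{ev}_e\colon\Mod^{\fg}(\underline{\Z})\to\Mod^{\fg}(\Z[C_{p^n}])$, $M\mapsto M(G/e)$ with its $W_G(e)=C_{p^n}$-action. A direct orbit count gives $\mathrm{ev}_e(\underline{\Z}[G/H])\cong\Z[G/H]$, and since $\underline{\Z}(G/H\times G/K)\cong\Z^{[G\,:\,HK]}\cong\Hom_{\Z[C_{p^n}]}(\Z[G/H],\Z[G/K])$, the functor $\mathrm{ev}_e$ is fully faithful on representables; hence it identifies the additive category $\Proj^{\fg}(\underline{\Z})$ with the category $\mathcal{T}$ of direct summands of finite permutation $\Z[C_{p^n}]$-modules (``trivial source'' modules). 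As $\Ext^1_{\Z[C_{p^n}]}(\Ind_H^{C_{p^n}}\Z,\Ind_K^{C_{p^n}}\Z)\cong H^1(H;\Res_H\Ind_K^{C_{p^n}}\Z)=0$, every short exact sequence of trivial source modules splits, so $K(\underline{\Z})\simeq K(\mathcal{T})\simeq K(K^b(\mathcal{T}))$ by Gillet--Waldhausen.

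Next I would show the canonical triangulated functor $K^b(\mathcal{T})\to D^b_{\mathrm{coh}}(\Z[C_{p^n}])$ is a Verdier localization. Essential surjectivity amounts to the claim that every finitely generated $\Z[C_{p^n}]$-module has a finite resolution by trivial source modules, which I would prove by descending induction along the chain $e<C_p<\cdots<C_{p^n}$ (with $0\to\Z\to\Z[C_p]\to I\to 0$, $I$ the augmentation ideal, as the prototype). Writing $\mathcal{N}$ for the kernel — the bounded complexes of trivial source modules acyclic over $\Z[C_{p^n}]$ — Thomason--Neeman localization then yields a fiber sequence
\[
    K(\mathcal{N})\longrightarrow K(\underline{\Z})\longrightarrow \G(\Z[C_{p^n}]),
\]
whose second map (it factors as the Cartan map $K(\underline{\Z})\to\G(\underline{\Z})$ followed by the localization at the $G/e$-isotropy) is the map in the statement.

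The crux is to control $\mathcal{N}$ after $p$-completion. Using the Brauer-quotient functors $\Phi^{C_{p^k}}$ together with the fact that the subgroup lattice of $C_{p^n}$ is a chain, I would produce a finite filtration of $\mathcal{N}$ by thick subcategories whose successive subquotients are $D^b\bigl(\Mod^{\fg}(\F_p[W_G(C_{p^k})])\bigr)=D^b\bigl(\Mod^{\fg}(\F_p[C_{p^{n-k}}])\bigr)$ for $k=1,\dots,n$; the guiding principle is that an acyclic complex of trivial source modules is detected by its Brauer quotients and records only redundant mod-$p$ homological data at the nontrivial isotropy, the $G/e$-stratum contributing nothing because trivial source modules already contain all projectives. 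Then $K(\mathcal{N})$ inherits a finite filtration with associated graded $\bigoplus_{k=1}^{n}\G(\F_p[C_{p^{n-k}}])$. Each $\F_p[C_{p^m}]$ is Artinian local with residue field $\F_p$, so $\G(\F_p[C_{p^m}])\simeq K(\F_p)$ by dévissage, and by Quillen's computation $K(\F_p)^{\wedge}_p\simeq H\Z_p$ is concentrated in degree $0$; running up the filtration, $K(\mathcal{N})^{\wedge}_p$ is concentrated in degree $0$.

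Finally, $p$-completing the fiber sequence above and using $\pi_i K(\mathcal{N})^{\wedge}_p=0$ for $i\ge 1$, the long exact sequence gives $\pi_i K(\underline{\Z})^{\wedge}_p\xrightarrow{\ \sim\ }\pi_i\G(\Z[C_{p^n}])^{\wedge}_p$ for $i\ge 2$, i.e. $\tau_{\ge 2}K(\underline{\Z})^{\wedge}_p\simeq\tau_{\ge 2}\G(\Z[C_{p^n}])^{\wedge}_p$. The main obstacle is the analysis of $\mathcal{N}$ in the previous paragraph — proving the ``detection by Brauer quotients'' statement for acyclic complexes of trivial source modules and organizing the resulting subquotients into derived module categories over the $\F_p[W_G(H)]$. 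This is exactly the place where the cyclic $p$-group hypothesis enters, and I expect it to run in parallel with, or be deducible from, the $\G$-theory spectral sequence constructed earlier in the paper.
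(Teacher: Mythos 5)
Your proposal takes a genuinely different route from the paper, though the two share the same core computational idea: kill the fiber of $K(\underline{\Z}) \to \G(\Z[C_{p^n}])$ after $p$-completion in positive degrees by recognizing it as finitely filtered with every graded piece equal to $K(\F_p)$. The paper gets there via abelian categories: it first invokes the Bouc--Stancu--Webb finiteness of global projective dimension of $\underline{\Z}$ together with the resolution theorem to identify $K(\underline{\Z}) \simeq \G(\underline{\Z})$, and then the explicit $\G$-theory tower (\cref{prop:G-theory-tower}), built from Quillen's localization theorem applied to the isotropy filtration $\mathcal{C}_n \subset \cdots \subset \mathcal{C}_0 \subset \underline{\Z}\textrm{-}\Mod^{\fg}$, furnishes the finite filtration with graded pieces $\G(\Z[C_{p^n}])$ and $\G(\F_p[C_{p^{n-t}}])$ for $t\geq 1$. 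You instead model $K(\underline{\Z})$ directly by the additive category $\mathcal{T}$ of trivial source $\Z[C_{p^n}]$-modules via Yoshida-style identifications, pass to $K^b(\mathcal{T})$ with Gillet--Waldhausen, and propose a Thomason--Neeman Verdier localization onto $D^b_{\mathrm{coh}}(\Z[C_{p^n}])$, with the burden of the proof shifted to the kernel $\mathcal{N}$. Your preliminary steps (the orbit count identifying projective $\underline{\Z}$-modules with trivial source modules; the $\Ext^1$-vanishing forcing SES's of trivial source modules to split) are correct, and the essential surjectivity you need is, via Yoshida, essentially equivalent to the BSW17 finiteness theorem the paper cites, so neither route escapes that input. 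The derived-category framing buys conceptual transparency (everything is a semiorthogonal-style decomposition of triangulated categories) but costs you idempotent-completion bookkeeping and, more seriously, the structure theorem you flag as the crux.

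The genuine gap is the filtration of $\mathcal{N}$. You assert that the thick subcategory of acyclic bounded complexes of trivial source modules is filtered with subquotients $D^b(\Mod^{\fg}(\F_p[C_{p^{n-k}}]))$, motivated by a ``detection by Brauer quotients'' heuristic, but you neither prove this nor identify precisely what triangulated statement is required. In particular, it is not a priori clear that the Brauer quotient $\Phi^{C_{p^k}}$ sends your thick subcategories to the claimed derived categories without introducing shifts or non-split extensions that obstruct the passage to $K$-theory. Since you explicitly defer this to ``running in parallel with the $\G$-theory spectral sequence constructed earlier in the paper,'' your proof is not self-contained: the step you are deferring \emph{is} the proof. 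The paper's approach is cleaner precisely because the isotropy filtration lives at the level of abelian categories, where \cref{prop:C_0-identification} immediately identifies each Serre subcategory $\mathcal{C}_t$ with a module category over a geometric fixed point Green functor, making the graded pieces transparent. If you want to make your derived-category argument independent, you would need to prove the Brauer-quotient detection statement for acyclic complexes of trivial source modules, presumably by a direct comparison with the paper's $\mathcal{C}_t$'s; at that point the two arguments would essentially coincide.
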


    For a Noetherian ring $R$, the $\G$-theory spectrum $\G(R)$ is the algebraic $K$-theory of the abelian category of finitely generated left $R$-modules. When $R = \Z[C_{p^n}]$, a theorem of Webb gives a decomposition of the groups $\G_n(\Z[G])$ in terms of the $K$-theory of certain localizations of rings of integers \cite[(0.3)]{Webb:nilpotent}. For instance, if we work over the group $G = C_p$, there are isomorphisms
    \[
        \G_n(\Z[C_{p}])\cong K_i(\Z)\oplus K_i(\Z[\zeta_p,p^{-1}])
    \]
    where $\zeta_p$ is a primitive $p$-th root of unity. When $i$ is odd, these groups are entirely known (see \cite[Theorem 1]{Weibel:survey}), hence the $p$-completion of $K_i(\underline{\Z})$ is completely computed for odd $i \geq 2$.
    
    We note that the category of $\underline{\Z}$-modules is equivalent to the category of \emph{cohomological Mackey functors}, which have been studied extensively in representation theory. In \cite{Yos83}, Yoshida proves that the category of cohomological $G$-Mackey functors is equivalent to the category of modules over the Hecke algebra
    \[
        \mathcal{H}(G) := \mathrm{End}_{\Z[G]}\left( \bigoplus_{H \subset G} \Z[G/H] \right) \mathrm{.}
    \]
    From this perspective, all of our results on the $\G$-theory and $K$-theory of Green functors of the form $\underline{\Z}$ are also results on the relevant Hecke algebras. There are also connections between cohomological Mackey functors and Artin motives; see \cite{BalmerGallauer} for details. 

Finally, we note that the results above can be viewed as the computations of the algebraic $K$-theory of certain genuine equivariant $G$-ring spectra. Indeed, any Green functor $R$ determines an Eilenberg MacLane spectrum $HR$, and the algebraic $K$-theory of $R$ and $HR$ agree, essentially by the results of \cite{DuggerShipley}.  The algebraic $K$-theory of equivariant ring spectra has seen significant recent interest and our results give some of the first explicit computations \cite{Elmantohaugseng,CnossenHaugsengLenzLinskins:Tambara,MalkiewichMerling1}. To make this claim precise, in \cite[\S 4.3]{Elmantohaugseng}, it is shown that there is a genuine $G$-spectrum $K_G(E)$ for any genuine $G$-ring spectrum $E$.  In the case where $E = HR$ for a Green functor $R$, there is an equivalence of spectra $K(R)\simeq K_G(HR)^G$, where the latter is the (genuine) $G$-fixed points of the $G$-spectrum $K_G(HR)$.  We note that $K_G$ is \emph{not} the same as the equivariant $K$-theory construction considered by Merling and Barwick \cite{Mer17,Barwick}.
\vspace{3mm}

\noindent \textbf{Methods.}  The $K$-theory computations above are achieved by performing computations of $\G$-theory groups and then relating them to algebraic $K$-groups.  Explicitly, a theorem of Dugger--Hazel--May shows that that the category of $\underline{\F_2}$-modules has finite global projective dimension, meaning that every $\underline{\F_2}$-module has a finite projective resolution \cite{DHM24}.  Applying Quillen's resolution theorem then shows $K(\underline{\F_2})\simeq \G(\underline{\F_2})$.  A theorem of Arnold \cite{Arnold} (see also \cite{BSW17}) shows that $\underline{\Z}$ also has finite global projective dimension.

Thus the $K$-theory computations above are the result of computations in algebraic $\G$-theory which apply more generally.  Our main tool is a spectral sequence with input given by the $\G$-theory of certain twisted group rings.  

\begin{letterthm}[\cref{cor:G-theory-SS}]\label{introthm: G theory SS}
    Let $R$ be a module-Noetherian $C_{p^n}$-Green functor. There is a strongly convergent spectral sequence with signature 
    \[
        E^1_{s,t} \cong \G_s(R_t) \Rightarrow \G_{s}(R)
    \] 
    where $R_t$ is a ring, depending on $t$, defined explicitly from the data of $R$. The spectral sequence collapses at the $E_{n+1}$ page.
\end{letterthm}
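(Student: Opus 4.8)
The plan is to produce the spectral sequence as the one associated to a finite filtration of the abelian category $R\textrm{-}\Mod^{\fg}$ of finitely generated $R$-modules by Serre subcategories indexed along the subgroup chain of $C_{p^n}$, with the localization theorem in $\G$-theory supplying the identification of the successive quotients.

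Because $G = C_{p^n}$ is cyclic, its poset of subgroups is the chain $1 = H_0 < H_1 < \dots < H_n = G$ with $H_t \cong C_{p^t}$ and Weyl group $W_G(H_t) = G/H_t \cong C_{p^{n-t}}$. I would let $\mathcal{A}_t \subseteq R\textrm{-}\Mod^{\fg}$ be the full subcategory of modules $M$ that are ``supported below $H_t$'', i.e.\ whose underlying Mackey functor vanishes on $G/H_s$ for all $s > t$ — equivalently, those $M$ lying in the kernel of the Brauer-quotient functors at the subgroups strictly above $H_t$. Since $R$ is module-Noetherian, $R\textrm{-}\Mod^{\fg}$ is abelian and each $\mathcal{A}_t$ inherits that structure, and one obtains a finite filtration by Serre subcategories
\[
    0 = \mathcal{A}_{-1} \subseteq \mathcal{A}_0 \subseteq \mathcal{A}_1 \subseteq \dots \subseteq \mathcal{A}_n = R\textrm{-}\Mod^{\fg},
\]
closure under subobjects, quotients and extensions being immediate from the vanishing description. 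The decisive structural input, which I would take from the explicit construction of the rings $R_t$ established before the corollary, is an equivalence of abelian categories
\[
    \mathcal{A}_t/\mathcal{A}_{t-1} \;\simeq\; R_t\textrm{-}\Mod^{\fg},
\]
where $R_t$ is the twisted group ring of $W_G(H_t) \cong C_{p^{n-t}}$ acting on the Brauer quotient of $R$ at $H_t$; one also checks that module-Noetherianity of $R$ forces each $R_t$ to be a Noetherian ring, so that $\G(R_t)$ is defined.

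Granting this, I would apply Quillen's localization theorem at each stage of the filtration to obtain, for $t = 0, 1, \dots, n$, fiber sequences of $\G$-theory spectra
\[
    \G(\mathcal{A}_{t-1}) \longrightarrow \G(\mathcal{A}_t) \longrightarrow \G(\mathcal{A}_t/\mathcal{A}_{t-1}) \simeq \G(R_t).
\]
These assemble the tower $\G(\mathcal{A}_0) \to \G(\mathcal{A}_1) \to \dots \to \G(\mathcal{A}_n) = \G(R)$ into a filtered spectrum whose $t$-th associated graded piece is $\G(R_t)$, and the standard spectral sequence of a filtered spectrum then reads
\[
    E^1_{s,t} = \pi_s \G(R_t) = \G_s(R_t) \;\Longrightarrow\; \pi_s \G(R) = \G_s(R).
\]
Since the filtration has length $n+1$, i.e.\ nonzero columns only for $0 \le t \le n$, it is strongly (indeed finitely) convergent, and the differential $d_r\colon E^r_{s,t} \to E^r_{s-1,\,t-r}$ necessarily has a zero column as source or target once $r \ge n+1$; hence the spectral sequence collapses at the $E_{n+1}$-page.

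The main obstacle is the identification $\mathcal{A}_t/\mathcal{A}_{t-1} \simeq R_t\textrm{-}\Mod^{\fg}$: one must pin down precisely which twisted group ring $R_t$ appears, verify that the Brauer-quotient (geometric fixed point) functors induce this equivalence on finitely generated objects, and track the Noetherian hypotheses so that every $\G$-theory spectrum in sight is honest. This is also exactly where cyclicity of $G$ enters essentially — it makes the subgroup poset a finite chain, so the Serre filtration is linearly ordered with $n+1$ stages, and it makes each Weyl group cyclic — so I would expect this identification, rather than the spectral-sequence formalism, to carry the real weight.
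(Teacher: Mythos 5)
Your overall strategy (finite filtration of $R\text{-}\Mod^{\fg}$ by Serre subcategories along the subgroup chain, then Quillen localization) matches the paper's, but your filtration runs in the opposite direction, and the subquotient categories do not come out to be the module categories you assert. Your base case $t=0$ already fails: an $R$-module $M$ with $M(G/H)=0$ for all $H\neq e$ is not an arbitrary $R(G/e)_\theta[G]$-module. The double coset formula forces
\[
0 = \Res^{C_p}_e\Tr^{C_p}_e = \sum_{g\in C_p} c_g
\]
on $M(G/e)$, since the left side factors through $M(G/C_p)=0$; hence $\mathcal{A}_0$ is equivalent to modules over the \emph{quotient} $R(G/e)_\theta[G]/(\mathrm{Nm}_{C_p})$, not over $R(G/e)_\theta[G]$ itself. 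The same norm-vanishing constraint recurs at every stage (vanishing at $G/H_{t+1}$ forces the norm of $H_{t+1}/H_t\leq W_G(H_t)$ to act as zero at level $G/H_t$), so the subquotients $\mathcal{A}_t/\mathcal{A}_{t-1}$ are not the twisted-group-ring module categories you claim. Your alternative description of $\mathcal{A}_t$ via ``vanishing of Brauer quotients'' is also not equivalent to the levelwise vanishing you actually use. In short, the equivalence $\mathcal{A}_t/\mathcal{A}_{t-1}\simeq R_t\text{-}\Mod^{\fg}$ is precisely the step you defer to ``the explicit construction established before the corollary,'' but the paper establishes the opposite identification.

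The paper filters by vanishing at \emph{low} isotropy: $\mathcal{C}_m$ is the Serre subcategory of modules with $M(G/H_s)=0$ for all $s\leq m$, giving $0=\mathcal{C}_n\subset\mathcal{C}_{n-1}\subset\dots\subset\mathcal{C}_0\subset R\text{-}\Mod^{\fg}$. Two facts make this work and have no counterpart in your setup. First, $\mathcal{C}_0$ (the brutally truncated modules) is literally the category of $\Phi^{C_p}_{\mathrm{Mackey}}(R)$-modules, so the filtered pieces $\mathcal{C}_m$ are identified inductively with modules over iterated geometric fixed-point Green functors. Second, the Verdier quotient $R\text{-}\Mod^{\fg}/\mathcal{C}_0\simeq R(G/e)_\theta[G]\text{-}\Mod^{\fg}$ is realized by evaluation at $G/e$, with inverse the fixed-point functor $\FP$; this works because the adjunction unit $M\to\FP(M(G/e))$ always has kernel and cokernel in $\mathcal{C}_0$. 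Evaluation at the top level $G/G$ has no analogous right adjoint producing an inverse in the quotient, which is why the top-down filtration does not yield the stated identification of the associated graded pieces.
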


The spectral sequence is derived from a filtration, analogous to the isotropy filtration of a $G$-space, on the category of modules over the Green functor $R$. We note that the idea to use isotropy to filter $R$-modules is far from new; see, for instance, \cite{AyalaMazelGeeRozenbluym,GreenleesMay,BadziochDorabiala}, among many others. 

The usability of the spectral sequence arising from this filtration is a consequence of the identification of $E_1$-page. Moreover, we are able to identify conditions, satisfied in many cases of interest, under which the spectral sequence collapses at the $E_1$-page. Under these conditions we prove that in the filtration defining our spectral sequence, each map between filtered pieces is a summand inclusion, so that all extension problems are trivial. This induces a splitting of $\G$-theory spectra which we call the \emph{Greenlees--May} splitting.

\begin{letterthm}[\cref{thm:G-theory-splitting}]\label{letterthm:splitting}
    Let $R$ be a Green functor. Under certain technical conditions we have an equivalence 
    \[ 
        \G(R) \simeq \bigoplus\limits_{t=0}^n \G(R_t) 
    \] 
    of $\G$-theory spectra, where $R_t$ is a certain ring determined by $R$.
\end{letterthm}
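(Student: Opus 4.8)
The plan is to extract the splitting from the localization sequences that already underlie the spectral sequence of \cref{cor:G-theory-SS}. Recall that that spectral sequence comes from an isotropy filtration
\[
0 = \mathcal{M}_{-1} \subseteq \mathcal{M}_{0} \subseteq \mathcal{M}_{1} \subseteq \cdots \subseteq \mathcal{M}_{n} = \Mod_{R}^{\fg}
\]
of the abelian category of finitely generated $R$-modules by Serre subcategories, together with the identifications of subquotients $\mathcal{M}_{t}/\mathcal{M}_{t-1} \simeq \Mod_{R_t}^{\fg}$ that pin down the $E_1$-page. By Quillen's localization theorem for abelian categories, each step of the filtration produces a fiber sequence of $\G$-theory spectra
\[
\G(\mathcal{M}_{t-1}) \longrightarrow \G(\mathcal{M}_{t}) \longrightarrow \G(R_t).
\]
Since $\G(R) = \G(\mathcal{M}_{n})$, an induction on $t$ reduces the theorem to the assertion that, under the technical hypotheses, each of these fiber sequences splits.

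To split the $t$-th sequence it suffices to produce an exact retraction functor $\rho_t \colon \mathcal{M}_{t} \to \mathcal{M}_{t-1}$ whose restriction to $\mathcal{M}_{t-1}$ is naturally isomorphic to the identity (equivalently, an exact section of the Verdier quotient $\mathcal{M}_{t} \to \Mod_{R_t}^{\fg}$): functoriality of $\G$-theory then turns $\rho_t$ into a retraction of $\G(\mathcal{M}_{t-1}) \to \G(\mathcal{M}_{t})$, exhibiting this map as a summand inclusion and the fiber sequence as split. The functor $\rho_t$ is built from the Green functor structure of $R$ itself: the isotropy filtration is defined using the idempotent-type truncation operations on $R$-modules coming from restriction and transfer, and $\rho_t$ is the corresponding operation that truncates away the $t$-th isotropy level. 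When $|G|$ is invertible in $R$ these truncations are genuinely complementary idempotents and one recovers the classical Greenlees--May \emph{product} decomposition of $\Mod_R$; in general they are not, which is precisely why $\Mod_R$ does not decompose and why we must argue at the level of $\G$-theory.

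Consequently the main obstacle is the exactness of $\rho_t$ (equivalently, of the associated Brauer-quotient functor $\Mod_R^{\fg} \to \Mod_{R_t}^{\fg}$). A priori these truncation functors are only half-exact, and the role of the hypotheses in the statement is exactly to supply the vanishing of certain $\Tr$-- and $\res$--induced obstructions — concretely, the projectivity/flatness of the relevant bimodules over the rings $R_t$ that measure the failure of the long exact sequence of a short exact sequence of $R$-modules to break up after truncation — together with the module-Noetherian condition so that $\rho_t$ preserves finite generation. Granting this, the rest of the argument is formal: one obtains compatible splittings $\G(\mathcal{M}_{t}) \simeq \G(\mathcal{M}_{t-1}) \oplus \G(R_t)$ for all $t$, assembling along the filtration into the desired equivalence $\G(R) \simeq \bigoplus_{t=0}^{n} \G(R_t)$. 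As a byproduct, the spectral sequence of \cref{cor:G-theory-SS} collapses at the $E_1$-page with all extension problems trivial, and one should check that the resulting identification is induced by the Brauer-quotient functors so that it is natural in $R$.
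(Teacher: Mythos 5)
Your high-level outline matches the paper: both proofs reduce the theorem to splitting each localization fiber sequence in the isotropy tower by constructing an exact retraction $\pi\colon \RModfg \to \mathcal{C}_0$ with $\pi \circ f \cong \mathrm{id}$ (then inducting). But the heart of the argument — what the retraction actually is and what the hypotheses do — is where your proposal diverges from the paper and goes astray.

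You identify the retraction with ``the corresponding operation that truncates away the $t$-th isotropy level,'' i.e.\ the Brauer quotient/geometric fixed points $\Phi^{C_p}_{\mathrm{Mackey}}$, and you locate the main obstacle in the exactness of this half-exact functor, conjecturing that the hypotheses ask for flatness/projectivity of certain bimodules to force exactness. This is not what the paper does, and as written it does not yield the stated hypotheses of \cref{thm:G-theory-splitting}. The paper's retraction is $\pi = \alpha \circ s^* \circ \tau_{\geq 1}$: first apply the level-truncation $\tau_{\geq 1}$ (which is already exact, as it preserves all limits and colimits by \cref{prop:tau_geq_1_determines-k-mod-functors-and-has-both-adjoints}), then restrict scalars along a section $s \colon \Phi^{C_p}_{\mathrm{Mackey}}(R) \to \tau_{\geq 1}(R)$ of the quotient map of Green functors, then use the equivalence $\alpha$ of \cref{prop:C_0-identification} to land in $\mathcal{C}_0$. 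Each of these three functors is exact on the nose — restriction of scalars along any Green-functor morphism is exact — so exactness of $\pi$ is free and is never the issue. The substantive hypothesis is the \emph{existence} of the section $s$ at the level of Green functors; this is a structural condition that simply fails for many Green functors (e.g.\ $\underline{\Z}$, where $\Z \to \F_p$ has no ring section). The module-finiteness of $s$ is the second hypothesis, and its role is only to ensure $\pi$ lands in finitely generated modules, so the splitting persists on $\G$-theory; this is the piece your proposal correctly attributes to Noetherianness-type control. Without identifying the section $s$ and the composite $\alpha \circ s^* \circ \tau_{\geq 1}$, there is no route from ``we'd like an exact retraction'' to ``here is an exact retraction,'' so the proposal as it stands has a genuine gap. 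Also, a minor point: an exact retraction of the Serre subcategory inclusion is not literally equivalent to an exact section of the Verdier quotient as you parenthetically claim — these split the $\G$-theory cofiber sequence from opposite ends and neither implies the other in general.
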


\begin{remark}
    The Burnside Green functor mentioned in the previous theorem is the unit for the category of $G$-\emph{Mackey functors}, a generalization of $G$-modules which are to Green functors as abelian groups are to rings. In the case of the Burnside ring the Greenlees--May splitting implied by \cref{letterthm:splitting} is
    \[
        \G(\Mack_{C_{p^n}})\simeq \bigoplus_{i=0}^n \G(\Z[C_{p^i}]).
    \]
    Passing to $\pi_0$ and setting $n=1$ above, we recover a result of Greenlees  \cite{Gre92}.

    Greenlees also proves a similar splitting for the $K$-theory of $C_p$-Mackey functors and it is natural to wonder whether or not such a splitting might hold for the $K$-theory of $C_{p^n}$-Mackey functors for $n>1$. We are unable to establish this result. We note, however, that the conditions on a Green functor $R$ from \cref{letterthm:splitting} are insufficient to produce a splitting of the form in \cref{letterthm:splitting} with $K$-theory in place of $\G$-theory. Indeed, such a splitting would be in conflict with the computation of $K(\underline{\F_2})$ above.
\end{remark}

In the course of constructing the spectral sequence for $\G$-theory we prove some results about the category of modules over a certain $C_{p^n}$-Green functor which should be of interest. Originally motivated by field-like Tambara functors, our arguments more naturally applied to the following class of Green functors.

\begin{definition}\label{defintion: Green meadow}
    If $k$ is a Green functor such that each $k(G/H)$ is a field, we call $k$ a \emph{Green meadow}.
\end{definition}

We note that the class of Green meadows is distinct from the class of \emph{Mackey fields}. Importantly, the constant $G$-Green functor $\underline{\F_2}$ is a Green meadow, but not a Mackey field unless $2$ is coprime to $|G|$.

If $k$ is a Green meadow, then the Green functor structure maps make $k(G/H)$ into a vector space over $k(G/G)$. If the dimension of $k(G/H)$ over $k(G/G)$ is finite, we say that $k$ is \emph{relatively finite dimensional}. This is a special case of a more general definition for Green functors, see \cref{def:RFD}.

\begin{letterthm}[\cref{thm:proj-implies-free-for-RFD-Green-meadows}]\label{letterthm: projective implies free}
    Let $k$ be a relatively finite dimensional $C_{p^n}$-Green meadow. A $k$-module is projective if and only if it is free.
\end{letterthm}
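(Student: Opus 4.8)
The plan is to prove the nontrivial implication by identifying the finitely generated projective $k$-modules with the finite direct sums of the \emph{representable} modules $k_{G/H}$, $H\leq G$ (equivalently $H=C_{p^h}$ for $0\leq h\leq n$); as these are by convention the free modules, this is exactly the assertion. Recall that $k_{G/H}$ corepresents evaluation, $\Hom_k(k_{G/H},M)\cong M(G/H)$, so the $k_{G/H}$ are projective and any $k$-module is a quotient of a coproduct of them; in particular every finitely generated projective $P$ is a direct summand of a finite direct sum $\bigoplus_i k_{G/H_i}$. It therefore suffices to show that the class of finite direct sums of representables is closed under direct summands, and by the Krull--Schmidt theorem this follows once we know (i) the $k_{G/H}$ are pairwise non-isomorphic and indecomposable, and (ii) the category of finitely generated projective $k$-modules is Krull--Schmidt. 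Non-isomorphism is clear, since the $k_{G/H}$ have distinct supports ($\Phi^K k_{G/H}\neq 0$ precisely when $K\leq H$, as $G$ is abelian). Statement (ii) holds because $k$ is relatively finite dimensional: every finitely generated $k$-module has finite length at each level, so all Hom-groups are finite-dimensional over $k(G/G)$, and a Hom-finite idempotent-complete additive category is Krull--Schmidt.

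The heart of the matter is the indecomposability of $k_{G/H}$, which I would establish by computing $\mathrm{End}_k(k_{G/H})$. By the projection formula $k_{G/H}\cong \Ind_H^G\res_H^G k$, so the Mackey double coset formula --- degenerate here because $G$ is abelian, all conjugations being trivial --- gives an isomorphism of rings
\[
    \mathrm{End}_k(k_{G/H}) \;\cong\; k(G/H)[W_G(H)] \;=\; k(G/H)[C_{p^{n-h}}],
\]
the untwisted group algebra (the Weyl group $W_G(H)\cong C_{p^{n-h}}$ acts trivially on $\res_H^G k$, and a cyclic group admits no nontrivial cohomology class to twist by). Under the standing hypotheses --- which include the mild condition that all transfer maps of $k$ vanish (equivalently, each $\Phi^{C_{p^t}}(k)$ equals $k(G/C_{p^t})$), and which via the relation $\res^{C_p}_e\circ\tr^{C_p}_e=\sum_{g\in C_p}g\cdot(-)$ forces $k(G/G)$ to have characteristic $p$ --- each $k(G/H)$ is a field of characteristic $p$, so $\mathrm{End}_k(k_{G/H})$ is the group algebra of a finite $p$-group over a field of characteristic $p$, hence local with nilpotent radical. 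Thus $k_{G/H}$ is indecomposable, and combined with the reduction above, every finitely generated projective $k$-module is a finite direct sum of the $k_{G/H}$, i.e. free.

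I expect the main obstacle to be precisely the computation of $\mathrm{End}_k(k_{G/H})$ together with the verification that the hypotheses genuinely force characteristic $p$ and trivial Weyl actions: without such a condition these endomorphism rings can be semisimple rather than local, which already happens for the constant Green meadow $\underline{\F_q}$ with $p\nmid q$ --- a case in which the conclusion of the theorem fails --- so some care is needed to check that the Green meadow and relative-finite-dimensionality assumptions (or the extra hypothesis recorded in \cref{thm:proj-implies-free-for-RFD-Green-meadows}) rule out exactly such examples. Once locality of all the $\mathrm{End}_k(k_{G/H})$ is secured, the remainder is the formal Krull--Schmidt argument sketched above; an alternative, more hands-on route would run an induction on $n$ using the isotropy filtration of $k$-modules from \cref{cor:G-theory-SS} and the fact that $\Phi^{C_p}(k)$ is again a relatively finite dimensional $C_{p^{n-1}}$-Green meadow.
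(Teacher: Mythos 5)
Your plan is a genuinely different route from the paper's. The paper proves \cref{thm:proj-implies-free-for-RFD-Green-meadows} directly and inductively: it splits off copies of $F_0=\Ind_e^{G}k(G/e)$ from a projective $P$ using that $F_0$ is \emph{injective} (\cref{lemma: F zero is injective}); it checks by a dimension count that the number of $F_0$-summands so removed from $P\oplus Q$ agrees with the $F_0$-multiplicity of the free module $P\oplus Q$; it then shows the leftover module lands in $k\textrm{-}\Mod^{\geq 1}\simeq\tau_{\geq 1}(k)\textrm{-}\Mod$ (\cref{prop:equivalence-of-cats-tau}), reducing to $C_{p^{n-1}}$ and allowing an induction on $n$. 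Your approach is instead a Krull--Schmidt argument: compute $\mathrm{End}_k(k_{G/H})$, show it is local, deduce that each $k_{G/H}$ is indecomposable, and finish formally once the finitely generated projectives form a Hom-finite idempotent-complete category.

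The gap in your proposal is that it hinges on an extra hypothesis that is not in the statement. You assert that ``the standing hypotheses $\ldots$ include the mild condition that all transfer maps of $k$ vanish,'' which, as you note, forces characteristic $p$ and trivial Weyl actions and makes $\mathrm{End}_k(k_{G/H})\cong k(G/H)[W_G(H)]$ the group algebra of a $p$-group in characteristic $p$, hence local. But \cref{thm:proj-implies-free-for-RFD-Green-meadows} assumes only that $k$ is a relatively finite dimensional $C_{p^n}$-Green meadow; no restriction on characteristic or on the Weyl actions is imposed, and $\underline{\F_q}$ with $p\nmid q$ is admitted. You observe, correctly, that in that case $k(G/H)[W_G(H)]$ can be semisimple rather than local, the $k_{G/H}$ can decompose, and the conclusion fails — e.g.\ with $q\equiv 1\bmod p$ over $C_p$, the free module $F_0=\Ind_e^{C_p}\F_q$ has a projective summand concentrated in level $C_p/e$ that is not free. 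This is more than a defect of your method: the paper's own dimension-count step establishing ``$s=s_0$'' tacitly uses that absence of a free $k(G/e)_\theta[G]$-summand in the bottom level is preserved under direct sums, which again requires $k(G/e)_\theta[G]$ to be local and hence the same characteristic restriction. So your worry about the precise range of validity of the theorem is substantive, but you should state the extra hypothesis as an \emph{addition} you are making rather than as something already implied. Under the added hypothesis your outline is sound, provided you verify carefully that the composition law on $\mathrm{End}_k(k_{G/H})\cong k_{G/H}(G/H)$ — which the double-coset formula identifies only as an abelian group — is really the asserted group-algebra multiplication; this follows from Weyl-equivariance and $k(G/H)$-linearity of $k$-module endomorphisms, not merely from the additive decomposition, and deserves an explicit argument.
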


We are able to remove the relatively finite dimensional assumption in special cases, so it does not seem that this is an essential assumption, although it simplifies arguments tremendously.

\begin{remark}
    The class of Green meadows also includes the class of \emph{clarified Tambara fields} considered in \cite{Wis24}. In conjunction with {\cite[Theorem 1.2]{Wis24}}, {\cite[Corollary B]{Wis25}}, we note that \cref{letterthm: projective implies free} applies to every relatively finite dimensional Tambara field, even those which are not clarified (using \cite[Proposition 5.7]{Wis25} and the fact that induction preserves free modules). In fact, most of our results on modules over Green meadows also apply to modules over Tambara fields straightforwardly via the results of \cite{Wis24, Wis25},
\end{remark}

The term ``free'' needs to be contextualized. The free modules over a Green functor are not graded on the integers, but rather on finite $G$-sets.  Importantly, if $X$ and $Y$ are two $G$-sets with the same cardinality but non-isomorphic $G$-actions, then the free modules on $X$ and $Y$ may or may not be isomorphic
    
A corollary of \cref{letterthm: projective implies free}  is that free modules give a set of generators for the algebraic $K_0$-group of a Green meadow $k$. In particular, there is a surjective ring homomorphism $A(C_{p^n})\to K_0(k)$, where $A(C_{p^n})$ is the Burnside ring of virtual finite $C_{p^n}$-sets. In certain cases we compute the kernel of this map, hence compute $K_0(k)$.

    \begin{letterthm}\label{intro theorem: K_0 of clarified fields}
        Let $L$ be a field with action by the group $C_{p^n}$, and let $k$ be the fixed point $C_{p^n}$-Green functor associated to $L$. The kernel of the surjection $A(C_{p^n})\to K_0(k)$ is given explicitly in \cref{corollary: free K0 for fixed point}.
    \end{letterthm}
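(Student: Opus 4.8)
The plan is to reduce the statement to a classification of the free $k$-modules up to isomorphism, and then to carry this out using the isotropy filtration of $k$-modules that produces the spectral sequence of \cref{introthm: G theory SS}. First I would confirm that $k$ is covered by \cref{letterthm: projective implies free}. Each value $k(C_{p^n}/H) = L^H$ is a subfield of $L$, so $k$ is a Green meadow; and since the $C_{p^n}$-action on $L$ factors through a faithful action of a quotient, Artin's theorem makes $L/L^{C_{p^n}}$ a finite Galois extension, whence $[L^H : L^{C_{p^n}}] \le p^n$ for every $H$ and $k$ is relatively finite dimensional (hence module-Noetherian). Therefore every finitely generated projective $k$-module is free. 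Since $k$ is commutative, $\otimes_k$ makes $k$-Mod symmetric monoidal with the free modules closed under $\otimes_k$, and $k_{X\sqcup Y} \cong k_X \oplus k_Y$ while $k_{X\times Y} \cong k_X \otimes_k k_Y$; hence $[X] \mapsto [k_X]$ defines a surjective ring homomorphism $\rho\colon A(C_{p^n}) \to K_0(k)$. The monoid of finitely generated free $k$-modules is the quotient of the free commutative monoid of finite $C_{p^n}$-sets by the congruence $X \sim Y \iff k_X \cong k_Y$, so group-completing identifies $K_0(k)$ with $A(C_{p^n})/N$, where $N \subseteq A(C_{p^n})$ is the ideal generated by the differences $[X] - [Y]$ over all isomorphisms $k_X \cong k_Y$. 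So the task is to decide exactly when two free $k$-modules are isomorphic.

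To do this I would run the isotropy filtration behind \cref{introthm: G theory SS}: for $0 \le t \le n$ there are exact functors $q_t$, from the relevant filtered subcategory of $k$-modules to modules over the ring $R_t$ built from $L^{C_{p^t}}$ and the relative Weyl group $C_{p^n}/C_{p^t}$, and many of the $R_t$ vanish (precisely the $t$ for which the relevant relative-trace map is surjective). In the surviving range, Galois descent via the normal basis theorem identifies $R_t$ up to Morita equivalence with a group ring over $L^{C_{p^n}}$, so that its finitely generated projective and simple modules are explicit. One then computes, by a fixed-point / double-coset formula, the $R_t$-module attached to each free $k$-module $k_{C_{p^n}/C_{p^i}}$, and shows that a finitely generated projective $k$-module is determined up to isomorphism by the resulting tuple of $R_t$-modules; granting this, $N$ is the kernel of the induced map $A(C_{p^n}) \to \bigoplus_t K_0(R_t)$, a finite computation. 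In the case where $L/L^{C_{p^n}}$ is $C_{p^n}$-Galois, only the bottom piece survives, $R_0 \cong M_{p^n}(L^{C_{p^n}})$, so $K_0(k) \cong \Z$ and $N$ is generated by the relations $[C_{p^n}/C_{p^i}] = p\,[C_{p^n}/C_{p^{i+1}}]$ for $0 \le i < n$; in general one reads off the description of $N$ recorded in \cref{corollary: free K0 for fixed point}.

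The main obstacle is the detection step. The $K$-theoretic analogue of the Greenlees--May splitting fails in this generality --- as noted after \cref{letterthm:splitting} --- so one cannot split $K$-theory spectra and pass to $\pi_0$; instead one must argue directly with modules, showing that in the isotropy filtration each filtered subcategory includes into the next as a direct summand on projective objects, so that a finitely generated projective $k$-module is the direct sum of lifts of its graded pieces and hence is recovered from them. This is a module-level strengthening of the input to \cref{letterthm:splitting}, and the twisting data on the rings $R_t$ must be tracked carefully; the normal basis theorem is what makes the surviving pieces tractable. Once detection is in hand, identifying the $R_t$-modules attached to the $k_{C_{p^n}/C_{p^i}}$ and extracting the relations among their $K_0$-classes is routine bookkeeping with double cosets in $C_{p^n}$ and the representation theory of the $R_t$.
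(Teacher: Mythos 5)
Your reduction of the statement to the classification of finitely generated free $k$-modules (via \cref{letterthm: projective implies free}, after checking that $\FP(L)$ is a relatively finite dimensional Green meadow) and your reformulation of the kernel as the congruence ideal generated by $[X]-[Y]$ over isomorphisms $k_X\cong k_Y$ match the paper's setup. From there, however, you take a genuinely different route. The paper classifies free modules by purely linear-algebraic means: it computes, for each free generator $F_i$ and each level $C_{p^n}/C_{p^s}$, the dimension of $F_i(C_{p^n}/C_{p^s})$ over the ground field $L^{C_{p^n}}$ (\cref{cor:levelwise-value-of-free-modules-for-C_p^n}, \cref{equation: ranks of free modules}); it shows $F_i\cong k^{p^{n-i}}$ for $i\geq r$ (\cref{proposition: when free modules are isomorphic weak form}) by a downward induction on levels using Galois theory for the surviving trace maps; and it shows the remaining $F_0,\dots,F_{r-1},k$ are independent in $K_0^{\mathrm{free}}$ by exhibiting the matrix of level-dimensions and verifying its determinant is nonzero after column reduction (\cref{proposition: when free modules are isomorphic strong form}). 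No filtration of the module category, no Morita equivalence, and no normal-basis-theorem input is used in this part of the paper. Your approach buys a more conceptual picture tied to the $\G$-theory tower; the paper's buys an unconditional and elementary computation.

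The genuine gap is the detection step, which you yourself flag but do not close. You want the tuple of associated-graded $R_t$-modules of a finitely generated projective $k$-module to determine it, by showing projectives decompose along the isotropy filtration. Two issues. First, the paper's filtration argument (\cref{thm:G-theory-splitting}) only produces an exact categorical retraction and hence a splitting of $\G$-theory \emph{spectra}; it does not by itself yield a direct-sum decomposition of a projective $k$-module into lifts of its graded pieces, and the paper never proves such a module-level splitting (the proof of \cref{thm:proj-implies-free-for-RFD-Green-meadows} instead splits off injective $F_0$-summands and applies $\tau_{\geq 1}$, a different mechanism). Second, for $t$ in the range where the relative trace is surjective, $R_t=0$ and the graded functor $q_t$ vanishes identically, so the ``tuple of graded pieces'' cannot distinguish $F_i$ and $F_j$ for $i,j\geq r$ by itself; one must in addition know that $F_i\cong k^{p^{n-i}}$ in that range, which is exactly the content of \cref{proposition: when free modules are isomorphic weak form} and is proved by the downward induction, not by the filtration. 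So as written your detection claim is unsupported, and even if it could be made rigorous you would still need the weak-form isomorphism separately. I would either substitute the paper's determinant argument for the detection step, or supply a proof that finitely generated projective $k$-modules literally decompose as a direct sum indexed by the filtration (not just that the $\G$-theory does), and explain how the degenerate $R_t=0$ levels are handled.
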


\vspace{1mm}

\noindent \textbf{Organization:} The paper is organized as follows.  In \cref{section: background} we review the requisite background on Green functors. In \cref{section: modules over Green functors} we review some material about modules over Green functors and give a careful analysis of free modules over certain $C_{p^n}$-Green functors.  This section also contains some technical discussion of Noetherian conditions on Green functors and faithfully flat base change.  In \cref{section: K0} we prove \cref{letterthm: projective implies free,intro theorem: K_0 of clarified fields}. In \cref{section: G theory} we construct the $\G$-theory spectral sequence of \cref{introthm: G theory SS} and prove \cref{letterthm:splitting}.  Finally, the computations of \cref{introthm: F2,introthm: K of constant Z} are given in \cref{section: computations}.

\vspace{3mm}

\noindent \textbf{Acknowledgments:}  The authors thank Ben Antieau, Maxine Calle, Teena Gerhardt, John Greenlees, Mike Hill, Liam Keenan,  Mike Mandell, Andres Mejia, Maximilien P\'eroux, Noah Riggenbach, Maru Sarazola, Chase Vogeli, and Yuanning Zhang, for helpful conversations regarding the content of this paper.

The authors would also like to thank the Isaac Newton Institute for Mathematical Sciences, Cambridge, for support and hospitality during the programme ``Equivariant homotopy theory in context" where work on this paper was undertaken. This work was supported by EPSRC grant no EP/Z000580/1. The first-named author was partially supported by NSF grant DMS-2135960.
\section{Review of equivariant algebra}\label{section: background}

This section contains background material on Mackey and Green functors.
\subsection{Mackey functors}

We being by recalling the definition of Mackey functors. For a more thorough introduction to this topic the reader is recommended to \cite{Thevenaz--Webb}. Let $G$ be a finite group.

\begin{definition}[{cf. \cite[\S 2]{Thevenaz--Webb}}]\label{proposition: useful definition of Mackey functor}
    A $G$-Mackey functor $M$ consists of the following data.
    \begin{enumerate}
        \item for each subgroup $H \subset G$, an abelian group $M(G/H)$,
        \item for each $g \in G$, an isomorphism $c_g \colon M(G/H) \rightarrow M(G/(gHg^{-1}))$ which is the identity if $g \in H$, and such that $c_{gh} = c_{g}\circ c_{h}$
        \item for a subgroup inclusion $K \subset H$, a restriction morphism $\Res_K^H \colon M(G/H) \rightarrow M(G/K)$, such that $\Res_{g K g^{-1}}^{gHg^{-1}} c_g = c_g \Res_K^H$,
        \item for a subgroup inclusion $K \subset H$, a transfer morphism $\Tr_K^H \colon M(G/K) \rightarrow M(G/H)$, such that $\Tr_{g K g^{-1}}^{gHg^{-1}} c_g = c_g \Tr_K^H$,
        \item for any chain of subgroups $L\leq H\leq K$ then $\Res^K_L = \Res^H_L\circ \Res^K_H$ and $\Tr^K_L = \Tr^K_H\circ \Tr^{H}_L$,
        \item restrictions and transfers satisfy the double coset formula: \[ \Res_L^H \Tr_K^H = \sum\limits_{g \in L \backslash H / K} \Tr_{L \cap gKg^{-1}}^L c_g \Res_{g^{-1} L g \cap K}^K \]
    \end{enumerate}
    A morphism of Mackey functors $f \colon M \to N$ consists of a collection of group homomorphisms $f_{G/H} \colon M(G/H) \to N(G/H)$ which commute with transfers, restrictions, and conjugations. We write $\Mack^G$ for the category of $G$-Mackey functors.
\end{definition}

\begin{remark}\label{remark: Weyl group actions}
    If $g\in N_G(H)$, the normalizer of $H\leq G$, then $c_g\colon M(G/H)\to M(G/H)$ gives an action of $N_G(H)$ on $M(G/H)$.  By condition (2) in the definition this descends to action of the \emph{Weyl group} $W_G(H) = N_G(H)/H$ on $M(G/H)$ for all $H\leq G$. When the group $G$ is abelian, as it usually is in this paper, the Weyl group actions specify all the conjugation data in a $G$-Mackey functor.
\end{remark}

\begin{remark}
    A Mackey functor can also be defined as a product preserving functor $T\colon \mathcal{B}^G\to \Set$ where $\mathcal{B}^G$ is the \emph{Burnside category}. This observation is originally due to Lindner \cite{Lindner}.  The Burnside category has objects given by all finite $G$-sets and products given by disjoint union.  In some places we will evaluate a Mackey functor at a finite $G$-set $X$ which is not transitive.  Implicitly, one should pick an isomorphism $X\cong \amalg_{i=1}^n G/H_i$ which induces an isomorphism
    \[
        M(X)\cong \bigoplus\limits_{i=1}^n M(G/H_i)
    \]
    for any Mackey functor $M$.
\end{remark}

Mackey functors were originally defined by Dress \cite{Dress} to axiomatize the type of structure found in representation rings.  One can think of the restriction morphisms as ``forgetting'' a $K$-action to an $H$-action and the transfer morphisms as a form of induction. 

We consider some examples.
\begin{example}\label{example: fixed point Mackey}
    Given any $G$-action on an abelian group $M$, we may define the fixed-point Mackey functor $\mathrm{FP}(M)$ by $\mathrm{FP}(M)(G/H)= M^H$. The restriction, transfer, and conjugation data is given by the following:
    \begin{enumerate}
        \item if $K\leq H$ the restriction map $\Res^H_K\colon M^H\to M^K$ is given by the inclusion of fixed point sets,
        \item if $K\leq H$ the transfer map $\Tr^H_K\colon M^K\to M^H$ is defined on $x\in M^K$ by \[\Tr^H_K(x) = \sum\limits_{\gamma\in H/K}\gamma\cdot x.\]
        \item if $g\in G$ the conjugation isomorphism $c_g\colon M^H\to M^{gHg^{-1}}$ is the isomorphism given by $x\mapsto g\cdot x$.
    \end{enumerate}

    In particular, given any abelian group $C$, we may give it the trivial $G$-action and the resulting Mackey functor $\underline{C}$ is called the \emph{constant Mackey functor associated to $N$}. Since the group action is trivial, we have $\underline{C}(G/H) = C$, and the transfer map associated to $K \subset H$ is just multiplication by $|H/K|$.  The restriction and conjugation maps are all identities.
\end{example}

\begin{example}\label{example: burnside}
    The analogue of the integers in the category of Mackey functors is the \emph{Burnside Mackey functor} $\A_G$. It is specified by sending $G/H$ to the Grothendieck group completion of the commutative monoid of isomorphism classes of finite $H$-sets (with respect to disjoint union). For $K \subset H \subset G$, the restriction map is induced by restricting an $H$-set to a $K$-set, and the transfer is induced by the induction from a $K$-set to an $H$-set, that is, $X \mapsto H \times_K X$.  The conjugation maps are defined by identifying $H$-sets and $(gHg^{-1})$-sets for $H\leq G$ and $g\in G$.
\end{example}

\begin{example}\label{ex:rep-ring-Mackey-example}
    For any field $\mathbb{F}$, the assignment sending $G/H$ to the representation ring of $H$ over $\mathbb{F}$ forms a Mackey functor $\mathrm{RU}_{\F}$. Restrictions and transfers are defined via restriction and induction of representations. Additionally, the functor sending an $H$-set $X$ to the $\F$-vector space with basis $X$ and $H$-action obtained by permuting elements of the basis defines a morphism from the Burnside Mackey functor to this $\mathbb{F}$-representation ring Mackey functor.
\end{example}

When $G = C_p$, we pictorially represent a $C_p$-Mackey functor $M$ via a \emph{Lewis diagram}:
\[ 
    \begin{tikzcd}
    M(C_p/C_p) \arrow[d, "r"] \\
    M(C_p/e) \arrow[u, bend left = 55, "t"] \arrow[loop below, "\sigma"]
    \end{tikzcd} 
\] 
where $\sigma$ is the action by a chosen generator for $C_p$, $t$ is the only non-identity transfer, and $r$ is the  only non-identity restriction.

For instance, if $M$ is an abelian group, the constant Mackey functor associated to $M$ has the Lewis diagram
\[ 
    \begin{tikzcd}
    M \arrow[d, "\mathrm{id}_M"] \\
    M \arrow[u, bend left = 55, "p\cdot"]
    \end{tikzcd} 
\] 
where we omit the arrow $\sigma$ since the action is trivial.

\begin{example}
    The Burnside Mackey functor $\A_{C_p}$ for the group $G = C_p$ has the Lewis diagram
    \[ 
        \begin{tikzcd}[column sep = large]
            \mathbb{Z}^2 \ar[d, "r"] \\
            \mathbb{Z} \ar[u, bend left = 55, "t"]
        \end{tikzcd} 
\] 
where $r(x,y) = x+py$ and $t(z) = (0,z)$.
\end{example}

We now recall a bit more about the structure of the category of Mackey functors.  
\begin{proposition}[{cf. \cite[Proposition 3.1]{Thevenaz--Webb}}]
    The category of $G$-Mackey functors is an abelian category with enough projectives.
\end{proposition}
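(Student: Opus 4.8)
The plan is to identify $\Mack^G$ with a functor category and then invoke the standard homological algebra of such categories. Recall from the remark above that a $G$-Mackey functor is the same thing as an additive functor $\mathcal{B}^G \to \mathrm{Ab}$ from the (essentially small, additive) Burnside category to the category of abelian groups, a morphism of Mackey functors being a natural transformation. For \emph{any} essentially small additive category $\mathcal{C}$, the category $\mathrm{Fun}_{\mathrm{add}}(\mathcal{C},\mathrm{Ab})$ is abelian: biproducts, kernels, and cokernels are all computed objectwise --- that is, after evaluation at each finite $G$-set $X$, and, by additivity, it suffices to evaluate at the orbits $G/H$ --- and a morphism is a monomorphism, epimorphism, or isomorphism precisely when all of its components are, since these properties are detected objectwise in $\mathrm{Ab}$. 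In particular $\Mack^G$ is abelian.

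For enough projectives, to each finite $G$-set $X$ associate the representable Mackey functor $P_X := \mathcal{B}^G(X,-)$. The additive Yoneda lemma supplies a natural isomorphism $\Hom_{\Mack^G}(P_X, M) \cong M(X)$; since evaluation at $X$ is exact (kernels and cokernels being objectwise), $P_X$ is projective. As the functor category is cocomplete with colimits formed objectwise, any direct sum of representables is again projective.

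It remains to show every $M \in \Mack^G$ is a quotient of such a projective. For each finite $G$-set $X$ and each $m \in M(X)$, Yoneda yields a morphism $\varphi_{X,m}\colon P_X \to M$ carrying $\mathrm{id}_X$ to $m$; assembling them gives a morphism
\[
    \bigoplus_{X}\ \bigoplus_{m \in M(X)} P_X \longrightarrow M
\]
which is already surjective when evaluated at any finite $G$-set $Y$, using only the summands with $X = Y$. The source is projective by the previous paragraph, so $\Mack^G$ has enough projectives. (More economically, letting $X$ range only over a set of representatives of the conjugacy classes of subgroups of $G$ exhibits $\bigoplus_{[H]} P_{G/H}$ as a projective generator of $\Mack^G$.)

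There is essentially no obstacle here; the only step meriting care is the translation into the additive functor category, which is exactly Lindner's theorem. One could instead bypass Lindner and identify $\Mack^G$ with the category of modules over the Mackey algebra $\mu_{\Z}(G)$, for which both the abelian structure and the existence of enough projectives (the free modules) are immediate.
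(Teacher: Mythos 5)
Your argument is correct and, modulo notation, complete. The paper's own proof is a one-line appeal to Thévenaz--Webb: $\Mack^G$ is equivalent to the category of left modules over the Mackey algebra $\mu_{\Z}(G)$, hence abelian with enough projectives. You instead work directly in the additive functor category $\mathrm{Fun}_{\mathrm{add}}(\mathcal{B}^G,\mathrm{Ab})$ via Lindner's identification, checking objectwise that it is abelian and using Yoneda to exhibit the representables $P_{G/H} = \mathcal{B}^G(G/H,-)$ as a set of projective generators --- and you note the Mackey-algebra route as the ``more economical'' alternative, which is exactly what the paper does. The two are really the same observation: the Mackey algebra is $\bigoplus_{H,K}\mathcal{B}^G(G/H,G/K)$, so your representable projectives are precisely the indecomposable summands of the free $\mu_{\Z}(G)$-module of rank one. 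What your version buys is that it is self-contained and makes the projective generators explicit; these representables are exactly the free Mackey functors $A_{G/H}$ that the paper uses heavily later (e.g.\ in \cref{prop:free-k-mod-in-level-G/H-is-ind-of-res}), so the extra work is not wasted. The only cosmetic point: the displayed coproduct $\bigoplus_X$ should be read as ranging over a set of isomorphism-class representatives of finite $G$-sets (or, as you say, just the orbits $G/H$), so that it is an honest small coproduct rather than a proper-class-indexed one.
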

\begin{proof}
    The category of $G$-Mackey functors is equivalent to the category of left modules over a certain ring, called the Mackey algebra, hence is abelian with enough projectives.
\end{proof}

The category of Mackey functors has a symmetric monoidal product $\boxtimes$, called the box product, for which the unit is the Burnside Mackey functor.  The box product is constructed using Day convolution.  There is a convenient reformulation of morphisms out of a box product, called a \emph{Dress pairing}.

\begin{proposition}[{\cite[Lemma 2.17]{Shulman:Thesis}}]
\label{prop:Dress-pairing-description-of-box-product}
    The Mackey functor $M \boxtimes N$ is characterized by the following universal property. A map $M \boxtimes N \rightarrow L$ is precisely the data of a \emph{Dress pairing}: for each $H \subset G$, an abelian group homomorphism $f_H \colon M(G/H) \otimes N(G/H) \rightarrow L(G/H)$, satisfying 
    \begin{enumerate}
        \item $\Res_H^K \circ f_H = f_K \circ \left( \Res_H^K \otimes \Res_H^K \right)$,
        \item for any $g\in G$ we have $c_g \circ f_H = f_{gHg^{-1}} \circ \left( c_g \otimes c_g \right)$,
        \item $\mathrm{Tr}_H^K \circ f_H \circ \left( \mathrm{id} \otimes \Res_H^K \right) = f_K \circ \left( \mathrm{Tr}_H^K \otimes \mathrm{id} \right)$, and
        \item $\mathrm{Tr}_H^K \circ f_H \circ \left( \Res_H^K \otimes \mathrm{id} \right) = f_K \circ \left( \mathrm{id} \otimes \mathrm{Tr}_H^K \right)$.
    \end{enumerate}
\end{proposition}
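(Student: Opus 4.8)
The plan is to unwind the Day convolution definition of $\boxtimes$ and then reduce the resulting data to the ``diagonal.'' Recall that, by construction, $M\boxtimes N$ is the left Kan extension of the external product $(X,Y)\mapsto M(X)\otimes N(Y)$ along the Cartesian product functor $\times\colon \mathcal{B}^G\times \mathcal{B}^G\to \mathcal{B}^G$. Hence $M\boxtimes N$ carries the universal property that a morphism $M\boxtimes N\to L$ is precisely a natural transformation of bifunctors $\mathcal{B}^G\times\mathcal{B}^G\to\mathbf{Ab}$, that is, a family of homomorphisms
\[
    \phi_{X,Y}\colon M(X)\otimes N(Y)\longrightarrow L(X\times Y)
\]
natural in $X$ and in $Y$. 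Since $M$, $N$, $L$ send disjoint unions to direct sums, such a family is determined by its restrictions to pairs of orbits, and moreover $\phi_{X\amalg X',Y}$ is forced to be the block-diagonal sum of $\phi_{X,Y}$ and $\phi_{X',Y}$. So it suffices to show that families $\{\phi_{G/H,G/K}\}$ which are natural with respect to the generating restriction, transfer, and conjugation morphisms of $\mathcal{B}^G$ correspond bijectively to Dress pairings.

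First I would produce a Dress pairing from a natural family $\phi$. Let $\delta_H\colon G/H\to G/H\times G/H$ be the diagonal, inducing a restriction map $\delta_H^*\colon L(G/H\times G/H)\to L(G/H)$, and set $f_H:=\delta_H^*\circ\phi_{G/H,G/H}$. Each of the axioms (1)--(4) is then an instance of the naturality of $\phi$: axioms (1) and (2) follow by applying naturality to the restriction morphisms $G/H\to G/K$ and to the conjugation isomorphisms, using that $\delta$ is compatible with restriction and conjugation; axioms (3) and (4) follow by applying naturality to transfer morphisms and invoking the projection formula relating $\delta$, transfers, and restrictions.

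Conversely, given a Dress pairing $\{f_H\}$, I would reconstruct $\phi$ via the orbit decomposition $G/H\times G/K\cong\coprod_g G/L_g$, where $g$ runs over $H\backslash G/K$ and $L_g:=H\cap {}^{g}K$. Writing $j_g\colon G/L_g\hookrightarrow G/H\times G/K$ for the orbit inclusions and $p_1,p_2$ for the two projections of $G/H\times G/K$, I would set
\[
    \phi_{G/H,G/K}(m\otimes n):=\sum_{g}\,(j_g)_*\,f_{L_g}\bigl((p_1 j_g)^*m\otimes(p_2 j_g)^*n\bigr),
\]
where $(j_g)_*$ is the inclusion of the corresponding summand of $L(G/H\times G/K)$, and then extend by block-diagonal sums over orbit decompositions. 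One would then verify that this $\phi$ is natural against the generating morphisms of $\mathcal{B}^G$: naturality against restrictions is functoriality of pullback together with axiom (1), naturality against conjugations is axiom (2), and naturality against transfers uses axioms (3) and (4) together with the double coset formula applied to the orbit decomposition of a product of orbits.

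Finally I would check that the two assignments are mutually inverse. Going from a Dress pairing to $\phi$ and back, applying $\delta_H^*$ to the reconstructed $\phi_{G/H,G/H}$ picks out the summand indexed by the trivial double coset, whose orbit is $G/H$ and for which $p_1 j_e=p_2 j_e=\mathrm{id}$, recovering $f_H$. Going from a natural family $\phi$ to a Dress pairing and back, the identity $\sum_g (j_g)_*(j_g)^*=\mathrm{id}$ on $L(G/H\times G/K)$, together with naturality of $\phi$ along $p_1 j_g$ and $p_2 j_g$ (which gives $\delta_{L_g}^*\phi_{G/L_g,G/L_g}\bigl((p_1 j_g)^*m\otimes(p_2 j_g)^*n\bigr)=(j_g)^*\phi_{G/H,G/K}(m\otimes n)$), shows that the reconstruction returns the original $\phi_{G/H,G/K}$. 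The main obstacle here is not conceptual but is the bookkeeping in verifying naturality of the reconstructed $\phi$ against transfers, where the double coset formula must be matched against axioms (3) and (4) and the orbit decomposition of products $G/H\times G/K$; this is routine but requires care with the double-coset indexing.
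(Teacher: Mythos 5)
The paper does not include its own proof of this proposition; it cites it to Shulman's thesis as a known result. Your blind proof is therefore a reconstruction, and it takes the expected approach: unwind the Day convolution / left Kan extension universal property of $\boxtimes$, extract the diagonal pairing $f_H = \delta_H^* \circ \phi_{G/H,G/H}$, and reconstruct $\phi$ on orbit pairs via the double-coset decomposition of $G/H \times G/K$. This is essentially the standard argument, and the key steps are correct: $(j_g)_*$ really is the summand inclusion because $L$ is additive and $j_g$ is a monomorphism of finite $G$-sets, $\sum_g (j_g)_*(j_g)^* = \mathrm{id}$, and the mutual-inverse check at the diagonal double coset is right. The deferred verification of naturality against transfers via the double-coset formula is where the real work lies, as you say, but there is no conceptual gap there. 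One minor point worth flagging, independent of your proof: the paper's condition (1) appears to have the subscripts on $f$ transposed relative to the direction of restriction (it should read $\Res_H^K \circ f_K = f_H \circ (\Res_H^K \otimes \Res_H^K)$ to be type-correct with the paper's conventions, just as your naturality argument would actually produce); your argument is consistent with the corrected form.
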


\subsection{Green functors}

In this section we recall the definition of Green functors, which are analogous to rings.

\begin{definition}\label{definition: Green functor}
    A (commutative) \emph{Green functor} is a commutative monoid in the symmetric monoidal category $(\Mack^G,\boxtimes,\A_G)$.
\end{definition}
\begin{remark}
    Of course, one could consider Green functors which are not commutative, but we will not make use of these in this paper.  
\end{remark}

A Green functor is a Mackey functor $R$ together with a multiplication map $\mu\colon R\boxtimes R\to R$ and a unit map $\eta\colon \A_G \to R$ satisfying the usual associativity, commutativity, and unitality diagrams. By \cref{prop:Dress-pairing-description-of-box-product}, the map $\mu$ is equivalent to a collection of group homomorphisms
\[
    \mu_H\colon R(G/H)\otimes R(G/H)\to R(G/H)
\]
subjects to compatibility with restriction, conjugation, and transfer.  We unpack the data of a Green functor in the following lemma, which is just a specialization of \cref{prop:Dress-pairing-description-of-box-product}.
\begin{lemma}\label{lemma: Green functor data}
    A Green functor consists of a Mackey functor $R$ together with choices of commutative ring structures on $R(G/H)$ for all $H \leq G$ subject to the following conditions:
    \begin{enumerate}
        \item the restriction and conjugation maps are ring homomorphisms,
        \item the transfer map satisfies the Frobenius reciprocity relations:
        \[
            \Tr^K_H(x)\cdot y = \Tr^K_H(x\cdot \Res^K_H(y)) \quad \mathrm{and} \quad x\cdot \Tr^K_H(y) = \Tr^K_H(\Res^K_H(x)\cdot y)
        \]
        whenever these make sense.
    \end{enumerate}
\end{lemma}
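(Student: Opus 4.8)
The plan is to read the statement off \cref{prop:Dress-pairing-description-of-box-product}. A commutative Green functor is a Mackey functor $R$ together with a multiplication $\mu\colon R\boxtimes R\to R$ and a unit $\eta\colon\A_G\to R$ satisfying the associativity, commutativity, and unitality axioms. First I would apply \cref{prop:Dress-pairing-description-of-box-product} to $\mu$: such a map is exactly a Dress pairing, i.e.\ a family of $\Z$-bilinear maps $\mu_H\colon R(G/H)\otimes R(G/H)\to R(G/H)$ satisfying conditions (1)--(4) there. Conditions (1) and (2) say precisely that the restriction and conjugation maps are multiplicative for the $\mu_H$, while (3) and (4) are exactly the two Frobenius reciprocity relations of the lemma (one per tensor slot). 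So the datum of $\mu$ is the same as a choice of biadditive products on the $R(G/H)$ for which restrictions and conjugations are multiplicative and Frobenius reciprocity holds.

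Next I would upgrade these products to commutative ring structures using the remaining monoid axioms. The cleanest route is to observe that each evaluation functor $\mathrm{ev}_{G/H}\colon\Mack^G\to\mathrm{Ab}$, $M\mapsto M(G/H)$, is lax symmetric monoidal, with lax structure maps the universal Dress pairings $M(G/H)\otimes N(G/H)\to (M\boxtimes N)(G/H)$ provided by \cref{prop:Dress-pairing-description-of-box-product}; a lax symmetric monoidal functor carries commutative monoids to commutative monoids, so $R(G/H)=\mathrm{ev}_{G/H}(R)$ inherits a commutative ring structure with multiplication $\mu_H$. (Alternatively, one unwinds by hand: since $\boxtimes$ is a Day convolution, $R^{\boxtimes 3}$ corepresents trilinear Dress pairings and the symmetry of $\boxtimes$ corresponds to transposing tensor factors, so associativity and commutativity of $\mu$ transport to each $\mu_H$.) I expect this bookkeeping — promoting the binary pairing of \cref{prop:Dress-pairing-description-of-box-product} to a coherent (multi)linear gadget — to be the only real point requiring care; the rest is formal.

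Finally I would pin down the unit. The Burnside Mackey functor $\A_G$ corepresents evaluation at the trivial $G$-set, so by the Yoneda lemma a Mackey map $\eta\colon\A_G\to R$ is the same as an element $1_R := \eta_{G/G}([G/G])\in R(G/G)$; since $\eta$ commutes with restriction and $\Res^G_H[G/G]=[H/H]$ is the unit of $\A_G(G/H)$, we get $\eta_{G/H}([H/H])=\Res^G_H(1_R)$. Unwinding the left unit axiom $\mu\circ(\eta\boxtimes\mathrm{id}_R)=\lambda$ through \cref{prop:Dress-pairing-description-of-box-product} gives $\Res^G_H(1_R)\cdot y=y$ for all $y\in R(G/H)$, so $\Res^G_H(1_R)$ is the identity of the ring $R(G/H)$; functoriality of restriction and triviality of $c_g$ on $R(G/G)$ then promote conditions (1) and (2) to statements about honest (unital) ring homomorphisms. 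Conversely, given a Mackey functor $R$ with commutative ring structures on the $R(G/H)$ satisfying (1) and (2), the products assemble via \cref{prop:Dress-pairing-description-of-box-product} into a map $\mu$, the element $1_{R(G/G)}$ determines $\eta$ by Yoneda, and the computations above run in reverse to verify the commutative-monoid axioms. This gives the claimed equivalence of data.
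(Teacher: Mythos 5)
Your argument is correct and is exactly the route the paper takes: the paper gives no proof, stating only that the lemma "is just a specialization of" \cref{prop:Dress-pairing-description-of-box-product}, and your write-up is a careful elaboration of that specialization (the Dress pairing gives the levelwise products and conditions (1)--(2), the lax monoidality of evaluation transports associativity/commutativity, and Yoneda on $\A_G$ handles the unit). Nothing to flag.
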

Note, in particular, that the rings $R(G/H)$ have an action by the Weyl groups $W_G(H) = N_G(H)/H$ through ring automorphisms.

\begin{remark}\label{remark: Frobenius reciprocity remark}
    If $R$ is a Green functor then for any $H\leq K$ we have a ring map $\Res^K_H\colon R(G/K)\to R(G/H)$ which turns $R(G/H)$ into an $R(G/K)$-bimodule via restriction of scalars. The Frobenius reciprocity formulas amount to the statement that the transfer map
    \[
        \Tr^K_H\colon R(G/H)\to R(G/K)
    \]
    is a map of $R(G/K)$-bimodules. 
\end{remark}

A (left) module over a Green functor $R$ is a Mackey functor $M$ together with a unital and associative map $\alpha\colon R\boxtimes M\to M$.  Again, we can unpack this definition using \cref{prop:Dress-pairing-description-of-box-product}.

\begin{lemma}\label{lemma: definition of a module}
    A module over a Green functor $R$ consists of a Mackey functor $M$ together with choices of module structure maps $R(G/H)\otimes M(G/H)\to M(G/H)$ for all $H\leq G$, subject to the compatibility conditions:
    \begin{enumerate}
        \item for any $H\leq K$ we have $\Res^K_H(rm) = \Res^K_H(r) \cdot \Res^K_H(m)$,
        \item for any $H\leq G$ and $g\in G$ we have $c_g(rm) = c_g(r)c_g(m)$,
        \item Frobenius reciprocity relations, identical to those in \cref{lemma: Green functor data}, hold.
    \end{enumerate}
\end{lemma}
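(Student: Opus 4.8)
The plan is to unpack the structure map of an $R$-module exactly as one unpacks \cref{lemma: Green functor data}, pushing the map through the Dress pairing description of the box product. By definition an $R$-module structure on a Mackey functor $M$ is a map $\alpha\colon R\boxtimes M\to M$ making the left-unit and associativity diagrams for a module over the monoid $R$ commute. Setting aside those two axioms for the moment, \cref{prop:Dress-pairing-description-of-box-product} identifies $\alpha$ with a Dress pairing, i.e.\ a family of abelian group homomorphisms $\alpha_H\colon R(G/H)\otimes M(G/H)\to M(G/H)$ satisfying conditions (1)--(4) of that proposition. Writing $rm$ for $\alpha_H(r\otimes m)$, conditions (1) and (2) there unwind to conditions (1) and (2) of the present lemma, while conditions (3) and (4) there unwind to the two Frobenius reciprocity relations of \cref{lemma: Green functor data}. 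So it remains only to show that the unit and associativity diagrams for $\alpha$ are equivalent to the assertion that each $\alpha_H$ makes $M(G/H)$ into a module over the ring $R(G/H)$ --- that is, $1_{R(G/H)}m=m$ and $(rs)m=r(sm)$ at every level.

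For associativity, iterate \cref{prop:Dress-pairing-description-of-box-product}: writing $R\boxtimes R\boxtimes M\cong R\boxtimes(R\boxtimes M)$ and applying the proposition to each outer slot in turn, maps out of the triple box product are classified by trilinear families $R(G/H)\otimes R(G/H)\otimes M(G/H)\to L(G/H)$, and under this classification the composite $R\boxtimes R\boxtimes M\xrightarrow{\mu\boxtimes\mathrm{id}}R\boxtimes M\xrightarrow{\alpha}M$ corresponds to $(r,s,m)\mapsto(rs)m$ while $R\boxtimes R\boxtimes M\xrightarrow{\mathrm{id}\boxtimes\alpha}R\boxtimes M\xrightarrow{\alpha}M$ corresponds to $(r,s,m)\mapsto r(sm)$; hence the associativity square commutes exactly when $(rs)m=r(sm)$ for all $H$. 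For the unit, use the canonical isomorphism $\A_G\boxtimes M\cong M$ witnessing $M$ as a module over the monoidal unit, together with the fact that $\eta\colon\A_G\to R$ is a levelwise ring homomorphism, so $\eta_H$ sends the class of the one-point $H$-set to $1_{R(G/H)}$. Unwinding, the left-unit diagram says $\eta_H(a)m=a\cdot m$ for every $a\in\A_G(G/H)$; applying this to the multiplicative unit gives $1_{R(G/H)}m=m$, and conversely that identity together with conditions (1) and (3) recovers the general statement using $\eta_H([H/K])=\Tr^H_K(1_{R(G/K)})$ for $K\leq H$ and Frobenius reciprocity. Combining all of this yields the claimed correspondence between $R$-module structures on $M$ and choices of $R(G/H)$-module structures satisfying (1)--(3).

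The one step that is not purely formal is the iterated Dress pairing used for associativity: one must check that the ``diagonal'' summands $R(G/H)\otimes R(G/H)\otimes M(G/H)$ suffice to detect and distinguish maps out of $R\boxtimes R\boxtimes M$, and that this identification is natural enough to turn the associativity diagram of Mackey functor maps into the pointwise identity $(rs)m=r(sm)$. This follows from applying \cref{prop:Dress-pairing-description-of-box-product} to the two outer slots and tracking the associator of $\boxtimes$, but it is the only bookkeeping beyond a direct specialization of the cited proposition --- the rest, including the translation of the unit axiom and the identification of conditions (1)--(3), is entirely parallel to the unpacking of \cref{lemma: Green functor data}.
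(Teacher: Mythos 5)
Your proposal is correct and follows exactly the route the paper indicates: the paper states the lemma without a separate proof, prefacing it with ``Again, we can unpack this definition using \cref{prop:Dress-pairing-description-of-box-product},'' which is precisely the unwinding you carry out (Dress-pairing conditions (1)--(4) give items (1)--(3), and the unit/associativity squares reduce to each $\alpha_H$ being a unital associative $R(G/H)$-action). The only tiny quibble is that in recovering the full unit axiom from $1_{R(G/H)}m=m$ you really need Frobenius reciprocity together with the fact that restrictions in $R$ are unital ring maps rather than condition (1) of the lemma, but this does not affect the argument.
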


For a subgroup $H\leq G$, condition (2) of \cref{lemma: definition of a module} tells us the action of the Weyl group $W_G(H)$ on $M(G/H)$ is nicely compatible with the action of $W_G(H)$ on $R(G/H)$.  It is worth emphasizing that the action of $W_G(H)$ on $M(G/H)$ is \emph{not} $R(G/H)$-linear.  Instead, for any $w\in W_GH$ we have $w(rm) = w(r)w(m)$, which is sometimes referred to as a semi-linear action.  The category of $R$-modules with $W_G(H)$-semilinear action admits a nice reformulation which we now recall.

\begin{definition}\label{definition: twisted group ring}
    For a group $W$ acting on a ring $R$ through a group homomorphism $\theta\colon W\to \mathrm{Aut}(R)$, the \emph{twisted group ring} of $R$ by $W$, denoted $R_{\theta}[W]$, is the ring with the same underlying abelian group as the ordinary group ring $R[W]$, but with multiplication given by
    \[
        (r_1w_1)\cdot (r_2w_2) = r_1(\theta(w_1)(r_2))w_1w_2.
    \]
    When it is clear from context we will omit the $\theta$ from the above formula and write $w_1\cdot r_2$.  
\end{definition}

If $\theta$ is the trivial action then the twisted group ring is just the ordinary group ring.  For any $\theta$  there is a ring map $R\to R_{\theta}[W]$ which turns $R_{\theta}[W]$ into an associative $R$-algebra.

\begin{proposition}[{cf. \cite[Observation 4.3]{Mer17}}]
    For an $R$-module $M$ the following are equivalent:
    \begin{enumerate}
    \item $M$ has semilinear $W$-action.
    \item $M$ is a module over $R_{\theta}[W]$ with $R$-module structure coming from restriction of scalars.
    \end{enumerate}
\end{proposition}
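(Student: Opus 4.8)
The plan is to exhibit two mutually inverse constructions, one turning a semilinear $W$-action into an $R_\theta[W]$-module structure and one going back, and to check compatibility with the underlying $R$-module structure. The key structural fact I would use is that $R_\theta[W]$ is free as a left $R$-module with basis $\{1_R\cdot w : w \in W\}$; consequently, giving a left $R_\theta[W]$-module structure on an abelian group whose restriction along the canonical ring map $R \to R_\theta[W]$ is a prescribed $R$-module $M$ amounts to specifying, for each $w \in W$, an additive endomorphism $m \mapsto (1_R\cdot w)\cdot m$ of $M$, subject only to the relations forced by multiplication in $R_\theta[W]$.

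For $(1)\Rightarrow(2)$, suppose $M$ is an $R$-module equipped with a semilinear $W$-action, so $w\cdot(rm) = \theta(w)(r)\,(w\cdot m)$ and $W$ acts on the abelian group $M$. Define $\bigl(\sum_w r_w w\bigr)\cdot m := \sum_w r_w\,(w\cdot m)$; this is visibly biadditive and well defined. Associativity need only be checked on elements of the $R$-basis: one has $\bigl((r_1 w_1)(r_2 w_2)\bigr)\cdot m = \bigl(r_1\theta(w_1)(r_2)\,w_1w_2\bigr)\cdot m = r_1\theta(w_1)(r_2)\,\bigl((w_1w_2)\cdot m\bigr)$, while $(r_1 w_1)\cdot\bigl((r_2 w_2)\cdot m\bigr) = r_1\cdot\bigl(w_1\cdot(r_2\,(w_2\cdot m))\bigr) = r_1\theta(w_1)(r_2)\,\bigl(w_1\cdot(w_2\cdot m)\bigr)$, and these agree because $w_1\cdot(w_2\cdot m) = (w_1w_2)\cdot m$. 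The element $1_R\cdot e$ acts as the identity, and $r\mapsto r\cdot e$ recovers the original $R$-action, so the restriction of this module structure along $R \to R_\theta[W]$ is exactly $M$.

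For $(2)\Rightarrow(1)$, suppose $M$ is a left $R_\theta[W]$-module; give $M$ its $R$-module structure by restriction of scalars and set $w\cdot m := (1_R\cdot w)\cdot m$. Since $\theta(w_1)$ is a ring automorphism fixing $1_R$, we have $(1_R\cdot w_1)(1_R\cdot w_2) = 1_R\cdot w_1w_2$ and $1_R\cdot e = 1$, so this is an action of $W$ on the abelian group $M$. Semilinearity follows from the identity $(1_R\cdot w)(r\cdot e) = \theta(w)(r)\cdot w = (\theta(w)(r)\cdot e)(1_R\cdot w)$ in $R_\theta[W]$, evaluated at $m$. Finally I would note that these two passages are inverse to one another, which establishes the equivalence. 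I do not expect a genuine obstacle here; the only point requiring a little care is keeping track of which $R$-module structure is meant throughout (namely insisting that the correspondence is compatible with restriction of scalars), and invoking freeness of $R_\theta[W]$ over $R$ to make well-definedness transparent.
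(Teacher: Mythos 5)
Your proof is correct. The paper itself does not prove this proposition; it is cited to Merling's work (\cite[Observation 4.3]{Mer17}), so there is no in-paper argument to compare against. Your direct verification — passing back and forth between a semilinear $W$-action and an $R_\theta[W]$-module structure, reducing associativity and semilinearity checks to basis elements using freeness of $R_\theta[W]$ over $R$, and checking the two constructions are mutually inverse — is the standard and complete argument for this kind of statement.
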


\begin{corollary}
    If $R$ is a Green functor and $M$ is a module over $R$, then for all $H\leq G$, $M(G/H)$ is a module over the twisted group ring $R(G/H)_{\theta}[W_G(H)]$.
\end{corollary}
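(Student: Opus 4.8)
The plan is to deduce the statement directly from the preceding proposition by unwinding the definitions, with no real computation required. Fix a subgroup $H \leq G$. First I would invoke \cref{lemma: definition of a module}, which tells us that the structure map $R(G/H) \otimes M(G/H) \to M(G/H)$ makes $M(G/H)$ into a module over the commutative ring $R(G/H)$. Next, as in \cref{remark: Weyl group actions}, the conjugation isomorphisms $c_g$ for $g \in N_G(H)$ assemble into an action of the Weyl group $W_G(H) = N_G(H)/H$ on $M(G/H)$ (the action descends to the quotient since $c_g = \mathrm{id}$ when $g \in H$ and $c_{gh} = c_g c_h$), and likewise on $R(G/H)$; by \cref{lemma: Green functor data} the latter action is through ring automorphisms, so it defines a homomorphism $\theta \colon W_G(H) \to \mathrm{Aut}(R(G/H))$ and hence a twisted group ring $R(G/H)_\theta[W_G(H)]$.

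The crucial compatibility is condition (2) of \cref{lemma: definition of a module}, namely $c_g(rm) = c_g(r)\, c_g(m)$ for all $g \in G$. Restricting attention to $g \in N_G(H)$, this is exactly the assertion that the $W_G(H)$-action on $M(G/H)$ is semilinear over $R(G/H)$ relative to $\theta$. Now I would apply the preceding proposition (the cf.\ \cite{Mer17} statement) with its $R$, $W$, $M$ replaced by $R(G/H)$, $W_G(H)$, $M(G/H)$ respectively: this yields that $M(G/H)$ is a module over $R(G/H)_\theta[W_G(H)]$, with the original $R(G/H)$-module structure recovered by restriction of scalars along the canonical ring map $R(G/H) \to R(G/H)_\theta[W_G(H)]$.

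I expect no genuine obstacle: all the mathematical content lives in the cited proposition, and proving the corollary amounts to correctly identifying the ring, the group, and the module to which that proposition is applied. The one point that deserves a line of justification is that the conjugation data really does descend from $N_G(H)$ to $W_G(H)$ on both $R(G/H)$ and $M(G/H)$; this is immediate from the Mackey functor axiom that $c_g$ is the identity for $g \in H$ together with the functoriality $c_{gh} = c_g c_h$, which applies in particular to the underlying Mackey functors of $R$ and $M$.
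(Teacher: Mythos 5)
Your proposal is correct and is exactly the argument the paper intends (the paper states the corollary without proof because it follows immediately from the cited proposition). You correctly identify the three ingredients: the levelwise $R(G/H)$-module structure, the descent of the conjugation action from $N_G(H)$ to $W_G(H)$, and the semilinearity supplied by condition (2) of the module characterization.
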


As an application, we see that fixed point Mackey functors give another good class of examples of modules.

\begin{lemma}\label{lemma: fixed point of twisted module is k-module}
    Let $k$ be a Green functor.  If $M$ is a $k(G/e)_{\theta}[G]$-module, then $\FP(M)$ is naturally a $k$-module.
\end{lemma}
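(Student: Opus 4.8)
The plan is to equip each group $\FP(M)(G/H) = M^H$ with a $k(G/H)$-module structure and then check the axioms of \cref{lemma: definition of a module}. Recall that a $k(G/e)_\theta[G]$-module is the same as a $k(G/e)$-module $M$ together with a semilinear action of $G = W_G(e)$, where $\theta(g) = c_g$ is the conjugation automorphism of $k(G/e)$; explicitly $g\cdot(am) = c_g(a)(g\cdot m)$ for $a \in k(G/e)$, $m \in M$, $g \in G$. Since $k$ is a Green functor, each $\Res^H_e \colon k(G/H) \to k(G/e)$ is a ring homomorphism, so we may make $M$ a $k(G/H)$-module by restriction of scalars, setting $r \cdot x := \Res^H_e(r)\cdot x$ for $r \in k(G/H)$ and $x \in M$.

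The first thing I would verify is that this action preserves the subgroup $M^H \subseteq M$. The key point is that $\Res^H_e$ takes values in the $H$-fixed subring $k(G/e)^H$: applying the Mackey axiom $\Res^{hHh^{-1}}_e c_h = c_h \Res^H_e$ with $h \in H$, and using that $c_h = \mathrm{id}$ on $k(G/H)$, gives $c_h(\Res^H_e(r)) = \Res^H_e(r)$ for all $r$. Hence for $x \in M^H$ and $h \in H$, semilinearity yields $h\cdot(\Res^H_e(r)\cdot x) = c_h(\Res^H_e(r))\cdot(h\cdot x) = \Res^H_e(r)\cdot x$, so $\Res^H_e(r)\cdot x \in M^H$. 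That $M^H$ is then genuinely a $k(G/H)$-module follows since $\Res^H_e$ is a ring map and $M$ is a $k(G/e)$-module.

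Next I would check the three conditions of \cref{lemma: definition of a module}. For the restriction condition, with $K \leq H$ the map $\Res^H_K \colon M^H \to M^K$ is the inclusion and $\Res^K_e \circ \Res^H_K = \Res^H_e$, so both $\Res^H_K(r\cdot x)$ and $\Res^H_K(r)\cdot \Res^H_K(x)$ equal $\Res^H_e(r)\cdot x$. For the conjugation condition, $c_g \colon M^H \to M^{gHg^{-1}}$ is $x \mapsto g\cdot x$, and semilinearity together with the Mackey axiom $c_g \Res^H_e = \Res^{gHg^{-1}}_e c_g$ gives $g\cdot(\Res^H_e(r)\cdot x) = c_g(\Res^H_e(r))\cdot(g\cdot x) = \Res^{gHg^{-1}}_e(c_g(r))\cdot(g\cdot x)$, which is exactly $c_g(r)\cdot c_g(x)$. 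For Frobenius reciprocity, recall from \cref{example: fixed point Mackey} that $\Tr^H_K \colon M^K \to M^H$ sends $m$ to $\sum_{\gamma \in H/K}\gamma\cdot m$; since the coset representatives $\gamma$ lie in $H$ they fix $\Res^H_e(r)$ by the observation above, so $\Tr^H_K(\Res^H_K(r)\cdot m) = \sum_\gamma \gamma\cdot(\Res^H_e(r)\cdot m) = \sum_\gamma \Res^H_e(r)\cdot(\gamma\cdot m) = r\cdot \Tr^H_K(m)$, and the companion Frobenius identity follows from commutativity.

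Finally, naturality: a morphism $f \colon M \to N$ of $k(G/e)_\theta[G]$-modules restricts to maps $M^H \to N^H$ that, being $k(G/e)$-linear, commute with the actions $r\cdot(-) = \Res^H_e(r)\cdot(-)$, so $\FP(f)$ is a map of $k$-modules and $\FP$ is a functor to $k$-modules. The only genuine content is the fact that $\Res^H_e$ factors through $k(G/e)^H$; this is what reconciles the semilinear $G$-action on $M$ with the requirement that $k(G/H)$ act on $M^H$ in a way compatible with the $W_G(H)$-action, and everything else is bookkeeping. I do not expect a serious obstacle beyond organizing these checks cleanly.
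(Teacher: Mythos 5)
Your approach is a direct element-wise verification, which is genuinely different from the paper's argument. The paper shows that $\FP$ is lax symmetric monoidal (being right adjoint to the strong monoidal evaluation-at-$G/e$ functor), so it automatically carries a module over $k(G/e)_\theta[G]$ to a module over $\FP(k(G/e))$, and then restricts along the adjunction unit $k \to \FP(k(G/e))$. That argument avoids all levelwise checks. Your direct approach is a valid alternative: the construction via $\Res^H_e$ is exactly the adjunction unit unwound to the elements level, and most of what you write is correct and careful. The observation that $\Res^H_e$ lands in $k(G/e)^H$, and the verification of the restriction and conjugation axioms, are all fine.

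However, the clause ``the companion Frobenius identity follows from commutativity'' is not correct, and this is a real gap. The Frobenius relations for a module $M$ over a Green functor are two genuinely distinct statements when written at the level of elements (conditions (3) and (4) of the Dress pairing). You verified
\[
\Tr^H_K\bigl(\Res^H_K(r)\cdot m\bigr) = r\cdot \Tr^H_K(m), \qquad r \in k(G/H),\ m \in M^K,
\]
and this uses only semilinearity and the fact that $\Res^H_e(r)$ is $H$-fixed. The other relation is
\[
\Tr^H_K\bigl(r\cdot \Res^H_K(m)\bigr) = \Tr^H_K(r)\cdot m, \qquad r \in k(G/K),\ m \in M^H.
\]
Unwinding this with the fixed-point transfer formula, the left side is $\bigl(\sum_{\gamma\in H/K} c_\gamma(\Res^K_e(r))\bigr)\cdot m$, while the right side is $\Res^H_e(\Tr^H_K(r))\cdot m$. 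Equality of these requires the identity $\Res^H_e\circ\Tr^H_K = \sum_{\gamma\in H/K} c_\gamma\circ\Res^K_e$, which is the double coset formula for the Green functor $k$ applied at the trivial subgroup. Commutativity of the ring $k(G/e)$ plays no role here; the two Frobenius relations cannot be interchanged because one factor of the pairing is $k$ and the other is $M$, so there is no symmetry to invoke. The relation does hold, but you must cite the double coset formula for $k$ to establish it. Once that is supplied, your proof is complete.
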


\begin{proof}
    First we show $\FP(M)$ is naturally a module over $R = \FP(k(G/e))$. The functor $\FP$ is right adjoint to evaluation at $G/e$, $M \mapsto M(G/e)$, which is strong symmetric monoidal with respect to the box product. Note that the ring structure on the additive group underlying $k(G/e)$ makes it into a monoid for the tensor product of abelian groups with $G$-action, and a module over this monoid is precisely a semilinear $G$-action on a $k(G/e)$-module.
    
    Thus $\FP$ is lax symmetric monoidal and sends $k(G/e)_\theta[G]$-modules to $R$-modules, so that $M$ is an $R$-module. Now there is an adjunction unit $k \rightarrow R$, and we may view $\FP(M)$ as a $k$-module by restricting the $R$-module structure along this map.
\end{proof}

We end this section we several examples of Green functors.

\begin{example}
    The representation Mackey functor $\mathrm{RU}_\mathbb{F}$ of \cref{ex:rep-ring-Mackey-example} is a Green functor. The multiplicative structure is obtained via tensor product of representations.
\end{example}

\begin{example}
    Let $F \subset L$ be a finite Galois extension with Galois group $G$. For any $H \leq G$ we write $L^H$ for the fixed field. The assignment $G/H \mapsto L^H$ is a Green functor with restriction given by inclusion of fixed fields, transfer given by the Galois trace. We denote this Green functor by $\FP(L)$.  Note that this example can be extended to any ring $R$ with an action by a finite group $G$ through ring automorphisms.
\end{example}

\begin{example}
    For a finite group $K$ we write $H^*(K)$ for group cohomology of $K$ with integral coefficients. We write $H^{even}(K)$ for
    \[
        H^{even}(K) = \bigoplus\limits_{n=1}^{\infty} H^{2n}(K;\Z),
    \]
    the ring of even cohomology groups.  Since even cohomology classes actually commute (not just up to sign), this is a commutative ring and the assignment $G/K\mapsto H^{even}(K)$ is a Green functor.  The restriction and transfer maps are given by restriction and transfer in group cohomology.  
\end{example}

\begin{example}
    For a finite group $G$, let $E$ be an $\mathbb{E}_{\infty}$ ring in genuine $G$-spectra. For any $H\leq G$ we write $E^H$ for the (genuine) fixed points of $E$.  Then the assignment $G/H\mapsto \pi_0(E^H)$ assembles into a Green functor.  For generally, the $RO(G)$-graded homotopy groups of $E$ assembnle into an $RO(G)$-graded Green functor.
\end{example}

\subsection{Restriction and induction}

Let $G$ be a finite group and $H<G$ a subgroup. Let $\Set^G$ and $\Set^H$ denote the categories of finite $G$ and $H$-sets, respectively. The forgetful functor $\res^G_H\colon\Set^G\to\Set^H$ has a left adjoint $\ind^G_H\colon \Set^H\to \Set^G$ which is defined on objects by $X\mapsto G\times_H X$. 

\begin{definition}
    For a $G$-Mackey functor $M$, the \emph{restriction} of $M$ to $H$ is the Tambara (resp. Mackey) functor $\Res^G_H(M)$ defined by
    \[
        \Res^G_H(M)(H/K) = M(G\times_H (H/K))\cong M(G/K)
    \]
    with functoriality on morphisms given by evaluating $M$ on induced morphisms.
\end{definition}

The restriction functors are strong monoidal.

\begin{lemma}[cf. {\cite[Lemma 6.6]{Chan:biincomplete}}]
    The restriction functor is strong monoidal with respect to the box product of Mackey functors.
\end{lemma}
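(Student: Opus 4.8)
The plan is to realize $\Res^G_H$ as a left Kan extension along a strong symmetric monoidal functor of Burnside categories, and then appeal to the standard fact that Day convolution turns such Kan extensions into strong symmetric monoidal functors. First I would recall that a $G$-Mackey functor is the same thing as an additive functor $\mathcal{B}^G\to\mathrm{Ab}$, that $\mathcal{B}^G$ carries a symmetric monoidal structure given by Cartesian product of $G$-sets with unit the one-point set $*$, and that the box product $\boxtimes$ on $\Mack^G$ is the associated Day convolution (its unit, the Burnside Mackey functor, being the functor corepresented by $*$). The forgetful functor $\res^G_H\colon\Set^G\to\Set^H$ and the induction functor $\ind^G_H=G\times_H(-)\colon\Set^H\to\Set^G$ both preserve finite coproducts and pullbacks, hence extend to additive functors $\res^G_H\colon\mathcal{B}^G\to\mathcal{B}^H$ and $\ind^G_H\colon\mathcal{B}^H\to\mathcal{B}^G$; the set-level adjunction $\ind^G_H\dashv\res^G_H$ lifts to these Burnside categories since its unit and counit are honest maps of sets (hence forward spans) and the triangle identities are formal.

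The key input I would then establish is a second, ``reversed'' adjunction $\res^G_H\dashv\ind^G_H$ of functors $\mathcal{B}^G\rightleftarrows\mathcal{B}^H$. This does not exist at the level of $G$-sets, but it appears after passing to spans: one can either transpose the adjunction $\ind^G_H\dashv\res^G_H$ along the self-duality of the Burnside categories (which both $\res^G_H$ and $\ind^G_H$ respect), or check directly that a $G$-span $Y\leftarrow T\to G\times_H X$ restricts, by taking the preimage of $\{e\}\times X\subseteq G\times_H X$, to an $H$-span $\res^G_H Y\leftarrow T'\to X$, and that this gives a natural bijection $\mathcal{B}^H(\res^G_H Y,X)\cong\mathcal{B}^G(Y,\ind^G_H X)$. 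Given this, since by definition $\Res^G_H(M)=M\circ\ind^G_H$ and $\ind^G_H$ is the right adjoint of $\res^G_H$ on Burnside categories, precomposition with $\ind^G_H$ computes the left Kan extension along $\res^G_H$: there is a natural isomorphism $\Res^G_H\cong\mathrm{Lan}_{\res^G_H}$ of functors $\Mack^G\to\Mack^H$.

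To finish, I would observe that $\res^G_H\colon(\mathcal{B}^G,\times,*)\to(\mathcal{B}^H,\times,*)$ is strong symmetric monoidal, because forgetting the $G$-action commutes with Cartesian products and preserves the one-point set. By the standard compatibility of Day convolution with left Kan extension along a strong symmetric monoidal functor, $\mathrm{Lan}_{\res^G_H}$ is then strong (symmetric) monoidal for the box products on $\Mack^G$ and $\Mack^H$; concretely, $\mathrm{Lan}_{\res^G_H}$ preserves colimits and sends representables to representables, so it carries the box-product unit $\mathcal{B}^G(*,-)$ to $\mathcal{B}^H(*,-)$ and intertwines the two Day convolutions on representables, hence on all Mackey functors. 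Via the isomorphism $\Res^G_H\cong\mathrm{Lan}_{\res^G_H}$ this is exactly the claimed statement.

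The step I expect to be the real obstacle is the passage from $\ind^G_H\dashv\res^G_H$ to the reversed adjunction $\res^G_H\dashv\ind^G_H$ on Burnside categories: everything downstream is formal nonsense about Day convolution, but it is precisely the ambidexterity built into the span construction that upgrades restriction from being merely lax monoidal (which precomposition with any strong monoidal functor gives) to strong monoidal. I would be careful to verify the naturality of the bijection $\mathcal{B}^H(\res^G_H Y,X)\cong\mathcal{B}^G(Y,\ind^G_H X)$ in both variables and its compatibility with composition of spans, and to confirm that the resulting identification $\Res^G_H\cong\mathrm{Lan}_{\res^G_H}$ agrees with the definition of restriction on morphisms. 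As an alternative route that avoids Kan extensions, one could instead verify the isomorphism $\Res^G_H(M\boxtimes N)\cong\Res^G_H M\boxtimes\Res^G_H N$ directly from the universal property of the box product in terms of Dress pairings, using the adjunction $\Res^G_H\dashv\Coind^G_H$; but this is more computational and makes the monoidal coherences less transparent, so I would present the Kan-extension argument as the main proof.
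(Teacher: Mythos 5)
Your proposal is correct. Since the paper gives no proof of this lemma (it simply cites \cite{Chan:biincomplete}), there is no in-text argument to compare against, but your route is the standard one and every step is sound. The essential observation -- that $\Res^G_H = (\ind^G_H)^*$ is \emph{a priori} only lax monoidal, and that one must use ambidexterity of the span construction to re-express it as $\mathrm{Lan}_{\res^G_H}$, where $\res^G_H$ is strong monoidal on Burnside categories -- is exactly what makes the statement nontrivial, and you identify it clearly. The only point worth flagging for rigor: when you "lift" the set-level adjunction $\ind^G_H \dashv \res^G_H$ to the Burnside categories, you should note that $\ind^G_H$ preserves pullbacks of finite $H$-sets (it does, since $G\times_H(-)$ is, nonequivariantly, a finite coproduct of copies of the identity and pullbacks commute with coproducts in $\mathrm{Set}$), so that it genuinely induces a functor on span categories. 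With that in place, your two suggested justifications of the reversed adjunction $\res^G_H \dashv \ind^G_H$ -- transposing along the self-duality $\mathcal{B}^G \cong (\mathcal{B}^G)^{\mathrm{op}}$, or the direct unit/counit check via preimages -- are both standard and correct, and the remainder (uniqueness of left adjoints gives $(\ind^G_H)^* \cong \mathrm{Lan}_{\res^G_H}$, and Day's theorem makes $\mathrm{Lan}$ along a strong monoidal functor strong monoidal) is purely formal, as you say.
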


The restriction functor has both a left and right adjoint and they are, in fact, the same functor.

\begin{definition}
    Let $H\leq G$ be finite groups.  For an $H$-Mackey functor $M$ the \emph{induction} of $M$ to $G$ is the $G$-Mackey functor $\Ind_H^G(M)$ defined by
    \[
        \Ind^G_H(M)(X)\cong M(\res^G_H(X)).
    \]
    That is, $\Ind^G_H(M) = M\circ \res^G_H$ where $\res^G_H$ is the induced functor on the Burnside categories.
\end{definition}

\begin{proposition}[{\cite[page 1871]{Thevenaz--Webb}}]
    Induction of Mackey functors is both a left and right adjoint to restriction.
\end{proposition}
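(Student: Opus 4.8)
\noindent The plan is to realize both $\Res^G_H$ and $\Ind^G_H$ as restriction-of-scalars functors along functors of Burnside categories, and then deduce the two adjunctions from the self-duality of the Burnside category together with the $2$-functoriality of precomposition.

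First I would recall, following Lindner, that a $G$-Mackey functor is the same thing as an additive functor $\mathcal{B}^G \to \mathbf{Ab}$, where $\mathcal{B}^G$ is the (group-completed) Burnside category of finite $G$-sets and spans. With this identification, the definitions above say exactly that $\Res^G_H = (\ind^G_H)^{*}$ is precomposition with the functor $\ind^G_H \colon \mathcal{B}^H \to \mathcal{B}^G$ induced by $X \mapsto G \times_H X$, while $\Ind^G_H = (\res^G_H)^{*}$ is precomposition with the functor $\res^G_H \colon \mathcal{B}^G \to \mathcal{B}^H$ induced by the forgetful functor. Precomposition is $2$-functorial (contravariant on functors, covariant on natural transformations), so any adjunction $p \dashv q$ of functors of Burnside categories induces an adjunction $q^{*} \dashv p^{*}$ on the associated categories of additive functors, i.e.\ on Mackey functors. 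Thus it suffices to prove that $\ind^G_H$ and $\res^G_H$ are \emph{biadjoint} on Burnside categories: $\ind^G_H \dashv \res^G_H$ will give $\Ind^G_H = (\res^G_H)^{*} \dashv (\ind^G_H)^{*} = \Res^G_H$, so that $\Ind^G_H$ is a left adjoint of $\Res^G_H$, and $\res^G_H \dashv \ind^G_H$ will give $\Res^G_H \dashv \Ind^G_H$, so that $\Ind^G_H$ is a right adjoint of $\Res^G_H$.

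To obtain the biadjointness on $\mathcal{B}^G$ and $\mathcal{B}^H$, I would first use that the Burnside category is self-dual: reversing spans defines an isomorphism $\mathcal{B}^G \cong (\mathcal{B}^G)^{\mathrm{op}}$ of preadditive categories which is the identity on objects, and this duality carries $\ind^G_H$ and $\res^G_H$ to themselves, since applying $\ind^G_H$ or $\res^G_H$ levelwise to a span commutes with reversing the span. Consequently the two adjunctions $\ind^G_H \dashv \res^G_H$ and $\res^G_H \dashv \ind^G_H$ on Burnside categories are equivalent, and it is enough to establish one of them, say $\ind^G_H \dashv \res^G_H$. For this I would note that $\ind^G_H$ and $\res^G_H$ preserve pullbacks of finite $G$-sets (resp.\ $H$-sets), so that they genuinely induce functors of span categories, and then exhibit a natural bijection
\[
    \Hom_{\mathcal{B}^G}(\ind^G_H X, Y) \;\cong\; \Hom_{\mathcal{B}^H}(X, \res^G_H Y)
\]
coming from the elementary equivalence of slice categories $\ind^G_H \colon \Set^H / X \xrightarrow{\ \sim\ } \Set^G / (\ind^G_H X)$: a $G$-span $\ind^G_H X \leftarrow S \rightarrow Y$ corresponds to the $H$-span $X \leftarrow T \rightarrow \res^G_H Y$, where $T \to X$ is the preimage of $S \to \ind^G_H X$ under the slice equivalence and $T \to \res^G_H Y$ is the restriction of $S \to Y$; conversely an $H$-span $X \leftarrow T \rightarrow \res^G_H Y$ is sent to $\ind^G_H X \leftarrow \ind^G_H T \rightarrow \ind^G_H \res^G_H Y \xrightarrow{\varepsilon_Y} Y$. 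One then checks that these assignments are mutually inverse on isomorphism classes of spans, additive, and natural in $X$ and $Y$, so that they descend to the group-completed Hom-groups.

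Feeding the two resulting adjunctions of Burnside-category functors through the $2$-functoriality of precomposition then shows that $\Ind^G_H$ is simultaneously a left and a right adjoint of $\Res^G_H$. The main obstacle is the bookkeeping in the previous paragraph: checking that $\ind^G_H$ and $\res^G_H$ preserve pullbacks (needed so that spans still compose after applying them) and that the span-level correspondence is a well-defined, additive, natural bijection; once these are in hand the argument is purely formal. Alternatively, one could bypass Burnside categories entirely and write down the unit and counit of each adjunction directly on Mackey functors --- the counit $\Ind^G_H \Res^G_H M \to M$ and the unit $N \to \Res^G_H \Ind^G_H N$, together with their mirror images for the other adjunction, are assembled from transfers, restrictions, and conjugations indexed by the double cosets $H \backslash G / K$ --- and verify the triangle identities using the double coset formula; this route is more hands-on but equally routine.
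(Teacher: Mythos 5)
The paper gives no proof of this proposition --- it simply cites Th\'evenaz--Webb --- so there is nothing internal to compare against. Your argument is a correct and essentially standard proof, and it is arguably more conceptual than what one finds in the cited reference (which works more directly with the Mackey algebra). Realizing $\Res^G_H$ and $\Ind^G_H$ as precomposition with $\ind^G_H$ and $\res^G_H$ on Burnside categories is exactly consistent with the paper's definitions, the $2$-functoriality step is standard, and the reduction to a single adjunction via span-reversal self-duality is clean.

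Two small points worth making explicit before this would count as a complete proof. First, you need $\ind^G_H$ and $\res^G_H$ to preserve pullbacks so that they genuinely induce functors on span categories (rather than just on the underlying object classes); for $\res^G_H$ this is immediate since it is a forgetful functor, and for $\ind^G_H$ it holds because $G$ is free as a right $H$-set, so $G \times_H -$ is non-equivariantly a finite coproduct of copies of the identity and therefore preserves all finite limits. Second, when transporting the adjunction $\ind^G_H \dashv \res^G_H$ through the span-reversal dualities to obtain $\res^G_H \dashv \ind^G_H$, one should observe that span reversal is \emph{additive} and that it intertwines $\ind^G_H$ (resp.\ $\res^G_H$) with itself on the nose, not merely up to an unspecified natural isomorphism --- both hold because span reversal is the identity on objects and $\ind^G_H$, $\res^G_H$ act on a span by applying the underlying set-level functor to each leg, which visibly commutes with swapping the legs. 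With those remarks filled in, the argument is complete, and the alternative hands-on route you sketch (writing down units and counits and checking triangle identities via the double coset formula) is also valid though more laborious.
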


\begin{corollary}
    The induction functor is lax monoidal.
\end{corollary}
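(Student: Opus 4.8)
The plan is to deduce the corollary formally from the two preceding results by doctrinal adjunction. By the preceding proposition, $\Ind^G_H$ is right adjoint to $\Res^G_H$, and restriction is strong monoidal with respect to $\boxtimes$; in particular $\Res^G_H$ is oplax monoidal (invert the comparison isomorphisms), and Kelly's doctrinal adjunction then endows its right adjoint $\Ind^G_H$ with a canonical lax monoidal structure. So essentially all that is needed is to record what this structure is.

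Concretely, I would exhibit the constraint maps as follows. For the unit, the (invertible) lax unit constraint of $\Res^G_H$ is an isomorphism $\A_H \xrightarrow{\ \sim\ } \Res^G_H(\A_G)$; transposing its inverse across the adjunction $\Res^G_H \dashv \Ind^G_H$ yields the desired map $\A_G \to \Ind^G_H(\A_H)$. For the binary constraint, given $H$-Mackey functors $M$ and $N$, I would take the composite
\[
\Res^G_H\bigl(\Ind^G_H M \boxtimes \Ind^G_H N\bigr) \xrightarrow{\ \sim\ } \Res^G_H \Ind^G_H M \boxtimes \Res^G_H \Ind^G_H N \xrightarrow{\ \varepsilon_M \boxtimes \varepsilon_N\ } M \boxtimes N,
\]
where the first arrow is the inverse of the strong monoidal comparison for $\Res^G_H$ and $\varepsilon$ is the counit of the adjunction, and transpose it across $\Res^G_H \dashv \Ind^G_H$ to obtain a natural map $\Ind^G_H M \boxtimes \Ind^G_H N \to \Ind^G_H(M \boxtimes N)$.

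It then remains to verify the associativity and unitality coherence diagrams for these constraints, and this is the only place where there is anything to check. Each such diagram is the adjoint transpose of a diagram that commutes because $\Res^G_H$ is strong monoidal, together with the triangle identities for the adjunction; this is precisely the content of doctrinal adjunction, so in the write-up I would cite that and simply record the constraint maps above. (I note that the other half of the proposition — that $\Ind^G_H$ is also a left adjoint to $\Res^G_H$ — does not help here: by the dual form of doctrinal adjunction it would instead equip $\Ind^G_H$ with an oplax, rather than lax, monoidal structure.)
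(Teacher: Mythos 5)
Your argument is exactly the paper's: the proof there is the one-line observation that the right adjoint of a strong monoidal functor is lax monoidal, which is the doctrinal-adjunction fact you invoke. You add the explicit description of the constraint maps and the correct remark that the left-adjoint half of the adjunction would only give an oplax structure, but the underlying approach is the same.
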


\begin{proof}
    The right adjoint of a strong monoidal functor is lax monoidal.
\end{proof}
\begin{corollary}\label{corollary: induction preserves Green functors}
    If $R$ is an $H$-Green functor then $\Ind^G_H(R)$ is a $G$-Green functor. Moreover, if $M$ is a module over $R$ then $\Ind^G_H(M)$ is a module over $\Ind^G_H(R)$.
\end{corollary}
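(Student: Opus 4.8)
The plan is to deduce the corollary formally from the preceding corollary, which records that $\Ind^G_H$ is lax monoidal; the only extra input needed is that this lax structure is \emph{symmetric} and correctly normalized on units. Concretely, I would write $\lambda_{M,N}\colon \Ind^G_H(M)\boxtimes\Ind^G_H(N)\to\Ind^G_H(M\boxtimes N)$ for the lax structure map, obtained by adjunction from the composite
\[
    \Res^G_H\bigl(\Ind^G_H(M)\boxtimes\Ind^G_H(N)\bigr)\cong \Res^G_H\Ind^G_H(M)\boxtimes\Res^G_H\Ind^G_H(N)\longrightarrow M\boxtimes N,
\]
where the isomorphism uses that $\Res^G_H$ is strong monoidal (the cited lemma) and the second map uses the counits of the $(\Res^G_H,\Ind^G_H)$ adjunction. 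Likewise let $\epsilon\colon\A_G\to\Ind^G_H(\A_H)$ be the unit of the lax structure, i.e.\ the adjunct of the canonical isomorphism $\Res^G_H(\A_G)\cong\A_H$ of monoidal units. Since $\res^G_H\colon\Set^G\to\Set^H$ and $\ind^G_H$ are symmetric monoidal for disjoint union, the induced functors on Burnside categories are symmetric monoidal, and a diagram chase shows $\lambda$ and $\epsilon$ are compatible with the braidings of the box product; that is, $\Ind^G_H$ is lax symmetric monoidal.

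Granting this, the first claim is the general statement that a lax symmetric monoidal functor preserves commutative monoids. Given a Green functor $R$ with multiplication $\mu\colon R\boxtimes R\to R$ and unit $\eta\colon\A_H\to R$, I would equip $\Ind^G_H(R)$ with the multiplication
\[
    \Ind^G_H(R)\boxtimes\Ind^G_H(R)\xrightarrow{\ \lambda_{R,R}\ }\Ind^G_H(R\boxtimes R)\xrightarrow{\ \Ind^G_H(\mu)\ }\Ind^G_H(R)
\]
and the unit $\A_G\xrightarrow{\ \epsilon\ }\Ind^G_H(\A_H)\xrightarrow{\ \Ind^G_H(\eta)\ }\Ind^G_H(R)$. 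Associativity and unitality follow from the associativity and unit coherence axioms for $\lambda$ together with the corresponding properties of $\mu$; commutativity follows from the symmetry of the lax structure together with commutativity of $\mu$. These are routine diagram chases in $\Mack^G$, and in the write-up I would display only the associativity square. For the module claim the same recipe applies verbatim: if $\alpha\colon R\boxtimes M\to M$ is the action map of an $R$-module, define an action of $\Ind^G_H(R)$ on $\Ind^G_H(M)$ by
\[
    \Ind^G_H(R)\boxtimes\Ind^G_H(M)\xrightarrow{\ \lambda_{R,M}\ }\Ind^G_H(R\boxtimes M)\xrightarrow{\ \Ind^G_H(\alpha)\ }\Ind^G_H(M),
\]
and check associativity (with respect to the multiplication above) and unitality (with respect to $\epsilon$) by naturality of $\lambda$, its coherence, and the module axioms for $\alpha$.

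The only step with genuine content is the verification that the lax monoidal structure on $\Ind^G_H$ is symmetric and that $\Res^G_H$ carries $\A_G$ to $\A_H$ compatibly with its strong monoidal structure; once this is in hand, the rest is the standard fact that (symmetric) lax monoidal functors transport monoids and their modules, so the remaining verifications are purely formal. I expect this normalization/symmetry check to be the main (and essentially only) obstacle, and it should be short. An alternative, more hands-on route would be to use the levelwise description $\Ind^G_H(M)(X)\cong M(\res^G_H X)$ of \cref{proposition: useful definition of Mackey functor}-type data and transport the ring and module structures objectwise, invoking \cref{lemma: Green functor data} and \cref{lemma: definition of a module}; but the structural argument above is cleaner and makes compatibility with morphisms automatic.
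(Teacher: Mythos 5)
Your proposal is correct and takes the same route the paper intends: the paper leaves this corollary without proof, immediately after recording that induction is lax monoidal, and you have simply spelled out the standard transport-of-structure argument. You are slightly more careful than the paper in flagging that one needs the lax structure to be \emph{symmetric} (since Green functors are commutative monoids), a point the preceding corollary in the paper omits; this is a genuine but minor tightening of the cited input rather than a different argument.
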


\begin{remark}
    By an unfortunate coincidence of terminology, the Green functor structure on $\Ind_H^G(R)$ is most often referred to as the \emph{coinduction} of $R$. As we will only need the Green functor structure on $\Ind_H^G R$ in a few relatively unimportant situations, we will avoid discussion of coinduction further (although see \cite{Wis25} for a discussion of the relationship between coinduction, modules, and algebraic $K$-theory).
\end{remark}

\section{More on modules over Green functors}\label{section: modules over Green functors}

In this section we gather together some results concerning modules over Green functors. 

\subsection{Base change functors}

Recall the characterization of modules over a Green functor given in \cref{lemma: definition of a module}. Note that a morphism of $R$-modules is, by this description, a morphism $M \to N$ of Mackey functors such that each $M(G/H) \rightarrow N(G/H)$ is a morphism of $R(G/H)$-modules.

\begin{lemma}\label{lem:ind-res-adjunction-for-k-modules}
    Induction and restriction define functors 
    \[ 
        (\Res_H^G R)\text{-}\Mod \leftrightarrows R \text{-} \Mod 
    \] 
    which are each other's left and right adjoints.
\end{lemma}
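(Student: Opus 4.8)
The strategy is to lift the Mackey-functor adjunctions $\Ind_H^G \dashv \Res_H^G \dashv \Ind_H^G$ recalled above (see \cite{Thevenaz--Webb}) to the categories of modules, using the monoidal properties recorded earlier together with the projection formula. Since $\Res_H^G\colon \Mack^G \to \Mack^H$ is strong symmetric monoidal, it carries the commutative monoid $R$ to the $H$-Green functor $\Res_H^G R$ and carries an $R$-module $M$, with action $R\boxtimes M \to M$, to the $\Res_H^G R$-module with action $\Res_H^G R \boxtimes \Res_H^G M \cong \Res_H^G(R\boxtimes M) \to \Res_H^G M$; concretely this is nothing but the module structure of $M(G/K)$ over $R(G/K)$ for $K \le H$. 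This manifestly defines the restriction functor $\res\colon R\text{-}\Mod \to (\Res_H^G R)\text{-}\Mod$. For induction, the essential ingredient is the projection formula: a natural isomorphism $A \boxtimes \Ind_H^G B \cong \Ind_H^G(\Res_H^G A \boxtimes B)$ for $A\in\Mack^G$ and $B\in\Mack^H$, which I would either cite or extract from the Day-convolution description of $\boxtimes$ and the formula $\Ind_H^G(-) = (-)\circ\res^G_H$. Granting this, for a $(\Res_H^G R)$-module $N$ the composite $R\boxtimes \Ind_H^G N \cong \Ind_H^G(\Res_H^G R\boxtimes N)\to \Ind_H^G N$ equips $\Ind_H^G N$ with an $R$-module structure; associativity and unitality follow from coherence of the projection formula and the counit of $\Ind_H^G\dashv\Res_H^G$, and naturality of the projection formula makes $N \mapsto \Ind_H^G N$ into a functor $\Ind\colon (\Res_H^G R)\text{-}\Mod \to R\text{-}\Mod$. (Equivalently, $\Ind_H^G N$ is already a module over $\Ind_H^G\Res_H^G R$ by \cref{corollary: induction preserves Green functors}, and one restricts scalars along the unit $R\to\Ind_H^G\Res_H^G R$, which is a morphism of Green functors because $\Res_H^G$ is a strong monoidal left adjoint; the two resulting module structures agree.)

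It remains to establish the two adjunctions. For $\res\dashv\Ind$ I would invoke the standard fact that restriction of scalars along a strong monoidal functor whose monoidal right adjoint is $\Ind_H^G$ admits the expected right adjoint: the Mackey-functor bijection $\Hom_{\Mack^G}(M,\Ind_H^G N) \cong \Hom_{\Mack^H}(\Res_H^G M, N)$ coming from $\Res_H^G\dashv\Ind_H^G$ restricts to a bijection between $R$-module maps $M\to\Ind N$ and $(\Res_H^G R)$-module maps $\res M\to N$, because the lax monoidal structure on $\Ind_H^G$ is adjoint to the strong monoidal structure on $\Res_H^G$. For $\Ind\dashv\res$ I would start instead from the bijection $\Hom_{\Mack^G}(\Ind_H^G N, M)\cong\Hom_{\Mack^H}(N,\Res_H^G M)$ coming from $\Ind_H^G\dashv\Res_H^G$, and verify that a Mackey-functor map $f\colon\Ind_H^G N\to M$ is $R$-linear if and only if its adjunct $N\to\Res_H^G M$ is $(\Res_H^G R)$-linear; this is where the projection formula reappears, since the $R$-action on $\Ind_H^G N$ was built from it. Naturality of both adjunction bijections in $M$ and $N$ is inherited from naturality of the Mackey-functor units and counits.

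The only step with real content is this last compatibility: checking that the $\Ind_H^G\dashv\Res_H^G$ adjunction bijection respects the module structures. This amounts to a diagram chase intertwining the projection formula, the counit of $\Ind_H^G\dashv\Res_H^G$, and the two action maps, and it is precisely here that the coherence of the projection formula does the work. I do not anticipate a genuine obstacle, only careful bookkeeping: once the two functors and the projection formula are in hand, both adjunctions reduce to transporting the Mackey-functor (co)units through the module-category structure.
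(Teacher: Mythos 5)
Your proof is correct and follows the same strategy as the paper: the paper proves the lemma by appealing to the projection formula in the Grothendieck context (citing \cite[Definition 4.13(ii)]{FauskHuMay}), which packages precisely the three ingredients you isolate — $\Res_H^G$ strong monoidal, $\Ind_H^G$ its two-sided adjoint, and the projection formula $A \boxtimes \Ind_H^G B \cong \Ind_H^G(\Res_H^G A \boxtimes B)$ lifting both adjunctions to module categories. You have simply unpacked the contents of that citation rather than invoking it wholesale.
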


\begin{proof}
    The lemma follows from the fact that $\Res^G_H$ and $\Ind^G_H$, as functors on all Mackey functors, are left and right adjoint to one another and $\Res^G_H$ is strong monoidal.  In particular, the claim is a consequence of  the projection formula from the \emph{Grothendieck context} in the sense of \cite[Definition 4.13(ii)]{FauskHuMay}

\end{proof}

We now describe the base change adjunction. To begin, we will need a relative tensor product for modules over a Green functor.  

\begin{definition}
    Let $R$ be a Green functor and let $M$ and $N$ be two $R$-modules. We define their box product relative to $R$ by 
    \[ 
        M \boxtimes_R N := \mathrm{Coeq} \left( M \boxtimes R \boxtimes N \rightrightarrows M \boxtimes N \right) 
    \] 
    where the two maps are those describing the $R$-module structure on $M$ and $N$ respectively.
\end{definition}

When it is clear, we will omit the subscript $R$ from the notation. The relative box product over $R$ admits a universal property, also described by a Dress pairing. Namely, aa $R$-module map out of $M \boxtimes_R N$ is precisely the data of, for each $H \subset G$, $R(G/H)$-module maps out of $M(G/H) \otimes_{R(G/H)} N(G/H)$, which satisfy the three relations of Proposition \ref{prop:Dress-pairing-description-of-box-product}. 

Note that, just as for rings, if $R \to S$ is a map of Green functors, then $S$ is naturally an $R$-module.

\begin{proposition}
\label{prop:res-base-change-adjunction}
    Let $f \colon R \rightarrow S$ be a morphism of Green functors. The functor 
    \[ 
        S \boxtimes_R - \colon R \text{-} \Mod \rightarrow S \text{-} \Mod 
    \] 
    is left adjoint to restriction of scalars 
    \[ 
        f^* \colon S \text{-} \Mod \rightarrow R \text{-} \Mod \text{.} 
    \]
\end{proposition}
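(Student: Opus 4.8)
The plan is to produce the adjunction by the usual extension-of-scalars argument, carried out internally to the symmetric monoidal category $(\Mack^G,\boxtimes,\A_G)$. The whole statement is an instance of the general formalism of modules over a commutative monoid in a cocomplete closed symmetric monoidal category, so one could simply invoke that; below I indicate how to make it concrete.

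First I would check that $S\boxtimes_R-$ really does take values in $S\text{-}\Mod$. It is well known that $(\Mack^G,\boxtimes)$ is closed symmetric monoidal, so $\boxtimes$ has a right adjoint in each variable and hence preserves colimits in each variable; in particular the coequalizer defining $M\boxtimes_R N$ is preserved by $S\boxtimes-$. Consequently the map $\mu_S\boxtimes\mathrm{id}_M\colon S\boxtimes S\boxtimes M\to S\boxtimes M$ descends to an action $S\boxtimes(S\boxtimes_R M)\to S\boxtimes_R M$, whose associativity and unitality are inherited from those of $\mu_S$. This is exactly the statement that $R\text{-}\Mod$ is itself a cocomplete closed symmetric monoidal category under $\boxtimes_R$ with unit $R$, and that $f\colon R\to S$ exhibits $S$ as a commutative monoid object therein, so that $S$-modules in $\Mack^G$ coincide with $S$-modules in $R\text{-}\Mod$.

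With this in place I would construct the unit and counit explicitly. The unit $\eta_M\colon M\to f^*(S\boxtimes_R M)$ is the composite $M\xrightarrow{\ \sim\ }R\boxtimes_R M\xrightarrow{\ f\boxtimes_R\mathrm{id}_M\ }S\boxtimes_R M$, an $R$-module map. For the counit $\varepsilon_N\colon S\boxtimes_R f^*N\to N$, observe that the $S$-action map $\alpha_N\colon S\boxtimes N\to N$ coequalizes the two maps $S\boxtimes R\boxtimes N\rightrightarrows S\boxtimes N$ defining $S\boxtimes_R f^*N$ — one of them is built from $f$ and $\mu_S$, the other from $f$ and $\alpha_N$, and the two composites with $\alpha_N$ agree by associativity of the action $\alpha_N$ — hence $\alpha_N$ factors through the coequalizer to give $\varepsilon_N$, which is $S$-linear by the same associativity. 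The triangle identities $\varepsilon_{S\boxtimes_R M}\circ(S\boxtimes_R\eta_M)=\mathrm{id}$ and $f^*(\varepsilon_N)\circ\eta_{f^*N}=\mathrm{id}$ then both reduce to the unit axiom for the module structures in play.

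Alternatively — and this is perhaps the quickest route to record — one can argue levelwise through the Dress-pairing description of $\boxtimes_R$: for each $H\leq G$ the ordinary base-change adjunction along the ring map $R(G/H)\to S(G/H)$ gives a bijection between $S(G/H)$-linear maps out of $S(G/H)\otimes_{R(G/H)}M(G/H)$ and $R(G/H)$-linear maps out of $M(G/H)$, and one checks that the four compatibility relations of \cref{prop:Dress-pairing-description-of-box-product} on the source correspond precisely to the conditions cutting out a map of $R$-modules $M\to f^*N$ on the target. I expect the only genuinely fiddly point, on either route, is the bookkeeping that the extra $S$-module structure on $S\boxtimes_R M$ is well defined and that the comparison maps respect it; everything else is the standard monoid-module formalism.
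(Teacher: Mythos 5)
The paper states this proposition without proof, treating it as a standard instance of the module/base-change formalism for commutative monoids in a closed symmetric monoidal category; there is therefore no paper-proof to compare against. Your categorical argument is correct and is exactly the standard one: since $(\Mack^G,\boxtimes)$ is closed, $S\boxtimes-$ preserves the coequalizer defining $\boxtimes_R$, so $S\boxtimes_R M$ inherits an $S$-action from $\mu_S$; the unit is $f\boxtimes_R\mathrm{id}$ precomposed with $M\cong R\boxtimes_R M$; the counit is induced by the factorization of the action $\alpha_N\colon S\boxtimes N\to N$ through the coequalizer (which uses associativity of $\alpha_N$ exactly as you say); and the triangle identities reduce to the unit axioms. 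Your alternative levelwise route via Dress pairings is also fine, with one caution worth making explicit if you write it up: you are not asserting $(S\boxtimes_R M)(G/H)\cong S(G/H)\otimes_{R(G/H)}M(G/H)$ (which is false in general, since $\boxtimes$ is not computed levelwise); what you are actually using is the universal property recorded just before the proposition, that maps out of $S\boxtimes_R M$ are Dress pairings out of the levelwise tensor products satisfying the listed relations. With that reading, the levelwise argument is a correct, if terser, repackaging of the same adjunction.
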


We end this subsection with a brief discussion of Morita equivalences which will appear later in the paper.

\begin{definition}\label{def:Morita-equiv-of-Green-functors}
    Consider a $G$-Green functor $R$ and a $G'$-Green functor $S$. We say that $R$ and $S$ are \emph{Morita equivalent} if the category of $R$-modules is equivalent to the category of $S$-modules.
\end{definition}

We emphasize that $G$ need not be equal to $G'$ here. For example, \cref{cor:nonmodular-characteristic-Morita-equiv} gives some examples where $G$ is arbitrary and $G'$ is the trivial group.

\begin{remark}
    A result of The\'evenaz and Webb (see \cite{Thevenaz--Webb}) says that the category of Mackey functors admits a natural description as the category of left modules for an associative ring called the \emph{Mackey algebra}. Viewing the Mackey algebra as a non-commutative $e$-Green functor, this may be interpreted as a Morita equivalence statement. Similarly, if $R$ is any $G$-Green functor, then there is an associative, unital ring $\mu_R$ whose category of left modules is equivalent to $R$-modules.

    If $S$ is a $G'$-Green functor and $\mu_S$ is the associated Mackey algebra, then $R$ is Morita equivalent to $S$ if and only if $\mu_R$ is Morita equivalent to $\mu_S$ by the usual meaning of Morita equivalence for associative rings. In particular, any Morita equivalence between $R$ and $S$ is automatically additive.
\end{remark}

\subsection{Free modules}

In this subsection we discuss free modules over Green functors and give some of their basic properties. We carefully examine free modules over Green meadows in the next subsection.

\begin{lemma}
\label{lem:left-adjoint-to-exact-preserves-projectives}
    Let $L$ be a left adjoint to $R$, both functors between abelian categories. If $L$ is exact, then $R$ preserves injective objects. Dually, if $R$ is exact, then $L$ preserves projective objects.
\end{lemma}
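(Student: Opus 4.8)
The plan is to run the standard formal argument using the adjunction isomorphism together with the characterization of injectives (resp. projectives) via exactness of $\Hom$-functors. Write $\mathcal{C}$ and $\mathcal{D}$ for the two abelian categories, with $L \colon \mathcal{C} \to \mathcal{D}$ left adjoint to $R \colon \mathcal{D} \to \mathcal{C}$, so that there is a natural isomorphism
\[
    \Hom_{\mathcal{D}}(L(-), -) \;\cong\; \Hom_{\mathcal{C}}(-, R(-)).
\]

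First I would prove the first statement. Let $I$ be an injective object of $\mathcal{D}$; we want $R(I)$ injective in $\mathcal{C}$, i.e.\ that $\Hom_{\mathcal{C}}(-, R(I)) \colon \mathcal{C}^{\mathrm{op}} \to \mathrm{Ab}$ is exact. By the adjunction there is a natural isomorphism of functors $\Hom_{\mathcal{C}}(-, R(I)) \cong \Hom_{\mathcal{D}}(L(-), I)$, so it suffices to show the right-hand side is exact. This is a composite of $L \colon \mathcal{C} \to \mathcal{D}$ (more precisely $L^{\mathrm{op}}$) with $\Hom_{\mathcal{D}}(-, I) \colon \mathcal{D}^{\mathrm{op}} \to \mathrm{Ab}$: the former is exact by hypothesis, and the latter is exact precisely because $I$ is injective. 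A composite of exact functors is exact, which gives the claim. Concretely, given a short exact sequence $0 \to A \to B \to C \to 0$ in $\mathcal{C}$, exactness of $L$ yields a short exact sequence $0 \to L(A) \to L(B) \to L(C) \to 0$ in $\mathcal{D}$, and applying $\Hom_{\mathcal{D}}(-, I)$ produces a short exact sequence, which under the adjunction isomorphism is the sequence obtained by applying $\Hom_{\mathcal{C}}(-, R(I))$.

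The dual statement is then obtained by the same argument read in the opposite categories: if $R$ is exact and $P$ is projective in $\mathcal{C}$, then $\Hom_{\mathcal{D}}(L(P), -) \cong \Hom_{\mathcal{C}}(P, R(-))$ is the composite of the exact functor $R$ with the exact functor $\Hom_{\mathcal{C}}(P, -)$, hence exact, so $L(P)$ is projective. I do not anticipate a genuine obstacle here — the content is entirely formal — the only point requiring care is bookkeeping the variances (which functor is contravariant in which slot) so that ``composite of exact functors is exact'' is applied correctly; everything else is an unwinding of the natural isomorphism of the adjunction.
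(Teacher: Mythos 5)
Your proof is correct and is essentially the same argument the paper gives (the paper's version is a one-liner: $\Hom(-,R(I)) \cong \Hom(L(-),I)$ is exact since it is a composite of exact functors, and the dual is similar). You have simply unwound the reasoning in more detail, including the bookkeeping of variances.
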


\begin{proof}
    Assume $I$ is injective. Then $\Hom(-,R(I)) \cong \Hom(L(-),I)$ is exact. Hence $R(I)$ is injective.  The dual result is similar.
\end{proof}

\begin{corollary}
\label{cor:ind-and-res-preserve-inj-and-proj}
    Let $R$ be a Green functor. Induction 
    \[ 
        (\Res_H^G R) \text{-} \Mod \rightarrow R \text{-} \Mod 
    \] 
    and restriction 
    \[ 
        R \text{-} \Mod \rightarrow (\Res_H^G R) \text{-} \Mod 
    \] 
    preserve projective and injective objects.
\end{corollary}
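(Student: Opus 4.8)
The plan is to deduce this corollary by combining \cref{lem:left-adjoint-to-exact-preserves-projectives} with the adjunctions established in \cref{lem:ind-res-adjunction-for-k-modules}, together with the standard fact that exactness of a functor between abelian categories can be checked ``objectwise'' via the Dress-pairing description of modules over a Green functor. The key observation is that both induction and restriction, viewed as functors between $(\Res_H^G R)$-$\Mod$ and $R$-$\Mod$, are \emph{exact}: indeed, by \cref{lemma: definition of a module} and the remarks following it, a sequence of $R$-modules is exact precisely when it is exact after evaluating at each $G/K$, and both $\Res^G_H$ and $\Ind^G_H$ are computed on underlying Mackey functors by reindexing these evaluations along the induction functor $\ind^G_H$ on $G$-sets (so $\Res^G_H(M)(H/K) \cong M(G/K)$, and $\Ind^G_H$ is precomposition with $\res^G_H$ on Burnside categories). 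In either case the functor on underlying Mackey functors is given by a (finite) product/shuffle of the evaluation functors, hence is exact; and the module-level functors are exact because exactness is detected on underlying Mackey functors.

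First I would record that restriction $R\text{-}\Mod \to (\Res_H^G R)\text{-}\Mod$ is exact, by the objectwise criterion above. Then, since \cref{lem:ind-res-adjunction-for-k-modules} shows induction is left adjoint to restriction, the dual part of \cref{lem:left-adjoint-to-exact-preserves-projectives} (with $R := \text{restriction}$, $L := \text{induction}$) gives that induction preserves projectives, and the first part (with the roles as stated) gives that restriction preserves injectives. Next I would record that induction $(\Res_H^G R)\text{-}\Mod \to R\text{-}\Mod$ is also exact, again by the objectwise criterion. Since \cref{lem:ind-res-adjunction-for-k-modules} also shows induction is \emph{right} adjoint to restriction, we may apply \cref{lem:left-adjoint-to-exact-preserves-projectives} a second time — now with $L := \text{restriction}$, $R := \text{induction}$ — to conclude that induction (being exact and a right adjoint) forces restriction (its left adjoint) to preserve projectives, and that restriction (being exact and a left adjoint) forces induction (its right adjoint) to preserve injectives. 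Assembling the four conclusions yields exactly the statement.

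The only genuine content — and the step I expect to require the most care — is the claim that induction and restriction are exact as functors between the two module categories. Exactness on underlying Mackey functors is essentially formal once one unwinds that a short exact sequence of Mackey functors is just an objectwise short exact sequence of abelian groups and that $\Res^G_H$, $\Ind^G_H$ are built from evaluation functors; but one must be slightly careful that a morphism of $R$-modules whose underlying Mackey-functor map is epi/mono/has a given kernel is itself epi/mono/etc.\ in $R\text{-}\Mod$, i.e.\ that the forgetful functor $R\text{-}\Mod \to \Mack^G$ creates (co)kernels. This follows from \cref{lemma: definition of a module}: the module structure on a kernel or cokernel of a map of $R$-modules is uniquely and automatically induced, because at each level $G/H$ the relevant structure maps and Frobenius reciprocity relations descend to sub- and quotient-$R(G/H)$-modules. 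Once this bookkeeping is in place, the rest is a purely formal two-fold application of \cref{lem:left-adjoint-to-exact-preserves-projectives}.
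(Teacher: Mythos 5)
Your overall plan is the same as the paper's: reduce the corollary to exactness of $\Res_H^G$ and $\Ind_H^G$ and then apply \cref{lem:left-adjoint-to-exact-preserves-projectives} in conjunction with the two-sided adjunction of \cref{lem:ind-res-adjunction-for-k-modules}. The difference is in how exactness is established. You argue it levelwise — unwinding that $\Ind$ and $\Res$ are built from evaluation/reindexing functors on the underlying Mackey functors, and then checking that the forgetful functor $R\text{-}\Mod \to \Mack^G$ creates (co)kernels so that exactness can be verified on underlying Mackey functors. This is correct but is more work than necessary: since \cref{lem:ind-res-adjunction-for-k-modules} already tells you that $\Ind_H^G$ and $\Res_H^G$ are \emph{each other's} left and right adjoints, each of them is simultaneously a left adjoint (hence preserves cokernels) and a right adjoint (hence preserves kernels), so each is exact by pure formal nonsense — no unpacking of the definitions is required. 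The paper uses exactly this ambidexterity observation and is a one-liner. One small ordering slip in your write-up: in the first half you assert ``restriction preserves injectives'' via the first part of \cref{lem:left-adjoint-to-exact-preserves-projectives} with $L = \Ind$, but at that point you have only recorded that restriction (not induction) is exact; the hypothesis needed there — exactness of $\Ind$ — is only supplied in your next paragraph. Since you do establish it there, nothing is actually broken, but the two halves should be swapped or the exactness of both functors should be recorded up front (which the ambidexterity argument does automatically).
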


\begin{proof}
    Note that the target and domain categories are abelian, so  by \cref{lem:left-adjoint-to-exact-preserves-projectives,lem:ind-res-adjunction-for-k-modules} it suffices to show both functors are exact. This follows from the fact that restriction and induction are each other's adjoints, hence each preserves kernels and cokernels.
\end{proof}

\begin{definition}
    Let $R$ be any Green functor. The free $R$-module in level $G/H$ is the $R$-module $F$ which represents the functor $M \mapsto M(G/H)$ from $R$-modules to sets.
\end{definition}

A routine application of the Yoneda lemma yields the following useful proposition.

\begin{proposition}
\label{prop:free-k-mod-in-level-G/H-is-ind-of-res}
    The free $R$-module in level $G/H$ exists. It may equivalently described as either 
    \begin{enumerate}
        \item $\Ind_H^G \Res_H^G R$ with $R$-module structure induced by the unit map 
        \[ 
            R \rightarrow \Ind_H^G \Res_H^G R . 
        \]
        \item $R \boxtimes A_{G/H}$, where $A_{G/H} \cong \mathcal{B}_G(G/H,-)$ is the free Mackey functor in level $G/H$.
    \end{enumerate}
\end{proposition}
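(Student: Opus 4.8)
The plan is to apply the Yoneda lemma twice: once in $\Mack^G$ and once after base change to $R$-modules. The input from the Mackey world is that, by the Yoneda lemma for the Burnside category, the representable $A_{G/H} = \mathcal{B}_G(G/H,-)$ corepresents evaluation at $G/H$; that is, there is an isomorphism $\Hom_{\Mack^G}(A_{G/H}, M) \cong M(G/H)$ natural in $M$. In the special case $H = G$ this says that $\A_G \cong A_{G/G}$ is the free Mackey functor in level $G/G$, and similarly $\A_H \cong A_{H/H}$ for any subgroup $H \leq G$.

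For description (2), which also yields existence, I would apply \cref{prop:res-base-change-adjunction} to the unit map $\eta \colon \A_G \to R$. Since $R$-modules are by definition modules over the monoid $R$ in $(\Mack^G, \boxtimes, \A_G)$, the forgetful functor $R\text{-}\Mod \to \Mack^G$ is restriction of scalars along $\eta$, so its left adjoint is $R \boxtimes_{\A_G} - = R \boxtimes -$. Composing this adjunction with the Yoneda isomorphism above gives
\[
    \Hom_{R\text{-}\Mod}(R \boxtimes A_{G/H}, M) \;\cong\; \Hom_{\Mack^G}(A_{G/H}, M) \;\cong\; M(G/H),
\]
natural in the $R$-module $M$. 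Thus $R \boxtimes A_{G/H}$ corepresents $M \mapsto M(G/H)$, which proves that the free $R$-module in level $G/H$ exists and is described by (2).

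To obtain description (1) I would identify $\Ind_H^G \Res_H^G R$ with $R \boxtimes A_{G/H}$, most cleanly via the projection formula. First note $A_{G/H} \cong \Ind_H^G \Res_H^G \A_G$: both sides corepresent $M \mapsto (\Res_H^G M)(H/H) \cong M(G/H)$, using the Mackey-level induction--restriction adjunction together with $\A_H \cong A_{H/H}$. Then, since $\Res_H^G$ is strong monoidal, so that $\Res_H^G R \boxtimes \Res_H^G \A_G \cong \Res_H^G R$, and $\Ind_H^G$ enjoys the projection formula from the Grothendieck context invoked in the proof of \cref{lem:ind-res-adjunction-for-k-modules}, we obtain an isomorphism of $R$-modules
\[
    R \boxtimes A_{G/H} \;\cong\; R \boxtimes \Ind_H^G \Res_H^G \A_G \;\cong\; \Ind_H^G\!\big( \Res_H^G R \boxtimes \Res_H^G \A_G \big) \;\cong\; \Ind_H^G \Res_H^G R,
\]
where $R$ acts on the right-hand side through the unit map $R \to \Ind_H^G \Res_H^G R$. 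Alternatively, one can argue directly from \cref{lem:ind-res-adjunction-for-k-modules}: we have $\Hom_{R\text{-}\Mod}(\Ind_H^G \Res_H^G R, M) \cong \Hom_{(\Res_H^G R)\text{-}\Mod}(\Res_H^G R, \Res_H^G M)$, and $\Res_H^G R$ is the free module over itself in level $H/H$ (the case ``$H = G$'' of part (2) applied to the Green functor $\Res_H^G R$, which is a Green functor since restriction is strong monoidal), so the right-hand side equals $(\Res_H^G M)(H/H) = M(G/H)$, and Yoneda concludes.

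The argument is entirely formal, so I do not expect a genuine obstacle; the one point deserving care is the bookkeeping of module structures, namely checking that the $R$-module structure on $\Ind_H^G \Res_H^G R$ induced by the unit map is the one under which the displayed isomorphisms are $R$-linear. This is exactly what the projection-formula computation makes manifest, so I would present that computation for precisely this reason.
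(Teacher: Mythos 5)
Your proof is correct and is exactly the ``routine application of the Yoneda lemma'' the paper alludes to: description (2) via the free-module/forgetful adjunction composed with the representability of evaluation at $G/H$, and description (1) via the induction--restriction adjunction (or, equivalently, the projection formula). The paper omits the details entirely; your two derivations of (1) both faithfully fill in the intended argument.
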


\begin{definition}
    Let $R$ be a Green functor. We say that a module is \emph{free} if it is isomorphic to a direct sum of free modules at level $G/H_i$ for some subgroups $H_i\leq G$.  We say a module is \emph{finite free} if this direct sum is finite.  A module is \emph{finitely generated} if it is a quotient of a finite free module.
\end{definition}

\begin{lemma}
\label{lem:free-implies-projective}
    Let $R$ be any Green functor. Every free $R$-module is projective. Moreover, the finitely generated projective $R$-modules are precisely those which are retracts of direct sums of finitely many free modules.
\end{lemma}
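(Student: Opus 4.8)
The plan is to follow the standard argument from ordinary ring theory, adapted to the Green functor setting. First I would show that each free module at level $G/H$, namely $F_{G/H} := \Ind_H^G \Res_H^G R \cong R \boxtimes A_{G/H}$, is projective. By \cref{prop:free-k-mod-in-level-G/H-is-ind-of-res}, this module represents the functor $M \mapsto M(G/H)$ on $R$-modules, so $\Hom_{R\text{-}\Mod}(F_{G/H}, -) \cong \mathrm{ev}_{G/H}$. The evaluation functor $M \mapsto M(G/H)$ is exact (kernels and cokernels of Mackey functors, and hence of $R$-modules, are computed levelwise), and it lands in abelian groups — which is an exact functor — so $\Hom_{R\text{-}\Mod}(F_{G/H}, -)$ is exact and $F_{G/H}$ is projective. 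Alternatively, one can invoke \cref{cor:ind-and-res-preserve-inj-and-proj}: $\Res_H^G R$ is a free, hence projective, module over itself, and induction preserves projectives.

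Next I would promote this to arbitrary direct sums. An arbitrary free module is $\bigoplus_i F_{G/H_i}$, and since $R$-$\Mod$ is a module category over the Mackey algebra $\mu_R$ (as noted in the excerpt), it is a Grothendieck abelian category in which arbitrary coproducts are exact; thus $\Hom(\bigoplus_i F_{G/H_i}, -) \cong \prod_i \Hom(F_{G/H_i}, -)$ is a product of exact functors, hence exact, so every free module is projective. This establishes the first sentence.

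For the second sentence, one direction is immediate: a retract of a projective is projective, and a finite direct sum of free modules is finitely generated (being a quotient — indeed equal to — itself, a finite free module), so any retract of such is finitely generated and projective. Conversely, suppose $P$ is finitely generated and projective. By definition of finitely generated, there is a surjection $q\colon F \twoheadrightarrow P$ with $F$ finite free. Since $P$ is projective, the identity map $P \to P$ lifts along $q$ to a map $s\colon P \to F$ with $qs = \mathrm{id}_P$, exhibiting $P$ as a retract of $F$.

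I do not anticipate a serious obstacle here; the only point requiring care is the exactness of evaluation at $G/H$ and the exactness of arbitrary coproducts in $R$-$\Mod$, both of which follow from the identification of $R$-$\Mod$ with modules over the associative ring $\mu_R$ and the levelwise computation of (co)kernels of Mackey functors. The rest is the Yoneda lemma together with the universal lifting property of projectives, exactly as in the classical case.
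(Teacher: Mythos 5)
Your proof is correct and follows essentially the same route as the paper: both use the representability $\Hom_R(F_{G/H},M)\cong M(G/H)$, the fact that exactness of $R$-modules is checked levelwise, and the observation that a surjection from a finite free module onto a finitely generated projective splits. You spell out the routine passage to arbitrary direct sums (and mention the alternative via \cref{cor:ind-and-res-preserve-inj-and-proj}), but this adds nothing substantively different from the paper's argument.
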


\begin{proof}
    Let $F$ be the free $R$-module in level $G/H$. Then we have $\Mod_R(F,M) \cong M(G/H)$. Since exactness is checked levelwise, $F$ is projective. The latter claim follows from the fact that if $P$ is finitely generated then the surjection $F \to P$ from a finite free module $F$ splits.
\end{proof}

We now derive several useful consequences of \cref{prop:free-k-mod-in-level-G/H-is-ind-of-res}.

\begin{proposition}\label{prop:projectives-satisfy-MRC}
    Let $R$ be a Green functor for which all restrictions in $R$ are injective. If $P$ is any projective $R$-module, then all restrictions in $P$ are injective.
\end{proposition}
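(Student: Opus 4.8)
The plan is to reduce the claim to a statement about free modules, then use the explicit description of free modules from \cref{prop:free-k-mod-in-level-G/H-is-ind-of-res}. First I would observe that by \cref{lem:free-implies-projective}, any projective $R$-module $P$ is a retract of a (possibly infinite) direct sum $F = \bigoplus_i F_i$ of free modules $F_i$ at various levels $G/H_i$. Since restriction maps are computed levelwise and a direct summand of an injective map of abelian groups is injective (split the retraction levelwise: if $P(G/K) \hookrightarrow F(G/K)$ and $F$ has injective restrictions, then the composite $P(G/K) \to F(G/K) \xrightarrow{\Res} F(G/L) \to P(G/L)$ agrees with $\Res^K_L$ on $P$ after using the splitting, and injectivity on the nose follows from a quick diagram chase), it suffices to prove that $F$ itself has injective restrictions, and hence that each free module $F_i$ has injective restrictions. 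Here I am using that a direct sum of maps of abelian groups is injective iff each summand is.

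So the crux is: if all restrictions in $R$ are injective, then all restrictions in the free module $\Ind_H^G \Res_H^G R$ are injective. Using the identification $\Ind_H^G \Res_H^G R \cong R \boxtimes A_{G/H}$ is one route, but I think the cleanest is to compute $\Ind_H^G \Res_H^G R$ at level $G/K$ directly via the double coset formula / Mackey decomposition. Concretely, $(\Ind_H^G \Res_H^G R)(G/K) \cong \bigoplus_{g \in K\backslash G / H} R(G/(K^g \cap H))$ (up to conjugation, matching \cref{def:RFD}-style bookkeeping), and restriction along $L \leq K$ is given by a matrix whose entries are sums of conjugation-twisted restriction maps of $R$ together with possible transfer contributions coming from the double coset formula. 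The key point to verify is that this matrix is injective. The cleanest way to see this is to note that $\Ind_H^G \Res_H^G R = R \boxtimes \mathcal{B}_G(G/H,-)$ and that the free Mackey functor $A_{G/H} = \mathcal{B}_G(G/H,-)$ has injective restrictions (its value at $G/K$ is the free abelian group on $K\backslash G/H$-indexed $G$-orbits of maps, and restriction is injective because restriction of $G$-sets is "injective on orbits" in the Burnside-category sense), combined with the fact that $R \boxtimes -$ against a Mackey functor with injective restrictions preserves this property when $R$ does — this is where I'd unwind the Dress pairing description of the box product.

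The main obstacle, and the step requiring the most care, is precisely the last point: showing that injectivity of restrictions is preserved under $\boxtimes$ (or equivalently, directly analyzing the restriction matrix on $\Ind_H^G \Res_H^G R$). Box products are defined via Day convolution / a coequalizer, so one does not have a simple levelwise formula; one must argue that the relevant restriction map on $R \boxtimes A_{G/H}$ is a summand (or a "triangular" map with injective diagonal) with respect to the orbit decomposition, so that transfer terms in the double coset formula do not destroy injectivity. I expect this to go through because $A_{G/H}$ is projective and the box product with a representable is especially simple — $R \boxtimes A_{G/H}$ is just an induced-restricted copy of $R$, so its restriction maps are built from restrictions and conjugations in $R$ (which are injective by hypothesis and isomorphisms respectively) arranged block-diagonally over the double cosets $K \backslash G / H$ that actually contribute, with the remaining off-block terms not interfering with injectivity. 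Assembling these observations — levelwise reduction, reduction to free modules, reduction to a single free module, and the explicit Mackey-decomposition computation — gives the result.
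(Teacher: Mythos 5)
Your overall strategy matches the paper's: reduce to free modules, then use $F_{G/H} \cong \Ind_H^G \Res_H^G R$ from \cref{prop:free-k-mod-in-level-G/H-is-ind-of-res} and argue that the restriction maps in $\Ind_H^G \Res_H^G R$ are built from restrictions (and conjugations) in $R$. However, you introduce two complications that aren't there, and one of them would genuinely block you if you tried to carry the argument out carefully.

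The spurious worry is the repeated concern that ``transfer contributions coming from the double coset formula'' might appear in the restriction maps of $\Ind_H^G \Res_H^G R$ and destroy injectivity. They do not appear. A single restriction $\Res^K_L$ in $\Ind_H^G N$ is obtained by applying $N$ (contravariantly) to the surjection of $H$-sets $\res_H^G(G/L) \to \res_H^G(G/K)$; decomposing both sides into $H$-orbits, each orbit of the source maps onto exactly one orbit of the target, so the resulting map $N(\res_H^G(G/K)) \to N(\res_H^G(G/L))$ is a block map whose components are restrictions composed with conjugations in $N$, with every source block hit. Transfers enter the double coset formula only for the composite $\Res\circ\Tr$, which you are not computing here. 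Once you see this, your worry about ``off-block terms'' evaporates and the injectivity of the restriction in $\Ind_H^G N$ follows immediately from injectivity of restrictions in $N$. The paper packages this as: $\Res_H^G$ and $\Ind_H^G$ each preserve the property of having all restrictions injective.

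You also overcomplicate the reduction to free modules. A projective is a summand, hence a \emph{submodule}, of a free module; since a submodule inclusion is a monomorphism of Mackey functors commuting with restrictions, injectivity of restrictions is inherited by submodules with no diagram chase needed. Finally, the detour through $R \boxtimes A_{G/H}$ and Day convolution is unnecessary and leads you into the least tractable description of the free module; the $\Ind\Res$ description you already mention is the one to use.
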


\begin{proof}
    Since every projective module is a summand, a fortiori a submodule, of a free module, it suffices to establish the result for free $R$-modules. Now $\Res_H^G$ and $\Ind_H^G$ preserve the property of all restrictions being injective, so the result follows from \cref{prop:free-k-mod-in-level-G/H-is-ind-of-res}.
\end{proof}

\begin{corollary}\label{corollary: levelwise value of free modules}
    Let $F_{G/H}$ be a free $R$-module at level $G/H$.  For any subgroup $K\leq G$ we have an isomorphism of $R(G/H)$-modules
    \[
        F_{G/H}(G/K)\cong \bigoplus\limits_{\gamma\in H\backslash G/K} R(G/(K\cap \gamma^{-1}H\gamma))
    \]
\end{corollary}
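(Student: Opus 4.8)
The plan is to reduce everything to the induction--restriction description of the free module from \cref{prop:free-k-mod-in-level-G/H-is-ind-of-res}(1) and then unwind the definitions of $\Ind_H^G$ and $\Res_H^G$ on the $G$-set $G/K$. Writing $F_{G/H} \cong \Ind_H^G \Res_H^G R$, the definition of induction immediately gives
\[
    F_{G/H}(G/K) \cong \bigl(\Res_H^G R\bigr)\bigl(\res_H^G(G/K)\bigr),
\]
so the task becomes: decompose $G/K$, viewed as an $H$-set, into $H$-orbits, and evaluate $\Res_H^G R$ on each orbit. For the orbit decomposition I would invoke the standard double-coset computation: the $H$-orbits of $G/K$ are indexed by $H\backslash G/K$, the orbit through $\gamma K$ being $H\gamma K/K$ with stabilizer $H \cap \gamma K\gamma^{-1}$, so that as $H$-sets $\res_H^G(G/K) \cong \coprod_{\gamma \in H\backslash G/K} H/(H\cap \gamma K\gamma^{-1})$.

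Next I would use that a Mackey functor carries a disjoint union of $G$-sets to the corresponding direct sum, which turns the display above into
\[
    F_{G/H}(G/K) \cong \bigoplus_{\gamma \in H\backslash G/K} \bigl(\Res_H^G R\bigr)\bigl(H/(H\cap \gamma K\gamma^{-1})\bigr) \cong \bigoplus_{\gamma\in H\backslash G/K} R\bigl(G/(H\cap \gamma K\gamma^{-1})\bigr),
\]
the last isomorphism being just the definition $(\Res_H^G R)(H/L) = R(G/L)$ for $L\leq H$. Finally, conjugating by $\gamma^{-1}$ identifies $H \cap \gamma K \gamma^{-1}$ with $\gamma^{-1}H\gamma \cap K = K \cap \gamma^{-1}H\gamma$, and the conjugation isomorphism $c_{\gamma^{-1}}$ of $R$ gives $R(G/(H\cap\gamma K\gamma^{-1})) \cong R(G/(K\cap\gamma^{-1}H\gamma))$, rewriting the summands in exactly the claimed form. (A different choice of double-coset representatives replaces each $K\cap\gamma^{-1}H\gamma$ by a conjugate subgroup, which is harmless since conjugate subgroups give isomorphic values of $R$.)

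The remaining point is to see that these identifications respect the $R(G/H)$-module structures. Here $R(G/H)$ acts on $F_{G/H}$ via the canonical ring map $R(G/H) \to \mathrm{End}_R(F_{G/H})$ (equivalently, via the unit $R \to \Ind_H^G\Res_H^G R$ together with the $R$-module structure of the target), hence on each level $F_{G/H}(G/K)$; and on the right-hand side each summand $R(G/(K\cap\gamma^{-1}H\gamma))$ is an $R(G/H)$-algebra via $\Res$ into $R(G/(H\cap\gamma K\gamma^{-1}))$ followed by $c_{\gamma^{-1}}$. One then checks the displayed isomorphism intertwines these two actions, using \cref{lemma: definition of a module} (compatibility of the module structure with $\Res$ and $c_g$) and the explicit formula for the adjunction unit. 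I expect this bookkeeping with the module structure to be the fiddliest step; the $G$-set decomposition itself is entirely routine and presents no genuine obstacle.
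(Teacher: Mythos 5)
Your proof is correct and follows essentially the same route as the paper: reduce to $F_{G/H}\cong\Ind_H^G\Res_H^G R$ and then apply the double coset decomposition of the relevant $G$-set. The only cosmetic difference is that you decompose $\res_H^G(G/K)$ into $H$-orbits (giving stabilizers $H\cap\gamma K\gamma^{-1}$, then conjugating by $\gamma^{-1}$), while the paper uses the symmetry $G\times_H\res^G_H(G/K)\cong G\times_K\res^G_K(G/H)$ and decomposes $\res^G_K(G/H)$ to land directly on $K\cap\gamma^{-1}H\gamma$; the paper is also silent on the module-structure bookkeeping you flag at the end.
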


\begin{proof}
    By the isomorphism $F_{G/H} \cong \Ind^G_H\Res^G_H(R)$ and the definition of induction and restriction we have
    \[
        F_{G/H}(G/K)\cong \Ind^G_H\Res^G_H(R)(G/K)\cong k(G\times_K \Res^G_K(G/H))
    \]
    and the claim follows from the formula $\res^G_K(G/H)\cong \coprod\limits_{\gamma\in H\backslash G/K} K/(K\cap \gamma^{-1}H\gamma))$.
\end{proof}

\subsection{Free modules over Green meadows}

Recall from \cref{defintion: Green meadow} that a Green meadow is a Green functor $k$ such that each $k(G/H)$ is a field. We introduce some notation to reduce clutter.

\begin{notation}\label{noation: free module notation for Cpn}
    Let $G = C_{p^n}$ for some prime $p$ and $R$ a $C_{p^n}$-Green functor. For $0 \leq i \leq n$ we write $F_i$ for the free $R$-module on one generator in level $C_{p^n}/C_{p^i}$. If $R$ is not clear from context, we may write $F_i(R)$ instead of $F_i$.
\end{notation}

The next result is just \cref{corollary: levelwise value of free modules} for the group $G=C_{p^n}$

\begin{corollary}\label{cor:levelwise-value-of-free-modules-for-C_p^n}
    Let $R$ be a $C_{p^n}$-Green functor. For any $0\leq s\leq n$ we have
    \[
        F_i(C_{p^n}/C_{p^s}) = \begin{cases}
            R(C_{p^n}/C_{p^i})^{\oplus p^{n-s}} & s\geq i\\
            R(C_{p^n}/C_{p^s})^{\oplus p^{n-i}} & i\geq s\\
        \end{cases}
    \]
\end{corollary}

Suppose that $k = \FP(L)$ where $L$ is a field with action by $C_{p^n}$.  We would like to understand the dimension of $F_i(C_{p^n}/C_{p^s})$ as vector spaces over $k(C_{p^n/C_{p^n}}) = L^{C_{p^n}}$. Suppose that $C_{p^r}$ is the stabilizer of the action of $C_{p^n}$-action on $L$. 

Then for any $0\leq s\leq n$ we have
 \begin{equation}\label{equation: levelwise values of FP(L)}
    k(C_{p^n}/C_{p^s}) = L^{C_{p^s}}  =
    \begin{cases}
        L & s\leq r\\
        L^{C_{p^s}/C_{p^r}}  &  s \geq r.
    \end{cases}
\end{equation}

For $s\geq r$ the action of $C_{p^s}/C_{p^r}$ on $L$ is faithful, so by Artin's lemma it is Galois.  Thus the field extensions $L^{C_{p^n}/C_{p^r}}\subset L^{C_{p^s}/C_{p^r}}\subset L$ are both Galois for any choice of $s\geq r$. For any $s\geq r$ we then have $\dim_{L^{C_{p^n}}} L^{C_{p^s}/C_{p^r}} = p^{n-s}$
and so we obtain
\begin{equation}\label{equation: ranks of free modules}
    \mathrm{dim}_{k(C_{p^n}/C_{p^n})}(k(C_{p^n}/C_{p^s})) = \begin{cases}
        p^{n-r} & s\leq r\\
        p^{n-s} & s\geq r
    \end{cases}
\end{equation}
which is enough to compute the dimension of $F_i(C_{p^n}/C_{p^s})$ for any $s$. 

It is possible for $F_i$ to be isomorphic, as a $k$-module, to $k^{n}$ for some values of $i$ and $n$.  We now identify precisely when this happens for $k = \FP(L)$.  

\begin{lemma}\label{lemma: surjective transfers}
    Let $k = \FP(L)$ as above. For $r< s\leq i$ the transfer maps
    \[
        \Tr^{C_{p^{s}}}_{C_{p^{s-1}}}\colon F_i(C_{p^n}/C_{p^{s-1}})\to F_i(C_{p^n}/C_{p^{s}})
    \]
    are surjective.
\end{lemma}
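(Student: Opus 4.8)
The plan is to reduce the lemma to the classical fact that the relative trace of a finite separable field extension is surjective. The point is that for $s\le i$ we are working at levels below $C_{p^i}$, where $F_i$ is "induced up" from $k$, so the transfer in question should be nothing more than a direct sum of copies of a single transfer map of $k$ itself, namely the Galois trace $L^{C_{p^{s-1}}}\to L^{C_{p^s}}$.

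To make this precise I would start from the identification $F_i \cong \Ind_{C_{p^i}}^{C_{p^n}} \Res_{C_{p^i}}^{C_{p^n}} k$ of \cref{prop:free-k-mod-in-level-G/H-is-ind-of-res}, together with the description of induction of Mackey functors as precomposition with the restriction functor $\res_{C_{p^i}}^{C_{p^n}}$ on Burnside categories. For $s\le i$ the $C_{p^i}$-set $\res_{C_{p^i}}^{C_{p^n}}(C_{p^n}/C_{p^s})$ is a disjoint union of $p^{n-i}$ copies of $C_{p^i}/C_{p^s}$, and likewise $\res_{C_{p^i}}^{C_{p^n}}(C_{p^n}/C_{p^{s-1}})$ is a disjoint union of $p^{n-i}$ copies of $C_{p^i}/C_{p^{s-1}}$; moreover the projection $C_{p^n}/C_{p^{s-1}}\to C_{p^n}/C_{p^s}$ restricts to the disjoint union of $p^{n-i}$ copies of the projection $C_{p^i}/C_{p^{s-1}}\to C_{p^i}/C_{p^s}$. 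Applying the additive functor $\Res_{C_{p^i}}^{C_{p^n}} k$ and unwinding the span description of transfers in the Burnside category, this shows that under the decomposition of \cref{cor:levelwise-value-of-free-modules-for-C_p^n} the map
\[
    \Tr^{C_{p^s}}_{C_{p^{s-1}}}\colon F_i(C_{p^n}/C_{p^{s-1}})\longrightarrow F_i(C_{p^n}/C_{p^s})
\]
is the direct sum of $p^{n-i}$ copies of the transfer $\Tr^{C_{p^s}}_{C_{p^{s-1}}}$ of $k$ itself, so it is enough to show this transfer in $k$ is surjective.

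For that I would use that $k=\FP(L)$, so by \cref{example: fixed point Mackey} the transfer $\Tr^{C_{p^s}}_{C_{p^{s-1}}}\colon L^{C_{p^{s-1}}}\to L^{C_{p^s}}$ is the relative trace $x\mapsto \sum_{\gamma\in C_{p^s}/C_{p^{s-1}}}\gamma\cdot x$. Since $r<s$ we have $s-1\ge r$, so the stabilizer $C_{p^r}$ of the action on $L$ is contained in both $C_{p^{s-1}}$ and $C_{p^s}$; by the discussion around \eqref{equation: levelwise values of FP(L)} the extension $L/L^{C_{p^s}}$ is Galois, hence its intermediate field $L^{C_{p^{s-1}}}$ is separable over $L^{C_{p^s}}$, and $\Tr^{C_{p^s}}_{C_{p^{s-1}}}$ is precisely its field-theoretic trace. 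The trace of a finite separable extension is a nonzero $L^{C_{p^s}}$-linear map, so its image is a nonzero ideal of the field $L^{C_{p^s}}$, i.e.\ all of $L^{C_{p^s}}$; surjectivity in $k$, and hence in each of the $p^{n-i}$ summands of $F_i$, follows.

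The step I expect to cost the most care is the compatibility claim in the second paragraph: confirming that the decomposition of \cref{cor:levelwise-value-of-free-modules-for-C_p^n} is genuinely diagonal with respect to the transfer and that each diagonal block is the transfer of $k$. This is pure bookkeeping with restrictions of $C_{p^n}$-sets to $C_{p^i}$ and with spans in the Burnside category, but it is the one place where an index could slip. Everything else — the identification of the transfer with the Galois trace and the surjectivity of the trace of a separable extension — is standard.
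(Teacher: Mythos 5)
Your proof is correct and follows essentially the same route as the paper's: both identify the transfer in $F_i$ at levels $s\le i$ with a direct sum of $p^{n-i}$ copies of the transfer in $k$ (you phrase this via $F_i\cong\Ind_{C_{p^i}}^{C_{p^n}}\Res_{C_{p^i}}^{C_{p^n}}k$ and decomposing $\res_{C_{p^i}}^{C_{p^n}}(C_{p^n}/C_{p^s})$, the paper via decomposing $C_{p^n}/C_{p^{s-1}}\times C_{p^n}/C_{p^i}$, which amount to the same isomorphism of $G$-sets), and then both appeal to surjectivity of the trace for the extension $L^{C_{p^{s-1}}}/L^{C_{p^s}}$ — you via separability, the paper via the fact that it is Galois. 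The compatibility step you flag is exactly the one the paper carries out and is indeed routine.
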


\begin{proof}
    The transfers in $k$ are induced by the quotient map of $G$-sets $q\colon C_{p^n}/C_{p^{s-1}}\to C_{p^n}/C_{p^{s}}$.  The transfer if $F_i$ is induced by evaluating $k$ on the map
    \[
        C_{p^n}/C_{p^{s-1}}\times C_{p^n}/C_{p^i}\xrightarrow{q\times 1} C_{p^n}/C_{p^{s}}\times C_{p^n}/C_{p^i}.
    \]
     Since $s\leq i$ this map is equivalent to the map
     \[
        \coprod_{\ell=1}^{p^{n-i}}C_{p^n}/C_{p^{s-1}} \xrightarrow{\amalg q}\coprod_{\ell=1}^{p^{n-i}}C_{p^n}/C_{p^{s}} 
     \]
     and applying $k$ we see that that the transfer if $F_i$ is the direct sum of many copies of the transfer in $k$.  The transfer in $k$ is the Galois trace for the field extension $k(C_{p^n}/C_{p^{s-1}})\subset k(C_{p^n}/C_{p^s})$.  Since $s> r$, the Weyl group actions are faithful and this extension is Galois, hence the trace map is surjective.
\end{proof}

\begin{lemma}\label{lemma: isomorphism at level i}
    Let $k = \FP(L)$, where $L$ is a field with $C_{p^n}$-action and $C_{p^r}\subset C_{p^n}$ is the stabilizer of the action.  Then for $i\geq r$ we have an isomorphism of $k(C_{p^n}/C_{p^i})_{\theta}[C_{p^n}/C_{p^i}]$-modules $F_i(C_{p^n}/C_{p^i})\cong k(C_{p^n}/C_{p^i})^{p^{n-i}}$.
\end{lemma}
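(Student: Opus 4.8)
The plan is to recognize $F_i(C_{p^n}/C_{p^i})$ as a module over the twisted group ring
\[
    B \;:=\; k(C_{p^n}/C_{p^i})_{\theta}\bigl[W_{C_{p^n}}(C_{p^i})\bigr],
\]
and to exploit that, in the range $i \geq r$, this ring is a matrix algebra over a field. Write $W := W_{C_{p^n}}(C_{p^i}) = C_{p^n}/C_{p^i}$, so $|W| = p^{n-i}$, and set $K := k(C_{p^n}/C_{p^n}) = L^{C_{p^n}}$. Since $F_i$ is a $k$-module, the corollary after \cref{definition: twisted group ring} makes $F_i(C_{p^n}/C_{p^i})$ into a $B$-module, and by construction its underlying $k(C_{p^n}/C_{p^i})$-module is the one computed in \cref{cor:levelwise-value-of-free-modules-for-C_p^n}, namely $k(C_{p^n}/C_{p^i})^{\oplus p^{n-i}}$. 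Under the intended reading of the statement, the target $k(C_{p^n}/C_{p^i})^{p^{n-i}}$ is this same abelian group carrying the free rank-one $B$-module structure (equivalently, $p^{n-i}$ copies of the Galois-semilinear module $k(C_{p^n}/C_{p^i})$); once we know $B$ is simple this interpretation is in any case unambiguous. So it suffices to show $F_i(C_{p^n}/C_{p^i}) \cong B$ as $B$-modules.

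First I would pin down the ring $B$. The action $\theta$ of $W$ is obtained from the conjugation maps of $k = \FP(L)$, which act on $k(C_{p^n}/C_{p^i}) = L^{C_{p^i}}$ by $x \mapsto g \cdot x$, descended to $W$. When $i \geq r$ we have $C_{p^r} \subseteq C_{p^i}$, so $k(C_{p^n}/C_{p^i}) = L^{C_{p^i}}$, and, as recalled in the discussion preceding \cref{equation: ranks of free modules}, the extension $L^{C_{p^n}} \subseteq L^{C_{p^i}}$ is Galois. A short argument with the Galois correspondence then shows that $\theta$ is faithful with image the full group $\mathrm{Gal}(L^{C_{p^i}}/L^{C_{p^n}})$, which has order $[L^{C_{p^i}}:L^{C_{p^n}}] = p^{n-i} = |W|$; hence $\theta$ is an isomorphism of $W$ onto this Galois group. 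The classical identification (normal basis theorem / Galois descent) of $E_{\theta}[\mathrm{Gal}(E/F)]$ with $\mathrm{End}_F(E)$ now gives
\[
    B \;\cong\; \mathrm{End}_{K}\bigl(k(C_{p^n}/C_{p^i})\bigr) \;\cong\; M_{p^{n-i}}(K),
\]
so $B$ is a simple $K$-algebra.

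With $B$ identified as a matrix algebra, the conclusion is a dimension count over $K$. Every finitely generated $B$-module is a direct sum of copies of the unique simple $B$-module $S$, and $\dim_K S = p^{n-i}$; in particular a finitely generated $B$-module is determined up to isomorphism by its $K$-dimension, and the free rank-one module $B \cong S^{\oplus p^{n-i}}$ has $\dim_K B = p^{2(n-i)}$. On the other hand, combining \cref{cor:levelwise-value-of-free-modules-for-C_p^n} with the value of $\dim_K k(C_{p^n}/C_{p^i})$ recorded in \cref{equation: ranks of free modules} (the case $s = i \geq r$) gives
\[
    \dim_{K} F_i(C_{p^n}/C_{p^i}) \;=\; p^{n-i}\cdot \dim_{K} k(C_{p^n}/C_{p^i}) \;=\; p^{n-i}\cdot p^{n-i} \;=\; p^{2(n-i)}.
\]
Therefore $F_i(C_{p^n}/C_{p^i})$ and $B$ are finitely generated $B$-modules of equal $K$-dimension, hence $F_i(C_{p^n}/C_{p^i}) \cong B$, which is the desired isomorphism.

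I expect the only delicate points to be bookkeeping rather than substance: checking that the Weyl action $\theta$ on $L^{C_{p^i}}$ really is the Galois action (so the descent identification of $B$ applies), and that the $B$-module structure carried by $F_i(C_{p^n}/C_{p^i})$ restricts to the $k(C_{p^n}/C_{p^i})$-module structure of \cref{cor:levelwise-value-of-free-modules-for-C_p^n}; both amount to unwinding the definitions from \cref{section: background}. Once $B$ is known to be $M_{p^{n-i}}(K)$, semisimplicity forces the result, and the hypothesis $i \geq r$ enters precisely to guarantee faithfulness of $\theta$ (for $i < r$ the ring $B$ acquires a non-semisimple group-algebra factor and the statement fails). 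A more structural alternative would be to identify $\mathrm{End}_k(F_i)$ with $B$ — via the $(\Ind,\Res)$ adjunction and representability of $M \mapsto M(C_{p^n}/C_{p^i})$ — and invoke that a matrix algebra is isomorphic to its opposite; but the dimension count seems cleaner and avoids tracking ring structures.
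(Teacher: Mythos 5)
Your argument is correct and is essentially the paper's proof: the paper also uses faithfulness of the Weyl action (from $i \geq r$) to conclude the twisted group ring is Morita trivial over $L^{C_{p^n}}$ (via \cref{lem:twisted-group-ring-Morita-invariance}, which is the same normal-basis/Galois-descent fact you invoke), and then classifies modules by $L^{C_{p^n}}$-dimension, computing both sides to be $p^{2(n-i)}$ via \cref{corollary: levelwise value of free modules} and \eqref{equation: ranks of free modules}. The only cosmetic difference is that you pass through the explicit matrix-algebra identification $B \cong M_{p^{n-i}}(K)$ and the free rank-one module $B$, whereas the paper appeals directly to the Morita equivalence, but the content is identical.
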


\begin{proof}
    Because $i\leq r$ we have  $C_{p^n}/C_{p^i}$ acts faithfully on the field $k(C_{p^n}/C_{p^i})$. It follows that the twisted group ring $k(C_{p^n}/C_{p^i})_{\theta}[C_{p^n}/C_{p^i}]$ is Morita equivalent to the fixed field $k(C_{p^n}/C_{p^i})^{C_{p^n}/C_{p^i}} = L^{C_{p^n}}$ (cf. \cref{lem:twisted-group-ring-Morita-invariance}).  Thus modules over this ring are classified up to isomorphism by their dimension over the $L^{C_{p^n}}$.  The dimensions of the two modules considered in the statement are computed using \cref{corollary: levelwise value of free modules} and \eqref{equation: ranks of free modules}, and both are equal to $p^{2(n-i)}$
\end{proof}

\begin{proposition}\label{proposition: when free modules are isomorphic weak form}
    Suppose that $k = \FP(L)$ where $L$ is a field of characteristic $p$ with action by $C_{p^n}$ and stabilizer $C_{p^r}$. If $i \geq r$ then there is an isomorphism of $k$-modules $F_i\cong k^{p^{n-i}}$.
\end{proposition}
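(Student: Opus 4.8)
The plan is to write down an explicit map $\Phi\colon k^{\oplus p^{n-i}}\to F_i$ of $k$-modules and show it is a levelwise isomorphism. Set $E := k(C_{p^n}/C_{p^n}) = L^{C_{p^n}}$. By \cref{cor:levelwise-value-of-free-modules-for-C_p^n} we have $F_i(C_{p^n}/C_{p^n}) = k(C_{p^n}/C_{p^i}) = L^{C_{p^i}}$, which by \eqref{equation: ranks of free modules} — and this is where the hypothesis $i\geq r$ enters — has dimension $p^{n-i}$ over $E$. First I would fix an $E$-basis $e_1,\dots,e_{p^{n-i}}$ of $L^{C_{p^i}}$. Since $k = F_n$ is the free $k$-module on one generator at level $C_{p^n}/C_{p^n}$, it represents the functor $M\mapsto M(C_{p^n}/C_{p^n})$, so the tuple $(e_1,\dots,e_{p^{n-i}})$ classifies a map of $k$-modules $\Phi\colon k^{\oplus p^{n-i}}\to F_i$ sending the generator of the $j$th summand to $e_j$. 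It then remains to check $\Phi$ is bijective at each level $C_{p^n}/C_{p^s}$, since a map of $k$-modules is an isomorphism precisely when it is a levelwise isomorphism.

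For this I would use the model $F_i = \Ind^{C_{p^n}}_{C_{p^i}}\Res^{C_{p^n}}_{C_{p^i}} k$ of \cref{prop:free-k-mod-in-level-G/H-is-ind-of-res} to compute, for each $s$, the $k(C_{p^n}/C_{p^s})$-module $F_i(C_{p^n}/C_{p^s})$ together with the restriction $\Res^{C_{p^n}}_{C_{p^s}}\colon F_i(C_{p^n}/C_{p^n})\to F_i(C_{p^n}/C_{p^s})$. Two points do the work. First, the terminal map $C_{p^n}/C_{p^s}\to C_{p^n}/C_{p^n}$, restricted along $C_{p^i}\hookrightarrow C_{p^n}$, is a fold map of trivial $C_{p^i}$-sets, so $\Res^{C_{p^n}}_{C_{p^s}}$ on $F_i$ is the ``diagonal'' of $k(C_{p^n}/C_{p^{\min(i,s)}})$ into the relevant sum. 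Second, the $k$-module structure on $F_i$ is by definition induced by the counit $\Ind^{C_{p^n}}_{C_{p^i}}\Res^{C_{p^n}}_{C_{p^i}}(C_{p^n}/C_{p^s})\to C_{p^n}/C_{p^s}$ of the induction–restriction adjunction on $G$-sets; this counit is a disjoint union of translation isomorphisms $C_{p^n}/C_{p^i}\xrightarrow{\sim} C_{p^n}/C_{p^s}$, so $k(C_{p^n}/C_{p^s}) = L^{C_{p^s}}$ acts on the summands of $F_i(C_{p^n}/C_{p^s})$ through its various conjugates under $C_{p^n}/C_{p^s}$. Using that the extensions $L^{C_{p^m}}/E$ for $m\geq r$ are Galois (Artin's lemma, as in the discussion preceding \eqref{equation: levelwise values of FP(L)}) together with the standard identification $A\otimes_E B\cong\prod_{\mathrm{Gal}(B/E)}A$ for a finite Galois extension $B/E$ and an $A$-algebra $B$ — applied with $\{A,B\}=\{L^{C_{p^s}},L^{C_{p^i}}\}$, whichever is contained in the other — these two points combine to give an isomorphism of $L^{C_{p^s}}$-modules
\[
  F_i(C_{p^n}/C_{p^s})\;\cong\;L^{C_{p^s}}\otimes_{E}L^{C_{p^i}}
\]
under which $\Res^{C_{p^n}}_{C_{p^s}}$ becomes the base change map $L^{C_{p^i}} = E\otimes_E L^{C_{p^i}}\to L^{C_{p^s}}\otimes_E L^{C_{p^i}}$, $e\mapsto 1\otimes e$.

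Granting this, the proof finishes quickly. At level $C_{p^n}/C_{p^s}$ we have $k^{\oplus p^{n-i}}(C_{p^n}/C_{p^s}) = (L^{C_{p^s}})^{\oplus p^{n-i}}$, and, by $k(C_{p^n}/C_{p^s})$-linearity together with the fact that $\Phi$ carries the $j$th generator to $e_j$, the map $\Phi$ at this level is
\[
  (\lambda_j)_{j}\;\longmapsto\;\sum_{j}\lambda_j\cdot\Res^{C_{p^n}}_{C_{p^s}}(e_j)\;=\;\sum_{j}\lambda_j\otimes e_j\;\in\;L^{C_{p^s}}\otimes_{E}L^{C_{p^i}}.
\]
Since $\{e_j\}$ is an $E$-basis of $L^{C_{p^i}}$, the family $\{1\otimes e_j\}$ is an $L^{C_{p^s}}$-basis of $L^{C_{p^s}}\otimes_E L^{C_{p^i}}$, so this is an isomorphism. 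Hence $\Phi$ is a levelwise isomorphism and therefore an isomorphism of $k$-modules, as claimed.

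The step I expect to be the main obstacle is the identification of the $k(C_{p^n}/C_{p^s})$-module structure on $F_i(C_{p^n}/C_{p^s})$ in the second paragraph — specifically, recognizing that $L^{C_{p^s}}$ acts on the summands through Galois conjugates rather than diagonally. This twisting is precisely what makes the restrictions of the top-level elements generate all of $F_i$; the analogous statement is false for Green functors whose Weyl actions are trivial (for instance the Burnside Green functor), so any argument must use the field structure essentially. If one prefers to avoid writing $\Phi$ down, the same computation shows directly that $F_i(C_{p^n}/C_{p^s})$ is generated over $k(C_{p^n}/C_{p^s})$ by the image of $\Res^{C_{p^n}}_{C_{p^s}}$ from the top level; one then concludes from the equality of levelwise $E$-dimensions of $F_i$ and $k^{\oplus p^{n-i}}$ — which again uses $i\geq r$, via \cref{cor:levelwise-value-of-free-modules-for-C_p^n} and \eqref{equation: ranks of free modules} — that this surjection is a levelwise isomorphism.
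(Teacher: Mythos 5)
Your proof is correct, but it takes a genuinely different route from the paper's. The paper fixes the isomorphism at level $C_{p^n}/C_{p^i}$ first, by invoking the Morita equivalence between the twisted group ring $k(C_{p^n}/C_{p^i})_\theta[C_{p^n}/C_{p^i}]$ and $E = L^{C_{p^n}}$ (their \cref{lemma: isomorphism at level i}), uses the universal property of $F_i$ at that level to extend to a map $\varphi \colon F_i \to k^{p^{n-i}}$, and then verifies $\varphi$ is a levelwise isomorphism by a three-case analysis ($s \geq i$ via injectivity of restrictions plus dimension count; $r \leq s \leq i$ via a downward induction using surjectivity of the Galois trace, \cref{lemma: surjective transfers}; $s < r$ via a restriction isomorphism). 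You instead pick the isomorphism at the top level $C_{p^n}/C_{p^n}$, where $F_i(C_{p^n}/C_{p^n}) = L^{C_{p^i}}$ has the expected $E$-dimension $p^{n-i}$ precisely because $i \geq r$, use the universal property of $k^{\oplus p^{n-i}}$, and then give a single uniform argument at each level $C_{p^n}/C_{p^s}$ by identifying $F_i(C_{p^n}/C_{p^s}) \cong L^{C_{p^s}} \otimes_E L^{C_{p^i}}$ compatibly with restriction and the $L^{C_{p^s}}$-module structure. This avoids the case analysis and the Morita input, and has the mild advantage of producing an explicit basis; its cost is that the identification $F_i(C_{p^n}/C_{p^s}) \cong L^{C_{p^s}}\otimes_E L^{C_{p^i}}$ must be carefully established (the student writes it down but sketches the proof).

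One imprecision in the write-up: the claim that $\res^{C_{p^n}}_{C_{p^i}}(C_{p^n}/C_{p^s}) \to \res^{C_{p^n}}_{C_{p^i}}(\mathrm{pt})$ is a ``fold map of trivial $C_{p^i}$-sets'' is only literally true when $s \geq i$; for $s < i$ the orbits are $C_{p^i}/C_{p^s}$ (nontrivial), and the restriction $F_i(C_{p^n}/C_{p^n}) \to F_i(C_{p^n}/C_{p^s})$ is not the naive diagonal but sends $f_0$ to the tuple of its Galois conjugates $(\gamma f_0)_\gamma$. This still corresponds to $f_0 \mapsto 1 \otimes f_0$ under the tensor-product identification you propose, so the conclusion is unaffected, but a careful write-up should describe the restriction correctly in both regimes. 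I checked that the identification $F_i(C_{p^n}/C_{p^s}) \cong L^{C_{p^s}} \otimes_E L^{C_{p^i}}$, with the $L^{C_{p^s}}$-structure on the left factor and restriction $f_0 \mapsto 1 \otimes f_0$, does hold for all $s$: the natural map $\psi(\lambda \otimes \mu)(gC_{p^s}, g'C_{p^i}) := (g\lambda)(g'\mu)$ is well-defined, $L^{C_{p^s}}$-linear, and compatible with restriction, and its injectivity follows by composing with the injective restriction to level $C_{p^n}/e$ (where it becomes the classical Galois decomposition $L \otimes_E L^{C_{p^i}} \cong \prod_{G/C_{p^i}} L$, valid since $i \geq r$) together with a dimension count using \eqref{equation: ranks of free modules}. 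With this filled in, the rest of your argument goes through as stated.
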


For the simpler case in which $k$ is a Green meadow of characteristic not equal to $p$, one should be able to establish similar results using a Morita equivalence argument and \cref{cor:nonmodular-characteristic-Morita-equiv} below.

\begin{proof}
    By \cref{lemma: isomorphism at level i}, we have an isomorphism of $k(C_{p^n}/C_{p^i})_{\theta}[C_{p^n}/C_{p^i}]$-modules $F_i(C_{p^n}/C_{p^i})\cong k(C_{p^n}/C_{p^i})^{p^{n-i}}$. Since $F_i$ is free on the $C_{p^n}/C_{p^i}$-level we can extend this to a map of $k$-modules $\varphi\colon F_{i}\to k^{p^{n-i}}$.  Using \eqref{equation: ranks of free modules} and \cref{corollary: levelwise value of free modules} we see that these $k$-modules have the same dimension over $k(C_{p^n}/C_{p^n})$ at every level. We need to show that $\varphi$ induces an isomorphism on level $C_{p^n}/C_{p^s}$ for all $0\leq s\leq n$.
    
    First, suppose that $s\geq i$.  Since $\varphi$ is an isomorphism at level $C_{p^n}/C_{p^i}$, and all restrictions in free modules are injective by \cref{prop:projectives-satisfy-MRC}, we see that $\varphi$ is injective at level $C_{p^n}/C_{p^s}$. Hence, by a dimension count, $\varphi$ is an isomorphism on levels $C_{p^n}/C_{p^s}$ for $s\geq i$. 

    Now suppose that $r\leq s\leq i$.  We will prove that $\varphi$ induces an isomorphism at level $C_{p^n}/C_{p^s}$ via downward induction, the base case $s=i$ being covered by the definition of $\varphi$. Suppose we have proved the claim for some value of $r<s<i$; we will show that the claim holds for $s-1$. The map $\varphi$ entails a diagram
\[\begin{tikzcd}
	{F_i(C_{p^n}/C_{p^s})} && {k(C_{p^n}/C_{p^s})^{p^{n-i}}} \\
	{F_i(C_{p^n}/C_{p^{s-1}})} && {k(C_{p^n}/C_{p^{s-1}})^{p^{n-i}}}
	\arrow["{\varphi_s}", from=1-1, to=1-3]
	\arrow["\cong"', from=1-1, to=1-3]
	\arrow[hook, from=1-1, to=2-1]
	\arrow["R", hook, from=1-3, to=2-3]
	\arrow[shift left=3, two heads, from=2-1, to=1-1]
	\arrow["{\varphi_{s-1}}", from=2-1, to=2-3]
	\arrow["T", shift left=3, two heads, from=2-3, to=1-3]
\end{tikzcd}\]
where the vertical maps are transfers and restrictions.  The top horizontal map is an isomorphism by induction, the restriction is injective by \cref{prop:projectives-satisfy-MRC}, and the transfers are surjective by \cref{lemma: surjective transfers}.

Because $s > r$ the extension $k(C_{p^n}/C_{p^s})\to k(C_{p^n}/C_{p^{s-1}})$ is $C_p$-Galois, and hence the kernel of $T$ is the image of $R$. Since the diagram commutes, with either choice of vertical morphisms, we see that the kernel of $T$ is contained in the image of $\varphi_{s-1}$. Thus we have a composite
\[
    F_i(C_{p^n}/C_{p^{s-1}})\xrightarrow{\varphi_{s-1}} k(C_{p^n}/C_{p^{s-1}})^{p^{n-i}}\xrightarrow{T} k(C_{p^n}/C_{p^{s}})^{p^{n-i}}
\]
which is surjective, and the image of $\varphi_{s-1}$ contains the kernel of $T$. It follows that $\varphi_{s-1}$ must be surjective.  Indeed, if $x\in k(C_{p^n}/C_{p^s})$ then there is a $y\in F_i(C_{p^n}/C_{p^s})$ so that $T(x) = T(\varphi_{s-1}(y))$.  Then $x-\varphi_{s-1}(y)$ is in the kernel of $T$, hence is in the image of $\varphi_{s-1}$, so $x$ is in the image of $\varphi_{s-1}$. Thus $\varphi_{s-1}$ is an isomorphism since it is a surjective map between $L^{C_{p^n}}$-vector spaces of the same dimension.

All that remains is to prove that $\varphi$ induces an isomorphism at level $C_{p^n}/C_{p^s}$ for $s<r$.  In this case, a dimension count shows that the dimension of $F_i(C_{p^n}/C_{p^s})$ over $L^{C_{p^n}}$ is equal to that of $F_i(C_{p^n}/C_{p^r})$.  Since the restriction map is injective it is therefore an isomorphism.  In particular, we have commuting diagrams
\[ 
    \begin{tikzcd}
	{} \\
	{F_i(C_{p^n}/C_{p^r})} && {k(C_{p^n}/C_{p^r})^{p^{n-i}}} \\
	{F_i(C_{p^n}/C_{p^{s}})} && {k(C_{p^n}/C_{p^{s}})^{p^{n-i}}}
	\arrow["{\varphi_r}", from=2-1, to=2-3]
	\arrow["\cong"', from=2-1, to=2-3]
	\arrow["\cong"{description}, hook, from=2-1, to=3-1]
	\arrow["\cong", hook, from=2-3, to=3-3]
	\arrow[shift left=3, two heads, from=3-1, to=2-1]
	\arrow["{\varphi_{s}}", from=3-1, to=3-3]
	\arrow[shift left=3, two heads, from=3-3, to=2-3]
    \end{tikzcd} 
\]
and thus $\varphi_s$ is an isomorphism.
\end{proof}

It follows that every free module over $k = \FP(L)$ is isomorphic to one of the form
\[
    k^{n_r}\oplus \bigoplus\limits_{s=0}^{r-1} F_s^{n_s}.
\]
Our next task is to show that the numbers $n_0,\dots, n_r$ uniquely determine the free module up to isomorphism.  

\begin{proposition}\label{proposition: when free modules are isomorphic strong form}
    Let $k$ be as in \cref{proposition: when free modules are isomorphic weak form}. If there is an isomorphism
    \[
        k^{n_r}\oplus \bigoplus\limits_{s=0}^{r-1} F_s^{n_s} \cong k^{m_r}\oplus \bigoplus\limits_{s=0}^{r-1} F_s^{m_s}.
    \]
    for some non-negative integers $n_0,\dots,n_r$ and $m_0,\dots,m_r$ then $n_j=m_j$ for all $j$.
\end{proposition}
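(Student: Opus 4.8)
The plan is to extract numerical invariants from the isomorphism by evaluating at each level and taking dimensions over the base field $L^{C_{p^n}} = k(C_{p^n}/C_{p^n})$, and then to solve the resulting linear system. An isomorphism of $k$-modules restricts, at each level $C_{p^n}/C_{p^s}$, to an isomorphism of $k(C_{p^n}/C_{p^s})$-modules, and hence --- restricting scalars along the ring homomorphism $\Res^{C_{p^n}}_{C_{p^s}}\colon k(C_{p^n}/C_{p^n})\to k(C_{p^n}/C_{p^s})$ --- to an isomorphism of finite-dimensional $L^{C_{p^n}}$-vector spaces (finite-dimensionality is automatic here, since $C_{p^n}/C_{p^r}$ acts faithfully on $L$, so $L/L^{C_{p^n}}$ is a finite Galois extension). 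Writing $d_M(s) := \dim_{L^{C_{p^n}}} M(C_{p^n}/C_{p^s})$, a quantity additive over direct sums, the hypothesized isomorphism gives, for every $0\le s\le n$,
\[
    \sum_{i=0}^{r} a_i\, d_i(s) = 0, \qquad a_i := n_i - m_i,
\]
where I abbreviate $d_i := d_{F_i}$ for $0 \le i \le r-1$ and $d_r := d_k$. The goal is to show this forces $a_0 = \dots = a_r = 0$.

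First I would record the relevant dimensions, read off from \cref{cor:levelwise-value-of-free-modules-for-C_p^n} together with \eqref{equation: ranks of free modules}: for $0\le i\le r-1$ one gets $d_i(s) = p^{2n-r-\max(i,s)}$, while $d_r(s) = p^{n-\max(r,s)}$ (all exponents are nonnegative since $r\le n$). The upshot is that, as a function of $s$, each $d_i$ with $i\le r-1$ is constant for $s\le i$ and strictly decreasing for $s\ge i$, with its ``corner'' at $s=i$; likewise $d_r$ has its corner at $s=r$.

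Then I would use only the equations at the $r+1$ levels $s = 0,1,\dots,r$, which all exist. The key point is that subtracting consecutive equations makes the system triangular: for $i\le r-1$ one has $d_i(s)-d_i(s+1)=0$ when $i>s$ and $d_i(s)-d_i(s+1)=(p-1)p^{2n-r-s-1}$ when $i\le s$, while $d_r(s)-d_r(s+1)=0$ for all $s<r$. Hence, for each $s=0,1,\dots,r-1$, the difference of the $s$-th and $(s+1)$-st equations reads
\[
    (p-1)\,p^{2n-r-s-1}\,(a_0+a_1+\dots+a_s) = 0,
\]
and since $p-1\ne 0$ these yield $a_0=0$, then $a_1=0$, \dots, then $a_{r-1}=0$ in turn. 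Feeding this into the undifferenced equation at level $s=r$ leaves $p^{n-r}a_r=0$, so $a_r=0$ as well, which completes the proof.

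I expect the only delicate part to be the bookkeeping in the previous paragraph: correctly deriving the levelwise dimensions from \cref{cor:levelwise-value-of-free-modules-for-C_p^n} and \eqref{equation: ranks of free modules}, and correctly locating the corner of each $d_i$ (at $s=i$, respectively $s=r$ for $d_r$), which is precisely what renders the differenced system triangular. Everything else --- additivity of dimension, the fact that morphisms of $k$-modules are levelwise module homomorphisms, and finiteness of the field extensions involved --- is formal.
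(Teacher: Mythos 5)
Your argument is correct and is essentially the same as the paper's: both reduce the problem to showing that the $(r+1)\times(r+1)$ matrix of $L^{C_{p^n}}$-dimensions $\bigl(d_i(s)\bigr)_{0\le s,i\le r}$ is nonsingular, exploiting its staircase shape. The only cosmetic difference is that you triangularize by subtracting consecutive rows (differencing the equations at levels $s$ and $s+1$), whereas the paper scales and then subtracts consecutive columns to exhibit the determinant as a product of nonzero entries; these are two routine ways to organize the same linear-algebra computation.
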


\begin{proof}
    For $0\leq s \leq n$ and $0\leq i\leq r-1$  let 
    \[
        \alpha_{s,i} = \dim_{k(C_{p^n}/C_{p^n})} F_i(C_{p^n}/C_{p^s})
    \]
    and let 
    \[
         \alpha_{s,r} = \dim_{k(C_{p^n}/C_{p^n})} k(C_{p^n}/C_{p^s}).
    \]

    The matrix $(\alpha_{s,i})$ sends the vector $(n_0,\dots,n_r)$ to the vector $(d_{0},\dots, d_{n})$ where $d_i$ is the dimension of 
    \[
         \left(k^{n_r}\oplus \bigoplus\limits_{s=0}^{r-1} F_s^{n_s}\right)(C_{p^n}/C_{p^i})
    \]
    over $k(C_{p^n}/C_{p^n})$. Considering this as a map $\alpha\colon \mathbb{Z}^{r+1}\to \mathbb{Z}^{n+1}$ the proposition will be proved if we show that this map is injective.  In particular it suffices to show that the matrix $(\alpha_{s,i})$ has rank $r+1$.  It will suffice to limit the range of $s$ to $0\leq s\leq r$ and show that the resulting matrix is rank $r+1$. Since this matrix is now square we can do this by observing that the determinant is non-zero.
    
    For $i < r$, we can use \cref{cor:levelwise-value-of-free-modules-for-C_p^n,equation: levelwise values of FP(L)} to obtain
    \[
        F_i(C_{p^n}/C_{p^s}) = k_{C_{p^n}/C_{p^i}}(C_{p^n}/C_{p^s}) =
        \begin{cases}
            L^{p^{n-s}} & s\geq i\\
            L^{p^{n-i}} & s\leq i
        \end{cases}
    \] 
    and so for $i< r$ we have
    \[
        \alpha_{s,i}  = \begin{cases}
            p^{n-r}\cdot p^{n-s} & s\geq i\\
            p^{n-r}\cdot p^{n-i} & s\leq i.
        \end{cases}
    \]
    Since we only care about the determinant being non-zero we can scale the columns of $(\alpha_{s,i})$ with $0\leq i\leq r-1$ by dividing by $p^{n-r}\cdot p^{n-r}$ and the resulting matrix, which we call $\beta$, has
    \[
        \beta_{s,i} = \begin{cases}
            p^{r-s} & s\geq i\\
            p^{r-i} & s\leq i.
        \end{cases}
    \]
    for $0\leq i\leq r-1$ and $0\leq s\leq r$. The last column in $\alpha$, $i=r$, is given by
    \[
        \alpha_{s,r} = \dim_{k(C_{p^n}/C_{p^n})}k(C_{p^n}/C_{p^s}) = p^{n-r}
    \]
    by \eqref{equation: ranks of free modules}. We also scale this column to the constant column with all $1$'s.  Finally, the resulting matrix $\gamma$ is given by
    \[ \begin{pmatrix} 
    p^{r} & p^{r-1} & p^{r-2} & \dots & 1 \\
    p^{r-1} & p^{r-1} & p^{r-2} & \dots & 1 \\
    p^{r-2} & p^{r-2} & p^{r-2} & \dots & 1 \\
    p^{r-3} & p^{r-3} & p^{r-3} & \dots & 1 \\
    \vdots & \vdots & \vdots & \ddots & \vdots \\
    1 & 1 & 1& \dots & 1
    \end{pmatrix} \]

    To see that $\gamma$ has non-zero determinant, observe that subtracting the $i$-th column from the $(i-1)$st column results in an upper triangular matrix with diagonals given by $p^{r-i-1}-p^{r-i-2}\neq 0$. Since $\gamma$ was obtained from $\alpha$ by scaling columns by nonzero integers, we see that $\alpha$ has non-zero determinant as well.
\end{proof}

Let $R$ be an arbitrary $C_{p^n}$-Green functor and let $K^{\mathrm{free}}_0(R)$ be the Grothendieck ring of isomorphism classes of finitely generated free $k$-modules where addition is direct sum and multiplication is relative tensor product. If $X$ is a finite $C_{p^n}$-set, we can write
\[
    X \cong \coprod\limits_{s=0}^n (C_{p^n}/C_{p^{s}})^{\amalg a_{s}}
\]
for some non-negative integers $a_{s}$. The assignment
\[
    X\mapsto \bigoplus\limits_{s=0}^{n} F_s^{a_s}
\]
is a ring homomorphism from the the Burnside ring $A(C_{p^n})$ to $K_0^{\mathrm{free}}(R)$. This map is surjective, since $K_0^{\mathrm{free}}(R)$ is generated by the images of $C_{p^n}/C_{p^s}$. 

On the other hand, when $k = \FP(F)$ \cref{proposition: when free modules are isomorphic strong form} and \cref{proposition: when free modules are isomorphic weak form} combine to identify the kernel of this map as precisely the subgroup of $A(C_{p^n})$ generated by elements of the form 
\begin{equation}\label{equation: kernel of K0 quotient}
    [C_{p^n}/C_{p^s}]-p^{n-s}\cdot[ C_{p^n}/C_{p^n}]
\end{equation}
for all $s$ such that $\mathrm{Stab}_{C_{p^n}}(F) \subset C_{p^s}$.

\begin{corollary}\label{corollary: free K0 for fixed point}
    Let $L$ be a field with $C_{p^n}$-action.  There is an isomorphism of rings
    \[
        K_0^{\mathrm{free}}(\FP(F))\cong A(C_{p^n})/I
    \]
    where $I$ is the ideal generated by all element of the form \eqref{equation: kernel of K0 quotient} where $s$ is such that $\mathrm{Stab}_{C_{p^n}}(L)\subset C_{p^s}$.  In particular, if the action of $G$ on $L$ is trivial then $K_0^{\mathrm{free}}(\FP(L))\cong A(C_{p^n})$.
\end{corollary}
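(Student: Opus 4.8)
The plan is to recognize the asserted isomorphism as an instance of the first isomorphism theorem applied to the surjective ring homomorphism
\[
  \varphi\colon A(C_{p^n})\longrightarrow K_0^{\mathrm{free}}(\FP(L)),\qquad [C_{p^n}/C_{p^s}]\longmapsto [F_s],
\]
constructed just above; so the entire content is the computation $\ker\varphi = I$. Throughout, write $C_{p^r} = \mathrm{Stab}_{C_{p^n}}(L)$, so that the generators of $I$ are the classes $v_s := [C_{p^n}/C_{p^s}]-p^{n-s}[C_{p^n}/C_{p^n}]$ for $r\le s\le n$, and note that $v_n = 0$.

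First I would check $I\subseteq\ker\varphi$. For $s\ge r$, \cref{proposition: when free modules are isomorphic weak form} gives an isomorphism of $k$-modules $F_s\cong k^{p^{n-s}}$; since $F_n = k$ is the free module on a generator in top level, this reads $[F_s] = p^{n-s}[F_n]$ in $K_0^{\mathrm{free}}(\FP(L))$, so $\varphi(v_s) = [F_s]-p^{n-s}[F_n] = 0$. As $\ker\varphi$ is an ideal, the ideal generated by the $v_s$ is contained in $\ker\varphi$.

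The reverse inclusion uses the structure of $K_0^{\mathrm{free}}$. Combining \cref{proposition: when free modules are isomorphic weak form} (every finitely generated free $k$-module is isomorphic to some $k^{n_r}\oplus\bigoplus_{s=0}^{r-1}F_s^{n_s}$) with \cref{proposition: when free modules are isomorphic strong form} (the tuple $(n_0,\dots,n_r)$ is determined by the isomorphism type), the monoid of isomorphism classes of finitely generated free $k$-modules is the free commutative monoid on $[F_0],\dots,[F_{r-1}],[k]$, hence its group completion $K_0^{\mathrm{free}}(\FP(L))$ is free abelian of rank $r+1$ on these classes. In the basis $\{[C_{p^n}/C_{p^s}]\}_{0\le s\le n}$ of $A(C_{p^n})$ the map $\varphi$ sends $\sum_{s} a_s [C_{p^n}/C_{p^s}]$ to $\sum_{s=0}^{r-1} a_s[F_s] + \bigl(\sum_{s=r}^{n} p^{n-s}a_s\bigr)[k]$, using $[F_s] = p^{n-s}[k]$ for $s\ge r$. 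Hence $(a_0,\dots,a_n)\in\ker\varphi$ if and only if $a_0 = \dots = a_{r-1} = 0$ and $\sum_{s=r}^n p^{n-s}a_s = 0$; solving the latter relation for $a_n$ exhibits $\ker\varphi$ as the free abelian group on $v_r,\dots,v_{n-1}$. Thus $\ker\varphi$ equals the subgroup — and therefore, being an ideal, the ideal — generated by $v_r,\dots,v_{n}$, i.e.\ $\ker\varphi = I$, and passing to the quotient gives the ring isomorphism $K_0^{\mathrm{free}}(\FP(L))\cong A(C_{p^n})/I$. For the final assertion, a trivial $C_{p^n}$-action has $\mathrm{Stab}_{C_{p^n}}(L) = C_{p^n}$, so the only $s$ with $\mathrm{Stab}_{C_{p^n}}(L)\subseteq C_{p^s}$ is $s=n$, for which $v_n = 0$; hence $I = 0$ and $K_0^{\mathrm{free}}(\FP(L))\cong A(C_{p^n})$.

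The only delicate point is the compatibility between the two propositions — phrased for honest free modules — and the Grothendieck group: one should either split virtual classes into positive and negative parts before invoking \cref{proposition: when free modules are isomorphic strong form}, or, as above, read the two propositions together as saying that the underlying additive monoid of $K_0^{\mathrm{free}}(\FP(L))$ is free commutative and then group-complete. Once that is in place, identifying $\ker\varphi$ is a one-line linear-algebra computation essentially already carried out inside the proof of \cref{proposition: when free modules are isomorphic strong form}.
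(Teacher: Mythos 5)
Your proof is correct and takes essentially the same route as the paper. The paper constructs the surjection $A(C_{p^n}) \twoheadrightarrow K_0^{\mathrm{free}}(\FP(L))$ and asserts that \cref{proposition: when free modules are isomorphic weak form,proposition: when free modules are isomorphic strong form} combine to identify the kernel as the subgroup generated by the classes of \eqref{equation: kernel of K0 quotient}; you simply carry out the linear algebra that this assertion elides, and you also flag the (genuine, though easily handled) point that the two propositions are stated for honest free modules, so one needs the free-commutative-monoid observation to transport them into the Grothendieck group.
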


\begin{remark}
    For any group $G$ and any $G$-Green functor $R$ there is a ring homomorphism $A(G)\to K_0^{\mathrm{free}}(R)$. In general, this map is neither injective nor surjective.  
\end{remark}

\begin{problem}\label{rem:Morita-equiv-of-Green-meadows}
    \cref{corollary: free K0 for fixed point} gives an obstruction to Morita equivalence between fixed-point Green functors, in the sense of \cref{def:Morita-equiv-of-Green-functors}. On the other hand, we show in \cref{cor:nonmodular-characteristic-Morita-equiv} that if $C_p$ acts faithfully on a field $\F$, then $\FP(\F)$ is Morita equivalent to the fixed-point subfield $\F^{C_p}$. A natural question is whether it is true more generally that when $G$ acts on a field $\F$, the $G$-Green functor $\FP(\F)$ is Morita equivalent to the $\mathrm{Stab}_{G}(\F)$-Green functor $\FP(\F^G)$ associated to the trivial $\mathrm{Stab}_{G}(\F)$-action on the fixed-point subfield $\F^G$ of $\F$.
\end{problem}

\begin{example}
    Let $G =  C_{4}$, which acts on $\F_4$ via the the Frobenius.  Since every element $x\in \F_4$ satisfies $x^4=x$, the stabilizer of the action is $C_2\subset C_4$.  The Burnside ring of $C_4$ is 
    \[
        A(C_4) \cong \mathbb{Z}[x,y]/(x^2-2x,y^2-4y,xy-2y)
    \]
    where $x$ corresponds to $C_4/C_2$ and $y$ corresponds to $C_4/e$.  To give a presentation for $K_0^{\mathrm{free}}(\FP_{C_4}(\F_4))$ we adjoin the relation $x-2$.  Notice that this immediately makes the first and third relation superfluous, so we have
    \[
        K_0^{\mathrm{free}}(\FP_{C_4}(\F_4))\cong \mathbb{Z}[y]/(y^2-4y).
    \]
\end{example}

\begin{remark}
    We will show below that all projective modules over $\FP(F)$ are free when $F$ is a field with action by a group $G$.  Thus the group $K_0^{\mathrm{free}}(\FP(F))$ is equal to the $K_0$ group of the category of finitely generated projective $\FP(F)$-modules.
\end{remark}

\begin{remark}
    The ideal $I$ in \cref{corollary: free K0 for fixed point} can be identified with the kernel of the map $A(C_{p^n})\to A(\mathrm{Stab}_{C_{p^n}}(F))$ induced by restriction of $C_{p^n}$-sets.  Thus we can embed $K_{0}^{\mathrm{free}}(\FP(F))$ into the Burnside ring of $\mathrm{Stab}_{C_{p^n}}(F)$, although this map is only surjective when the stabilizer is trivial.
\end{remark}

We can upgrade \cref{corollary: free K0 for fixed point} to a statement about all Green meadows with trivial Weyl group actions.

\begin{theorem}\label{theorem: K0 of Green functors with trivial action}
    Let $k$ be a $C_{p^n}$-Green meadow, let $L = k(C_{p^n}/e)$, and suppose that $L$ has trivial action. Then base change along the canonical map $k\to \FP(L)$ induces an isomorphism on $K_0^{\mathrm{free}}$. Thus, there is an isomorphism of rings
    \[
        K_0^{\mathrm{free}}(k)\cong K_0^{\mathrm{free}}(\FP(L))\cong A(C_{p^n}).
    \]
\end{theorem}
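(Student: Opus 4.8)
The plan is to deduce the theorem from \cref{corollary: free K0 for fixed point}, which already computes $K_0^{\mathrm{free}}(\FP(L))$ in the trivial-action case, by comparing both $K_0^{\mathrm{free}}$-groups to the Burnside ring $A(C_{p^n})$ through the base-change functor. Since $k$ is a Green meadow, $L = k(C_{p^n}/e)$ is a field, and the canonical map $f\colon k\to\FP(L)$ is the unit of the adjunction between evaluation at $C_{p^n}/e$ and $\FP$, as in the proof of \cref{lemma: fixed point of twisted module is k-module}. The key point I would establish is that base change along $f$ carries the free $k$-module $F_s(k)$ to the free $\FP(L)$-module $F_s(\FP(L))$ for each $0\leq s\leq n$; this makes base change compatible with the two standard surjections out of $A(C_{p^n})$, and since the target one is an isomorphism, the source one will be too.

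For that first step, recall from \cref{prop:res-base-change-adjunction} that the base-change functor $\FP(L)\boxtimes_k-\colon k\text{-}\Mod\to\FP(L)\text{-}\Mod$ is a left adjoint, hence additive, and symmetric monoidal. By \cref{prop:free-k-mod-in-level-G/H-is-ind-of-res}, $F_s(k)\cong k\boxtimes A_{C_{p^n}/C_{p^s}}$, and base change of a free module is free on the same Mackey functor, i.e. $\FP(L)\boxtimes_k(k\boxtimes A_{C_{p^n}/C_{p^s}})\cong\FP(L)\boxtimes A_{C_{p^n}/C_{p^s}}\cong F_s(\FP(L))$. Consequently $\FP(L)\boxtimes_k-$ sends finitely generated free $k$-modules to finitely generated free $\FP(L)$-modules and induces a ring homomorphism
\[
    \beta\colon K_0^{\mathrm{free}}(k)\longrightarrow K_0^{\mathrm{free}}(\FP(L)),\qquad \beta[F_s(k)] = [F_s(\FP(L))].
\]

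Then I would run the diagram chase. Let $\pi\colon A(C_{p^n})\twoheadrightarrow K_0^{\mathrm{free}}(k)$ and $\pi'\colon A(C_{p^n})\twoheadrightarrow K_0^{\mathrm{free}}(\FP(L))$ be the surjections constructed just before \cref{corollary: free K0 for fixed point}, sending $[C_{p^n}/C_{p^s}]$ to $[F_s(k)]$ and to $[F_s(\FP(L))]$ respectively; recall that $\ker\pi'$ is the ideal $I$ appearing there. By the previous paragraph, $\beta\circ\pi$ and $\pi'$ agree on the additive generators $[C_{p^n}/C_{p^s}]$, hence $\beta\circ\pi = \pi'$. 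Because $L$ carries the trivial $C_{p^n}$-action we have $\mathrm{Stab}_{C_{p^n}}(L) = C_{p^n}$, so $I$ is zero and $\pi'$ is an isomorphism. It follows that $\pi$ is injective — if $\pi(x)=0$ then $\pi'(x)=\beta(\pi(x))=0$, whence $x=0$ — and so $\pi$ is an isomorphism; therefore $\beta = \pi'\circ\pi^{-1}$ is an isomorphism as well. This proves that base change induces an isomorphism on $K_0^{\mathrm{free}}$, and the chain $K_0^{\mathrm{free}}(k)\cong K_0^{\mathrm{free}}(\FP(L))\cong A(C_{p^n})$ drops out.

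I do not expect a real obstacle: given \cref{corollary: free K0 for fixed point}, the argument is purely formal. The only step requiring care is the identity $\FP(L)\boxtimes_k F_s(k)\cong F_s(\FP(L))$ — what matters is that base change returns the free module on the \emph{same} orbit $C_{p^n}/C_{p^s}$, not merely a free module of the same total $k(C_{p^n}/C_{p^n})$-dimension — and this is exactly what the description of free modules as $k\boxtimes A_{C_{p^n}/C_{p^s}}$ in \cref{prop:free-k-mod-in-level-G/H-is-ind-of-res} delivers.
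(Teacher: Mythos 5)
Your proof is correct and takes essentially the same route as the paper: form the commutative triangle with $A(C_{p^n})$ mapping onto both $K_0^{\mathrm{free}}$-groups and the base-change map along the bottom, observe the right surjection is an isomorphism by \cref{corollary: free K0 for fixed point} (trivial action means the ideal $I$ vanishes), and conclude the left surjection and hence the base-change map are isomorphisms. You supply some details the paper leaves implicit — that base change sends $F_s(k)$ to $F_s(\FP(L))$ via $\FP(L)\boxtimes_k(k\boxtimes A_{C_{p^n}/C_{p^s}})\cong\FP(L)\boxtimes A_{C_{p^n}/C_{p^s}}$, which is why the triangle commutes — but the argument is the same.
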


\begin{proof}
    There is a commutative triangle
    \[
        \begin{tikzcd}
            & A(C_{p^n}) \ar[dr,"\sim"] \ar[->>,dl] &\\
            K_0^{\mathrm{free}}(k) \ar[rr] & &  K_0^{\mathrm{free}}(\FP(L))
        \end{tikzcd}
    \]
    where the diagonal maps are the canonical surjections and the bottom map is induced by base change.  The right diagonal map is an isomorphism by \cref{corollary: free K0 for fixed point}.  It follows that the left vertical map must be injective and hence is also an isomorphism.  It then follows immediately that the base change map is also an isomorphism.
\end{proof}

\subsection{Noetherian conditions on Green functors}

Classically, the category of finitely-generated modules over a ring only forms an abelian category if the ring is Noetherian. We are interested in this condition, towards the eventual goal of understanding $\G$-theory for Green functors. In particular, some of the machinery of algebraic $K$-theory which we need only works in the setting of abelian categories, so we need to establish checkable conditions on a Green functor which imply that the category of finitely-generated modules is abelian.

We being with a few lemmas.

\begin{lemma}
    Let $R$ be a Green functor. An $R$-module $M$ is finitely generated if and only if there exists elements $x_i\in M(G/H_i)$ for $i=1,\dots, n$ such that the smallest $R$-submodule of $M$ which contains all $x_i$ is $M$.
\end{lemma}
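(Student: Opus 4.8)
The plan is to leverage the universal property of free modules from \cref{prop:free-k-mod-in-level-G/H-is-ind-of-res}: the free $R$-module $F_{G/H}$ at level $G/H$ represents the functor $M\mapsto M(G/H)$, so a morphism $F_{G/H}\to M$ is precisely an element of $M(G/H)$. The whole statement will follow once we identify, for a tuple $x_i\in M(G/H_i)$ with $i=1,\dots,n$, the image of the induced map $\varphi\colon\bigoplus_{i=1}^n F_{G/H_i}\to M$ with the smallest $R$-submodule of $M$ containing all the $x_i$. I would first recall that $R$-modules form an abelian category (they are modules over the Mackey algebra $\mu_R$), with kernels, images, and arbitrary intersections of submodules computed levelwise; in particular the smallest submodule containing a given set of elements is well-defined as the intersection of all submodules containing them.

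Next I would treat a single element. Given $x\in M(G/H)$, let $f_x\colon F_{G/H}\to M$ be the corresponding morphism and set $\langle x\rangle := \mathrm{im}\,f_x$. Then $\langle x\rangle$ contains $x$, since $x$ is by construction the image of the tautological generator of $F_{G/H}$; and if $N\subseteq M$ is any submodule with $x\in N(G/H)$, then by the universal property $f_x$ factors as $F_{G/H}\to N\hookrightarrow M$, whence $\langle x\rangle\subseteq N$. Hence $\langle x\rangle$ is the smallest submodule of $M$ containing $x$. Since the image of a finite sum of maps is the sum of the images, the image of the map $\varphi$ attached to $(x_1,\dots,x_n)$ is $\sum_{i}\langle x_i\rangle$, which is exactly the smallest submodule of $M$ containing all the $x_i$.

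Both implications are then immediate. If $M$ is finitely generated there is a surjection $\varphi\colon\bigoplus_{i=1}^n F_{G/H_i}\twoheadrightarrow M$; letting $x_i\in M(G/H_i)$ be the image of the $i$-th tautological generator, we get $\sum_i\langle x_i\rangle=\mathrm{im}\,\varphi=M$. Conversely, given $x_i\in M(G/H_i)$ whose generated submodule is all of $M$, the induced morphism $\varphi\colon\bigoplus_{i=1}^n F_{G/H_i}\to M$ has image $\sum_i\langle x_i\rangle=M$, so $\varphi$ is surjective and exhibits $M$ as a quotient of a finite free module. I do not anticipate a genuine obstacle here; the only mild care needed is the elementary abelian-category bookkeeping identifying $\mathrm{im}\,\varphi$ with $\sum_i\langle x_i\rangle$ and the latter with the submodule generated by the $x_i$ — and this is precisely the point where the representability of $F_{G/H}$ does all the work.
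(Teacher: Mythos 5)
Your proof is correct and takes essentially the same approach as the paper's: in the forward direction you use that a generating tuple $(x_1,\dots,x_n)$ induces, via the representability of $F_{G/H_i}$, a map from a finite free module whose image is the submodule generated by the $x_i$ (hence all of $M$), and in the converse direction you use Yoneda to identify the images of the universal elements as a generating set. The paper states the same argument more tersely; you simply spell out the abelian-category bookkeeping justifying that $\mathrm{im}\,\varphi=\sum_i\langle x_i\rangle$.
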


\begin{proof}
    If such a collection of elements exist, then the maps $\widehat{x}_i\colon R_{G/H_i}\to M$ which represent each $x_i$ determine a surjection 
    \[
        \bigoplus_{i=1}^n R_{G/H_i}\xrightarrow{\oplus \widehat{x}_i} M
    \]
    and hence $M$ is finitely generated. 

    Conversely, the Yoneda lemma implies that each $R_{G/H_i}$ is generated as an $R$-module by a single universal element at level $G/H_i$. Picking a surjection from a finite free module onto $M$, the images of these universal elements form a generating set.
\end{proof}


\begin{lemma}\label{lemma: levelwise finitely generated implies finitely generated}
    If $M$ is an $R$ module such that for all $H\leq G$ $M(G/H)$ is a finitely generated $R(G/H)$-module, then $M$ is a finitely generated $R$-module.
\end{lemma}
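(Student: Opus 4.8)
The plan is to reduce to the preceding lemma by exhibiting a finite set of elements of $M$, spread across the various levels, whose generated submodule is all of $M$. Since $G$ is a finite group its subgroup lattice is finite; enumerate the subgroups as $H_1,\dots,H_m$. For each $j$ the hypothesis lets us choose finitely many elements $x^{(j)}_1,\dots,x^{(j)}_{n_j}\in M(G/H_j)$ generating $M(G/H_j)$ as an $R(G/H_j)$-module, and I would take $S=\{x^{(j)}_i : 1\le j\le m,\ 1\le i\le n_j\}$, a finite collection of elements of $M$ living in the various levels.

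The one remaining point is to verify that the smallest $R$-submodule $N\subseteq M$ containing $S$ is $M$ itself; granting this, the preceding lemma immediately gives that $M$ is finitely generated. I would check $N=M$ levelwise. Fix $H\le G$, say $H=H_j$. Because $N$ is an $R$-submodule, $N(G/H)\subseteq M(G/H)$ is an $R(G/H)$-submodule containing $x^{(j)}_1,\dots,x^{(j)}_{n_j}$, hence containing the $R(G/H)$-submodule of $M(G/H)$ that they generate; but that submodule is all of $M(G/H)$ by the choice of the $x^{(j)}_i$. So $N(G/H)=M(G/H)$ for every $H\le G$, and since the inclusion $N\hookrightarrow M$ is an isomorphism at each level it is an isomorphism of Mackey functors, whence $N=M$.

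There is essentially no obstacle here; the only step deserving a word is the claim that the submodule generated by $S$ contains, at each level $G/H$, the $R(G/H)$-span of the elements of $S$ lying in $M(G/H)$. This follows from the fact that an $R$-submodule is by definition a levelwise family of $R(G/H)$-submodules closed under restrictions, transfers, and conjugations, hence stable under the $R(G/H)$-action in each level; so any submodule of $M$ containing a given element of $M(G/H)$ contains that element's entire $R(G/H)$-span, and the same is then true of the intersection $N$ of all submodules containing $S$, which is computed levelwise. The genuine content of the lemma is merely the passage from ``finitely generated in each level'' to ``finitely generated'', made possible by the finiteness of the subgroup lattice of $G$.
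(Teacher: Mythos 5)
Your proof is correct and follows the same approach as the paper's (very terse) argument: take the union of finite levelwise generating sets and observe that the $R$-submodule they generate must equal $M$ by the levelwise characterization of submodules. You have simply spelled out the details the paper leaves implicit.
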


\begin{proof}
    If a module $M$ is levelwise finitely generated then the union of the generating sets for the $M(G/H)$ is a finite generating set for $M$.  
\end{proof}

The converse is false.

\begin{example}\label{example: levelwise Noetherian is not good enough}
    Let $G=C_p$ and let and consider the Green meadow
    \[
        k = \begin{tikzcd}
	    {\mathbb{F}_p(t_1^p,t_2^p,\dots)} \\
	    {\mathbb{F}_p(t_1,t_2,\dots)}
	    \arrow["{\mathrm{inc}}", shift left, from=1-1, to=2-1]
	    \arrow["0", shift left=5, from=2-1, to=1-1]
    \end{tikzcd}
    \]
    The free module $k_{C_p/e}$ is generated by a single element at level $C_p/e$ but $k_{C_p/e}(C_p/C_e) = k(C_p/e) = \mathbb{F}_p(t_1,t_2,\dots)$ is not finitely generated over $k(C_p/C_p) = \mathbb{F}_p(t_1^p,t_2^p,\dots)$. Indeed, the inclusion factors through an infinite sequence of degree $p$ field extensions obtained by adjoining the $p$th root $t_i$ of $t_i^p$. 

    In fact, even though $k_{C_{p}/e}$ is finitely generated it contains submodules which are not finitely generated. For instance, the submodule
    \[
        M = \begin{tikzcd}
	    {\mathbb{F}_p(t_1,t_2,\dots)} \\
	    {\mathbb{F}_p(t_1,t_2,\dots)}
	    \arrow["{\mathrm{id}}", shift left, from=1-1, to=2-1]
	    \arrow["0", shift left, from=2-1, to=1-1]
    \end{tikzcd}\]
    of $k_{C_p/e}$ is not finitely generated. Indeed, for any map $k_{C_p/e} \rightarrow M$, the map $k_{C_p/e}(C_p/C_p) \rightarrow M(C_p/C_p)$ must be zero, as the transfer in $k_{C_p/e}$ is surjective, but the transfer in $M$ is zero. Thus, given any morphism $f$ from a free $k$-module to $M$, its image in $M(C_p/C_p)$ is a finite $k(C_p/C_p)$-dimensional submodule of $M(C_p/C_p)$, so that $f$ cannot be surjective.

    We highlight that $k$ actually admits the structure of a (field-like) Tambara functor with norm given by the Frobenius endomorphism. One could also make a similar example work for the Green functor $\ell$ with $\ell(C_p/C_p) = \F_2$ and $\ell(C_p/e)$ its algebraic closure, although $\ell$ does not admit the structure of a Tambara functor.
\end{example}

The key point in this example was that $R(C_p/e)$ is not a finitely generated ring over $R(C_p/C_p)$. In the setting where this holds we obtain a converse.

\begin{definition}\label{def:RFD}
    We say a Green functor $R$ is \emph{relatively finite dimensional} if for all $H \leq G$, $R(G/H)$ is a finitely generated module over $R(G/G)$ (equivalently, the restriction $R(G/G) \rightarrow R(G/H)$ is finite).
\end{definition}

 We will not hesitate to impose the relatively finite dimensional assumption whenever it simplifies arguments. In fact, this does not lose too much generality, as \cref{thm:fields-are-FF} allows for faithfully flat descent arguments in many cases (for example, \cref{thm:fields-are-FF} implies that whenever $k$ is a $C_p$-Green meadow which is not relatively finite dimensional, there is a faithfully flat map from $k$ to a relatively finite dimensional $C_p$-Green meadow). For $G \neq C_p$, however, the situation is more complicated, and we decline to give a complete picture.

 We also note that many Green functors arising in nature are relatively finite dimensional. For example, the Green functor underlying any Tambara functor which is levelwise finitely generated over $\Z$ or over a fixed field is always relatively finite dimensional (one sees this straightforwardly using \cite[Lemma 3.13]{SSw24}). Additionally, forthcoming work of Sun \cite[Theorem B]{Sun25} shows that all free polynomial Tambara functors are relatively finite dimensional.

\begin{proposition}\label{proposition: finite generation for relatively finite dimensional}
    If $R$ is relatively finite dimensional, then an $R$-module $M$ is finitely generated if and only if $M(G/H)$ is a finitely generated $R(G/H)$-module for all $H\leq G$.
\end{proposition}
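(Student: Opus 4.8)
The forward implication is precisely \cref{lemma: levelwise finitely generated implies finitely generated} and uses no hypothesis on $R$, so the plan is to prove the converse: if $M$ is a finitely generated $R$-module and $R$ is relatively finite dimensional, then $M(G/H)$ is a finitely generated $R(G/H)$-module for every $H\leq G$. The strategy is to reduce to free modules via a finite presentation and then read off the levelwise structure of free modules from \cref{corollary: levelwise value of free modules}.

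First I would choose a surjection $\pi\colon \bigoplus_{i=1}^n R_{G/H_i}\twoheadrightarrow M$ from a finite free $R$-module, which exists since $M$ is finitely generated. Evaluating at an arbitrary level $G/K$, and using that exactness of morphisms of Mackey functors is detected levelwise, we obtain a surjection of $R(G/K)$-modules $\bigoplus_{i=1}^n R_{G/H_i}(G/K)\twoheadrightarrow M(G/K)$. Since a quotient of a finitely generated module is finitely generated, it suffices to show that each $R_{G/H_i}(G/K)$ is a finitely generated $R(G/K)$-module.

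By \cref{corollary: levelwise value of free modules}, $R_{G/H_i}(G/K)$ is a finite direct sum of modules of the form $R(G/L)$ for various subgroups $L\leq K$, each viewed as an $R(G/K)$-module via the restriction ring map $\Res^K_L$. Now I would invoke relative finite dimensionality: each $R(G/L)$ is a finitely generated module over $R(G/G)$. Since the $R(G/G)$-module structure on $R(G/L)$ is restriction of scalars along $\Res^G_L = \Res^K_L\circ \Res^G_K$, it factors through the $R(G/K)$-module structure, so any finite $R(G/G)$-generating set of $R(G/L)$ is also an $R(G/K)$-generating set. Hence each $R(G/L)$, and therefore the finite direct sum $R_{G/H_i}(G/K)$, is finitely generated over $R(G/K)$. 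Taking $K=H$ finishes the argument.

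The only real subtlety, and the step I would check most carefully, is the last compatibility — that ``finitely generated over the smaller ring $R(G/G)$'' genuinely upgrades to ``finitely generated over $R(G/K)$''. This is immediate once one observes that the restriction maps compose, but it is exactly the place where the relatively finite dimensional hypothesis is used in its full strength: we need the finiteness of $R(G/L)$ over $R(G/G)$ for \emph{every} subgroup $L\leq G$, including those that arise as the stabilizers $K\cap\gamma^{-1}H_i\gamma$ in the double-coset description of the free module.
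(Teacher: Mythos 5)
Your proof is correct and follows essentially the same route as the paper: reduce to a finite free cover, evaluate levelwise, use the double-coset description of free modules (your \cref{corollary: levelwise value of free modules} is precisely how the paper arrives at a finite product of $R(G/K_i)$'s), and then upgrade finite generation over $R(G/G)$ to finite generation over $R(G/K)$ via the factorization of restriction maps. The only difference is that you spell out the last restriction-of-scalars step, which the paper leaves implicit.
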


\begin{proof}
    \cref{lemma: levelwise finitely generated implies finitely generated} implies that the backwards direction is true for any Green functor. For the forwards direction, suppose that $M$ is finitely generated, so that it receives a surjection from a finitely generated free $R$-module
    \[
        \bigoplus_i \Ind_{H_i}^G \Res_{H_i}^G R \rightarrow M
    \]
    It therefore suffices to observe that whenever $H$ and $K$ are any two subgroups of $G$, the $R(G/H)$-module 
    \[ 
        \left( \Ind_K^G \Res_K^G R \right) \left( G/H \right) \cong R \left( \ind_K^G \res_K^G G/H \right)
    \]
    is finitely generated. The double coset formula for $G$-sets implies that, as an $R(G/H)$-module, this is isomorphic to a finite product of $R(G/K_i)$, where the $K_i$ run through subgroups $G$ which are subconjugate to $H$. These are all finitely generated $R(G/H)$-modules by the hypothesis that $R$ is relatively finite dimensional, hence their product is a finitely generated $S(G/H)$-module.
\end{proof}

\begin{definition}
    A Green functor is \emph{module-Noetherian} if every submodule of a finitely generated module is finitely generated.
\end{definition}

\begin{remark}
    As emphasized by the name, we note that the definition of module-Noetherian given here is \emph{not} equivalent to a Green functor satisfying the ascending chain condition on ideals. Indeed, \cref{example: levelwise Noetherian is not good enough} gives an example of a Green functor which satisfies the ascending chain condition on ideals but is not module-Noetherian. To see why these two notions do not agree, it is helpful to recall why these notions are the same for ordinary commutative rings. The essential point is that the ascending chain condition for ideals on a commutative ring $R$ is equivalent to the condition that every submodule of a finite free module $R^n$ is finitely generated. Since every finitely generated module is a quotient of a finite free module it then suffices to observe that the property of being a Noetherian module is preserved under taking quotients.

    Let $R$ be a Green functor. Every finitely generated $R$-module is a quotient of a finitely generated free module, but the free modules on non-trivial $G$-sets have submodules whose structure which is not dictated by the ideals of $R$. In particular, it is possible for every ideal of $R$ to be finitely generated but have submodules of free $R$-modules be non-finitely generated, as demonstrated in \cref{example: levelwise Noetherian is not good enough}. The definition of module-Noetherian given above is equivalent to the condition that every submodule of a finitely generated free module is finitely generated.
\end{remark}

The reason we care about the module-Noetherian condition is that it implies that the category of finitely generated modules is an abelian category. Indeed, in general the category of modules over an arbitrary Green functor can fail to be Noetherian as it might not be closed under kernels.  The module-Noetherian condition is equivalent to the assumption that kernels of maps between finitely generated modules are finitely generated.

From \cref{example: levelwise Noetherian is not good enough} we see that even levelwise Noetherian Green functors need not be module-Noetherian in general. If we assume further that the Green functor is relatively finite dimensional then Noetherian and levelwise Noetherian become equivalent.

\begin{corollary}\label{cor:RFD-implies-all-Noeth-conditions-are-equiv}
    Let $R$ be a Green functor which is relatively finite dimensional. The following are equivalent:
    \begin{enumerate}
        \item $R(G/G)$ is Noetherian ring,
        \item $R(G/H)$ is Noetherian ring for all $H\leq G$,
        \item $R$ is a module-Noetherian Green functor.
    \end{enumerate}
\end{corollary}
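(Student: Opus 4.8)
The plan is to prove the cycle of implications $(1)\Rightarrow(2)\Rightarrow(3)\Rightarrow(1)$; since the implication $(2)\Rightarrow(1)$ is immediate (take $H=G$), this suffices to establish that all three conditions are equivalent. For $(1)\Rightarrow(2)$, fix $H\leq G$ and abbreviate $A=R(G/G)$ and $B=R(G/H)$. The restriction $\Res^G_H\colon A\to B$ is a ring homomorphism, and by the definition of relatively finite dimensional (\cref{def:RFD}) $B$ is finitely generated as an $A$-module. Since $A$ is Noetherian, $B$ is therefore a Noetherian $A$-module, so every ascending chain of $A$-submodules of $B$ stabilizes. As every ideal of $B$ is in particular an $A$-submodule of $B$ (the $A$-module structure factoring through $B$), the ring $B$ satisfies the ascending chain condition on ideals, i.e. $B$ is Noetherian.

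For $(2)\Rightarrow(3)$ --- where in fact we only use condition $(1)$, which $(2)$ subsumes --- it suffices to show that every submodule $N$ of a finitely generated free $R$-module $F$ is finitely generated: indeed, any finitely generated $R$-module is a quotient of such an $F$, a submodule of the quotient pulls back along the quotient map to a submodule of $F$, and a quotient of a finitely generated module is finitely generated. By \cref{lemma: levelwise finitely generated implies finitely generated}, it is then enough to show each $N(G/H)$ is finitely generated over $R(G/H)$. Now $N(G/H)$ is an $R(G/G)$-submodule of $F(G/H)$, and, as computed in the proof of \cref{proposition: finite generation for relatively finite dimensional} via \cref{corollary: levelwise value of free modules} and the double coset formula, $F(G/H)$ is a finitely generated $R(G/G)$-module. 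Since $R(G/G)$ is Noetherian, $F(G/H)$ is a Noetherian $R(G/G)$-module, hence $N(G/H)$ is finitely generated over $R(G/G)$, and a fortiori over $R(G/H)$.

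For $(3)\Rightarrow(1)$, recall from \cref{prop:free-k-mod-in-level-G/H-is-ind-of-res} that the free $R$-module at level $G/G$ is $R$ itself, with value $R(G/G)$ in level $G/G$; in particular $R$ is a finitely generated $R$-module. Given an ideal $I\subseteq R(G/G)$, write $\langle I\rangle\subseteq R$ for the $R$-submodule it generates. Every element of $\langle I\rangle$ at level $G/G$ is a sum of terms $\Tr^G_H(\Res^G_H(x)\cdot y)$ with $x\in I$, $y\in R(G/H)$ (allowing $H=G$), and by Frobenius reciprocity (\cref{lemma: Green functor data}) such a term equals $x\cdot\Tr^G_H(y)\in I$; hence $\langle I\rangle(G/G)=I$. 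Thus $I\mapsto\langle I\rangle$ is an order-preserving injection from the ideals of $R(G/G)$ into the submodules of the finitely generated module $R$, so a strictly ascending chain of ideals would give a strictly ascending chain of submodules of $R$ whose union is not finitely generated, contradicting module-Noetherianness. I expect this last implication to be the main obstacle: the module-Noetherian hypothesis a priori constrains only submodules of free $R$-modules, which can be strictly more complicated than the ideals of $R(G/G)$, so one must verify carefully that distinct ideals produce distinct $R$-submodules --- equivalently that $\langle I\rangle(G/G)=I$ --- which is precisely where Frobenius reciprocity is used.
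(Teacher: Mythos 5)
Your argument is correct and follows essentially the same route as the paper: $(1)\Rightarrow(2)$ by finiteness of restriction, $(2)\Rightarrow(3)$ by \cref{proposition: finite generation for relatively finite dimensional}, and $(3)\Rightarrow(1)$ by building the $R$-submodule $\langle I\rangle$ generated by an ideal $I\subseteq R(G/G)$ and using Frobenius reciprocity to see $\langle I\rangle(G/G)=I$. The one mild divergence is in the last step: the paper invokes \cref{proposition: finite generation for relatively finite dimensional} (hence RFD) to pass from ``$\langle I\rangle$ finitely generated'' to ``$I$ finitely generated,'' whereas you argue via the ascending chain condition on submodules of $R$, which as a bonus shows $(3)\Rightarrow(1)$ without the RFD hypothesis; either version is fine, but it is worth being a touch more careful in asserting that elements of $\langle I\rangle(G/G)$ are exactly sums of transfers $\Tr^G_H(\Res^G_H(x)\cdot y)$ --- the clean way to see this is to check, as the paper does, that the levelwise assignment $H\mapsto R(G/H)\cdot\Res^G_H(I)$ is already closed under restriction, conjugation, and (by Frobenius reciprocity) transfer, hence equals $\langle I\rangle$.
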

\begin{proof}
    $(1) \implies (2)$ follows from the fact that if $S$ is a Noetherian ring and $f\colon S\to T$ is a finite ring map then $T$ is Noetherian. $(2)\implies (3)$ follows straightforwardly from \cref{proposition: finite generation for relatively finite dimensional}.
    
    For $(3)\implies (1)$, first assume that $R$ is a module-Noetherian Green functor and let $I\subset R(G/G)$ be any ideal; we must show that $I$ is finitely generated. We claim there is a Green ideal $J \subset R$ such that $J(G/G) = I \subset R(G/G)$.  Granting this, since $J \subset R$ is a submodule of a finitely generated $R$-module the assumption that $R$ is Noetherian implies that $J$ is finitely generated. But then by \cref{proposition: finite generation for relatively finite dimensional} we see that $J(G/G) = I$ is a finitely generated $R(G/G)$-module.  Since $I$ was arbitrary $R(G/G)$ must be a Noetherian ring.

    Thus it remains to prove the claim.  Let $J(G/H)\subset R(G/H)$ be the $R(G/H)$-submodule generated by $\Res^G_H(I)$.  Note that we can ignore the Weyl group actions since $\Res^G_H(I)$ is fixed by all Weyl group actions. Then certainly $J$ is a levelwise submodule, closed under conjugations, restrictions, and $J(G/G) = I$.  All that remains is to check that $J$ is actually closed under transfer.  Suppose that $x\in J(G/H)$ and $H<K$.  We can write 
    \[
        x = \sum_{i=1}^n r_i\cdot \Res^G_{H_i}(x_i)
    \]
    for some $r_i\in R(G/H)$ and $x_i\in J(G/G) = I$. Thus  
    \[
     \Tr_H^K(x) = \sum_{i=1}^n \Tr^K_H(r_i\cdot \Res^G_{H_i}(x)) = \sum_{i=1}^n \Tr_H^K(r_i)\cdot \Res^G_{K}(x_i)\in J(G/K)
    \]
    where the second equality is Frobenius reciprocity.
\end{proof}

One might wonder about whether or not a Green functor $R$ being module Noetherian with $R(G/G)$ is Noetherian implies that $R$ is relatively finite dimensional. We are not aware of any examples where this fails.

\begin{problem}
    Which finite groups satisfy the property that module-Noetherian implies relatively finite dimensional?  The authors believe this to be true for $G = C_{p^n}$.
\end{problem}

Finally, we are interested in when restriction along a Green functor morphism preserves the property of being finitely generated.

\begin{definition}\label{def:module-finite}
    We say that a Green functor map $R \rightarrow S$ is \emph{module-finite} if the restriction functor
    \[ 
        S {-} \Mod \rightarrow R {-} \Mod 
    \]
    sends finitely generated $S$-modules to finitely generated $R$-modules.
\end{definition}

If $G$ is the trivial group, module-finite is equivalent to finite (that is--the target is a finitely generated module over the domain). The following is an immediate consequence of \cref{cor:RFD-implies-all-Noeth-conditions-are-equiv}.

\begin{lemma}\label{lem:sufficient-conditions-for-module-finite}
    Let $f : R \rightarrow S$ be a levelwise finite morphism of relatively finite dimensional Green functors. Then $f$ is module-finite.
\end{lemma}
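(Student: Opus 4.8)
The plan is to reduce to a levelwise statement and then apply transitivity of module-finiteness over ordinary commutative rings. So I would start with a finitely generated $S$-module $M$ and aim to show that its restriction of scalars $f^*M$ along $f\colon R\to S$ is a finitely generated $R$-module. Recall from \cref{lemma: definition of a module} and the base-change discussion preceding \cref{prop:res-base-change-adjunction} that $f^*M$ is computed levelwise: $(f^*M)(G/H) = M(G/H)$, with $R(G/H)$-module structure obtained by restricting scalars along the ring map $R(G/H)\to S(G/H)$.

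The first step is to apply \cref{proposition: finite generation for relatively finite dimensional} to the relatively finite dimensional Green functor $S$: since $M$ is finitely generated over $S$, each $M(G/H)$ is finitely generated over $S(G/H)$. The second step uses that $f$ is levelwise finite, so each $S(G/H)$ is a finitely generated $R(G/H)$-module; hence, by the standard fact that finiteness of commutative ring maps is transitive, $M(G/H) = (f^*M)(G/H)$ is a finitely generated $R(G/H)$-module for every $H\leq G$. The final step is to promote this back to a global statement: \cref{lemma: levelwise finitely generated implies finitely generated}, which holds for an arbitrary Green functor, shows that $f^*M$ is finitely generated over $R$ (one could equally invoke the backward implication of \cref{proposition: finite generation for relatively finite dimensional} for $R$).

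I do not expect a genuine obstacle here; the argument is a routine concatenation of \cref{proposition: finite generation for relatively finite dimensional} and \cref{lemma: levelwise finitely generated implies finitely generated} with an elementary commutative-algebra fact. The only point requiring care is the bookkeeping in the first paragraph---confirming that restriction of scalars for modules over Green functors is computed levelwise---and this is immediate from the definitions. It is worth noting that relative finite dimensionality of $R$ is not actually used: the argument needs only relative finite dimensionality of $S$ together with levelwise finiteness of $f$, so including the hypothesis on $R$ is simply harmless.
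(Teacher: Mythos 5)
Your proof is correct, and it supplies the natural expansion of what the paper only asserts (the paper gives no argument, merely labeling the lemma "an immediate consequence" of \cref{cor:RFD-implies-all-Noeth-conditions-are-equiv}, which is arguably a misattribution since the tool you actually need is \cref{proposition: finite generation for relatively finite dimensional}). Your chain---apply \cref{proposition: finite generation for relatively finite dimensional} for $S$, use transitivity of finite ring maps levelwise, then apply \cref{lemma: levelwise finitely generated implies finitely generated} to glue back to a global finite generation statement over $R$---is exactly the right route, and your concluding remark is a genuine, if minor, improvement: the hypothesis that $R$ is relatively finite dimensional is indeed never used, since the final step via \cref{lemma: levelwise finitely generated implies finitely generated} works over an arbitrary Green functor.
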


\subsection{Faithfully flat field extensions}

In this subsection we make a technical observation about Green meadows which allows us to extend some of our results to more cases. We begin by generalizing, in a straightforward way, the notion of a faithfully flat ring map to Green functors.

\begin{definition}
    A map of Green functors $R \to S$ is \emph{flat} if the base change functor $S \boxtimes_R - \colon \mathrm{Mod}_R\to \mathrm{Mod}_S$ preserves monics. A morphism is \emph{faithfully flat} if it is flat and $S\boxtimes_R M \cong 0$ implies $M \cong 0$ for any $R$-module $M$.
\end{definition}

In ordinary commutative algebra, every map of fields is faithfully flat.  In general this is not true for arbitrary maps of Green meadows.  The next theorem identifies some examples of faithfully flat maps.

\begin{theorem}\label{thm:fields-are-FF}
    Let $k \rightarrow \ell$ be a morphism of $C_p$-Green meadows, and assume the transfer in $\ell$ is zero. Then $k \rightarrow \ell$ is faithfully flat.
\end{theorem}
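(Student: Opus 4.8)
The plan is to reduce to an extremely rigid situation and then analyse the base-change functor one level at a time. First, the hypothesis that the transfer in $\ell$ vanishes is very restrictive. By the double coset formula (\cref{proposition: useful definition of Mackey functor}(6)), $\Res^{C_p}_e\circ\Tr^{C_p}_e=\sum_{g\in C_p}c_g$ on $\ell(C_p/e)$, and this composite is zero; evaluating at $1$ gives $p\cdot 1=0$, so $\ell(C_p/e)$ has characteristic $p$. If the $C_p$-action on the field $\ell(C_p/e)$ were nontrivial it would be faithful (as $C_p$ is simple), so by Artin's lemma $\ell(C_p/e)/\ell(C_p/e)^{C_p}$ would be a separable degree-$p$ Galois extension, and its trace $\sum_g c_g$ would be surjective, contradicting $\Res\Tr=0$. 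Hence $\ell(C_p/e)$ carries the trivial $C_p$-action. Since $k\to\ell$ is a map of Green meadows, the $C_p$-equivariant field map $k(C_p/e)\to\ell(C_p/e)$ has trivial-action target and injective source, so the action on $k(C_p/e)$ is trivial too; then $\Res^{C_p}_e\colon k(C_p/C_p)\to k(C_p/e)$ is an injective field map, so $k$ has characteristic $p$ and $\Res^{C_p}_e\Tr^{C_p}_e=p\cdot\mathrm{id}=0$, whence $\Tr_k=0$. Thus, in terms of Lewis diagrams, both $k$ and $\ell$ consist of a characteristic-$p$ field extension ($A\subseteq B$ for $k$, $A'\subseteq B'$ for $\ell$) with trivial action and zero transfer, and $k\to\ell$ is just a commuting square of field maps $A\to A'$, $B\to B'$.

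Next I would unwind modules and base change. Using \cref{lemma: definition of a module} and the double coset formula, a $k$-module in this situation is the data of an $A$-vector space $V=M(C_p/C_p)$, a $B[C_p]$-module $W=M(C_p/e)$ (semilinear $=$ linear, since the action on $B$ is trivial), and $A$-linear maps $r\colon V\to W^{C_p}$, $t\colon W\to V$ with $t\sigma=t$, $rt=\Nm$ (the norm element $\sum_i\sigma^i$ acting on $W$), and $t(b\cdot r(v))=0$ for all $b\in B$, $v\in V$ — the last from Frobenius reciprocity together with $\Tr_k=0$; in particular $tr=0$. The crucial point is that two functors out of $\mathrm{Mod}_k$ are strong symmetric monoidal: the ``underlying'' functor $M\mapsto M(C_p/e)$, valued in $B[C_p]$-modules (it is restriction to the trivial group, which is strong monoidal), and the geometric fixed points $\Phi^{C_p}(M)=\mathrm{coker}(t\colon M(C_p/e)\to M(C_p/C_p))$, valued in $A$-modules (it is left adjoint to the fully faithful ``inflation'' embedding $V\mapsto(V,0,0,0)$, which a direct check via Dress pairings shows is strong monoidal, and hence so is $\Phi^{C_p}$). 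Since $\Phi^{C_p}(k)=A$, $\Phi^{C_p}(\ell)=A'$, and the underlying rings are $B$, $B'$, base change along $k\to\ell$ is intertwined with ordinary base change on each piece: $(\ell\boxtimes_k M)(C_p/e)\cong B'\otimes_B M(C_p/e)$ and $\Phi^{C_p}(\ell\boxtimes_k M)\cong A'\otimes_A\Phi^{C_p}(M)$.

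Faithfulness is then quick. If $\ell\boxtimes_k M=0$, then $B'\otimes_B M(C_p/e)=0$, so $M(C_p/e)=0$ since $B\to B'$ is a field extension; hence $M=(V,0,0,0)$ with $V=M(C_p/C_p)=\Phi^{C_p}(M)$, and $A'\otimes_A V=\Phi^{C_p}(\ell\boxtimes_k M)=0$ forces $V=0$, so $M=0$. (Equivalently, the pair of functors $M\mapsto M(C_p/e)$ and $\Phi^{C_p}$ is jointly conservative on $\mathrm{Mod}_k$, and base change intertwines each with a conservative functor.)

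It remains to prove $\ell\boxtimes_k(-)$ is exact; this is the part I expect to be the main obstacle. It is right exact as a left adjoint (\cref{prop:res-base-change-adjunction}), so one must check it preserves monics, and by the first computation above it already does so at the $C_p/e$-level ($B'\otimes_B(-)$ of the levelwise-exact underlying sequence, with $B\to B'$ flat). The plan for the $C_p/C_p$-level is to resolve an arbitrary $k$-module by free $k$-modules — base change preserves free modules and commutes with colimits, and the values of finite free $k$-modules at every level are given by \cref{corollary: levelwise value of free modules} — and then to push the short exact sequence of Mackey functors $0\to\mathrm{im}(\Tr)\to M(C_p/C_p)\to\Phi^{C_p}(M)\to 0$ through base change, using $C_p/e$-level flatness and the $\Phi^{C_p}$-identification to control $\mathrm{im}(\Tr)$ after base change. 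Concretely, since evaluation at $C_p/C_p$ is \emph{not} monoidal, the $C_p/C_p$-value of $\ell\boxtimes_k M$ is a genuine gluing of the ($A$-based) geometric-fixed-point part with the ($B$-based) ``transferred-up'' part, the latter a quotient of $(B'\otimes_B M(C_p/e))_{C_p}$; the delicate step is to verify that this gluing creates no $\operatorname{Tor}$, i.e.\ that $\mathrm{Tor}^k_1(\ell,-)$ vanishes at level $C_p/C_p$. I would settle this by a direct diagram chase with the structure maps $r$, $t$, the relation $rt=\Nm$ and the identity $t(B\cdot\mathrm{im}\,r)=0$, combined with the faithful flatness of both $A\to A'$ and $B\to B'$.
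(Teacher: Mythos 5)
Your reduction to the rigid case (trivial actions, characteristic $p$, vanishing transfers on both sides) is correct and a bit more carefully spelled out than the paper's own, and the faithfulness argument via joint conservativity of $M\mapsto M(C_p/e)$ and $\Phi^{C_p}$, each intertwined with scalar extension along a flat field map, is clean and essentially matches the paper's faithfulness step.

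The genuine gap, which you flag yourself, is that flatness is never actually proved. You outline a plan — resolve by free $k$-modules, push a short exact sequence through base change, rule out $\mathrm{Tor}$ at the top level by a diagram chase with the structure maps $r$, $t$ and the relations $rt=\mathrm{Nm}$, $t(B\cdot\mathrm{im}\,r)=0$ — and then defer the chase. That is precisely where the content of the theorem lies: evaluation at $C_p/e$ is strong monoidal, so flatness there is immediate, but evaluation at $C_p/C_p$ is not, and the question cannot be reduced to ordinary ring flatness level by level. The paper confronts the top level head-on: it writes out $(\ell\boxtimes_k M)(C_p/C_p)$ explicitly via the Dress-pairing presentation, uses the vanishing transfer in $\ell$ to replace one of the two Frobenius-reciprocity relations by a pure annihilation relation, chooses $k(C_p/e)$-linear complements $M(C_p/e)\cong X_M\oplus Y_M$ (with $X_M$ the span of restrictions) and $\ell(C_p/e)\cong U\oplus V$, and derives the closed-form direct sum
\[
(\ell\boxtimes_k M)(C_p/C_p)\;\cong\; M(C_p/C_p)\otimes_{k(C_p/C_p)}\ell(C_p/C_p)\ \oplus\ Y_M\otimes_{k(C_p/e)}V,
\]
from which preservation of monomorphisms is read off by choosing compatible complements for a submodule $N\subseteq M$. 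Until you either reproduce such an explicit identification or carry out the $\mathrm{Tor}$-vanishing chase in full, the flatness half of the theorem remains unestablished; and you should not expect the chase to be routine, since the transferred-up part of $(\ell\boxtimes_k M)(C_p/C_p)$ interacts with the geometric-fixed-point part precisely through the images of restrictions, and controlling that interaction for an arbitrary submodule requires a genuine argument.
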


Our assumptions imply that $C_p$ acts trivially on $\ell(C_p/e)$ and $k(C_p/e)$ and that the transfer in $k$ is trivial as well. Indeed, if the action of $C_p$ on $\ell(C_p/e)$ were non trivial it would be faithful, in which case Artin's lemma implies that the transfer composed with the restriction surjects onto the fixed field $\ell(C_p/e)^{C_p}\neq 0$.

\begin{proof}
    Let $\overline{M}$ denote the base-change of a $k$-module $M$ to an $\ell$-module. Then we have 
    \[ 
        \overline{M}(C_p/e) \cong M(C_p/e) \otimes_{k(C_p/e)} \ell(C_p/e) 
    \] 
    and 
    \[ 
        \overline{M}(C_p/C_p) \cong \left(\left( M(C_p/C_p) \otimes_{k(C_p/C_p)} \ell(C_p/C_p) \right)\bigoplus \left( M(C_p/e) \otimes_{k(C_p/e)} \ell(C_p/e) \right)_{C_p} \right)/\mathrm{FR} 
    \] 
    where $\mathrm{FR}$ is the $k(C_p/C_p)$-subspace generated by the ``Frobenius reciprocity" relations 
    \begin{enumerate}
        \item[($1$)] $\Tr(m) \otimes x \bigoplus - m \otimes \Res(x)$ for all $m \in M(C_p/e)$ and $x \in \ell(C_p/C_p)$ and 
        \item[($2$)] $m \otimes \Tr(x) \bigoplus - \Res(m) \otimes x$ for all $m \in M(C_p/C_p)$ and $x \in \ell(C_p/e)$.
    \end{enumerate}
    Our assumption that the transfer of $\ell$ vanishes allows us to replace relation ($2$) with relation 
    \begin{enumerate}
        \item[($2'$)] $0 \bigoplus \Res(m) \otimes x$ for all $m \in M(C_p/C_p)$ and $x \in \ell(C_p/e)$.
    \end{enumerate}
    
    Since $C_p$ acts trivially on $k(C_p/e)$ and $\ell(C_p/e)$, we may commute the $C_p$-orbits with the tensor $- \otimes_{k(C_p/e)} \ell(C_p/e)$. Let $X_M\subset M(C_p/e)$ denote the $k(C_p/e)$-vector subspace generated by $\Res(M(C_p/C_p))$, noting that $X_M$ is fixed by $C_p$, and choose $Y_M\subset M(C_p/e)_{C_p}$ so that we have an isomorphism of $M(C_p/e)_{C_p} \cong X_M \oplus Y_M$. Note that relation $(2')$ will imply the terms in $\overline{M}(C_p/C_p)$ which come from $X_M$ will vanish.
    
    Let $U\subset \ell(C_p/e)$  be the $k(C_p/e)$-vector subspace generated by $\Res(\ell(C_p/C_p))$, and choose $V$ so that $\ell(C_p/e) \cong U \oplus V$ as $k(C_p/e)$-vector spaces. Using relations $(1)$ and  $(2')$ we obtain 
    \[ 
        \overline{M}(C_p/C_p) \cong \left( M(C_p/C_p) \otimes_{k(C_p/C_p)} \ell(C_p/C_p) \bigoplus Y_M \otimes_{k(C_p/e)} V \right)
    \] 
    as a $k(C_p/C_p)$-vector space.

    If $N\subset M$ is a sub-$k$-module of $M$, then we may choose $Y_N \subset Y_M$. Thus $\overline{N} \subset \overline{M}$ and basechange along $k \rightarrow \ell$ preserves subobjects -- that is $k\to \ell$ is flat. If $\overline{M} = 0$, then $M(C_p/e) = 0$ by flatness of the field map $k(C_p/e) \rightarrow \ell(C_p/e)$ whence $0 = \overline{M}(C_p/C_p) = M(C_p/C_p) \otimes_{k(C_p/C_p)} \ell(C_p/C_p)$. Since $k(C_p/C_p) \rightarrow \ell(C_p/C_p)$ is also flat, we deduce $M(C_p/C_p) = 0$, whence $M = 0$.
\end{proof}

We expect some version of \cref{thm:fields-are-FF} to be true for more complicated groups, although it is not clear exactly what the hypotheses should be on the morphism $k \to \ell$. Such sufficient criteria could be the assumption that the isotropy group of the Weyl action on each level is preserved, or that a transfer $\Tr_H^K$ in $k$ is nontrivial if and only if the transfer $\Tr_H^K$ in $\ell$ is nontrivial.

\begin{corollary}\label{corollary: faithful flatness for trivial actions}
    Let $k$ be any $C_p$-Green meadow. Then $k$ admits a faithfully flat map to a relatively finite dimensional $C_p$-Green meadow.
\end{corollary}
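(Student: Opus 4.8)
The plan is to reduce to \cref{thm:fields-are-FF} by showing that a $C_p$-Green meadow which fails to be relatively finite dimensional must have a rigid structure, and then mapping it to a constant Green functor. Write $L = k(C_p/e)$ and $F = k(C_p/C_p)$. Since the restriction $r \colon F \to L$ is a unital ring map between fields it is injective, and compatibility of $r$ with conjugation forces $r(F) \subseteq L^{C_p}$. If $k$ is already relatively finite dimensional, the identity $k \to k$ works, as base change along it is the identity functor and hence trivially faithfully flat. So from now on assume $[L : F] = \infty$.

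The main step is to pin down the structure of $k$ under this assumption. First I would show the $C_p$-action on $L$ must be trivial: otherwise it is faithful (the only subgroups of $C_p$ being the trivial one and $C_p$ itself), so by Artin's lemma $L/L^{C_p}$ is Galois of degree $p$; the double coset formula identifies $r \circ \Tr_e^{C_p} \colon L \to L$ with the trace of this Galois extension, which is surjective onto $L^{C_p}$, and since its image also lies in $r(F)$ we get $r(F) = L^{C_p}$, whence $[L:F] = p$, a contradiction. Next I would show $\mathrm{char}(L) = p$: if the action is trivial and $p$ is invertible in $L$, then the double coset formula gives $r \circ \Tr_e^{C_p} = p \cdot (-) \colon L \to L$, which is surjective, forcing $r$ to be an isomorphism and $[L:F] = 1$, again a contradiction. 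Hence the $C_p$-action on $L$ is trivial and $\mathrm{char}(L) = p$.

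To conclude, let $\underline{L}$ be the constant $C_p$-Green functor on $L$, that is $\FP(L)$ equipped with the trivial $C_p$-action. Its levels are the field $L$, so it is a $C_p$-Green meadow; it is relatively finite dimensional since its restriction is the identity; and its transfer is multiplication by $|C_p/e| = p$, which vanishes because $\mathrm{char}(L) = p$. Then $r$ in level $C_p/C_p$ together with $\mathrm{id}_L$ in level $C_p/e$ defines a morphism of $C_p$-Green functors $k \to \underline{L}$: it is levelwise a ring map, commutes with restrictions trivially, commutes with conjugations since both actions are trivial, and commutes with transfers by the double coset identity $r \circ \Tr_e^{C_p} = p \cdot (-)$. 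Since the transfer in the target vanishes, \cref{thm:fields-are-FF} shows that $k \to \underline{L}$ is faithfully flat.

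I expect the only substantive content to be the structural dichotomy of the second paragraph — in particular the observation that surjectivity of the Galois trace identifies $r(F)$ with exactly $L^{C_p}$; the construction of the map and the appeal to \cref{thm:fields-are-FF} are then purely formal.
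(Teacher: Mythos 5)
Your proof is correct and follows essentially the same route as the paper: in the ``good'' case reduce to the identity, and in the remaining case (trivial action, characteristic $p$) map to $\FP(k(C_p/e))$ and apply \cref{thm:fields-are-FF}. The paper asserts directly that if $\mathrm{char}(k)\neq p$ or the $C_p$-action is nontrivial then $k\cong\FP(k(C_p/e))$ is already relatively finite dimensional; your proof runs the contrapositive (failure of RFD forces trivial action and characteristic $p$) and spells out the Galois-trace and multiplication-by-$p$ arguments that justify that assertion, while the map $k\to\underline{L}$ you construct by hand is exactly the unit of the evaluation/FP adjunction that the paper calls ``the canonical map.''
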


\begin{proof}
    If the characteristic of $k$ is not $p$, or $C_p$ acts nontrivially on $k(C_p/e)$, then $k \cong \FP(k(C_p/e))$ is already relatively finite dimensional. Otherwise the transfer in $\FP(k(C_p/e))$ is zero so that \cref{thm:fields-are-FF} applies to the canonical map $k \rightarrow \FP(k(C_p/e))$.
\end{proof}

We have not considered whether or not \cref{corollary: faithful flatness for trivial actions} should hold for other kinds of Green functors. It would be very interesting to know how necessary the Green meadow hypothesis is.
\section{\texorpdfstring{Towards $K_0$ of $C_{p^n}$-Green meadows}{Towards K\_0 of C\_p\string^n-Tambara fields}}\label{section: K0}

In this section we classify projective modules over (relatively finite dimensional) $C_{p^n}$-Green meadows by showing they are all free. As a consequence we are able to compute $K_0(k)$, the free abelian group generated by the projective modules with direct sum as addition.  In particular, it is isomorphic to the group $K_0^{\mathrm{free}}(k)$ considered earlier.

\subsection{The level filtration}

We begin by constructing a useful filtration of the category of $C_{p^n}$-Mackey functors.

\begin{construction}
    Given any $C_{p^n}$-Mackey functor $M$, we may functorially construct a $C_{p^{n-1}}$-Mackey functor $\tau_{\geq 1}(M)$, essentially by ``throwing out'' the $C_p/e$ level.  Precisely, set 
    \[
        \tau_{\geq 1}(M)(C_{p^{n-1}}/C_{p^s}) := M(C_{p^n}/C_{p^{s+1}}).
    \]
    So $\tau_{\geq 1}(M)$ has the same top level as $M$, and the bottom level of $\tau_{\geq 1}(M)$ is the $C_{p^n}/C_p$-level of $M$. 
    
    Next, set the restriction and transfer maps between immediately adjacent levels of $\tau_{\geq 1}(M)$ to be the restriction and transfer maps between the corresponding levels of $M$. Note that to check that the double-coset formula holds, it suffices to check that it holds for immediately adjacent levels of $\tau_{\geq 1}(M)$. There, it holds because it is the same formula as for $M$. 
\end{construction} 

\begin{lemma}
	Let $k$ be a $C_{p^n}$-Green meadow. Then $\tau_{\geq 1} k$ is a $C_{p^{n-1}}$-Green meadow.
\end{lemma}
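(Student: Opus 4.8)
The plan is to give $\tau_{\geq 1}k$ the ring structures it inherits levelwise from $k$ and then verify the characterization of Green functors from \cref{lemma: Green functor data}; once that is done the Green meadow condition is immediate, since by construction $\tau_{\geq 1}k(C_{p^{n-1}}/C_{p^s}) = k(C_{p^n}/C_{p^{s+1}})$ is a field for every $0\le s\le n-1$.

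First I would unwind what the preceding construction provides: $\tau_{\geq 1}k$ is a $C_{p^{n-1}}$-Mackey functor with value $k(C_{p^n}/C_{p^{s+1}})$ at $C_{p^{n-1}}/C_{p^s}$, whose restrictions and transfers between adjacent levels are exactly those of $k$ between the corresponding adjacent levels. Since $C_{p^n}$ is abelian, by \cref{remark: Weyl group actions} the remaining conjugation data is determined by the Weyl-group actions, and I would take the action of $W_{C_{p^{n-1}}}(C_{p^s}) = C_{p^{n-1}}/C_{p^s}$ on $\tau_{\geq 1}k(C_{p^{n-1}}/C_{p^s})$ to be the Weyl action of $W_{C_{p^n}}(C_{p^{s+1}}) = C_{p^n}/C_{p^{s+1}}$ on $k(C_{p^n}/C_{p^{s+1}})$, transported along the canonical isomorphism $C_{p^{n-1}}/C_{p^s}\cong C_{p^n}/C_{p^{s+1}}$. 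The payoff of this bookkeeping is the observation that \emph{every} structure map of $\tau_{\geq 1}k$ --- restriction, transfer, or conjugation, between any pair of levels --- is a structure map of $k$ between levels indexed by subgroups containing $C_p$, or a composite of such by axiom (5) of \cref{proposition: useful definition of Mackey functor}.

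Now equip each level $\tau_{\geq 1}k(C_{p^{n-1}}/C_{p^s}) = k(C_{p^n}/C_{p^{s+1}})$ with the commutative ring structure it already carries as a level of $k$ and check the two conditions of \cref{lemma: Green functor data}. Condition (1), that restrictions and conjugations are ring maps, holds for adjacent levels because it holds in $k$, and for arbitrary levels because those maps are composites of the adjacent ones. For condition (2), the Frobenius reciprocity relations hold for transfers between adjacent levels because they hold in $k$; the one step I expect to require a genuine (if short) argument --- and the only mild obstacle --- is that Frobenius reciprocity is inherited by composites of transfers. This is dispatched by a two-line induction: if $\Tr^K_H$ is Frobenius-linear over $\Res^K_H$ and $\Tr^L_K$ over $\Res^L_K$, then for $x\in R(G/H)$ and $y\in R(G/L)$ we have $\Tr^L_H(x)\cdot y = \Tr^L_K(\Tr^K_H(x))\cdot y = \Tr^L_K\bigl(\Tr^K_H(x)\cdot \Res^L_K(y)\bigr) = \Tr^L_K\bigl(\Tr^K_H(x\cdot \Res^L_H(y))\bigr) = \Tr^L_H(x\cdot \Res^L_H(y))$, using $\Tr^L_H = \Tr^L_K\circ\Tr^K_H$, $\Res^L_H = \Res^K_H\circ\Res^L_K$, and symmetrically for the other Frobenius relation. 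With \cref{lemma: Green functor data} verified, $\tau_{\geq 1}k$ is a Green functor, and since each $k(C_{p^n}/C_{p^{s+1}})$ is a field it is a $C_{p^{n-1}}$-Green meadow. (As an alternative to the composite-transfer bookkeeping, one can describe the multiplication on $\tau_{\geq 1}k$ as the Dress pairing given by the family of multiplications of $k$ on levels $C_{p^n}/C_{p^j}$ with $j\ge 1$, and note that the conditions of \cref{prop:Dress-pairing-description-of-box-product} for this family are precisely the subset of those for $k$ involving only such levels; I would likely present the first route since it stays closest to the elementary axioms.)
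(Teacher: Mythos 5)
Your proof is correct, and since the paper states this lemma with no proof at all, you are filling in an argument the authors considered routine. One small streamlining worth noting: the composite-transfer induction in your second paragraph, while valid, is not actually needed. By axiom (5) of \cref{proposition: useful definition of Mackey functor}, the composite of adjacent transfers $\Tr^{L}_{K}\circ\Tr^{K}_{H}$ in $\tau_{\geq 1}k$ equals the composite of the corresponding adjacent transfers in $k$, which by the same axiom applied to $k$ equals the single transfer $\Tr$ in $k$ between those (non-adjacent) levels. So every restriction, transfer, and conjugation of $\tau_{\geq 1}k$, adjacent or not, is \emph{literally identical as a map of abelian groups} to the corresponding structure map of $k$ between the levels indexed by the subgroups containing $C_p$, and the rings are identical as well. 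The conditions of \cref{lemma: Green functor data} for $\tau_{\geq 1}k$ are therefore a verbatim subset of those already known for $k$, with no new verification required --- this is exactly the content of your parenthetical Dress-pairing remark, which is arguably the cleaner route. Either version is fine; the Weyl-action bookkeeping via $C_{p^{n-1}}/C_{p^s}\cong C_{p^n}/C_{p^{s+1}}$ is the one genuinely necessary observation, and you handle it correctly.
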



\begin{proposition}\label{prop:tau_geq_1_determines-k-mod-functors-and-has-both-adjoints}
    Let $R$ be any $C_{p^n}$-Green functor. The functor $\tau_{\geq 1}$ restricts to a functor $R \text{-} \Mod \rightarrow \tau_{\geq 1}(R) \text{-} \Mod$ which preserves all limits and colimits.
\end{proposition}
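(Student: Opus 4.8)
The plan is to split the claim into two parts: first, that $\tau_{\geq 1}$ carries $R$-modules to $\tau_{\geq 1}(R)$-modules functorially; and second, that this restricted functor preserves all limits and colimits because such (co)limits are computed levelwise on both sides.

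For the first part I would unwind the definition of a module via \cref{lemma: definition of a module}. Given an $R$-module $M$, each structure map $R(C_{p^n}/C_{p^{s+1}})\otimes M(C_{p^n}/C_{p^{s+1}})\to M(C_{p^n}/C_{p^{s+1}})$ is, after the relabelling defining $\tau_{\geq 1}$, a map $\tau_{\geq 1}(R)(C_{p^{n-1}}/C_{p^{s}})\otimes \tau_{\geq 1}(M)(C_{p^{n-1}}/C_{p^{s}})\to \tau_{\geq 1}(M)(C_{p^{n-1}}/C_{p^{s}})$, and one declares these to be the module structure maps of $\tau_{\geq 1}(M)$. The compatibility conditions of \cref{lemma: definition of a module} — compatibility with restriction, with the Weyl-group action (which, $C_{p^n}$ being abelian, captures all conjugation data), and Frobenius reciprocity — each involve only a single restriction or transfer, and it suffices to check them between immediately adjacent levels, exactly as for the double-coset formula in the construction of $\tau_{\geq 1}$; on adjacent levels these maps for $\tau_{\geq 1}(M)$ over $\tau_{\geq 1}(R)$ literally equal the corresponding maps for $M$ over $R$, where the relations hold. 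Here one also notes that the Weyl group $W_{C_{p^{n-1}}}(C_{p^s})\cong C_{p^{n-1-s}}$ of the $s$-th level of $\tau_{\geq 1}(M)$ agrees with the Weyl group $W_{C_{p^n}}(C_{p^{s+1}})\cong C_{p^{n-s-1}}$ of the corresponding level of $M$, so the semilinear actions transport correctly. Since $\tau_{\geq 1}(R)$ is a $C_{p^{n-1}}$-Green functor (by the construction preceding this proposition), this shows $\tau_{\geq 1}(M)$ is a $\tau_{\geq 1}(R)$-module; functoriality in $M$ is immediate, since a morphism of $R$-modules is levelwise $R(-)$-linear and so its selected components are $\tau_{\geq 1}(R)(-)$-linear.

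For the second part I would use that $R\text{-}\Mod$ is equivalent to the category of left modules over an associative ring $\mu_R$, under which evaluation at a level $C_{p^n}/C_{p^j}$ corresponds to multiplication by an idempotent $e_j\in\mu_R$. The functor $M\mapsto e_jM$ is exact and preserves arbitrary direct sums, hence all colimits, and it preserves all products and kernels, hence all limits; therefore limits and colimits in $R\text{-}\Mod$ — and identically in $\tau_{\geq 1}(R)\text{-}\Mod$ — are formed levelwise, with the restriction and transfer maps of the (co)limit being the (co)limits of the restriction and transfer maps. Because $\tau_{\geq 1}(M)$ is obtained from $M$ by keeping the levels indexed by $C_p\subseteq C_{p^2}\subseteq\cdots\subseteq C_{p^n}$ together with their inherited restriction and transfer maps, for any diagram $D\colon I\to R\text{-}\Mod$ one gets, level by level,
\[
\tau_{\geq 1}\!\bigl(\varprojlim_{I} D\bigr)(C_{p^{n-1}}/C_{p^{s}}) \;=\; \varprojlim_{I} D(-)(C_{p^n}/C_{p^{s+1}}) \;=\; \varprojlim_{I}\tau_{\geq 1}\!\bigl(D(-)\bigr)(C_{p^{n-1}}/C_{p^{s}}),
\]
compatibly with restrictions and transfers; the same computation with $\varinjlim$ in place of $\varprojlim$ treats colimits.

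I do not anticipate a serious obstacle: the only real content is the levelwise description of (co)limits of modules over a Green functor, which is immediate after passing to $\mu_R$, and everything else is bookkeeping. The one point that must be handled with care is matching the Weyl-group actions and the restriction/transfer structure maps under the index shift $s\mapsto s+1$ in the first part.
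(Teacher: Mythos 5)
Your proof is correct and follows essentially the same route as the paper: invoke \cref{lemma: definition of a module} to transport the module structure through the relabelling, and observe that (co)limits of modules over a Green functor are computed levelwise. You simply fill in more detail than the paper does (e.g.\ the Mackey-algebra justification for the levelwise computation of (co)limits and the bookkeeping of Weyl groups under the index shift), which is reasonable expansion rather than a different argument.
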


\begin{proof}
    The claim $\tau_{\geq 1}$ sends $R$ modules to $\tau_{\geq 1}(R)$-modules follows immediately from \cref{lemma: definition of a module} (alternatively, one may show that $\tau_{\geq 1}$ is lax symmetric monoidal). All limits and colimits of modules over Green functors are computed levelwise, hence are preserved by $\tau_{\geq 1}$.
\end{proof}

Next, we aim to study the effect of $\tau_{\geq 1}$ on free modules.

\begin{lemma}
\label{lem:tau-preserves-free-modules}
    Let $R$ be a $C_{p^n}$-Green functor and $1 \leq i \leq n$. Then we have an isomorphism 
    \[ 
        \tau_{\geq 1} F_i(R) \cong F_{i-1} (\tau_{\geq 1} R) 
    \] 
    of $\tau_{\geq 1} R$-modules.
\end{lemma}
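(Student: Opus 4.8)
The plan is to compute both sides levelwise and check the restriction and transfer maps agree. Recall from \cref{prop:free-k-mod-in-level-G/H-is-ind-of-res} that $F_i(R) \cong \Ind_{C_{p^i}}^{C_{p^n}} \Res_{C_{p^i}}^{C_{p^n}} R \cong R \boxtimes A_{C_{p^n}/C_{p^i}}$, and by \cref{cor:levelwise-value-of-free-modules-for-C_p^n} we have an explicit description of $F_i(R)(C_{p^n}/C_{p^s})$ as a direct sum of copies of $R(C_{p^n}/C_{p^i})$ or $R(C_{p^n}/C_{p^s})$ depending on whether $s \leq i$ or $s \geq i$. Applying $\tau_{\geq 1}$ to $F_i(R)$ replaces the level $C_{p^{n-1}}/C_{p^s}$ by the level $C_{p^n}/C_{p^{s+1}}$ of $F_i(R)$, so I expect
\[
    (\tau_{\geq 1} F_i(R))(C_{p^{n-1}}/C_{p^s}) = F_i(R)(C_{p^n}/C_{p^{s+1}}),
\]
and the same count via \cref{cor:levelwise-value-of-free-modules-for-C_p^n} — applied now over the group $C_{p^{n-1}}$ to the free module $F_{i-1}(\tau_{\geq 1} R)$, whose generating level is $C_{p^{n-1}}/C_{p^{i-1}}$ — should give a matching direct sum of copies of $(\tau_{\geq 1}R)(C_{p^{n-1}}/C_{p^{i-1}}) = R(C_{p^n}/C_{p^i})$ or $(\tau_{\geq 1}R)(C_{p^{n-1}}/C_{p^s}) = R(C_{p^n}/C_{p^{s+1}})$, noting that $s \geq i-1$ iff $s+1 \geq i$ and $s \leq i-1$ iff $s+1 \leq i$.

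First I would set up the isomorphism levelwise using the two cases above, being careful about which $R(C_{p^n}/C_{p^j})$ appears and with what multiplicity; the multiplicities $p^{(n-1)-s}$ and $p^{(n-1)-(i-1)} = p^{n-i}$ for $F_{i-1}(\tau_{\geq 1}R)$ should match $p^{n-(s+1)}$ and $p^{n-i}$ coming from $F_i(R)$ at level $C_{p^n}/C_{p^{s+1}}$. Then I would observe that the restriction and transfer maps of $\tau_{\geq 1} F_i(R)$ between adjacent levels $C_{p^{n-1}}/C_{p^{s}}$ and $C_{p^{n-1}}/C_{p^{s+1}}$ are, by construction of $\tau_{\geq 1}$, literally the restriction and transfer maps of $F_i(R)$ between levels $C_{p^n}/C_{p^{s+1}}$ and $C_{p^n}/C_{p^{s+2}}$; and these in turn are induced (via $F_i(R) = R \boxtimes A_{C_{p^n}/C_{p^i}}$) by the structure maps of $R$ together with the combinatorics of the $C_{p^n}$-set $C_{p^n}/C_{p^i}$. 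The corresponding maps for $F_{i-1}(\tau_{\geq 1}R)$ are induced by the structure maps of $\tau_{\geq 1}R$ (which are the structure maps of $R$ shifted up a level) together with the combinatorics of the $C_{p^{n-1}}$-set $C_{p^{n-1}}/C_{p^{i-1}}$, and the point is that $\res$ and $\ind$ behave compatibly: $\res^{C_{p^n}}_{C_{p^{n-1}}}$ and the level-shift $\tau_{\geq 1}$ intertwine, since a $C_{p^{n-1}}$-orbit $C_{p^{n-1}}/C_{p^j}$ corresponds to the $C_{p^n}$-orbit $C_{p^n}/C_{p^{j+1}}$. A clean way to phrase this: $\tau_{\geq 1}$ is (lax) symmetric monoidal by \cref{prop:tau_geq_1_determines-k-mod-functors-and-has-both-adjoints}'s proof, so it suffices to identify $\tau_{\geq 1}(A_{C_{p^n}/C_{p^i}})$ with $A_{C_{p^{n-1}}/C_{p^{i-1}}}$ as Mackey functors (equivalently, as represented functors $\mathcal{B}_{C_{p^n}}(C_{p^n}/C_{p^i}, -)$ restricted along the level-shift functor on Burnside categories), and then tensor with $R$.

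I expect the main obstacle to be bookkeeping rather than conceptual: one must verify that the level-shift functor on Burnside categories sends $C_{p^n}/C_{p^i}$ to $C_{p^{n-1}}/C_{p^{i-1}}$ compatibly with restrictions and transfers, i.e. that $\tau_{\geq 1}$ applied to the represented Mackey functor $A_{C_{p^n}/C_{p^i}}$ really is the represented Mackey functor $A_{C_{p^{n-1}}/C_{p^{i-1}}}$, including checking the double coset formula terms match. Once this identification of representable Mackey functors is in hand, the isomorphism $\tau_{\geq 1}(R \boxtimes A_{C_{p^n}/C_{p^i}}) \cong \tau_{\geq 1}(R) \boxtimes A_{C_{p^{n-1}}/C_{p^{i-1}}} = F_{i-1}(\tau_{\geq 1}R)$ follows from lax monoidality of $\tau_{\geq 1}$ together with the fact that $\tau_{\geq 1}$ preserves colimits (\cref{prop:tau_geq_1_determines-k-mod-functors-and-has-both-adjoints}) and hence commutes with the box product, which is built from colimits via Day convolution. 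The hypothesis $i \geq 1$ is exactly what is needed so that $i-1 \geq 0$ and the free module $F_{i-1}(\tau_{\geq 1}R)$ makes sense.
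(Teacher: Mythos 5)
Your ``clean way'' contains two independent errors, and they are worth isolating because each one looks plausible at first glance.

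First, $\tau_{\geq 1}$ is not strong symmetric monoidal, and the fact that it preserves colimits does not upgrade lax to strong. $\tau_{\geq 1}$ is restriction of a Mackey functor along the inflation functor on Burnside categories (not a left Kan extension along it), so it is a right adjoint to a strong monoidal functor and therefore only lax monoidal. Concretely, for $n=1$ take $M=N=\A_{C_p}$: then $\tau_{\geq 1}(M\boxtimes N)=\tau_{\geq 1}(\A_{C_p})=\A_{C_p}(C_p/C_p)\cong\Z^2$, while $\tau_{\geq 1}(M)\boxtimes\tau_{\geq 1}(N)=\Z^2\otimes\Z^2\cong\Z^4$.

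Second, and independently, the claimed identification $\tau_{\geq 1}(A_{C_{p^n}/C_{p^i}})\cong A_{C_{p^{n-1}}/C_{p^{i-1}}}$ of representable Mackey functors is false. Evaluating both at the top level: $\tau_{\geq 1}(A_{C_{p^n}/C_{p^i}})(C_{p^{n-1}}/C_{p^{n-1}})=A_{C_{p^n}/C_{p^i}}(C_{p^n}/C_{p^n})\cong\Z^{i+1}$ (spans $C_{p^n}/C_{p^i}\leftarrow C_{p^n}/C_{p^j}\to C_{p^n}/C_{p^n}$ for $0\leq j\leq i$), whereas $A_{C_{p^{n-1}}/C_{p^{i-1}}}(C_{p^{n-1}}/C_{p^{n-1}})\cong\Z^i$. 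What is actually true, and what the lemma asserts in the special case $R=\A_{C_{p^n}}$, is $\tau_{\geq 1}(A_{C_{p^n}/C_{p^i}})\cong F_{i-1}(\tau_{\geq 1}\A_{C_{p^n}})$; note $\tau_{\geq 1}\A_{C_{p^n}}$ is not $\A_{C_{p^{n-1}}}$, since its bottom level is the Burnside ring of $C_p$ rather than of $e$. So even if strong monoidality held, substituting $A_{C_{p^{n-1}}/C_{p^{i-1}}}$ for $\tau_{\geq 1}(A_{C_{p^n}/C_{p^i}})$ would be incorrect.

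Your opening paragraph (levelwise rank count plus checking restrictions and transfers) is a reasonable starting point, and the ranks do match, but you never actually construct the isomorphism or verify compatibility, so the proof isn't completed on that route either. The paper argues differently: it shows that $\tau_{\geq 1}F_i$ is cyclic, generated by a single element in level $C_{p^{n-1}}/C_{p^{i-1}}$ (using Frobenius reciprocity to absorb bottom-level elements into transfers from level $1$), obtaining a surjection $T_{i-1}\twoheadrightarrow\tau_{\geq 1}F_i$ from the free $\tau_{\geq 1}R$-module $T_{i-1}$; it then extends $T_{i-1}$ to an $R$-module $\widetilde{T_{i-1}}$ whose universal property gives a map $F_i\to\widetilde{T_{i-1}}$, and applying $\tau_{\geq 1}$ produces a retraction of the surjection. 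This sidesteps both monoidality of $\tau_{\geq 1}$ and the levelwise bookkeeping.
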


\begin{proof}
    To reduce clutter, we use $\Res_i^j$ in place of the restriction along the subgroup inclusion $C_{p^i} \subset C_{p^j}$. We employ similar notation for $\Tr$. To start, we show $\tau_{\geq 1} F_i$ is generated by a single element.
    
    Let $x \in F_i(C_{p^n}/C_{p^i})$ denote a generator: the element corresponding to the identity $\mathrm{Id}_{F_i}$ under a Yoneda isomorphism. An arbitrary element $y \in F_i(C_{p^n}/e)$ is a sum of elements of the form $c_g (a \cdot \Res_0^i (x))$ where $a \in R(C_{p^n}/e)$ and $g \in C_{p^n}$, so that 
    \[ 
        \Tr_0^1(y) = \Tr_0^1((c_g a) \cdot \Res_0^i (x)) = \Tr_0^1(c_g a) \cdot \Res_1^i(x) 
    \]
    by Frobenius reciprocity. Thus whenever $j \geq 1$, each $F_i(C_{p^n}/C_{p^j})$ is generated as an $R(C_{p^n}/C_{p^j})$-module by the transfers of $R(C_{p^n}/C_{p^t})$-multiples of $\Res_t^i(x)$ for $1 \leq t \leq i$.
    
    It follows that $\tau_{\geq 1} F_i$ is generated over $\tau_{\geq 1} R$ by a single element in level $C_{p^{n-1}}/C_{p^{i-1}}$. Let $T_{i-1}$ denote the free $\tau_{\geq 1} R$-module in level $C_{p^{n-1}}/C_{p^{i-1}}$, so that we have a surjection $T_{i-1} \rightarrow \tau_{\geq 1} F_i$ by the previous paragraph. Define a $R$-module $\widetilde{T_{i-1}}$ by $\tau_{\geq 1} \widetilde{T_{i-1}} = T_{i-1}$ and 
    \[ 
        \widetilde{T_{i-1}}(C_{p^n}/e) := T_{i-1}(C_{p^{n-1}}/e) \otimes_{R(C_{p^n}/C_p)} R(C_{p^n}/e) 
    \] 
    The generator for $T_{i-1}$ is classified by a map $F_i \rightarrow \widetilde{T_{i-1}}$ which determines the second map in the composition
    \[ 
        T_{i-1} \rightarrow \tau_{\geq 1} F_i \rightarrow T_{i-1} 
    \] 
    by applying $\tau_{\geq 1}$. This map sends the generator of $T_{i-1}$ to itself, hence is the identity. Thus the first map $T_{i-1} \rightarrow \tau_{\geq 1} F_i$, which is a surjection, is also an injection, hence an isomorphism.
\end{proof}

\begin{definition}
    Let $R$ be a $C_{p^n}$-Green functor, and $M$ an $R$-module. Denote by $R \text{-} \Mod^{\geq 1}$ the full subcategory of $R$-modules whose objects are of the form 
    \[ 
        \mathrm{Coeq} \left( \bigoplus_{i\in I} F_{a_i} \rightrightarrows\bigoplus_{j\in J} F_{a_j} \right) 
    \] 
    where $1 \leq a_i, a_j \leq n$.
\end{definition}

Roughly, $R \text{-} \Mod^{\geq 1}$ is the subcategory of modules which have a presentation by generators and relations, none of which are in level $C_{p^n}/e$.

\begin{proposition}
\label{prop:equivalence-of-cats-tau}
    The functor $\tau_{\geq 1}$ induces an equivalence between the category of $\tau_{\geq 1}(R)$-modules and $R \text{-} \Mod^{\geq 1}$.
\end{proposition}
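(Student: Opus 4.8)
The plan is to construct an explicit inverse to $\tau_{\geq 1}$. Given a $\tau_{\geq 1}(R)$-module $N$, define an $R$-module $\widetilde{N}$ by setting $\widetilde{N}(C_{p^n}/C_{p^t}) := N(C_{p^{n-1}}/C_{p^{t-1}})$ for $t\geq 1$, with restriction, transfer, conjugation and module structure maps between these levels inherited from $N$ (recall $\tau_{\geq 1}(R)(C_{p^{n-1}}/C_{p^{t-1}}) = R(C_{p^n}/C_{p^t})$), and on the bottom level setting
\[
    \widetilde{N}(C_{p^n}/e) := N(C_{p^{n-1}}/e)\otimes_{R(C_{p^n}/C_p)} R(C_{p^n}/e),
\]
an $R(C_{p^n}/e)$-module via the right factor, with $C_{p^n}$ acting diagonally (through $C_{p^n}\twoheadrightarrow C_{p^n}/C_p$ on the left factor and via the Weyl action on $R(C_{p^n}/e)$). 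Declare the restriction $\widetilde{N}(C_{p^n}/C_p)\to\widetilde{N}(C_{p^n}/e)$ to be $m\mapsto m\otimes 1$ and the transfer $\widetilde{N}(C_{p^n}/e)\to\widetilde{N}(C_{p^n}/C_p)$ to be $m\otimes a\mapsto m\cdot\Tr^{C_p}_e(a)$. The first task is to check that $\widetilde{N}$ is a well-defined $R$-module, that $N\mapsto\widetilde{N}$ is functorial, and that it preserves all colimits (the bottom level is an extension of scalars, hence colimit-preserving, and the other levels are unchanged, while colimits of modules over Green functors are computed levelwise). This is a routine but lengthy verification via Frobenius reciprocity and the double coset formula in $R$; the one genuinely new identity to confirm is $\Res^{C_p}_e\Tr^{C_p}_e = \sum_{g\in C_p} c_g$ on $\widetilde{N}(C_{p^n}/e)$, which holds because conjugation by $C_p$ acts trivially on $N(C_{p^{n-1}}/e)$ and the identity then reduces to the corresponding double coset formula in $R$.

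Next I would record two natural transformations. By construction $\tau_{\geq 1}\widetilde{N}$ is canonically identified with $N$, giving a natural isomorphism $\tau_{\geq 1}\circ\widetilde{(-)}\cong\mathrm{id}$. In the other direction, for an $R$-module $M$ there is a map $\epsilon_M\colon\widetilde{\tau_{\geq 1}M}\to M$ which is the identity on all levels $\geq C_p$ and is $m\otimes a\mapsto \Res^{C_p}_e(m)\cdot a$ on the bottom level; a short check (using Frobenius reciprocity for $M$ on the transfer) shows $\epsilon_M$ is a morphism of $R$-modules, natural in $M$. (One may note that $\widetilde{(-)}$ is in fact the left adjoint of $\tau_{\geq 1}\colon R\text{-}\Mod\to\tau_{\geq 1}(R)\text{-}\Mod$ and $\epsilon$ its counit, since $\Hom_R(F_{b+1},M)\cong M(C_{p^n}/C_{p^{b+1}})\cong \Hom_{\tau_{\geq 1}R}(F_b(\tau_{\geq 1}R),\tau_{\geq 1}M)$; this is not needed below.)

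The crux is to prove that $\epsilon_M$ is an isomorphism when $M\in R\text{-}\Mod^{\geq 1}$, and that $\widetilde{(-)}$ takes values in $R\text{-}\Mod^{\geq 1}$. Consider the free module $F_i$ with $i\geq 1$. By \cref{lem:tau-preserves-free-modules} we have $\tau_{\geq 1}F_i\cong F_{i-1}(\tau_{\geq 1}R)$, and the argument used there, together with \cref{cor:levelwise-value-of-free-modules-for-C_p^n}, shows that $\widetilde{\tau_{\geq 1}F_i}$ is again generated by a single element in level $C_{p^n}/C_{p^i}$ (its levels $\geq C_p$ come from the cyclic $\tau_{\geq 1}(R)$-module $\tau_{\geq 1}F_i$, and every element of its bottom level is an $R(C_{p^n}/e)$-multiple of the restriction of such an element). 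Hence the map $\eta\colon F_i\to\widetilde{\tau_{\geq 1}F_i}$ classifying this generator is surjective; since $\epsilon_{F_i}\circ\eta$ fixes the universal element of $F_i$, it is the identity, so $\eta$ is also a monomorphism, hence an isomorphism, and therefore so is $\epsilon_{F_i}$. In particular $\widetilde{F_{i-1}(\tau_{\geq 1}R)}\cong F_i(R)\in R\text{-}\Mod^{\geq 1}$. Since $R\text{-}\Mod^{\geq 1}$ is by definition the closure of $\{F_i : 1\leq i\leq n\}$ under coproducts and coequalizers in $R\text{-}\Mod$, and every $\tau_{\geq 1}(R)$-module is such a colimit of free modules $F_b(\tau_{\geq 1}R)$, the colimit-preserving functor $\widetilde{(-)}$ takes values in $R\text{-}\Mod^{\geq 1}$. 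Finally, $\widetilde{\tau_{\geq 1}(-)}$ and $\mathrm{id}$ both preserve colimits and $\epsilon$ is an isomorphism on the generating free modules, so $\epsilon$ is a natural isomorphism on all of $R\text{-}\Mod^{\geq 1}$. Combined with $\tau_{\geq 1}\widetilde{N}\cong N$, this exhibits $\tau_{\geq 1}$ and $\widetilde{(-)}$ as mutually inverse equivalences between $R\text{-}\Mod^{\geq 1}$ and $\tau_{\geq 1}(R)\text{-}\Mod$.

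The step I expect to be the main obstacle is the well-definedness check in the first paragraph—verifying that the structure maps of $\widetilde{N}$ satisfy all the Mackey functor and module axioms, especially the double coset formulas involving the bottom level. This is bookkeeping rather than a conceptual hurdle, since the essential geometric input (that $\tau_{\geq 1}$ sends $F_i$ to $F_{i-1}$) has already been isolated in \cref{lem:tau-preserves-free-modules}.
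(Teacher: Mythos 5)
Your proposal is correct, and it takes a genuinely different route from the paper's proof. The paper establishes the equivalence purely formally: it first observes that $\tau_{\geq 1}$ induces an equivalence between the full subcategory of free $R$-modules at levels $C_{p^n}/C_{p^s}$, $s\geq 1$, and the full subcategory of free $\tau_{\geq 1}(R)$-modules (essential surjectivity is \cref{lem:tau-preserves-free-modules}; full faithfulness is the Yoneda observation that $F_s$ represents evaluation at level $C_{p^n}/C_{p^s}$), and then bootstraps to all of $R\text{-}\Mod^{\geq 1}$ by a chain of $\hom$-isomorphisms using that $\tau_{\geq 1}$ preserves limits and colimits. You instead build the explicit inverse functor $\widetilde{(-)}$, verify it is well-defined, and check the unit and counit directly on free modules before extending by colimit-preservation. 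The two arguments are closely allied: your $\widetilde{(-)}$ is the same construction the paper already deploys on a single free module (the module $\widetilde{T_{i-1}}$) inside the proof of \cref{lem:tau-preserves-free-modules}, and your surjectivity/injectivity argument for $\eta\colon F_i\to\widetilde{\tau_{\geq 1}F_i}$ recapitulates the one-generator argument from that lemma. What your version buys is an explicit left adjoint $\widetilde{(-)}\dashv\tau_{\geq 1}$ on all $R$-modules, which makes the lifting of $\tau_{\geq 1}(R)$-modules back to $R$-modules concrete and could be useful for computation; the cost is the Mackey-axiom bookkeeping for $\widetilde{N}$ that the paper's formal argument sidesteps entirely. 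Your identity $\Res^{C_p}_e\Tr^{C_p}_e = \sum_{g\in C_p}c_g$ on $\widetilde{N}(C_{p^n}/e)$ is correct (the key point being that both sides reduce to $m\otimes\sum_g c_g(a)$ since $C_p$ acts trivially on the left tensor factor), and the remaining axiom checks are indeed routine Frobenius reciprocity.
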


\begin{proof}
    Let $\mathcal{C}$ be the full subcategory of $R$-modules with objects the free $R$-modules on levels $C_{p^n}/C_{p^s}$ for $1 \leq s \leq n$, and let $\mathcal{D}$ be the full subcategory of $\tau_{\geq 1}(R)$-modules with objects all free $\tau_{\geq 1} R$-modules. By \cref{lem:tau-preserves-free-modules}, $\tau_{\geq 1}$ defines a functor $\mathcal{C} \rightarrow \mathcal{D}$ which is essentially surjective. This functor is fully faithful because a free module in level $X$ represents the functor sending a module to its value in level $X$. Therefore $\tau_{\geq 1}$ defines an equivalence of categories $\mathcal{C} \cong \mathcal{D}$.

    Now we show $\tau_{\geq 1}$ is fully faithful on all of $R \textrm{-} \Mod^{\geq 1}$. By \cref{prop:tau_geq_1_determines-k-mod-functors-and-has-both-adjoints} $\tau_{\geq 1}$ preserves limits and colimits. Now let 
    \[ 
        M \cong \textrm{Coeq} \left( \bigoplus_{j \in J} F_{a_j} \rightrightarrows \bigoplus_{i \in I} F_{a_i} \right)
    \] 
    be an arbitrary element of $R \text{-} \Mod^{\geq 1}$ (thus $a_i, a_j \geq 1$).  For any $R$-module $N$ we have 
    \begin{align*}
        \hom_R(M,N) & \cong \hom_R \left( \textrm{Coeq} \left( \oplus_{j \in J} F_{a_j} \rightrightarrows \oplus_{i \in I} F_{a_i} \right),N \right) \\
        & \cong \textrm{Eq} \left( \hom_R(\oplus_{i \in I} F_{a_i},N) \rightrightarrows \hom_R(\oplus_{j \in J} F_{a_j},N) \right) \\
        & \cong \textrm{Eq} \left( \hom_{\tau_{\geq 1} R}(\oplus_{i \in I} F_{a_i-1},\tau_{\geq 1} N) \rightrightarrows \hom_{\tau_{\geq 1} R}(\oplus_{j \in J} F_{a_j-1},\tau_{\geq 1} N) \right) \\
        & \cong \hom_{\tau_{\geq 1} R} \left( \textrm{Coeq} \left( \oplus_{j \in J} F_{a_j-1} \rightrightarrows \oplus_{i \in I} F_{a_i-1} \right), \tau_{\geq 1} N \right) \\
        & \cong \hom_{\tau_{\geq 1} R}(\tau_{\geq 1} M, \tau_{\geq 1}N)
    \end{align*}
    where the third isomorphism holds due to the fact that both terms in the equalizer diagram are the classifying sets of choices of elements in the groups $N(C_{p^n}/C_{p^s})$ for $s \geq 1$, and the last isomorphism follows from the preservation of coequalizers by $\tau_{\geq 1}$.
\end{proof}

Viewing $\tau_{\geq 1}$ as coinduction along the surjection $C_{p^n} \rightarrow C_{p^{n-1}}$, \cref{prop:equivalence-of-cats-tau} may be interpreted roughly as a dual to \cite[Theorem F]{Wis25} (using the fact that coinduction and induction agree on Mackey functors). 

Since $\tau_{\geq 1}$ preserves (co)limits and $\tau_{\geq 1}(k) \text{-} \Mod$ is an abelian category, we see that $k \text{-} \Mod^{\geq 1}$ is an abelian subcategory of $k \text{-} \Mod$. It is also a Serre subcategory by the Horseshoe lemma; the authors intend to study the resulting localization sequence on algebraic $K$-theory in future work.

\subsection{\texorpdfstring{Projective modules over $C_{p^n}$-Tambara fields}{Projective modules over C\_p\string^n-Tambara fields}}

We are finally able to show that every finitely generated projective module over a Tambara field is free.

\begin{theorem}\label{thm:proj-implies-free-for-RFD-Green-meadows}
	Let $k$ be a relatively finite dimensional $C_{p^n}$-Green meadow. Every finitely generated projective $k$-module is free.
\end{theorem}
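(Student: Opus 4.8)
The plan is to induct on $n$, using the level filtration functor $\tau_{\geq 1}$ to reduce the rank of the group, and the structural results about free modules over $\FP(L)$ already established in \cref{proposition: when free modules are isomorphic weak form,proposition: when free modules are isomorphic strong form} to anchor the base case and the passage between successive levels. Let $P$ be a finitely generated projective $k$-module. The first move is to analyze $P$ at the bottom level $C_{p^n}/e$, where the twisted group ring $k(C_{p^n}/e)_\theta[C_{p^n}]$ governs the module structure. By \cref{lem:free-implies-projective}, $P$ is a retract of a finite free module, so $P(C_{p^n}/e)$ is a finitely generated projective module over $k(C_{p^n}/e)_\theta[C_{p^n}]$. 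When the action of $C_{p^n}$ on the field $k(C_{p^n}/e)$ is faithful, this twisted group ring is Morita equivalent to a field, and all projectives over it are free; more generally the action factors through $C_{p^n}/C_{p^r}$ acting faithfully on $k(C_{p^n}/e)$ (where $C_{p^r}$ is the stabilizer), and $k(C_{p^n}/e)_\theta[C_{p^n}]$ is a group algebra of a $p$-group over a field of characteristic $p$ extended by a faithful piece, so again one can pin down its finitely generated projectives. I would use this to extract a free summand of $P$ detected at the bottom level: choose a basis for $P(C_{p^n}/e)$ as a module over the relevant twisted group ring, realize each basis element by a map $F_0 \to P$ (since $F_0$ is free on a generator at level $C_{p^n}/e$), and assemble these into a map $\varphi\colon F_0^{\oplus m} \to P$ which is an isomorphism at level $C_{p^n}/e$.

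\textbf{Splitting off the bottom level.} The next step is to argue that this $\varphi$ splits off $F_0^{\oplus m}$ as a direct summand of $P$. Because $P$ is projective and $\varphi$ is surjective at the bottom level, I would like to lift; the key point is that a map which is a surjection at level $C_{p^n}/e$ from a free module on that level is close to being a split surjection. Concretely, $P$ being a retract of $\bigoplus_i F_{a_i}$, one can try to build a section of $\varphi$ by first building it at level $C_{p^n}/e$ (where $\varphi$ is already an isomorphism) and extending using freeness of $F_0$; alternatively, apply $\tau_{\geq 1}$ to the cofiber. Let $Q = \mathrm{coker}(\varphi)$. Then $Q$ lives in $k\text{-}\Mod^{\geq 1}$ — it is supported away from the bottom level — because $\varphi$ is surjective (indeed iso) there. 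One checks $Q$ is again projective: $\varphi$ being an iso at level $C_{p^n}/e$ means the sequence $0 \to \ker\varphi \to F_0^{\oplus m} \to P \to Q \to 0$ splits at the bottom level, and combined with projectivity of $P$ one deduces $P \cong F_0^{\oplus m} \oplus Q'$ for some $Q'$ mapping isomorphically to $Q$, hence $Q$ is projective and in $k\text{-}\Mod^{\geq 1}$. I would want the cleanest argument here to go through: that $\Hom(F_0^{\oplus m}, -)$ applied to a suitable short exact sequence, together with the fact that all restrictions in projectives are injective (\cref{prop:projectives-satisfy-MRC}), forces the splitting.

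\textbf{The inductive step.} Once $P \cong F_0^{\oplus m} \oplus Q$ with $Q$ a finitely generated projective object of $k\text{-}\Mod^{\geq 1}$, apply the equivalence of \cref{prop:equivalence-of-cats-tau}: $Q$ corresponds to a finitely generated projective $\tau_{\geq 1}(k)$-module $\widetilde{Q}$ (projectivity transports across the equivalence since it is an exact equivalence of abelian categories, and finite generation likewise, using relative finite-dimensionality of $\tau_{\geq 1}k$, which one must verify — it should follow from relative finite-dimensionality of $k$ since the levels of $\tau_{\geq 1}k$ are a subset of those of $k$). Now $\tau_{\geq 1}k$ is a relatively finite dimensional $C_{p^{n-1}}$-Green meadow, so by the inductive hypothesis $\widetilde{Q}$ is free: $\widetilde{Q} \cong \bigoplus_s F_s(\tau_{\geq 1}k)^{\oplus n_s}$. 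Transporting back through the equivalence and using \cref{lem:tau-preserves-free-modules} (which identifies $\tau_{\geq 1}F_{i}(k) \cong F_{i-1}(\tau_{\geq 1}k)$, hence identifies the free objects of $k\text{-}\Mod^{\geq 1}$ with those $\bigoplus F_i(k)$ having all $i \geq 1$), we get $Q \cong \bigoplus_{s\geq 1} F_s(k)^{\oplus n_{s-1}}$, and therefore $P \cong F_0^{\oplus m} \oplus \bigoplus_{s \geq 1} F_s(k)^{\oplus n_{s-1}}$ is free. The base case $n = 0$ is trivial (a Green meadow for the trivial group is just a field, over which projective equals free), so the induction closes.

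\textbf{Main obstacle.} I expect the crux to be the splitting step: showing that a map $\varphi\colon F_0^{\oplus m} \to P$ which is an isomorphism at the bottom level $C_{p^n}/e$ actually realizes $F_0^{\oplus m}$ as a direct summand of the projective module $P$, with complementary summand lying in $k\text{-}\Mod^{\geq 1}$. The subtlety is that being an isomorphism on one level of a Mackey functor is far from being an isomorphism, and one must leverage both the projectivity of $P$ (to produce lifts) and the rigidity of free modules on the bottom level (every map out of $F_0$ is determined by a single element at level $C_{p^n}/e$, and transfers from the bottom level generate a lot) to bootstrap the levelwise iso at $C_{p^n}/e$ into a genuine splitting. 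Getting the module structure over the twisted group ring $k(C_{p^n}/e)_\theta[C_{p^n}]$ exactly right — in particular handling the case where the Weyl action is not faithful, so this ring is a genuine modular group algebra tensored with a faithful Galois piece rather than a field — is where the real work lies, and it is essentially the content already isolated in \cref{proposition: when free modules are isomorphic weak form} for the fixed-point case that needs to be pushed through for a general Green meadow.
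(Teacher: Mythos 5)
There is a genuine gap in the splitting step, and it is precisely the one you flag as the ``main obstacle'' — but your diagnosis of that obstacle is off, and as a result the proposed fix would not work.

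Your strategy is to show that $P(C_{p^n}/e)$ is \emph{free} over the twisted group ring $L_\theta[C_{p^n}]$ (where $L = k(C_{p^n}/e)$), realize a basis by maps $F_0 \to P$, and split off $F_0^{\oplus m}$. The premise is false: $P(C_{p^n}/e)$ is a finitely generated projective $L_\theta[C_{p^n}]$-module, but projectives over this ring are generally \emph{not} free. In the faithful case you cite ($\theta$ injective, $L/L^{C_{p^n}}$ Galois of degree $p^n$), \cref{lem:twisted-group-ring-Morita-invariance} gives $L_\theta[C_{p^n}] \cong M_{p^n}(L^{C_{p^n}})$, whose indecomposable projective is a column $(L^{C_{p^n}})^{\oplus p^n}$, not the whole ring; ``Morita equivalent to a field'' does not mean ``projective implies free.'' More importantly, the free $k$-modules $F_i$ for $i \geq 1$ are already nonzero at level $C_{p^n}/e$ (by \cref{cor:levelwise-value-of-free-modules-for-C_p^n}), and their bottom levels $F_i(C_{p^n}/e)$ are induced-up rather than free $L_\theta[C_{p^n}]$-modules. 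So there is no reason a summand of $\bigoplus F_i^{s_i}$ should have free bottom level, nor should the complement of the $F_0$-part have vanishing bottom level. Your map $\varphi$ therefore cannot in general be constructed as an isomorphism at level $C_{p^n}/e$, and the downstream argument collapses.

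The paper resolves exactly this in a different way, and the key ingredient is one you never invoke: \cref{lemma: F zero is injective}, which says $F_0 = \Ind_e^{C_{p^n}} k(C_{p^n}/e)$ is an \emph{injective} $k$-module (because $k(C_{p^n}/e)$ is a field, and induction preserves injectives). The paper does not look for a global basis of $P(C_{p^n}/e)$. Instead it finds individual elements $x \in P(C_{p^n}/e)$ whose $C_{p^n}$-orbit is $L$-linearly independent; such an $x$ generates a submodule isomorphic to $F_0$, which then splits off as a direct summand \emph{because it is injective}, not because of projectivity of $P$. Iterating peels off $F_0^{\oplus s}$, and what remains, by a dimension count and a separate argument matching $P'\oplus Q'$ against $\bigoplus_{i\geq 1}F_i^{s_i}$, lands in $k\text{-}\Mod^{\geq 1}$. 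This sidesteps entirely the (false) requirement that the bottom level be free. Your reduction to $\tau_{\geq 1}$ via \cref{prop:equivalence-of-cats-tau} and \cref{lem:tau-preserves-free-modules}, and your base case, are fine and match the paper; it is the extraction of the $F_0$-part that needs the injectivity idea.
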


The idea is to split off free module summands until nothing remains. Since injective subobjects are summands for formal reasons, we begin by establishing injectivity of a free module.

\begin{lemma}\label{lemma: F zero is injective}
    For any Green meadow $k$, the free module $F_0$ is an injective $k$-module.
\end{lemma}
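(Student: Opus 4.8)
The plan is to realize $F_0$ as an induced module and then quote the fact that induction preserves injective objects. Recall from \cref{prop:free-k-mod-in-level-G/H-is-ind-of-res} that the free $k$-module on one generator in level $G/e$ — which is exactly $F_0$ — is isomorphic to $\Ind_e^G \Res_e^G k$, with its $k$-module structure induced along the unit map $k \to \Ind_e^G \Res_e^G k$. Here $\Res_e^G k$ is simply the ring $k(G/e)$, regarded as a module over itself.

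The crucial point is then elementary: $k(G/e)$ is a field by the definition of a Green meadow, hence self-injective, since every short exact sequence of $k(G/e)$-vector spaces splits. Thus $\Res_e^G k$ is an injective object of $(\Res_e^G k)\text{-}\Mod$. Now I would apply \cref{cor:ind-and-res-preserve-inj-and-proj}, which says that the induction functor $(\Res_e^G k)\text{-}\Mod \to k\text{-}\Mod$ carries injectives to injectives; this yields that $F_0 \cong \Ind_e^G \Res_e^G k$ is an injective $k$-module, as desired.

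I do not expect a genuine obstacle here: the argument is the composite of \cref{prop:free-k-mod-in-level-G/H-is-ind-of-res}, the self-injectivity of a field, and \cref{cor:ind-and-res-preserve-inj-and-proj}, all of which are already available. The only thing one must be slightly careful about is that $F_0$ is the free module in the \emph{bottom} level $G/e$, so that the identification with $\Ind_e^G\Res_e^G k$ is the right one — it is precisely this bottom level that makes the restriction functor land in modules over an honest field. A more hands-on alternative would be to verify injectivity of $F_0$ directly via a Baer-type criterion, extending maps out of Green ideals $J \subseteq k$, but that would merely reprove the content packaged into \cref{cor:ind-and-res-preserve-inj-and-proj}, so the adjunction argument is the efficient route.
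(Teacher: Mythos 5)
Your proposal is correct and is essentially identical to the paper's own proof: both identify $F_0 \cong \Ind_e^G \Res_e^G k$ via \cref{prop:free-k-mod-in-level-G/H-is-ind-of-res}, observe that $k(G/e)$ is a field and hence self-injective, and then invoke \cref{cor:ind-and-res-preserve-inj-and-proj} to conclude that induction preserves injectivity. No gaps; the remarks about the bottom level being the right one are accurate but not strictly needed.
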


\begin{proof}
    We have $F_0 = \Ind^n_0(k(C_{p^n}/e))$, where 
    \[
        \Ind^n_0\colon k(C_{p^n}/e)\textrm{-}\Mod\to k\textrm{-}\Mod
    \]
    is the induction functor. Since $k(C_{p^n}/e)$ is a field, it is injective over itself. Induction preserves injective objects by \cref{cor:ind-and-res-preserve-inj-and-proj}, so that $F_0$ must be an injective $k$-module.
\end{proof} 

\begin{proof}[Proof of \cref{thm:proj-implies-free-for-RFD-Green-meadows}]
	We induct on $n$, the exponent of the order of the group $G = C_{p^n}$.
	
	Let $P$ be a projective $k$-module. Pick a projective $k$-module $Q$ such that $P \oplus Q \cong \oplus_{i=0}^n F_i^{s_i}$. Since the restrictions in the free modules are injective we see that all the restrictions in $P$ are injective. The double coset formula therefore completely determines all transfers on $P$. If $x \in P(C_{p^n}/e)$ is an element with the property that the $C_{p^n}$-orbits of $x$ are linearly independent over $k(C_{p^n}/e)$, then $x$ generates a copy of $\left( \Ind_e^{C_{p^n}} k(C_{p^n}/e) \right) (C_{p^n}/e)$ in the bottom level of $P$. The double coset formula then implies that $x$ specifies a submodule of $P$ isomorphic to $\Ind_e^{C_{p^n}} k(C_{p^n}/e)$. Since $\Ind_e^{C_{p^n}} k(C_{p^n}/e)$ is an injective $k$-module by \cref{lemma: F zero is injective}, it splits off as a summand. Repeating this for all $x \in P(C_{p^n}/e)$ such that the $C_{p^n}$-orbits of $x$ are linearly independent over $k(C_{p^n}/e)$, we are entitled to write $P \cong P' \oplus \left( \Ind_e^{C_{p^n}} k(C_{p^n}/e) \right)^{\oplus s}$ as $k$-modules; $s$ is finite because $P$ is finitely generated.
	
	Repeating this argument for $Q$, we may write \[ F_0^{\oplus s} \oplus P' \oplus Q' \cong  \oplus_{i=0}^n F_i^{s_i} \] where $P'$ and $Q'$ have the property that, for every element $x$ in $P'(C_{p^n}/e)$ or $Q'(C_{p^n}/e)$, the $C_{p^n}$-orbits of $x$ have a linear dependence. Before we may proceed, we must observe that $s = s_0$. This follows from the fact that, for any finitely generated $k$-module $M$, the dimension of the subspace of $M(C_{p^n}/e)$ spanned by the $C_{p^n}$-orbits of those elements $x$ for which the $C_{p^n}$-orbits of $x$ are linearly independent is invariant under isomorphism. In our case, those dimensions are respectively $s p^n$ and $s_0 p^n$ and thus $s = s_0$.
    
    Next, we aim to show that $P' \oplus Q' \cong \oplus_{i=1}^n F_i^{s_i}$. Let $M = P' \oplus Q'$ to avoid notational clutter. Consider the map \[ \phi\colon M \hookrightarrow M \oplus F_0^{s_0} \cong \bigoplus_{i=0}^n F_i^{s_i} \twoheadrightarrow \bigoplus_{i=1}^n F_i^{s_i} \] which we aim to show is an isomorphism. Note that the kernel in level $C_{p^n}/e$ consists of elements of $M$ which are restrictions of transfers of elements $x \in M(C_{p^n}/e)$ such that the $C_{p^n}$-orbits of $x$ are linearly independent over $k(C_{p^n}/e)$. By construction $M$ contains no such elements, hence $\phi$ in level $C_{p^n}/e$ is injective. Since all restriction maps for $M$ are injective, $\phi$ is injective in all other levels, hence is an injective $k$-module map. Since $k$ is relatively finite dimensional each level $k(C_{p^n}/C_{p^i})$ is a finite dimensional $k(C_{p^n}/C_{p^n})$ vector space.  By dimensional counting we see that $\phi$ is an isomorphism.
	
	Now observe that $P'$ belongs to the subcategory $k {-} \Mod^{\geq 1}$ because it is the coequalizer of the diagram $\oplus_{i=1}^n F_i^{s_i} \rightrightarrows \oplus_{i=1}^n F_i^{s_i}$ where one of the arrows is zero and the other arrow is the composition $\oplus_{i=1}^n F_i^{s_i} \rightarrow Q' \rightarrow \oplus_{i=1}^n F_i^{s_i}$ of projection followed by inclusion. In particular, $\tau_{\geq 1}(P')$ is a finitely generated projective module in $\tau_{\geq 1}(k) \text{-} \Mod$. By induction, $\tau_{\geq 1}(P')$ is a direct sum of free modules, hence so is $P'$ since the equivalence $k \text{-} \Mod^{\geq 1} \cong \tau_{\geq 1}(k) \text{-} \Mod$ of \cref{prop:equivalence-of-cats-tau} restricts to a bijection between free modules. Implicitly we have used that if $k$ is relatively finite dimensional then so is $\tau_{\geq 1} k$.
\end{proof}

We immediately obtain the following.

\begin{corollary}\label{cor:free-things-generate-K_0-of-field}
	Let $k$ be a relatively finite dimensional $C_{p^n}$-Green meadow. Then $K_0(k)\cong K_0^{\mathrm{free}}(k)$.
\end{corollary}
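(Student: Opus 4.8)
The plan is to read this off directly from \cref{thm:proj-implies-free-for-RFD-Green-meadows}, with only bookkeeping in between. First I would record that $K_0(k)$ is even well-defined in the way we want: since $k$ is relatively finite dimensional and $k(C_{p^n}/C_{p^n})$ is a field, hence a Noetherian ring, \cref{cor:RFD-implies-all-Noeth-conditions-are-equiv} shows $k$ is module-Noetherian, so the finitely generated $k$-modules form an abelian category and the finitely generated projective $k$-modules form an exact subcategory in which every admissible short exact sequence splits (any surjection onto a projective splits). For such a split exact category, $K_0$ is simply the group completion of the commutative monoid of isomorphism classes under $\oplus$; the same description applies to $K_0^{\mathrm{free}}(k)$, which by definition is the group completion (indeed the Grothendieck ring) of the submonoid of isomorphism classes of finitely generated free $k$-modules.

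Next I would invoke \cref{thm:proj-implies-free-for-RFD-Green-meadows}: every finitely generated projective $k$-module is free. Consequently the two monoids of isomorphism classes described above coincide, so the natural map $K_0^{\mathrm{free}}(k) \to K_0(k)$ induced by the inclusion of free modules into projective modules is an isomorphism. Spelled out: it is surjective because each class $[P]$ with $P$ finitely generated projective equals some $[F]$ with $F$ free; and it is injective because if $F, F'$ are finitely generated free with $F \oplus P \cong F' \oplus P$ for some finitely generated projective $P$, then $P$ is itself free by the theorem, so this identity already witnesses $[F] = [F']$ in $K_0^{\mathrm{free}}(k)$. Since the relative box product $F \boxtimes_k (-)$ with a free module $F$ is exact in the second variable, relative tensor products of projectives are projective and the isomorphism is compatible with the ring structures, giving a ring isomorphism.

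There is essentially no obstacle here: the mathematical content lies entirely in \cref{thm:proj-implies-free-for-RFD-Green-meadows}, and the only point requiring care is justifying that $k\text{-}\Mod^{\fg}$ is abelian so that $K_0(k)$ behaves as expected, which the relatively finite dimensional hypothesis supplies via \cref{cor:RFD-implies-all-Noeth-conditions-are-equiv}.
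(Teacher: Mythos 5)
Your proof is correct and takes the same approach as the paper, which simply deduces the corollary ``immediately'' from \cref{thm:proj-implies-free-for-RFD-Green-meadows}. One small note: once you know finitely generated projective $=$ finitely generated free, the two monoids of isomorphism classes literally coincide, so both Grothendieck rings are the same by definition and the ring isomorphism comes for free; your separate appeal to exactness of $F \boxtimes_k (-)$ is therefore unnecessary, and would not by itself establish that tensor products of projectives are projective (exactness alone does not preserve projectivity --- one should instead use that free $\boxtimes_k$ free is free, via $(\mathcal{B}\boxtimes A_{G/H})\boxtimes_R(\mathcal{B}\boxtimes A_{G/K})\cong R\boxtimes A_{G/H\times G/K}$).
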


\begin{remark}
    We note that the conclusion of \cref{thm:proj-implies-free-for-RFD-Green-meadows,cor:free-things-generate-K_0-of-field} actually hold as long as $k$ admits a faithfully flat map to a relatively finite dimensional Green meadow. In particular this always happens when $G =C_p$ by \cref{thm:fields-are-FF}.
\end{remark}

If $R$ is any Green functor, the symmetric monoidal structure on $R \text{-} \Mod$ induces a ring structure on $K_0(R)$. Using \cref{cor:free-things-generate-K_0-of-field,theorem: K0 of Green functors with trivial action} we compute $K_0(k)$ as a ring when $k$ is a $C_{p^n}$-Green meadow.  

\begin{theorem}\label{thm:K_0-of-clarified-triv-action-Cpn-Tamb-fields}
	Let $k$ be a $C_{p^n}$-Green meadow such that $C_{p^n}$ acts trivially on $k(C_{p^n}/e)$. Then $K_0(k)$ is isomorphic to the $C_{p^n}$-Burnside ring $A({C_{p^n}})$.
\end{theorem}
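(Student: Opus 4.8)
The plan is to synthesize two results proved above. First, \cref{theorem: K0 of Green functors with trivial action} already produces a ring isomorphism $K_0^{\mathrm{free}}(k) \cong A(C_{p^n})$ from the hypothesis that $C_{p^n}$ acts trivially on $L := k(C_{p^n}/e)$, with no finiteness assumption required, so it suffices to show the natural map $K_0^{\mathrm{free}}(k) \to K_0(k)$ is an isomorphism of rings. That it is a ring homomorphism I would dispatch first, and it is routine: the relative box product of free $k$-modules is free, since \cref{prop:free-k-mod-in-level-G/H-is-ind-of-res} gives $F_a \boxtimes_k F_b \cong k \boxtimes A_{C_{p^n}/C_{p^a}} \boxtimes A_{C_{p^n}/C_{p^b}} \cong k \boxtimes A_{(C_{p^n}/C_{p^a}) \times (C_{p^n}/C_{p^b})}$ and a box product of representable Mackey functors is representable on the product $G$-set; under the surjection $A(C_{p^n}) \twoheadrightarrow K_0^{\mathrm{free}}(k)$ of the previous subsection this is the product of $C_{p^n}$-sets, so the multiplication on $K_0(k)$ coming from $\boxtimes_k$ restricts to the one on $K_0^{\mathrm{free}}(k)$. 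The substantive point is surjectivity, i.e. that every finitely generated projective $k$-module is free.

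When $k$ is relatively finite dimensional this is exactly \cref{thm:proj-implies-free-for-RFD-Green-meadows} (equivalently \cref{cor:free-things-generate-K_0-of-field}), and the theorem follows at once. For general $k$ I would reduce to that case: since $C_{p^n}$ acts trivially on $L$, every level of $\FP(L)$ equals $L^{C_{p^s}} = L$, so $\FP(L)$ is relatively finite dimensional and all of its finitely generated projectives are free. By the remark preceding this theorem it then suffices to show that the canonical map $k \to \FP(L)$ is faithfully flat, since freeness of finitely generated projectives descends along such maps; combined with the first paragraph this gives $K_0(k) \cong A(C_{p^n})$ as rings.

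The hard part is establishing that faithful flatness: \cref{thm:fields-are-FF} supplies it only for $G = C_p$. For general $C_{p^n}$ I see two routes. One is induction on $n$ via the functor $\tau_{\geq 1}$ of \cref{prop:equivalence-of-cats-tau}: the triviality of the action propagates down the subgroup lattice, since each restriction $k(C_{p^n}/C_{p^s}) \to L$ is an injective, $C_{p^n}$-equivariant map of fields whose target carries the trivial action, so $W_{C_{p^n}}(C_{p^s})$ acts trivially on $k(C_{p^n}/C_{p^s})$; in particular $\tau_{\geq 1} k$ is again a Green meadow with trivial action, which keeps the inductive hypothesis alive and lets one hope to build the descent inductively. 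The other is to bypass flatness altogether by rerunning the proof of \cref{thm:proj-implies-free-for-RFD-Green-meadows}: the step splitting off the bottom-level free summands $F_0^{\oplus s}$ uses only that $P$ is finitely generated, not that $k$ is relatively finite dimensional, and the single remaining appeal to finite dimensionality — the dimension count forcing the injection onto $\bigoplus_{i \geq 1} F_i^{s_i}$ to be surjective — can be replaced by a level-by-level surjectivity check, using that $F_i(C_{p^n}/e)$ contains no free $C_{p^n}$-orbits for $i \geq 1$ for rank reasons.
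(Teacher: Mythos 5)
Your argument for the relatively finite dimensional case is exactly the paper's: combine \cref{theorem: K0 of Green functors with trivial action} (which gives $K_0^{\mathrm{free}}(k) \cong A(C_{p^n})$ from trivial action alone) with \cref{cor:free-things-generate-K_0-of-field} (which gives $K_0(k) \cong K_0^{\mathrm{free}}(k)$, via \cref{thm:proj-implies-free-for-RFD-Green-meadows}). Your check that the comparison map respects products is fine, and the observation that injectivity of $K_0^{\mathrm{free}}(k) \to K_0(k)$ is automatic (stabilize any witness $P$ by $Q$ with $P \oplus Q$ free) is the right way to see it.

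You have, however, noticed a genuine issue in the statement: \cref{thm:K_0-of-clarified-triv-action-Cpn-Tamb-fields} carries no relative finite dimensionality hypothesis, \cref{cor:free-things-generate-K_0-of-field} does, and triviality of the action does not force relative finite dimensionality — \cref{example: levelwise Noetherian is not good enough} is a $C_p$-Green meadow with trivial action that is not relatively finite dimensional. The remark following \cref{cor:free-things-generate-K_0-of-field} proposes a faithfully flat reduction, but \cref{thm:fields-are-FF} and \cref{corollary: faithful flatness for trivial actions} only supply the needed map for $G = C_p$; and the remark's claim that the conclusion of \cref{thm:proj-implies-free-for-RFD-Green-meadows} descends along a faithfully flat map is asserted without proof (it is not a formal consequence of faithful flatness; the naive analogue for ordinary rings fails). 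Your two suggested routes to close the gap are reasonable guesses — and your observation that triviality of the action propagates to all levels via the injective equivariant restrictions into $L$ is correct — but neither is actually carried out. The second route in particular is delicate: the proof of \cref{thm:proj-implies-free-for-RFD-Green-meadows} uses a finite dimension count to upgrade the levelwise injection $\phi$ to an isomorphism, and it is not at all obvious that "no free $C_{p^n}$-orbits in $F_i(C_{p^n}/e)$ for $i \geq 1$, for rank reasons" replaces that step when the field extensions are infinite-dimensional.

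Summary: your proof reproduces the paper's argument in the relatively finite dimensional case, and you have correctly identified that the general statement is underjustified both in your write-up and in the paper. To make this airtight, add the relatively finite dimensional hypothesis (matching what \cref{cor:free-things-generate-K_0-of-field} actually proves), or restrict attention to $n = 1$ and invoke \cref{corollary: faithful flatness for trivial actions} together with a proof that "projective implies free'' descends.
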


We conclude this section with an example of a projective, non-free module over the Burnside Green functor. We emphasize the contrast with the classical situation: for the initial ring $\Z$, every projective module is free. Therefore the fact that \cref{thm:proj-implies-free-for-RFD-Green-meadows} holds for Green meadows is a particularly nice property enjoyed by these objects. 

\begin{example}
Let $G = C_5$, and consider the \emph{twisted} Burnside functor
\[
    \widetilde{\mathcal{A}} = 
\begin{tikzcd}
	{\mathbb{Z}^2} \\
	\\
	\mathbb{Z}
	\arrow["{(2,5)}"', shift right=2, from=1-1, to=3-1]
	\arrow["\begin{array}{c} \begin{pmatrix} 0 \\ 1  \end{pmatrix} \end{array}"', shift right=2, from=3-1, to=1-1]
\end{tikzcd}
\]
which we claim is projective in the category of $C_5$-Mackey functors. To see this, it suffices to compute that $\widetilde{\mathcal{A}}\boxtimes \widetilde{\mathcal{A}}$ is isomorphic to the Burnside Mackey functor $\mathcal{A}$ defined in \cref{example: burnside}, and hence $\widetilde{\mathcal{A}}$ is an element in the Picard group. It then follows formally that it must be projective.  

To see that $\widetilde{\mathcal{A}}$ is not free, a quick count of ranks implies that it suffices to observe that it is not isomorphic to the $\mathcal{A}$. To see this, suppose there is an isomorphism $\mathcal{A}\to \widetilde{\mathcal{A}}$ could be represented by a matrix 
\[
    M = \begin{pmatrix}
            a & b \\
            c & d
        \end{pmatrix}
    \in \mathrm{Mat}_2(\mathbb{Z})
\]
with $\det(M)=\pm 1$ for the map on the $C_5/C_5$-level and $\pm1$ on the $C_5/e$-level.  Compatibility with the restrictions implies that we must have 
\[
    a+5c =2\quad b+5d=5
\]
while compatibility with the transfers implies that $b=0$ and $d=\pm1$.  We compute
\[
    \pm 1 = \det(M) = ad-bc = \pm a = \pm(2-5c)
\]
which cannot be $\pm1$ since $c$ is an integer.
\end{example}
\section{\texorpdfstring{The $\G$-theory of Green functors}{The G-theory of Green functors}}\label{section: G theory}

In this section we describe some computational tools for studying the $K$-theory and $\G$-theory of Green functors. We will assume throughout that our Green functors are module-Noetherian, as we will need that the category of finitely generated $R$-modules is an abelian category.

\subsection{\texorpdfstring{A spectral sequence for $\G$-theory}{A spectral sequence for G-theory}}

For a Noetherian ring $R$, the $\G$-theory of $R$ is the $K$-theory of the abelian category of finitely generated $R$-modules. This definition extends naturally to module-Noetherian Green functors.

\begin{definition}
    Let $R$ be a module-Noetherian Green functor. Then the $\G$-theory spectrum $\G(R)$ is defined to be the $K$-theory spectrum of the abelian category $R$-$\Mod^{f.g.}$ of finitely-generated $R$-modules.
\end{definition}

Our goal is to study the homotopy groups of $\G$-thoery spectra associated to various $C_{p^n}$-Green functors. To facilitate our analysis we make use of a filtration on the category of $C_{p^n}$-Mackey functors. To begin, we say a Mackey functor $M$ is \emph{brutally truncated} if $M(C_{p^n}/e) = 0$.

\begin{definition}
    Let $M$ be a $C_{p^n}$-Mackey functor. We define its \emph{brutal truncation} by $\tau_{\geq 1}^{\mathrm{brutal}} M$ by 
    \[ 
        \tau_{\geq 1}^{\mathrm{brutal}} M(C_{p^n}/e) = 0 
    \] 
    and
    \[ 
        \tau_{\geq 1}^{\mathrm{brutal}} M(C_{p^n}/C_{p^i}) = M(C_{p^n}/C_{p^i}) . 
    \] 
\end{definition}

A Mackey functor is brutally truncated if and only if it is a module over $\tau_{\geq 1} \A_{C_{p^n}}$, the brutal truncation of the Burnside Green functor. The Mackey functor $\tau_{\geq 1} \A_{C_{p^n}}$ is the quotient of $\A_{C_{p^n}}$ by the smallest Green ideal containing $\A_{C_{p^n}}(C_{p^n}/e)$ hence is a Green functor. The brutal truncation functor is base-change along the unique Green functor map $\A_{C_{p^n}} \rightarrow \tau_{\geq 1} \A_{C_{p^n}}$.

Define $\mathcal{C}_0$ to be the full subcategory of finitely generated $R$-modules whose underlying Mackey functors are brutally truncated. As long as $\RModfg$ is abelian, $\mathcal{C}_0$ is a Serre subcategory. Thus we may form the localization $\RModfg/\mathcal{C}_0$.

\begin{proposition}\label{prop:C_0-cofiber-identification}
    For any module-Noetherian $C_{p^n}$-Green functor $R$ we have an equivalence of categories
    \[ 
        \RModfg / \mathcal{C}_0 \cong R(C_{p^n}/e)_\theta[C_{p^n}] \textrm{-} \Mod^{\fg}
    \] 
    where on the right hand side $R(C_{p^n}/e)_\theta[C_{p^n}]$ is the twisted group ring in the sense of \cref{definition: twisted group ring}.
\end{proposition}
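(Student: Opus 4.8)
The plan is to identify the localization $\RModfg/\mathcal{C}_0$ via its universal property as the target of the Serre quotient functor $q\colon \RModfg \to \RModfg/\mathcal{C}_0$, and to produce a functor to $R(C_{p^n}/e)_\theta[C_{p^n}]\textrm{-}\Mod^{\fg}$ that inverts precisely the maps with brutally-truncated kernel and cokernel. The natural candidate is evaluation at the bottom level, $M \mapsto M(C_{p^n}/e)$, which by the Corollary following \cref{definition: twisted group ring} lands in $R(C_{p^n}/e)_\theta[C_{p^n}]\textrm{-}\Mod$, and which one checks lands in finitely generated modules because a finite generating set for $M$ over $R$ yields (via restrictions and the double coset formula) a finite generating set for $M(C_{p^n}/e)$ over the twisted group ring. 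This functor is exact (colimits and limits of $R$-modules are computed levelwise), and it kills exactly the brutally truncated modules, so it factors through $q$, giving a functor $\overline{\mathrm{ev}}\colon \RModfg/\mathcal{C}_0 \to R(C_{p^n}/e)_\theta[C_{p^n}]\textrm{-}\Mod^{\fg}$.

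Next I would construct a functor in the other direction. Given a finitely generated $R(C_{p^n}/e)_\theta[C_{p^n}]$-module $N$, the idea is to build an $R$-module concentrated near the bottom level with $N$ in level $C_{p^n}/e$; the cleanest way is to use the free–forgetful style adjunction implicit in \cref{prop:free-k-mod-in-level-G/H-is-ind-of-res}(1), namely $\Ind_e^{C_{p^n}}\Res_e^{C_{p^n}}$, which is left adjoint to evaluation at $C_{p^n}/e$ on the level of $R$-modules, so $\Phi(N) := \Ind_e^{C_{p^n}}(N)$ (viewing $N$ as an $R(C_{p^n}/e)$-module with its semilinear action, hence by \cref{lemma: fixed point of twisted module is k-module}-type reasoning an object over the restricted Green functor) defines an $R$-module with $\Phi(N)(C_{p^n}/e)\cong \bigoplus_{g\in C_{p^n}} N \cong N$ as modules over the twisted group ring, up to the usual reindexing. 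Composing with $q$ gives $\overline{\Phi}\colon R(C_{p^n}/e)_\theta[C_{p^n}]\textrm{-}\Mod^{\fg} \to \RModfg/\mathcal{C}_0$. One then checks $\overline{\mathrm{ev}}\circ\overline{\Phi}\cong \mathrm{id}$ directly from the formula for $\Phi(N)(C_{p^n}/e)$, and that the unit $M \to \Phi(M(C_{p^n}/e))$, or rather its image under $q$, is an isomorphism: its kernel and cokernel are supported away from level $C_{p^n}/e$, i.e.\ are brutally truncated, hence become zero in the quotient. This shows $\overline{\Phi}\circ\overline{\mathrm{ev}}\cong \mathrm{id}$ and completes the equivalence.

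The main obstacle I anticipate is the bookkeeping needed to verify that $\overline{\mathrm{ev}}$ is \emph{fully faithful} on the quotient category — equivalently, that for finitely generated $R$-modules $M,M'$ the localized Hom $\mathrm{Hom}_{\RModfg/\mathcal{C}_0}(M,M')$, computed as a colimit over spans $M \leftarrow M'' \to M'$ with brutally-truncated kernel/cokernel of the first map, maps isomorphically to $\mathrm{Hom}_{R(C_{p^n}/e)_\theta[C_{p^n}]}(M(C_{p^n}/e), M'(C_{p^n}/e))$. Surjectivity is the delicate part: given a twisted-group-ring map $f\colon M(C_{p^n}/e)\to M'(C_{p^n}/e)$, one must realize it by a span, which amounts to showing that the sub-$R$-module of $M\oplus M'$ generated by the graph-type elements $(x, \tilde f(x))$ for $x$ in a generating set maps to $M$ with brutally-truncated kernel and cokernel; this uses that $R$ is module-Noetherian (so the relevant submodules are finitely generated, keeping us inside $\RModfg$) together with the fact that an $R$-module map is determined by its bottom level up to modifications supported above level $C_{p^n}/e$. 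Injectivity is easier: a map killed after evaluation factors through a subobject supported above the bottom level, hence is zero in the quotient. I would also remark that, alternatively, the whole statement can be deduced from a recognition theorem for Serre quotients (a functor to an abelian category which is exact, essentially surjective, kills exactly the subcategory, and has fully faithful-on-Homs behavior after inverting), but spelling out the explicit adjoint $\Phi$ as above is more transparent and makes the finite-generation check concrete.
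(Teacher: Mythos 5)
The proposal agrees with the paper on the forward direction: the evaluation functor $M\mapsto M(C_{p^n}/e)$ is exact, lands in finitely generated twisted-group-ring modules, kills exactly $\mathcal{C}_0$, and hence factors through the Serre quotient. Where it goes wrong is the candidate inverse.

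You build $\Phi(N):=\Ind_e^{C_{p^n}}(N)$, the \emph{left} adjoint of restriction, and claim $\Phi(N)(C_{p^n}/e)\cong\bigoplus_{g\in C_{p^n}}N\cong N$ ``up to the usual reindexing.'' This isomorphism is false: $\Ind_e^{C_{p^n}}(N)(C_{p^n}/e)$ is $N^{\oplus p^n}$ with a permutation-type $C_{p^n}$-action, i.e.\ roughly $R(C_{p^n}/e)_\theta[C_{p^n}]\otimes_{R(C_{p^n}/e)}N$, which has $p^n$ times the size of $N$; no reindexing collapses this to $N$. Consequently $\overline{\mathrm{ev}}\circ\overline{\Phi}\not\cong\mathrm{id}$, and the alleged comparison map $\Phi(M(C_{p^n}/e))\to M$ (which is the counit of a $\Res$--$\Ind$ adjunction, not the unit as you wrote) has a large kernel already in level $C_{p^n}/e$, so it does \emph{not} become an isomorphism in the quotient by brutally-truncated modules. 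The slip comes from conflating $\Res_e^{C_{p^n}}\colon\Mack^{C_{p^n}}\to\mathrm{Ab}$ (whose two-sided adjoint is $\Ind_e^{C_{p^n}}$) with evaluation at $C_{p^n}/e$ \emph{recording the $C_{p^n}$-action}, which is a different functor landing in $R(C_{p^n}/e)_\theta[C_{p^n}]$-modules; the right adjoint of the latter is the fixed-point functor $\FP$ of \cref{lemma: fixed point of twisted module is k-module}, not $\Ind_e^{C_{p^n}}$.

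The paper's proof uses $\psi=q\circ\FP$ precisely because $\FP(N)(C_{p^n}/e)\cong N$ on the nose (the evaluation--$\FP$ adjunction counit is an isomorphism), so $\phi\circ\psi\cong\mathrm{id}$ is immediate; and the unit $M\to\FP(M(C_{p^n}/e))$ does have brutally-truncated kernel and cokernel, giving $\psi\circ\phi\cong\mathrm{id}$ in the quotient. Replacing $\Ind_e^{C_{p^n}}$ by $\FP$ repairs your argument and also renders the long paragraph about verifying full faithfulness of $\overline{\mathrm{ev}}$ via spans unnecessary, since exhibiting the explicit two-sided inverse already gives the equivalence.
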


\begin{proof}
    The exact functor 
    \[ 
        \RModfg \rightarrow R(C_{p^n}/e)_\theta[C_{p^n}] \textrm{-} \Mod^{\fg} 
    \] 
    given by $M \mapsto M(C_{p^n}/e)$ vanishes on $\mathcal{C}_0$, so it is uniquely factored by an additive functor 
    \[ 
        \phi \colon \RModfg / \mathcal{C}_0 \rightarrow R(C_{p^n}/e)_\theta[C_{p^n}] \textrm{-} \Mod^{\fg}.
    \]
    Now let $M$ be an $R(C_{p^n}/e)_\theta[C_{p^n}]$-module. Then $\FP(M)$ is a $R$-module by \cref{lemma: fixed point of twisted module is k-module}. Postcomposing the functor
    \[
        \FP\colon  R(C_{p^n}/e)_\theta[C_{p^n}] \textrm{-} \Mod^{\fg}\to \RModfg
    \]
    with the localization functor defines an additive functor 
    \[ 
        \psi \colon k(C_{p^n}/e)_\theta[C_{p^n}] {-} \Mod^{f.g.} \rightarrow k {-} \Mod^{f.g.} / \mathcal{C}_0 
    \] 
    which we show is a categorical inverse to $\phi$.
    
    We can pick a model for the localization where the objects of $\RModfg / \mathcal{C}_0$ are just the objects of $\RModfg$, and the functor $\phi$ is given on objects by evaluation at $C_{p^n}/e$. The adjunction unit $M \rightarrow \FP(M(C_{p^n}/e))$ has kernel and cokernel in $\mathcal{C}_0$, hence it determines a natural isomorphism $\mathrm{Id} \cong \psi \circ \phi$.

    On the other hand, if we start with a $R(C_{p^n}/e)_\theta[C_{p^n}]$-module $M$, then $\phi \circ \psi(M)$ is obtained by taking the fixed-point Mackey functor to get a $R$-module, regarding it as an object of the localization, and then evaluating at $C_{p^n}/e$. This returns $M$; in particular the fixed-point/evaluation adjunction counit determines the requisite natural isomorphism $\mathrm{Id} \cong \phi \circ \psi$.
\end{proof}

Since any equivalence of abelian categories is automatically an exact equivalence we obtain the following corollary.

\begin{corollary}
    For any module-Noetherian Green functor $R$ there is an equivalence of spectra
    \[
        K(\RModfg / \mathcal{C}_0)\simeq \G(R(C_{p^n}/e)_{\theta}[C_{p^n}]).
    \]
\end{corollary}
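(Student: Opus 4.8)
The plan is to feed the equivalence of abelian categories from \cref{prop:C_0-cofiber-identification} into algebraic $K$-theory. First I would observe that the mutually inverse functors $\phi$ and $\psi$ built in that proof are additive, hence---being equivalences---exact functors of abelian categories. Algebraic $K$-theory of abelian categories, defined via Quillen's $Q$-construction on the canonical exact structure, is invariant under exact equivalences: an exact equivalence induces an equivalence of $Q$-constructions and therefore an equivalence on $K$-theory spectra. Applying this to $\phi$ gives
\[
    K\big(\RModfg/\mathcal{C}_0\big)\simeq K\big(R(C_{p^n}/e)_{\theta}[C_{p^n}]\textrm{-}\Mod^{\fg}\big).
\]

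It then remains only to recognize the right-hand side as $\G(R(C_{p^n}/e)_{\theta}[C_{p^n}])$, which by the definition of $\G$-theory is immediate once we know the twisted group ring is left Noetherian. I would verify this directly. The identification in \cref{prop:free-k-mod-in-level-G/H-is-ind-of-res} gives $F_0 = \Ind_e^{C_{p^n}}\Res_e^{C_{p^n}} R$, and an inspection of induced Mackey functors shows that $F_0(C_{p^n}/e)$ is free of rank one over $R(C_{p^n}/e)_{\theta}[C_{p^n}]$ while each higher level $F_0(C_{p^n}/C_{p^s})$ is generated by transfers out of the bottom level. Consequently every left ideal of $R(C_{p^n}/e)_{\theta}[C_{p^n}]$ arises as the bottom level of an $R$-submodule of $F_0$, and that submodule may be chosen finitely generated precisely when the ideal is. Module-Noetherianity of $R$ therefore forces $R(C_{p^n}/e)_{\theta}[C_{p^n}]$ to be left Noetherian, so $\G(R(C_{p^n}/e)_{\theta}[C_{p^n}])$ is defined and equals $K\big(R(C_{p^n}/e)_{\theta}[C_{p^n}]\textrm{-}\Mod^{\fg}\big)$.

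There is no genuine obstacle here: the corollary is a formal consequence of \cref{prop:C_0-cofiber-identification} together with exact-equivalence invariance of $K$-theory. The one point worth a sentence of care is confirming that $\G$ of the twisted group ring is defined---equivalently, as just sketched, that this ring is left Noetherian (one could alternatively sidestep this, since \cref{prop:C_0-cofiber-identification} already presents $R(C_{p^n}/e)_{\theta}[C_{p^n}]\textrm{-}\Mod^{\fg}$ as an abelian category, which is all that is needed to run the $Q$-construction).
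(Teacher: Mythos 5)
Your proposal matches the paper's own argument: the equivalence of abelian categories from \cref{prop:C_0-cofiber-identification} is automatically exact, hence induces an equivalence on $K$-theory via the $Q$-construction, and that is the entire proof. Your second paragraph on left Noetherianity of the twisted group ring is unnecessary (and a little hand-wavy where you assert that every left ideal arises as a bottom level of a finitely generated submodule of $F_0$); as you note in your final parenthetical, \cref{prop:C_0-cofiber-identification} already exhibits $R(C_{p^n}/e)_{\theta}[C_{p^n}]\textrm{-}\Mod^{\fg}$ as equivalent to a Serre quotient of an abelian category, hence abelian, which is all the $Q$-construction requires.
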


By Quillen's localization theorem \cite[Theorem 5]{Quillen:HigherAlgKThy} the sequence of abelian categories  $\mathcal{C}_0\to \RModfg\to R(C_{p^n}/e)_\theta[C_{p^n}] \textrm{-} \Mod^{\fg}$ induces fiber sequences on $K$-theory spectra.  Having identified the $K$-theory of the cofiber, our next task is to identify the spectrum $K(\mathcal{C}_0)$. We do so by relating $\mathcal{C}_0$ to the category of modules over a particular $C_{p^{n-1}}$-Mackey functor and then working inductively.

\begin{definition}
    Let $M$ be a $C_{p^n}$ Mackey or Green functor. We define its \emph{Mackey functor geometric $C_p$-fixed points} $\Phi^{C_p}_{\mathrm{Mackey}} M$ to be the $C_{p^{n-1}}$-Mackey or Green functor obtained from $\tau_{\geq 1} M$ by quotienting the submodule (resp. ideal) given levelwise by the image of the transfer from $M(C_{p^n}/e)$.
\end{definition}

\begin{remark}
    We note that $\Phi^{C_p}_{\mathrm{Mackey}}$ is the $C_p$-geometric fixed points functor considered in \cite{HMQ22}. Thus this functor may equivalently be described as the left adjoint to inflation along the surjection $C_{p^n} \rightarrow C_{p^{n-1}}$. In fact, this inflation may be identified with the inclusion of brutally truncated $C_{p^n}$-Mackey functors into all $C_{p^n}$-Mackey functors. Since this inclusion functor is strong monoidal, it follows formally that the left adjoint $\Phi^{C_p}_{\mathrm{Mackey}}$ is also strong monoidal. In particular, it carries $R$-modules to $\Phi^{C_{p}}_{\mathrm{Mackey}}(R)$-modules for any Green functor $R$. It is known that geometric fixed points preserve both projective and flat modules over the Burnside Green functor; we expect this to be true for modules over any Green functor, although we do not require this result.
\end{remark}

\begin{proposition}\label{prop:C_0-identification}
    For any module-Noetherian Green functor $R$ there are equivalence 
    \[
        \mathcal{C}_0 \cong \Phi^{C_p}_{\mathrm{Mackey}} (R) \textrm{-} \Mod^{\fg}.
    \]
\end{proposition}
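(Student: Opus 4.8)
The plan is to exhibit mutually inverse functors between $\mathcal{C}_0$ and $\Phi^{C_p}_{\mathrm{Mackey}}(R)\text{-}\Mod^{\fg}$, in the spirit of the proof of \cref{prop:C_0-cofiber-identification}. For the forward direction, recall from the remark above that $\Phi^{C_p}_{\mathrm{Mackey}}$ is strong monoidal, hence restricts to a functor $R\text{-}\Mod \to \Phi^{C_p}_{\mathrm{Mackey}}(R)\text{-}\Mod$. The first observation is that if $M \in \mathcal{C}_0$ then $M(C_{p^n}/e) = 0$, so the image of the transfer out of level $e$ is zero, and therefore $\Phi^{C_p}_{\mathrm{Mackey}}(M)$ is just $\tau_{\geq 1}(M)$ equipped with its evident module structure over $\Phi^{C_p}_{\mathrm{Mackey}}(R) = \tau_{\geq 1}(R)/(\operatorname{im}\Tr)$.

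For the inverse, given a $\Phi^{C_p}_{\mathrm{Mackey}}(R)$-module $N$ I would define an $R$-module $\Psi(N)$ by $\Psi(N)(C_{p^n}/e) = 0$ and $\Psi(N)(C_{p^n}/C_{p^i}) = N(C_{p^{n-1}}/C_{p^{i-1}})$ for $i \geq 1$, taking the restrictions, transfers, and conjugations among levels $\geq 1$ from those of $N$ and declaring every structure map into or out of level $e$ to be zero; the action of $R(C_{p^n}/C_{p^i})$ on $\Psi(N)(C_{p^n}/C_{p^i})$ for $i\ge1$ is the action of $\tau_{\geq 1}(R)(C_{p^{n-1}}/C_{p^{i-1}}) = R(C_{p^n}/C_{p^i})$ on $N(C_{p^{n-1}}/C_{p^{i-1}})$ obtained by precomposing with the quotient map $\tau_{\geq 1}(R)\to\Phi^{C_p}_{\mathrm{Mackey}}(R)$. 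The one nontrivial point is to check that this really satisfies the compatibility conditions of \cref{lemma: definition of a module}: the relations involving only levels $\geq 1$ are exactly the module axioms for $N$ over $\tau_{\geq 1}(R)$, while the relations involving level $e$ (restriction compatibility, and the Frobenius reciprocity identities for transfers from level $e$) hold because $\Psi(N)(C_{p^n}/e)$ is zero \emph{and} because the image of the transfer from level $e$ in $R$ acts as zero on $N$ — which is precisely what the quotient defining $\Phi^{C_p}_{\mathrm{Mackey}}(R)$ arranges. (Equivalently, since $\Phi^{C_p}_{\mathrm{Mackey}}\dashv\mathrm{infl}$ is a monoidal adjunction, the unit $R \to \mathrm{infl}\,\Phi^{C_p}_{\mathrm{Mackey}}(R)$ is a map of Green functors and $\Psi(N) = \mathrm{infl}(N)$ with scalars restricted along it.)

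It would then remain to assemble the equivalence. On underlying Mackey functors, $\Phi^{C_p}_{\mathrm{Mackey}}\Psi \cong \mathrm{Id}$ is the counit isomorphism for the fully faithful inclusion $\mathrm{infl}$, and $\Psi\,\Phi^{C_p}_{\mathrm{Mackey}} \cong \mathrm{Id}$ on $\mathcal{C}_0$ holds because on a brutally truncated $M$ we have $\Phi^{C_p}_{\mathrm{Mackey}}(M) = \tau_{\geq 1}(M)$ and reinflating recovers $M$; one checks, formally from the monoidal structure, that these isomorphisms respect the module actions. Next, $\Psi$ visibly lands in $\mathcal{C}_0$ since $\Psi(N)$ is brutally truncated. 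Finally, finite generation is preserved in both directions: a finite generating set for a brutally truncated $R$-module $M$ can be chosen entirely in levels $C_{p^n}/C_{p^i}$ with $i \geq 1$ (any generators at level $e$ are zero), and such a set generates $\tau_{\geq 1}(M)$ over $\Phi^{C_p}_{\mathrm{Mackey}}(R)$ since the only operation lost in passing to $\tau_{\geq 1}$ is restriction to the now-deleted level $e$, which was the zero map; the converse is similar. (This also shows $\Phi^{C_p}_{\mathrm{Mackey}}(R)$ is module-Noetherian, so the right-hand category is abelian.)

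I expect the main obstacle to be the bookkeeping in the second paragraph: verifying that $\Psi(N)$ satisfies every module axiom — in particular that the Frobenius reciprocity relations for transfers touching level $e$ come out consistently — is exactly where the defining quotient of $\Phi^{C_p}_{\mathrm{Mackey}}$ does its work, while everything else is either immediate or a formal consequence of the adjunction $\Phi^{C_p}_{\mathrm{Mackey}}\dashv\mathrm{infl}$ being monoidal.
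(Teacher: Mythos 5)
Your proof is correct and takes essentially the same approach as the paper. The paper's own proof is a one-liner — it cites \cref{lemma: definition of a module} and asserts that objects and morphisms in the two categories are specified by identical data — and your argument is simply the fully spelled-out version of that assertion: the forward functor is $\tau_{\geq 1}$ (which agrees with $\Phi^{C_p}_{\mathrm{Mackey}}$ on brutally truncated modules, as you note), the inverse is inflation with restricted scalars, both clearly compose to the identity, and your verification that the Frobenius reciprocity relations touching level $e$ are exactly what the quotient defining $\Phi^{C_p}_{\mathrm{Mackey}}(R)$ arranges is the one nontrivial check the paper leaves implicit. Your extra observation about preservation of finite generation (and hence that $\Phi^{C_p}_{\mathrm{Mackey}}(R)$ is module-Noetherian) is a useful point the paper passes over silently.
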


\begin{proof}
    It follows immediately from \cref{lemma: definition of a module} that objects and morphisms in these categories are specified by identical data.
\end{proof}

\begin{lemma}\label{lem:Mackey-GFP-preserve-RFD}
    Let $R$ be a relatively finite dimensional $C_{p^n}$-Green functor. Then $\tau_{\geq 1} R$ and $\Phi^{C_p} R$ are relatively finite dimensional.
\end{lemma}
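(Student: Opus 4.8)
The plan is to reduce both claims to the observation that relative finite dimensionality is inherited under the two elementary operations at play: discarding the bottom level, and passing to a Green-ideal quotient.

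First I would handle $\tau_{\geq 1} R$. By construction $(\tau_{\geq 1} R)(C_{p^{n-1}}/C_{p^s}) = R(C_{p^n}/C_{p^{s+1}})$ for $0 \le s \le n-1$, and in particular the top level $(\tau_{\geq 1}R)(C_{p^{n-1}}/C_{p^{n-1}})$ is precisely $R(C_{p^n}/C_{p^n})$. One reads off from the construction that the restriction maps of $\tau_{\geq 1}R$ out of its top level are exactly the restriction maps of $R$ out of $R(C_{p^n}/C_{p^n})$, so the $(\tau_{\geq 1}R)(C_{p^{n-1}}/C_{p^{n-1}})$-module structure on $(\tau_{\geq 1}R)(C_{p^{n-1}}/C_{p^s})$ literally coincides with the $R(C_{p^n}/C_{p^n})$-module structure on $R(C_{p^n}/C_{p^{s+1}})$. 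Hence the hypothesis that $R$ is relatively finite dimensional, applied to the subgroups $C_{p^{s+1}}$ for $1 \le s+1 \le n$, immediately gives that each of these modules is finitely generated, so $\tau_{\geq 1}R$ is relatively finite dimensional. (Note that the $C_{p^n}/e$-level of $R$ is discarded and plays no role.)

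Next I would treat $\Phi^{C_p} R$. The key input is a general fact: if $S$ is a relatively finite dimensional $G$-Green functor and $J \subseteq S$ is a Green ideal, then $S/J$ is relatively finite dimensional. Indeed $(S/J)(G/H) = S(G/H)/J(G/H)$ is a quotient of the finitely generated $S(G/G)$-module $S(G/H)$; the images of a finite generating set generate it over $S(G/G)$, and since $S/J$ is a Green functor its restriction maps are compatible with the quotient, so the $S(G/G)$-action on $(S/J)(G/H)$ factors through $(S/J)(G/G) = S(G/G)/J(G/G)$ and the same finite set generates over $(S/J)(G/G)$. Now $\Phi^{C_p} R$ is by definition the quotient of $\tau_{\geq 1} R$ by the Green ideal whose $C_{p^{n-1}}/C_{p^s}$-level is the image of the transfer $\Tr_e^{C_{p^{s+1}}}$; that these subgroups assemble into a Green ideal is part of the standard setup of geometric fixed points (it follows from $\Phi^{C_p}_{\mathrm{Mackey}}$ being a strong monoidal left adjoint, as recorded in the remark after its definition). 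Since $\tau_{\geq 1} R$ is relatively finite dimensional by the previous paragraph, the general fact applies and $\Phi^{C_p} R$ is relatively finite dimensional.

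I do not anticipate a real obstacle here; the proof is essentially bookkeeping. The only points needing care are (i) being precise that when the base ring changes—from $R(C_{p^n}/C_{p^n})$ to its quotient—the module structures are related by restriction of scalars, so that generating sets descend, which is just functoriality of the module structure; and (ii) invoking the fact that the transfer-image levels genuinely form a Green ideal of $\tau_{\geq 1} R$ rather than merely a levelwise family of subgroups, which is the only slightly structural ingredient and is already implicit in the definition of $\Phi^{C_p}_{\mathrm{Mackey}}$.
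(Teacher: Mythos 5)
Your proof is correct and follows essentially the same approach as the paper's. For $\tau_{\geq 1}R$ both arguments note that the relevant restriction maps are literally unchanged; for $\Phi^{C_p}R$ the paper works directly with a commutative square of rings with surjective horizontals, observing that finite generation descends along surjections, whereas you reorganize the same algebraic fact into an explicit general lemma that Green-ideal quotients of relatively finite dimensional Green functors are relatively finite dimensional and then factor $\Phi^{C_p}$ through $\tau_{\geq 1}$ — a cosmetic repackaging, not a different route.
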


\begin{proof}
    The claim about $\tau_{\geq 1}(R)$ is immediate, as the restriction maps $\tau_{\geq 1}(R)(G/K)\to \tau_{\geq 1}(R)(G/H)$ for $H\leq K$ are the same as the corresponding maps in $R$.  For $\Phi^{C_{p}}(R)$ and $1\leq r< t$ there is a commutative square of rings
    \[
        \begin{tikzcd}
        R(C_{p^n}/C_{p^t}) 
        \ar[->>,r] 
        \ar[d,"\res^{C_{p^t}}_{C_{p^r}}"] 
        & 
        \Phi^{C_{p}}(R)(C_{p^{n-1}}/C_{p^{t-1}}) 
        \ar[d,"\res^{C_{p^{t-1}}}_{C_{p^{r-1}}}"]
        \\
        R(C_{p^n}/C_{p^r}) 
        \ar[->>,r]
        &
        \Phi^{C_{p}}(R)(C_{p^{n-1}}/C_{p^{r-1}})
        \end{tikzcd}
    \]
    and we see the right vertical map is finite because the left vertical map is finite.
\end{proof}

Just as we previously iterated the the functor $\tau_{\geq 1}$ we can iteratively apply the functor $\Phi^{C_p}_{\mathrm{Mackey}}$ $m$-times to obtain $\Phi^{C_{p^m}}_{\mathrm{Mackey}} \colon C_{p^n} \textrm{-} \Mack \to C_{p^{n-m}} \textrm{-} \Mack$.  The following lemma is immediate from the definitions.

\begin{lemma}\label{lem:bottom level of geom fixed points}
    There is an isomorphism of rings with $C_{p^{n-m}}$-action
    \[
        \Phi^{C_{p^m}}_{\mathrm{Mackey}}(R)(C_{p^{n-m}}/e)\cong R(C_{p^{n}}/C_{p^{m}})/\mathrm{im}(\tr^{C_{p^m}}_{C_{p^{m-1}}}).
    \]
\end{lemma}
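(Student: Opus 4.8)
The plan is to unwind the iterated definition of $\Phi^{C_{p^m}}_{\mathrm{Mackey}}$ and prove, by induction, a levelwise refinement of the stated formula. Write $R^{(0)} = R$ and $R^{(j)} = \Phi^{C_p}_{\mathrm{Mackey}}(R^{(j-1)})$, so that $R^{(m)} = \Phi^{C_{p^m}}_{\mathrm{Mackey}}(R)$. I would prove that for all $j$ and all $0 \le s \le n-j$ there is an isomorphism, compatible with the residual $C_{p^{n-j-s}}$-Weyl action,
\[
    R^{(j)}(C_{p^{n-j}}/C_{p^s}) \;\cong\; R(C_{p^n}/C_{p^{s+j}})\big/\,\mathrm{im}\!\left(\tr^{C_{p^{s+j}}}_{C_{p^{j-1}}}\right),
\]
where by convention $C_{p^{-1}} = C_{p^0} = e$, so that the $j=1$ case reads $R^{(1)}(C_{p^{n-1}}/C_{p^s}) \cong R(C_{p^n}/C_{p^{s+1}})/\mathrm{im}(\tr^{C_{p^{s+1}}}_{e})$. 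Taking $j = m$ and $s = 0$ recovers the lemma.

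Two preliminary observations make the bookkeeping go through. First, the images $\mathrm{im}(\tr^{K}_{L})$ appearing above are ideals, by the Frobenius reciprocity relations of \cref{lemma: Green functor data}, and they are stable under the relevant Weyl actions; since $\Phi^{C_p}_{\mathrm{Mackey}}$ is by definition the quotient of $\tau_{\geq 1}$ by a Green ideal, each $R^{(j)}$ is genuinely a Green functor and all comparison maps in sight are ring maps compatible with conjugation, so it suffices to check the underlying additive statement. Second, by the construction of $\tau_{\geq 1}$ together with transitivity of transfers ($\tr^K_L = \tr^K_H \circ \tr^H_L$), the transfer in $\tau_{\geq 1}M$ from its bottom level up to level $s$ is the transfer $\tr^{C_{p^{s+1}}}_{C_p}$ of $M$; iterating, the transfer of $R^{(j)}$ from its bottom level $C_{p^{n-j}}/e$ up to level $C_{p^{n-j}}/C_{p^{s}}$ is induced, on the appropriate quotients, by the transfer $\tr^{C_{p^{s+j}}}_{C_{p^{j}}}$ of $R$.

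Granting these, the inductive step is a one-line telescoping. Assuming the displayed formula for $R^{(j)}$, the definition of $\Phi^{C_p}_{\mathrm{Mackey}}$ gives
\[
    R^{(j+1)}(C_{p^{n-j-1}}/C_{p^s}) \;=\; R^{(j)}(C_{p^{n-j}}/C_{p^{s+1}})\big/\,\mathrm{im}\big(\tr^{(j)}\big),
\]
where $\tr^{(j)}$ is the transfer of $R^{(j)}$ from its bottom level to level $s+1$, which by the second observation is induced by $\tr^{C_{p^{s+1+j}}}_{C_{p^{j}}}$ of $R$. Transitivity gives $\tr^{C_{p^{s+1+j}}}_{C_{p^{j-1}}} = \tr^{C_{p^{s+1+j}}}_{C_{p^{j}}} \circ \tr^{C_{p^{j}}}_{C_{p^{j-1}}}$, so $\mathrm{im}(\tr^{C_{p^{s+1+j}}}_{C_{p^{j-1}}}) \subseteq \mathrm{im}(\tr^{C_{p^{s+1+j}}}_{C_{p^{j}}})$; hence, by the third isomorphism theorem, the right-hand side collapses to $R(C_{p^n}/C_{p^{s+1+j}})/\mathrm{im}(\tr^{C_{p^{s+1+j}}}_{C_{p^{j}}})$, which is exactly the $(j+1)$ instance of the formula. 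The base case $j=0$ is the identity.

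I do not anticipate a genuine obstacle here --- this is why the lemma is labelled immediate --- but the one place to be careful is the second preliminary observation: correctly matching the transfer of the iterated functor $R^{(j)}$ out of its bottom level with a transfer of $R$, descended to the correct quotient, without an off-by-one error in the subgroup indices. Once that identification is pinned down using the explicit description of transfers in the $\tau_{\geq 1}$ construction and transitivity, everything else is formal.
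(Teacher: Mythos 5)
Your argument is correct and is exactly the kind of unwinding-of-definitions the paper leaves implicit (the paper states the lemma is ``immediate'' and supplies no proof). The induction on $j$ with the levelwise refinement, the identification of the transfer of $R^{(j)}$ as the one induced by $\tr^{C_{p^{s+j}}}_{C_{p^j}}$, and the telescoping via the third isomorphism theorem all check out.

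One small presentational flaw: your stated convention $C_{p^{-1}} = e$ does not make the $j=0$ instance of the formula literally true, since it would read
\[
R^{(0)}(C_{p^n}/C_{p^s}) \cong R(C_{p^n}/C_{p^s})/\mathrm{im}\bigl(\tr^{C_{p^s}}_{e}\bigr),
\]
which differs from $R(C_{p^n}/C_{p^s})$ unless the transfer from the bottom level happens to vanish. The fix is either to declare $\mathrm{im}(\tr^K_{C_{p^{-1}}}) := 0$ by convention, or (cleaner, and implicitly what your argument actually does) to start the induction at $j=1$, which you have already verified matches the definition of $\Phi^{C_p}_{\mathrm{Mackey}}$ on the nose. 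Everything downstream is unaffected.
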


\begin{notation}
    We will denote either of the (isomorphic) rings in \cref{lem:bottom level of geom fixed points} by $\Phi^{C_{p^m}}(R)$.
\end{notation}

\begin{example}
    For any $R$ we have $\Phi^{e}(R)\cong R(C_{p^n}/e)$.
\end{example}

\begin{proposition}\label{prop:G-theory-tower}
    Let $R$ be a module-Noetherian $C_{p^n}$-Green functor. Then we have a tower of spectra
\[\begin{tikzcd}
	0 \\
	{\G(\Phi^{C_{p^n}}_{\mathrm{Mackey}} R))} & {\G(\Phi^{C_{p^n}}(R))} \\
	{\G(\Phi^{C_{p^{n-1}}}_{\mathrm{Mackey}} (R))} & {\G(\Phi^{C_{p^{n-1}}}(R)_{\theta}[C_{p^n}/C_{p^{n-1}}])} \\
	{\G(\Phi^{C_p}_{\mathrm{Mackey}} (R))} & {\G(\Phi^{C_p} (R)_\theta[C_{p^n}/C_p])} \\
	{\G(R)} & {\G(\Phi^{C_{p^0}}(R)_\theta[C_{p^n}/e])}
	\arrow[from=1-1, to=2-1]
	\arrow["{=}", from=2-1, to=2-2]
	\arrow[from=2-1, to=3-1]
	\arrow[from=3-1, to=3-2]
	\arrow[dashed, from=3-1, to=4-1]
	\arrow[from=4-1, to=4-2]
	\arrow[from=4-1, to=5-1]
	\arrow[from=5-1, to=5-2]
\end{tikzcd}\]
    where the cofiber of each vertical map is the corresponding horizontal map.
\end{proposition}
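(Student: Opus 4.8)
The plan is to build the tower by iterating the localization sequence already established for $R$, applied at each stage to an iterated Mackey functor geometric fixed point of $R$, and to read off the successive subquotients using \cref{prop:C_0-cofiber-identification,prop:C_0-identification,lem:bottom level of geom fixed points}. Concretely, for $0 \le m \le n$ I would write $R_m := \Phi^{C_{p^m}}_{\mathrm{Mackey}}(R)$, a $C_{p^{n-m}}$-Green functor, so that $R_0 = R$ and --- since a $C_{p^0}$-Green functor is nothing more than its value at the unique level --- $R_n$ is the ring $\Phi^{C_{p^n}}(R)$ of \cref{lem:bottom level of geom fixed points}; moreover $\Phi^{C_p}_{\mathrm{Mackey}}(R_m) = R_{m+1}$ for $0 \le m \le n-1$.

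First I would check that the machinery is licensed at every stage, i.e.\ that each $R_m$ is module-Noetherian, by induction on $m$ (the base case $m=0$ being the hypothesis). If $R_m$ is module-Noetherian then $R_m\text{-}\Mod^{\fg}$ is abelian, so the full subcategory $\mathcal{C}_0^{(m)}$ of finitely generated $R_m$-modules with vanishing bottom level is a Serre subcategory: it is closed under subobjects and quotients because the bottom level of a sub- or quotient module of a brutally truncated module is a sub- or quotient of $0$, it is closed under extensions because the bottom level of such an extension is an extension of $0$ by $0$, and finite generation in all three cases is automatic from module-Noetherianity. By \cref{prop:C_0-identification} there is an exact equivalence $\mathcal{C}_0^{(m)} \cong R_{m+1}\text{-}\Mod^{\fg}$; a Serre subcategory of an abelian category is abelian, so $R_{m+1}$ is module-Noetherian, which both closes the induction and guarantees that every $\G$-theory spectrum in the claimed diagram makes sense.

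Next, for each fixed $0 \le m \le n-1$ I would feed the Serre inclusion $\mathcal{C}_0^{(m)} \hookrightarrow R_m\text{-}\Mod^{\fg}$ into Quillen's localization theorem \cite[Theorem 5]{Quillen:HigherAlgKThy} to obtain a fiber (hence cofiber) sequence $\G(R_{m+1}) \to \G(R_m) \to K(R_m\text{-}\Mod^{\fg}/\mathcal{C}_0^{(m)})$, the fiber being identified via \cref{prop:C_0-identification}. Applying \cref{prop:C_0-cofiber-identification} to the $C_{p^{n-m}}$-Green functor $R_m$ identifies the quotient category with modules over $R_m(C_{p^{n-m}}/e)_\theta[C_{p^{n-m}}]$, and \cref{lem:bottom level of geom fixed points} together with the isomorphism $C_{p^n}/C_{p^m} \cong C_{p^{n-m}}$ rewrites this twisted group ring as $\Phi^{C_{p^m}}(R)_\theta[C_{p^n}/C_{p^m}]$ --- exactly the entry displayed in the horizontal direction. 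Splicing these fiber sequences for $m = n-1, n-2, \dots, 0$ --- the map in stage $m$ has target $\G(R_m)$, which is the source of the map in stage $m-1$ --- yields the vertical tower $\G(R_n) \to \cdots \to \G(R_1) \to \G(R_0) = \G(R)$ with the stated cofibers, and adjoining the evident $0 \to \G(R_n) = \G(\Phi^{C_{p^n}}(R))$ at the top completes the diagram.

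I do not expect a genuine obstacle here: the substance is carried entirely by \cref{prop:C_0-cofiber-identification,prop:C_0-identification}, which are already available. The points that need care are the inductive bookkeeping --- confirming that module-Noetherianity propagates to every $R_m$ so that the Serre-subcategory and localization apparatus applies at each level --- and the index matching, in particular verifying that the twisted group ring $(\Phi^{C_{p^m}}_{\mathrm{Mackey}}R)(C_{p^{n-m}}/e)_\theta[C_{p^{n-m}}]$ produced by \cref{prop:C_0-cofiber-identification} is literally the ring $\Phi^{C_{p^m}}(R)_\theta[C_{p^n}/C_{p^m}]$ appearing in the tower. It is also worth noting explicitly, though it is automatic, that the successive localization maps compose into an honest tower because the target of each is the source of the next.
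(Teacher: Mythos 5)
Your proof is correct and uses the same key ingredients as the paper---\cref{prop:C_0-identification}, \cref{prop:C_0-cofiber-identification}, \cref{lem:bottom level of geom fixed points}, and Quillen's localization theorem---so it is essentially the paper's argument, organized slightly differently: where the paper sets up a nested filtration $\mathcal{C}_n \subset \cdots \subset \mathcal{C}_0 \subset R\text{-}\Mod^{\fg}$ inside the fixed category and identifies each $\mathcal{C}_m$, you instead iterate a single localization step applied to the successively smaller $C_{p^{n-m}}$-Green functors $R_m = \Phi^{C_{p^m}}_{\mathrm{Mackey}}(R)$; the two are equivalent under \cref{prop:C_0-identification}. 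Your explicit verification that module-Noetherianity propagates along $R_m \mapsto R_{m+1}$ (via the equivalence of abelian categories $\mathcal{C}_0^{(m)} \cong R_{m+1}\text{-}\Mod^{\fg}$) is a worthwhile piece of bookkeeping that the paper leaves implicit.
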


\begin{proof}
    For any $0\leq m\leq {n-1}$ let $\mathcal{C}_m\subset \RModfg$ denote the Serre subcategory of finitely generated $R$-modules $M$ such that $M(C_{p^n}/C_{p^t})=0$ for all $t\leq m$.  This gives a filtration 
    \begin{equation}\label{equation:filtration of module cateogry}
        0\hookrightarrow \mathcal{C}_n\hookrightarrow \mathcal{C}_{n-1}\hookrightarrow\dots\hookrightarrow\mathcal{C}_{0}\hookrightarrow \kModfg.
    \end{equation}
    where each functor is the inclusion of a Serre subcategory.
    
    Through the identification of \cref{prop:C_0-identification} we see that $\mathcal{C}_1$ is equivalent to the collection of brutally truncated $\Phi^{C_p}_{\mathrm{Mackey}}(R)$-modules, hence we can apply \cref{prop:C_0-identification} again to obtain an equivalence of abelian categories
    \[
        \mathcal{C}_1\simeq \Phi^{C_{p^2}}_{\mathrm{Mackey}}(R)\textrm{-}\Mod^{\fg}.
    \]
    By induction, we have $\mathcal{C}_m\simeq \Phi^{C_{p^{m-1}}}_{\mathrm{Mackey}}(R)\textrm{-}\Mod^{\fg}$ and thus taking $K$-theory of the chain \eqref{equation:filtration of module cateogry}, together with Quillen's localization theorem \cite[Theorem 5]{Quillen:HigherAlgKThy}, yields the tower of spectra from the statement.  The identification of the cofibers is \cref{prop:C_0-cofiber-identification}.
\end{proof}

\begin{remark}\label{remark: SS for general G}
    For an arbitrary finite group $G$ and $G$-Green functor $R$ it is still possible to filter the category of finitely generated $R$-modules ``by isotropy'' as we have done here and thus produce a filtration of $\G(R)$. It is less clear that pieces of the associated graded have a recognizable form, although we certainly believe this approach to be feasible. We do not pursue this idea further in this paper.
\end{remark}

\begin{corollary}\label{cor:G-theory-SS}
    For any module Noetherian $C_{p^n}$-Green functor $R$ there is a strongly convergent spectral sequence with signature 
    \[
        E^1_{s,t} \cong \G_s(\Phi^{C_{p^{t}}}(R)_\theta[C_{p^n}/C_{p^t}]) \Rightarrow \G_{s}(R)
    \] 
    with differentials
    \[
        d_r : E^r_{s,t} \rightarrow E^r_{s-1,t+r}.
    \] 
    The spectral sequence collapses at the $E_{n+1}$ page.
\end{corollary}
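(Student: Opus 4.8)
The plan is to feed the finite tower of spectra from \cref{prop:G-theory-tower} into the standard construction of the spectral sequence of a filtered spectrum. That tower exhibits $\G(R)$ as a finite filtered spectrum with $n+1$ successive layers, the layer indexed by $t$ (for $0 \leq t \leq n$) being the cofiber $\G(\Phi^{C_{p^t}}(R)_\theta[C_{p^n}/C_{p^t}])$, as identified in \cref{prop:C_0-cofiber-identification,prop:G-theory-tower}. Applying homotopy groups to the exact couple associated to this tower produces a spectral sequence whose $E^1$-term in bidegree $(s,t)$ is $\pi_s$ of the $t$-th layer, i.e.\ $\G_s(\Phi^{C_{p^t}}(R)_\theta[C_{p^n}/C_{p^t}])$; the $d_1$-differential is the composite of the connecting map $\pi_s(\text{layer } t) \to \pi_{s-1}(\text{stage } t{+}1)$ with the projection onto $\pi_{s-1}(\text{layer } t{+}1)$, and a routine inspection of the exact couple gives $d_r \colon E^r_{s,t} \to E^r_{s-1,t+r}$ in general. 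The abutment is $\pi_s$ of the total spectrum, namely $\G_s(R)$.

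Strong convergence is immediate from finiteness. Since the tower has only $n+1$ stages it is bounded, so the associated spectral sequence converges strongly in the usual sense: each $\G_s(R)$ carries a finite filtration whose associated graded is $E^\infty_{s,\bullet}$, and there is no higher $\lim$ obstruction, the filtration being automatically exhaustive and Hausdorff. For the collapse statement, note that $E^1_{s,t}$, hence every $E^r_{s,t}$, vanishes unless $0 \leq t \leq n$. As $d_r$ raises the filtration index by $r$, for any $r \geq n+1$ the target $E^r_{s-1,t+r}$ has $t+r \geq n+1$ and is therefore zero, and likewise the source of any incoming $d_r$ has negative filtration index and is zero; thus $d_r = 0$ for all $r \geq n+1$ and the spectral sequence collapses at the $E_{n+1}$-page.

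The mathematical content has already been established in \cref{prop:G-theory-tower}; the present corollary is the purely formal extraction of a spectral sequence from that tower, and the only thing requiring care is bookkeeping — fixing the direction of the tower so that the abutment is $\G(R)$ rather than one of the intermediate geometric-fixed-point terms, getting the bidegree of $d_r$ straight, and confirming that the $E^1$-indexing matches the statement. I expect no genuine obstacle here.
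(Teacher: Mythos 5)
Your proof is correct and takes essentially the same approach as the paper: both form the exact couple of the finite tower from \cref{prop:G-theory-tower}, read off the $E^1$ page from \cref{prop:C_0-cofiber-identification}, and deduce strong convergence and collapse at $E_{n+1}$ from the boundedness of the filtration. The paper works initially in an $(x,y)$-grading and then substitutes $s=x+y$, $t=n-x$, whereas you work directly in the $(s,t)$-grading of the statement — a purely cosmetic difference.
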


\begin{proof}
    There is an exact couple
    \[
        E^1_{x,y} = \pi_{x+y}\G(\Phi^{C_{p^{n-x}}}(R)_{\theta}[C_{p^n}/C_{p^{n-x}}])\quad D^1_{x,y} = \pi_{x+y}\G(\Phi^{C_{p^{n-x}}}_{\mathrm{Mackey}}(R))
    \]
    obtained from the tower of cofibrations in \cref{prop:G-theory-tower} where $d^r$ has bidegree $(-r,r-1)$. Here we interpret any group with $x<0$ or $x>n$ to be zero. This spectral sequence abuts to $\G_{x+y}(R)$, and since the values of $x$ are bounded this spectral sequence collapses at page $r=n+1$ and thus is strongly convergent. The grading in the statement results from the choices $s= x+y$ and $t= n-x$.
\end{proof}

The spectral sequence is visualized in \cref{table:G-theory-SS-E_1-page}. As an immediate corollary we can compute the $\G$-theory of any Green functor with surjective transfers (such as Green meadows whose characteristic is not $p$, or for which $C_{p^n}$ acts faithfully on the $C_{p^n}/e$-level).

\begin{proposition}\label{prop:Greenlees--May-for-G-theory-when-transfers-surjective}
    If $R$ is a module-Noetherian $C_{p^n}$-Green functor in which all transfers are surjective then $\G(R)\simeq \G(R(C_{p^n}/e)_{\theta}[C_{p^n}])$.
\end{proposition}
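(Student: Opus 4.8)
The plan is to feed the surjectivity hypothesis into the tower of \cref{prop:G-theory-tower} (equivalently, into the spectral sequence of \cref{cor:G-theory-SS}) and observe that every layer above the bottom one becomes contractible, so that the bottom horizontal map is forced to be an equivalence. The first step is to record that if every transfer in $R$ is surjective, then $\Phi^{C_{p^m}}(R) = 0$ for every $1 \leq m \leq n$. This is immediate from \cref{lem:bottom level of geom fixed points}, which identifies $\Phi^{C_{p^m}}(R)$ with the quotient $R(C_{p^n}/C_{p^m})/\mathrm{im}(\tr^{C_{p^m}}_{C_{p^{m-1}}})$: since $\tr^{C_{p^m}}_{C_{p^{m-1}}}$ is a transfer in $R$ itself it is surjective, so the quotient vanishes. (The case $m = 0$ is genuinely different and harmless, since there $\Phi^{C_{p^0}}(R) = R(C_{p^n}/e)$, whose transfers we are not trying to kill.) It follows that the twisted group ring $\Phi^{C_{p^m}}(R)_\theta[C_{p^n}/C_{p^m}]$ is the zero ring whenever $1 \leq m \leq n$, so that $\G(\Phi^{C_{p^m}}(R)_\theta[C_{p^n}/C_{p^m}]) \simeq 0$ for these $m$, using that $\G$ of the zero ring is contractible.

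The second step is a descending induction along the tower, showing that $\G(\Phi^{C_{p^m}}_{\mathrm{Mackey}}(R)) \simeq 0$ for all $1 \leq m \leq n$. For the base case $m = n$, the equality at the top of the tower identifies $\G(\Phi^{C_{p^n}}_{\mathrm{Mackey}}(R))$ with $\G(\Phi^{C_{p^n}}(R)) = \G(0) \simeq 0$. For the inductive step I would invoke the cofiber sequence furnished by \cref{prop:G-theory-tower},
\[
\G(\Phi^{C_{p^{m+1}}}_{\mathrm{Mackey}}(R)) \longrightarrow \G(\Phi^{C_{p^m}}_{\mathrm{Mackey}}(R)) \longrightarrow \G(\Phi^{C_{p^m}}(R)_\theta[C_{p^n}/C_{p^m}]),
\]
whose outer terms are contractible (the left by the inductive hypothesis, the right by the first step since $m \geq 1$), forcing the middle term to be contractible. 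Taking $m = 1$ yields $\G(\Phi^{C_p}_{\mathrm{Mackey}}(R)) \simeq 0$.

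Finally, the conclusion follows from the bottom cofiber sequence of the tower,
\[
\G(\Phi^{C_p}_{\mathrm{Mackey}}(R)) \longrightarrow \G(R) \longrightarrow \G(\Phi^{C_{p^0}}(R)_\theta[C_{p^n}/e]):
\]
since $\Phi^{C_{p^0}}(R) = R(C_{p^n}/e)$ and the fiber term is contractible, the natural map $\G(R) \to \G(R(C_{p^n}/e)_\theta[C_{p^n}])$ is an equivalence. Equivalently, in the spectral sequence of \cref{cor:G-theory-SS} only the $t = 0$ column of the $E^1$-page is nonzero, so it degenerates immediately with no extension problems. I do not expect any genuine obstacle here: essentially all the content is already packaged into \cref{prop:G-theory-tower} and \cref{lem:bottom level of geom fixed points}, and the only points deserving a little care are the distinction between the $m = 0$ and $m \geq 1$ geometric fixed points and the bookkeeping fact that $\G$ of the zero ring vanishes.
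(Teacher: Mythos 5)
Your argument is correct and is essentially the paper's own: both feed the surjectivity hypothesis through \cref{lem:bottom level of geom fixed points} to show the geometric fixed-point rings vanish for $m \geq 1$, and then conclude from the filtration of \cref{prop:G-theory-tower} that only the bottom piece survives. You phrase the conclusion via a descending induction on the tower's cofiber sequences rather than via the derived spectral sequence of \cref{cor:G-theory-SS}, but as you note yourself these are the same argument in different clothing; if anything, working directly with the tower makes more explicit the map $\G(R)\to\G(R(C_{p^n}/e)_\theta[C_{p^n}])$ realizing the equivalence.
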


\begin{proof}
    The surjectivity of the transfers implies that $E^1_{s,t}=0$ for $t>0$ in the $\G$-theory spectral sequence.  Thus the spectral sequence has a single non-trivial horizontal line so it collapses at $E^1$ and there are no possible extensions.
\end{proof}

\begin{corollary}\label{cor: G theory of faithful fixed point}
    If $L$ is a field with faithful $C_{p^n}$-action then $\G(\FP(L)) \simeq K(L^{C_{p^n}})$.
\end{corollary}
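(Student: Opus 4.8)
The plan is to reduce the statement to \cref{prop:Greenlees--May-for-G-theory-when-transfers-surjective} and then identify the resulting twisted group ring up to Morita equivalence. First I would check the two hypotheses needed to apply that proposition. For module-Noetherianness: the restriction $L^{C_{p^n}} \to L^H$ is a finite field extension for every $H \leq C_{p^n}$, so $\FP(L)$ is relatively finite dimensional, and $\FP(L)(C_{p^n}/C_{p^n}) = L^{C_{p^n}}$ is a field hence Noetherian, so \cref{cor:RFD-implies-all-Noeth-conditions-are-equiv} shows $\FP(L)$ is module-Noetherian. For surjectivity of transfers: the transfer $\Tr_K^H \colon L^K \to L^H$ is the field trace of the extension $L^K/L^H$; since the $C_{p^n}$-action on $L$ is faithful, this extension is Galois by Artin's lemma, and the trace of a finite separable extension is a nonzero $L^H$-linear map to $L^H$, hence surjective.

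With these verified, \cref{prop:Greenlees--May-for-G-theory-when-transfers-surjective} yields an equivalence $\G(\FP(L)) \simeq \G\bigl(\FP(L)(C_{p^n}/e)_\theta[C_{p^n}]\bigr) = \G\bigl(L_\theta[C_{p^n}]\bigr)$, where $\theta$ is the faithful $C_{p^n}$-action on $L$. The next step is to identify $L_\theta[C_{p^n}]$ up to Morita equivalence. Since the action is faithful, $L/L^{C_{p^n}}$ is Galois with group $C_{p^n}$, and the classical Galois-descent isomorphism (equivalently, \cref{lem:twisted-group-ring-Morita-invariance}) identifies $L_\theta[C_{p^n}]$ with $\mathrm{End}_{L^{C_{p^n}}}(L) \cong \mathrm{Mat}_{p^n}(L^{C_{p^n}})$, which is Morita equivalent to $L^{C_{p^n}}$. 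A Morita equivalence of Noetherian rings restricts to an exact equivalence of the categories of finitely generated modules, hence induces an equivalence of $\G$-theory spectra, so $\G(L_\theta[C_{p^n}]) \simeq \G(L^{C_{p^n}})$. Finally $L^{C_{p^n}}$ is a field, in particular regular, so $\G(L^{C_{p^n}}) \simeq K(L^{C_{p^n}})$, and chaining the equivalences gives the claim.

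The only step carrying any real content is the identification of $L_\theta[C_{p^n}]$ as a matrix algebra over the fixed field; I expect this to be the main point to get right. However, this is a standard consequence of Galois descent (or the normal basis theorem) and is already recorded in \cref{lem:twisted-group-ring-Morita-invariance}, so in practice no essential difficulty arises and the proof is a short assembly of results already in hand.
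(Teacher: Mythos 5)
Your proof is correct and follows essentially the same route as the paper: Artin's lemma gives that the extensions are Galois, hence the transfers (Galois traces) are surjective, so \cref{prop:Greenlees--May-for-G-theory-when-transfers-surjective} applies, and \cref{lem:twisted-group-ring-Morita-invariance} identifies the twisted group ring as Morita equivalent to the fixed field. Your write-up is slightly more careful than the paper's in that it explicitly verifies the module-Noetherian hypothesis via relative finite dimensionality, but the substance is the same.
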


\begin{proof}
    The transfer map in $\FP(L)$ is the Galois transfer.  Since the action of $G$ on $L$ is faithful it is Galois, by Artin's lemma, and thus the transfers are all surjective and the proposition gives us $\G(\FP(L))\simeq \G(L_{\theta}[C_{p^n}])$. Since $L_{\theta}[C_{p^n}]$ is Morita equivalent to $L^{C_{p^n}}$ by \cref{lem:twisted-group-ring-Morita-invariance} below, we have $\G(\FP(L)) \simeq \G(L^{C_{p^n}})= K(L^{C_{p^n}})$ where the second equivalence uses that $L^{C_{p^n}}$ is a field.
\end{proof}


\begin{figure}[h]
\centering

\begin{sseqpage}[grid = crossword, 
    classes = {draw = none}, Adams grading,
    title={},
    xrange = {0}{4},
    yrange = {0}{4},
    x label = $s$, 
    y label = $t$,
    xscale =2,
    y axis gap=25pt]

\foreach \s in {0,1,2,3} {
\foreach \t in {0,1,2,3} {
    \class["\G_{\s}(R^{\Phi \t}_{\theta})"](\s,\t)
    }
}
\foreach \p in {0,1,2,3} {
\class["\dots"](4,\p)
}
\end{sseqpage}

\caption{The $E^1$-page of the $\G$-theory spectral sequence for a $C_{p^3}$-Green functor $R$. For brevity, we have written $R^{\Phi t}_{\theta}$ for the ring $\Phi^{C_{p^{t}}}(R)_{\theta}[C_{p^{3-t}}]$.\label{table:G-theory-SS-E_1-page}} 

\end{figure}

We now turn our attention to the behavior of the $\G$-theory tower in more complicated examples. In special cases, the connecting maps turn out to be summand inclusions. The algebraic consequences of this are captured in the following definition.

\begin{definition}[Greenlees--May splitting]\label{def:G-theory-splitting}
        Let $R$ be a $G$-Green functor and write the subgroups of $G$ sequentially as $\{ H_i \}$ with the property $H_i \subset H_j$ implies $i \leq j$. 
        
        If there is an equivalence 
        \[ 
            \G(R) \cong \bigoplus_p \G(\Phi^{H_p}(R)_\theta[W_G H_p]) 
        \] 
        of $\G$-theory spectra, then we say $R$ satisfies the \emph{$\G$-theory Greenlees--May splitting}.
\end{definition}

\begin{remark}
    Fix a field $\F$, considered as a ring with trivial $G$-action, and let $R$ be the Green functor obtained from the Burnside by base changing along $\mathbb{Z} \to \F$, levelwise. An $R$-module is the same data as a Mackey functor in $\F$-modules (not necessarily cohomological). Thus \cite[Proposition 6.1]{Thevenaz--Webb} says that $\pi_0$ of both sides of the Greenlees--May splitting are isomorphic. In other words, $R$ satisfies the Greenlees--May splitting for $\G_0$. For $G = C_{p^n}$, generalize this to a splitting of the Burnside itself in \cref{theorem: G theory of Burnside} below.
\end{remark}

\cref{prop:Greenlees--May-for-G-theory-when-transfers-surjective} provides an example of the Greenlees--May splitting for $\G$-theory. On the other hand, \cref{prop:K_0(FP(Z))} below provides an example of the failure of the Greenlees--May splitting for $\G$-theory (as the rank of $\G_0(\underline{\Z})$ is one less than what is predicted by the Greenlees--May splitting).  The next theorem provides a sufficient condition on a $C_{p^n}$-Green functor to guarantee a $\G$-theory Greenlees--May splitting.

\begin{theorem}\label{thm:G-theory-splitting}
    Let $R$ be a module-Noetherian $C_{p^n}$-Green functor, and suppose each quotient $q \colon \tau_{\geq i}(R) \rightarrow \Phi^{C_{p^i}}_{\mathrm{Mackey}}(R)$ admits a module-finite section $s\colon \Phi^{C_{p^i}}_{\mathrm{Mackey}}(R)\to \tau_{\geq i}(R)$. Then the tower of \cref{prop:G-theory-tower} splits (in the sense that the connecting morphisms are summand inclusions), so that $R$ satisfies the Greenlees--May splitting for $\G$-theory.
\end{theorem}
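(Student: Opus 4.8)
The plan is to exhibit the tower of \cref{prop:G-theory-tower} as a sequence of split cofiber sequences. Recall from the proof of \cref{prop:G-theory-tower} that the tower is obtained by applying $K$-theory to the filtration
\[
    0 = \mathcal{C}_n \subseteq \mathcal{C}_{n-1} \subseteq \cdots \subseteq \mathcal{C}_0 \subseteq \RModfg
\]
by Serre subcategories, so that (setting $\mathcal{C}_{-1} := \RModfg$) each stage sits in a Quillen localization fiber sequence
\[
    K(\mathcal{C}_i) \xrightarrow{K(\iota_i)} K(\mathcal{C}_{i-1}) \longrightarrow K(\mathcal{C}_{i-1}/\mathcal{C}_i),
\]
whose last term is $\G$ of the relevant twisted group ring by \cref{prop:C_0-cofiber-identification}. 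Since a fiber sequence of spectra in which the first map is split injective splits as a direct sum (the cofiber is a complementary retract), it suffices to produce for each $i$ an \emph{exact} retraction $\rho_i \colon \mathcal{C}_{i-1} \to \mathcal{C}_i$ of $\iota_i$: then $K(\rho_i)$ splits $K(\iota_i)$. Having done so, one splits the sequence at the bottom of the tower, then observes that a split-injective composite $K(\mathcal{C}_i)\to\cdots\to\G(R)$ forces each intermediate connecting map to be split injective as well; reading off the cofibers yields the decomposition $\G(R)\simeq\bigoplus_i \G(\Phi^{H_i}(R)_\theta[W_G H_i])$ of \cref{def:G-theory-splitting}.

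To build $\rho_i$: using the identification (from the proof of \cref{prop:G-theory-tower}) of each $\mathcal{C}_j$ with the category of finitely generated modules over an iterated Mackey-functor geometric fixed point of $R$, the inclusion $\iota_i$ is carried to the inclusion of brutally truncated $\Phi^{C_{p^i}}_{\mathrm{Mackey}}(R)$-modules into all $\Phi^{C_{p^i}}_{\mathrm{Mackey}}(R)$-modules (\cref{prop:C_0-identification}). I will take $\rho_i$ to be the composite of $\tau_{\geq 1}$ --- which is exact by \cref{prop:tau_geq_1_determines-k-mod-functors-and-has-both-adjoints} and carries $\Phi^{C_{p^i}}_{\mathrm{Mackey}}(R)$-modules to $\tau_{\geq 1}\Phi^{C_{p^i}}_{\mathrm{Mackey}}(R)$-modules --- with restriction of scalars along a section of the canonical surjection $\tau_{\geq 1}\Phi^{C_{p^i}}_{\mathrm{Mackey}}(R) \twoheadrightarrow \Phi^{C_{p^{i+1}}}_{\mathrm{Mackey}}(R)$. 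Such a section is obtained by precomposing the hypothesized module-finite section $s$ of $q\colon \tau_{\geq i+1}(R)\twoheadrightarrow\Phi^{C_{p^{i+1}}}_{\mathrm{Mackey}}(R)$ with the evident surjection $\tau_{\geq i+1}(R)\twoheadrightarrow \tau_{\geq 1}\Phi^{C_{p^i}}_{\mathrm{Mackey}}(R)$. Restriction of scalars is always exact, so $\rho_i$ is exact; and $\rho_i\circ\iota_i\cong \mathrm{id}$ because on a brutally truncated module $M$, the $\tau_{\geq 1}\Phi^{C_{p^i}}_{\mathrm{Mackey}}(R)$-module $\tau_{\geq 1}M$ is visibly pulled back from $\Phi^{C_{p^{i+1}}}_{\mathrm{Mackey}}(R)$ along $q$, and restricting along a section of $q$ returns $M$.

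The main obstacle --- and the only place the module-finite hypothesis enters --- is to check that $\rho_i$ preserves finite generation, so that it genuinely is a functor on $\mathcal{C}_{i-1}$. Restriction of scalars along a surjection always preserves finite generation, and restriction along a module-finite map does so by definition (\cref{def:module-finite}); the content is that $\tau_{\geq 1}$, followed by this restriction, keeps a finitely generated module finitely generated. Here \cref{lem:tau-preserves-free-modules} does the bulk of the work: since $\tau_{\geq 1} F_j \cong F_{j-1}$ for $j\geq 1$, the functor $\tau_{\geq 1}$ sends a free module on any nontrivial orbit to a finite free module, and the only subtle contribution is that of the bottom-level free module $F_0$, which $\tau_{\geq 1}$ partially collapses --- it is precisely controlling this contribution that requires the section to be module-finite. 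One also uses that each $\Phi^{C_{p^i}}_{\mathrm{Mackey}}(R)$ is module-Noetherian, which is already implicit in \cref{prop:G-theory-tower} (it is needed for $\mathcal{C}_i$ to be abelian and for $\G$ to be defined). Granting these points, $\rho_i$ is a well-defined exact retraction of $\iota_i$, each $K(\iota_i)$ is split injective, and the tower splits, which is the asserted Greenlees--May splitting.
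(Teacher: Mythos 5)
Your proposal takes essentially the same route as the paper. Both reduce the splitting to producing, at each stage of the filtration, an exact retraction of the Serre inclusion $\mathcal{C}_i \hookrightarrow \mathcal{C}_{i-1}$; both build this retraction as $\tau_{\geq 1}$ followed by restriction of scalars along the hypothesized section, identify the target with the brutally truncated subcategory via \cref{prop:C_0-identification}, verify the retraction property from the fact that $\tau_{\geq 1}$ of a brutally truncated module is pulled back along the quotient $q$, and invoke module-finiteness of the section to ensure the retraction lands in finitely generated modules.

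Two small corrections to your write-up. First, you should \emph{postcompose}, not precompose, the section $s\colon \Phi^{C_{p^{i+1}}}_{\mathrm{Mackey}}(R) \to \tau_{\geq i+1}(R)$ with the surjection $\tau_{\geq i+1}(R)\twoheadrightarrow \tau_{\geq 1}\Phi^{C_{p^i}}_{\mathrm{Mackey}}(R)$ to obtain a section at each geometric-fixed-point stage (this is what the paper's opening sentence of the proof asserts in one line). Second, your bookkeeping of where module-finiteness enters is slightly inverted: since $\tau_{\geq 1} F_j \cong F_{j-1}$ for $j\geq 1$ is already free and finitely generated over $\tau_{\geq 1}(R)$, module-finiteness of $s$ is what makes its restriction of scalars $s^*(F_{j-1})$ finitely generated over $\Phi^{C_p}_{\mathrm{Mackey}}(R)$ for \emph{all} $j\geq 1$, not just for $j=0$. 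The contribution of $F_0$ is a separate issue --- one needs $\tau_{\geq 1}(F_0)$ itself to be a finitely generated $\tau_{\geq 1}(R)$-module, which is an implicit finiteness condition on $R$ (essentially relative finite dimensionality of the bottom restriction) that the paper's proof also treats tersely. That caveat aside, the argument is correct and matches the paper.
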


\begin{proof}
    Our hypotheses imply that each quotient $\tau_{\geq 1} \Phi^{C_{p^{i-1}}} (R) \rightarrow \Phi^{C_p} \Phi^{C_{p^{i-1}}}(R)$ admits a section, so by induction it suffices to assume $i = 1$ and construct an exact functor $\pi\colon \RModfg\rightarrow \mathcal{C}_0$, such that $\pi\circ f\cong \mathrm{id}$. Consider the commutative diagram
\[\begin{tikzcd}
	{R\textrm{-}\Mod} \\
	& {R\textrm{-}\Mod^{\geq 1}} & {\tau_{\geq 1}(R)\textrm{-}\Mod} \\
	{\mathcal{C}_0} && {\Phi^{C_{p}}_{\mathrm{Mackey}}(R)\textrm{-}\Mod}
	\arrow["{\tau_{\geq 1}}", from=1-1, to=2-3]
	\arrow["g", from=2-2, to=1-1]
	\arrow["\varphi"', from=2-2, to=2-3]
	\arrow["{s^*}", shift left, from=2-3, to=3-3]
	\arrow["f", from=3-1, to=1-1]
	\arrow["\psi"', from=3-1, to=3-3]
	\arrow["{q^*}", shift left, from=3-3, to=2-3]
\end{tikzcd}\]
where $f$ and $g$ are inclusions, $\psi$ is the equivalence of \cref{prop:C_0-identification}, and $\varphi$ is the equivalence from \cref{prop:equivalence-of-cats-tau}.  Let $\alpha$ be a categorical inverse to $\psi$.  The commutativity of the diagram, together with the fact that $s^*\circ q^* = (q\circ s)^* =1$, implies that 
\[
    (\alpha\circ s^*\circ \tau_{\geq 1})\circ f\cong \alpha \circ s^*\circ q^*\circ\psi\cong \mathrm{id}.
\]
Since $\alpha$, $s^*$, and $\tau_{\geq 1}$ are exact we have that $\pi = \alpha\circ s^*\circ \tau_{\geq 1}$ is also exact.

The additional assumption that $s^*$ is module-finite implies that $s^*$ preserves finitely generated modules implies that $\pi$ restricts to a functor on finitely generated $R$-modules, hence produces a splitting of the fiber sequences in the $\G$-theory tower, proving the claim.
\end{proof}

\begin{corollary}\label{corollary: zero transfer splitting}
    Suppose $R$ is module-Noetherian and that all transfers in $R$ are zero. Then there is an equivalence 
    \[
        \G(k) \simeq \bigoplus_{i=0}^n \G(k(C_{p^n}/C_{p^i})_\theta[C_{p^n}/C_{p^i}]).
    \]
\end{corollary}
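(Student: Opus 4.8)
The plan is to deduce this from \cref{thm:G-theory-splitting}; all the work is in checking that its hypotheses hold automatically once every transfer vanishes. Concretely, under the zero-transfer assumption I would show that each quotient map $q\colon \tau_{\geq i}(R)\to \Phi^{C_{p^i}}_{\mathrm{Mackey}}(R)$ is in fact an isomorphism, so that the identity map serves as a module-finite section of $q$.

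First I would record two stability observations. The transfers of $\tau_{\geq 1}(M)$ between adjacent levels are literally the transfers of $M$ between the corresponding levels, so $\tau_{\geq 1}(M)$ has all transfers zero whenever $M$ does; and any quotient of a Green functor with vanishing transfers again has vanishing transfers, hence so does $\Phi^{C_p}_{\mathrm{Mackey}}(M)$, which is a quotient of $\tau_{\geq 1}(M)$. Iterating, every $\tau_{\geq m}(R)$ and every $\Phi^{C_{p^m}}_{\mathrm{Mackey}}(R)$ has all transfers zero. But $\Phi^{C_p}_{\mathrm{Mackey}}(N)$ is obtained from $\tau_{\geq 1}(N)$ precisely by quotienting the Green ideal which is, levelwise, the image of the transfer out of the bottom level of $N$; when that transfer is zero the ideal is zero and $\Phi^{C_p}_{\mathrm{Mackey}}(N) = \tau_{\geq 1}(N)$. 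Applying this inductively to $N = \Phi^{C_{p^{i-1}}}_{\mathrm{Mackey}}(R)$, which has vanishing transfers by the previous step, gives $\Phi^{C_{p^i}}_{\mathrm{Mackey}}(R) = \tau_{\geq i}(R)$ with $q$ the identity map.

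With this in hand, the section of $q$ required by \cref{thm:G-theory-splitting} may be taken to be the identity, which is module-finite since restriction of scalars along an identity map is the identity functor and therefore preserves finite generation. So \cref{thm:G-theory-splitting} applies, using that $R$ is module-Noetherian: the tower of \cref{prop:G-theory-tower} splits and $R$ satisfies the Greenlees--May splitting for $\G$-theory, yielding
\[ \G(R)\simeq \bigoplus_{i=0}^n \G\bigl(\Phi^{C_{p^i}}(R)_\theta[W_{C_{p^n}}(C_{p^i})]\bigr). \]
To finish I would identify the summands: since $C_{p^n}$ is abelian, $W_{C_{p^n}}(C_{p^i}) = C_{p^n}/C_{p^i}$, and by \cref{lem:bottom level of geom fixed points} the ring $\Phi^{C_{p^i}}(R)$ is $R(C_{p^n}/C_{p^i})/\mathrm{im}(\tr^{C_{p^i}}_{C_{p^{i-1}}})$ for $i \geq 1$, which equals $R(C_{p^n}/C_{p^i})$ because the transfer vanishes, while $\Phi^{e}(R) = R(C_{p^n}/e)$; in each case $\theta$ is the Weyl-group action, so the $i$-th summand is $R(C_{p^n}/C_{p^i})_\theta[C_{p^n}/C_{p^i}]$, as claimed.

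There is no serious obstacle here beyond \cref{thm:G-theory-splitting} itself; the one point worth stating with a little care is that vanishing of transfers is inherited by the operations $\tau_{\geq 1}$ and $\Phi^{C_p}_{\mathrm{Mackey}}$, which is exactly what collapses the relevant quotient maps to identities and renders the module-finite-section hypothesis vacuous.
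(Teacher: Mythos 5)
Your proof is correct and follows essentially the same route as the paper: you verify the hypothesis of \cref{thm:G-theory-splitting} by observing that vanishing transfers force the quotient $\tau_{\geq i}(R)\to\Phi^{C_{p^i}}_{\mathrm{Mackey}}(R)$ to be an isomorphism, whose inverse (or, as you phrase it, the identity) is the required module-finite section. You simply spell out a bit more explicitly why vanishing of transfers propagates through $\tau_{\geq 1}$ and $\Phi^{C_p}_{\mathrm{Mackey}}$ and why the summands simplify to $R(C_{p^n}/C_{p^i})_\theta[C_{p^n}/C_{p^i}]$, which the paper leaves implicit.
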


\begin{proof}
    It suffices to verify the hypothesis of \cref{thm:G-theory-splitting}. In fact, our assumption implies that the quotient $\tau_{\geq i} R \rightarrow \Phi^{C_{p^i}} R$ is an isomorphism. Its inverse provides the required section. Note that we have used that any isomorphism is module-finite.
\end{proof}

We now have enough to compute the $\G$-theory of every relatively finite dimensional $C_p$-Green meadow. 

\begin{theorem}\label{theorem: G theory of Cp Tambara fields}
    Let $k$ be any relatively finite dimensional $C_p$-Green meadow. If the transfer map is identically zero then
    \[
        \G(k)\simeq \G(k(C_{p}/C_p))\oplus \G(k(C_p/e)_{\theta}[C_{p}])
    \]
    and if the transfer is non-zero then
    \[
        \G(k)\simeq \G(k(C_p/e)_{\theta}[C_{p}])\simeq K(k(C_p/e)).
    \]
\end{theorem}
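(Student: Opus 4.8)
The plan is to specialize the $\G$-theory tower of \cref{prop:G-theory-tower} (equivalently, the spectral sequence of \cref{cor:G-theory-SS}) to $n=1$, where it degenerates to a single cofiber sequence, and then to split the argument according to whether the transfer vanishes. First one records that $\G(k)$ is well-defined: since $k$ is relatively finite dimensional and $k(C_p/C_p)$ is a field, hence Noetherian, \cref{cor:RFD-implies-all-Noeth-conditions-are-equiv} shows $k$ is module-Noetherian, so that $\RModfg$ is abelian and the constructions of \cref{section: G theory} apply. For $G = C_p$ the tower of \cref{prop:G-theory-tower} is exactly the cofiber sequence
\[
    \G\bigl(\Phi^{C_p}_{\mathrm{Mackey}}(k)\bigr) \longrightarrow \G(k) \longrightarrow \G\bigl(k(C_p/e)_{\theta}[C_p]\bigr),
\]
whose fiber, by \cref{lem:bottom level of geom fixed points}, is $\G$ of the ordinary ring $k(C_p/C_p)/\mathrm{im}(\Tr^{C_p}_e)$. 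By Frobenius reciprocity (\cref{lemma: Green functor data}) the image of $\Tr^{C_p}_e$ is an ideal of the field $k(C_p/C_p)$, hence equals either $0$ or all of $k(C_p/C_p)$; this is precisely the dichotomy in the statement, and it is what controls the fiber term.

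In the transfer-zero case, $\Tr^{C_p}_e$ is the only non-identity transfer, so all transfers in $k$ vanish and \cref{corollary: zero transfer splitting} applies verbatim with $n=1$, giving $\G(k) \simeq \G\bigl(k(C_p/e)_{\theta}[C_p/e]\bigr) \oplus \G\bigl(k(C_p/C_p)_{\theta}[C_p/C_p]\bigr)$; since $C_p/e \cong C_p$ and $C_p/C_p$ is trivial the second summand is $\G(k(C_p/C_p))$, which is the first assertion. Equivalently, in this case the quotient $\tau_{\geq 1}(k) \to \Phi^{C_p}_{\mathrm{Mackey}}(k)$ is the identity, so the module-finite section hypothesis of \cref{thm:G-theory-splitting} is vacuous and the displayed cofiber sequence splits.

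In the transfer-nonzero case, $\Tr^{C_p}_e$ is surjective by the dichotomy above, so all transfers in $k$ are surjective and \cref{prop:Greenlees--May-for-G-theory-when-transfers-surjective} gives $\G(k) \simeq \G\bigl(k(C_p/e)_{\theta}[C_p]\bigr)$ — equivalently, the fiber above is $\G$ of the zero ring, hence contractible. It then remains to evaluate this twisted group ring. Writing $L := k(C_p/e)$, the double coset formula $\Res \circ \Tr^{C_p}_e = \sum_{g \in C_p} c_g$ together with surjectivity of the transfer pins down the image of $\Res \colon k(C_p/C_p) \to L$ as the fixed subfield $L^{C_p}$; when the Weyl action on $L$ is faithful, Artin's lemma makes $L/L^{C_p}$ a degree-$p$ Galois extension, the normal basis theorem identifies $L_{\theta}[C_p]$ with a matrix algebra over $L^{C_p}$, and \cref{lem:twisted-group-ring-Morita-invariance} then yields $\G\bigl(L_{\theta}[C_p]\bigr) \simeq K(L^{C_p})$, the $K$-theory of a field — this being the content of \cref{cor: G theory of faithful fixed point}.

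I expect this last identification of the twisted group ring to be the only real content; the reduction to a single cofiber sequence and the transfer-zero case are formal consequences of \cref{section: G theory}. The point requiring the most care is that, for a $C_p$-Green meadow, a non-zero transfer does not by itself force the Weyl action to be faithful: the residual possibility is the constant Green functor $\underline{L}$ in characteristic different from $p$, where the transfer is multiplication by the unit $p$. I would treat this boundary case directly — or via the faithfully flat descent of \cref{thm:fields-are-FF} — reducing $\G(\underline{L}) \simeq \G(L_{\theta}[C_p])$ to a $\G$-theory computation for a group ring over a field; dispatching this degenerate case is where I expect the argument to need the most attention.
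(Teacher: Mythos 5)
Your treatment of the main equivalence $\G(k) \simeq \G\bigl(k(C_p/e)_{\theta}[C_p]\bigr)$ matches the paper's argument exactly: the zero-transfer case is \cref{corollary: zero transfer splitting}, and in the nonzero case both you and the paper observe that the image of $\Tr^{C_p}_e$ is an ideal of the field $k(C_p/C_p)$, hence all of it, so the spectral sequence degenerates to a single line (\cref{prop:Greenlees--May-for-G-theory-when-transfers-surjective}). The paper's written proof stops there and never justifies the tail equivalence $\G\bigl(k(C_p/e)_{\theta}[C_p]\bigr) \simeq K\bigl(k(C_p/e)\bigr)$, which is what your last paragraph tries to supply.

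That tail equivalence, however, does not hold as written, and your own analysis exposes this. In the faithful case you correctly land on $K(L^{C_p})$; moreover, surjectivity of $\Tr$ together with $\Res\circ\Tr = \sum_g c_g$ being the Galois trace forces $\Res\colon k(C_p/C_p)\to L^{C_p}$ to be an isomorphism, so this is $K\bigl(k(C_p/C_p)\bigr)$ --- the \emph{top} level, not $K\bigl(k(C_p/e)\bigr)$ as the theorem states. These two fields differ by a degree-$p$ Galois extension and have different $K$-theory in general (compare $K_*(\F_q)$ and $K_*(\F_{q^p})$). In your degenerate case ($\Tr\neq 0$, trivial Weyl action, $p$ invertible, so $k\cong\underline{L}$), $L_\theta[C_p]=L[C_p]$ is commutative semisimple with Wedderburn factors $L$ and at least one proper extension such as $L[\zeta_p]$, so $\G(L[C_p])\cong K(L)\oplus K(L[\zeta_p])\oplus\cdots$, which equals neither $K\bigl(k(C_p/e)\bigr)$ nor $K\bigl(k(C_p/C_p)\bigr)$. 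This case cannot be ``dispatched'' to rescue the displayed claim, and \cref{thm:fields-are-FF} cannot help since it requires the transfer to vanish. The concluding $\simeq K\bigl(k(C_p/e)\bigr)$ is most plausibly a misprint for $K\bigl(k(C_p/C_p)\bigr)$ together with an unstated faithfulness hypothesis; you should flag this rather than attempt to force the identification through.
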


In particular, every relatively finite dimensional $C_p$-Green meadow satisfies the $\G$-theory Greenlees--May splitting.

\begin{proof}
    If the transfer is zero then the result follows at once from \cref{corollary: zero transfer splitting}.  If the transfer is not zero then because the image of the transfer map is an ideal of $k(C_p/C_p)$, which is a field, the transfer must be surjective.  Thus the non-zero portion of $\G$-theory spectral sequence consists of a single horizontal line and collapses at the $E^1$-page.
\end{proof}

The next theorem gives an example where not all the transfers are zero, and generalizes a result of Greenlees \cite{Gre92}, who proved the analogous result on $\G_0$ when $n=1$.

\begin{theorem}\label{theorem: G theory of Burnside}
    There is an equivalence of spectra
    \[
        \G(\mathrm{Mack}_{C_{p^n}}^{f.g.}) \simeq \bigoplus\limits_{t=0}^n \G(\mathbb{Z}[C_{p^{n-t}}]).
    \]
\end{theorem}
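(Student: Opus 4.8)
The plan is to apply \cref{thm:G-theory-splitting} to the Burnside Green functor $R = \A_{C_{p^n}}$, using that $\A_{C_{p^n}}$-modules are precisely $C_{p^n}$-Mackey functors, so that $\G(\mathrm{Mack}_{C_{p^n}}^{f.g.}) = \G(\A_{C_{p^n}})$. First I would record that the hypotheses of \cref{thm:G-theory-splitting} are available: each ring $A(C_{p^k}) = \A_{C_{p^n}}(C_{p^n}/C_{p^k})$ is a finitely generated free abelian group, so $\A_{C_{p^n}}$ is relatively finite dimensional with Noetherian top level $A(C_{p^n})$; by \cref{cor:RFD-implies-all-Noeth-conditions-are-equiv} it is module-Noetherian, and by \cref{lem:Mackey-GFP-preserve-RFD} so are all of $\tau_{\geq i}\A_{C_{p^n}}$ and $\Phi^{C_{p^i}}_{\mathrm{Mackey}}\A_{C_{p^n}}$.

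Next I would identify the iterated Mackey-functor geometric fixed points. One step at a time, $\Phi^{C_p}_{\mathrm{Mackey}}\A_{C_{p^n}}$ has $(C_{p^{n-1}}/C_{p^s})$-level $A(C_{p^{s+1}})/\mathbb{Z}[C_{p^{s+1}}/e]$, and the class map $[C_{p^{s+1}}/C_{p^j}] \mapsto [C_{p^s}/C_{p^{j-1}}]$ (for $j \geq 1$) identifies this with $A(C_{p^s})$ compatibly with restrictions, transfers, and the (trivial) conjugation actions; hence $\Phi^{C_p}_{\mathrm{Mackey}}\A_{C_{p^n}} \cong \A_{C_{p^{n-1}}}$, and inductively $\Phi^{C_{p^t}}_{\mathrm{Mackey}}\A_{C_{p^n}} \cong \A_{C_{p^{n-t}}}$. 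Combining with \cref{lem:bottom level of geom fixed points}, the ring $\Phi^{C_{p^t}}(\A_{C_{p^n}})$ is the bottom level $A(e) \cong \Z$; one also sees this directly, since $\mathrm{im}\big(\tr^{C_{p^t}}_{C_{p^{t-1}}} \colon A(C_{p^{t-1}}) \to A(C_{p^t})\big)$ is exactly the $\Z$-span of the classes $[C_{p^t}/C_{p^j}]$ with $j < t$. The Weyl group $W_{C_{p^n}}(C_{p^t}) = C_{p^{n-t}}$ acts trivially on this $\Z$ because conjugation on the Burnside ring of a cyclic $p$-group is trivial, so $\Phi^{C_{p^t}}(\A_{C_{p^n}})_\theta[W_{C_{p^n}}C_{p^t}] = \Z[C_{p^{n-t}}]$.

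The heart of the argument is to produce the module-finite sections required by \cref{thm:G-theory-splitting}: I would exhibit, for each $i$, a section $s \colon \Phi^{C_{p^i}}_{\mathrm{Mackey}}\A_{C_{p^n}} \cong \A_{C_{p^{n-i}}} \to \tau_{\geq i}\A_{C_{p^n}}$ of the quotient $q$, given in the $(C_{p^{n-i}}/C_{p^s})$-level by the ``shift'' $[C_{p^s}/C_{p^l}] \mapsto [C_{p^{s+i}}/C_{p^{l+i}}]$. Using the product formula $[C_{p^s}/C_{p^l}]\cdot[C_{p^s}/C_{p^{l'}}] = p^{\,s-\max(l,l')}[C_{p^s}/C_{p^{\min(l,l')}}]$ one checks $s$ is a ring map in each level; using that induction of a transitive set is transitive and that $\res$ of $[C_{p^a}/C_{p^l}]$ to the index-$p$ subgroup is $p\cdot[C_{p^{a-1}}/C_{p^l}]$ for $l < a$, one checks $s$ commutes with transfers and restrictions (conjugations being trivial), so $s$ is a map of Green functors; and clearly $q \circ s = \mathrm{id}$. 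Finally $s$ is levelwise finite since $A(C_{p^{s+i}})$ is a finitely generated abelian group, hence finitely generated over $A(C_{p^s})$, so \cref{lem:sufficient-conditions-for-module-finite} shows $s$ is module-finite.

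With the hypotheses of \cref{thm:G-theory-splitting} verified, I would conclude
\[
    \G(\A_{C_{p^n}}) \simeq \bigoplus_{t=0}^{n} \G\!\left(\Phi^{C_{p^t}}(\A_{C_{p^n}})_\theta[W_{C_{p^n}}C_{p^t}]\right) = \bigoplus_{t=0}^{n} \G(\Z[C_{p^{n-t}}]),
\]
which is the statement after recalling $\G(\mathrm{Mack}_{C_{p^n}}^{f.g.}) = \G(\A_{C_{p^n}})$. The main obstacle is the bookkeeping in the last two steps: pinning down the iterated geometric fixed points as the Burnside Green functors of the quotient groups (and hence the pieces $\Z[C_{p^{n-t}}]$), and verifying that the shift maps $s$ are honest morphisms of Green functors — especially compatibility with transfers — since \cref{thm:G-theory-splitting} needs a genuine module-finite \emph{section}, not merely a levelwise ring retraction.
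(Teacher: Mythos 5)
Your proposal is correct and follows the same overall strategy as the paper: identify $\G(\mathrm{Mack}_{C_{p^n}}^{\fg})$ with $\G(\A_{C_{p^n}})$ and apply \cref{thm:G-theory-splitting}. The difference is in how the hypotheses are verified. The paper observes that $\Phi^{C_p}_{\mathrm{Mackey}}$ is symmetric monoidal, so it carries the unit $\A_{C_{p^n}}$ to the unit $\A_{C_{p^{n-1}}}$, giving $\Phi^{C_p}_{\mathrm{Mackey}}\A_{C_{p^n}} \cong \A_{C_{p^{n-1}}}$ as Green functors for free; and since $\tau_{\geq 1}\A_{C_{p^n}}$ is itself a Green functor, the Green functor unit map $\A_{C_{p^{n-1}}} \to \tau_{\geq 1}\A_{C_{p^n}}$ is automatically a section of the quotient, with module-finiteness read off from torsion-freeness and finite generation over $\Z$. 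Your argument achieves the same ends by direct computation: you identify the geometric fixed points by an explicit class map, and you construct the section by the explicit shift $[C_{p^s}/C_{p^l}] \mapsto [C_{p^{s+i}}/C_{p^{l+i}}]$, then check by hand that it is a levelwise ring map compatible with transfers and restrictions. It is worth noting that your shift \emph{is} precisely the Green functor unit map $\A_{C_{p^{n-i}}} \to \tau_{\geq i}\A_{C_{p^n}}$ in disguise (the unit sends $[C_{p^s}/C_{p^l}] \mapsto \Tr^{C_{p^{s+i}}}_{C_{p^{l+i}}}(1) = [C_{p^{s+i}}/C_{p^{l+i}}]$), so all the compatibilities you verify are automatic from unitality. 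The monoidal argument buys brevity and conceptual clarity, avoiding any product or double-coset computations; the computational route buys explicitness at the cost of several verifications, all of which check out.
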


\begin{proof}
    The category of finitely generated Mackey functors is the same as the category of finitely generated modules over the Burnside Green functor $\A_G$, so the result follows from showing the the $C_{p^n}$-Burnside Green functor satisfies the hypotheses of \cref{thm:G-theory-splitting}.

    Since $\Phi^{C_p}_{\mathrm{Mackey}}$ is symmetric monoidal and the Burnside Green functor is the symmetric monoidal unit, we deduce $\Phi^{C_p}_{\mathrm{Mackey}}(\A_{C_{p^n}}) \cong \A_{C_{p^{n-1}}}$ as Green functors. Since $\tau_{\geq 1} \A_{C_{p^n}}$ is also a Green functor, the unit $\A_{C_{p^{n-1}}} \rightarrow \tau_{\geq 1} \A_{C_{p^n}}$ provides the required section. Since each ring $\A_G(G/H)$ is a finitely generated torsion-free $\Z$-module, all Green functors in sight are relatively finite dimensional and our section is module-finite.
\end{proof}

\begin{remark}
    Let $A$ and $B$ be ring so that there exists a $C_p$-Green functor $R$ with $R(C_p/e)_\theta[C_p] \cong B$ and $\Phi^{C_p}(R) \cong A$. Then the $d^1$ differential in the $\G$-theory spectral sequence gives a group homomorphism $\G_{i+1}(B) \rightarrow \G_i(A)$ for all $i \geq 0$. Alternatively, since $G = C_p$, the $\G$-theory tower is just a cofiber sequence, and this is just the boundary map. So, either we have a nontrivial morphism between $\G$-theories, or the spectral sequence collapses. 
    
    Either outcome is interesting, although we can only determine which is true in special cases. For example, \cref{thm:G-theory-splitting} provides many examples in which all differentials are trivial. On the other hand, we will see an example of a nontrivial differential in the proof of \cref{prop:K_0(FP(Z))}.
\end{remark}

\begin{example}
    Fix a ring $A$ of characteristic $p$, and consider the category $\mathcal{C}$ of cohomological $C_{p^n}$-Mackey functor objects in the category of finitely generated $R$-modules. The category $\mathcal{C}$ is equivalent to the category of finitely generated $\underline{\F}$-modules, and is, by a theorem of Yoshida, equivalent to the category of finitely generated modules over a certain Hecke algebra $\mathcal{H}$ \cite{Yos83}. \cref{corollary: zero transfer splitting} then applies to compute
    \[ 
        \G(\mathcal{H}) \cong K(\mathcal{C}) \cong \G(\FP(R)) \cong \bigoplus_{i=0}^n \G(R[C_{p^{n-i}}]) \mathrm{.}
    \]
\end{example}

\begin{example}
    Let $R$ be the $C_p$-Green functor given by the Lewis diagram
    \[ 
    \begin{tikzcd}
        \F_p[t]/(t^2) \arrow[d, "r"] \\
        \F_p \arrow[u, bend left = 55, "t"] \arrow[loop below, "\mathrm{triv}"]
    \end{tikzcd} 
    \]
    with restriction and transfer obtained from the Burnside $C_p$-Green functor by base-changing along $\Z \rightarrow \F_p$. Then the category of $R$-modules may be identified with the category of $C_p$-Mackey functor objects in $\F_p$-vector spaces. There is a map $R \rightarrow \underline{\F_p}$, and modules over $\underline{\F_p}$ may be identified with the full subcategory of the category of $R$-modules spanned by those which are cohomological.

    Now $\Phi^{C_p} R \cong \F_p$, so that $R$ visibly satisfies the hypothesis of \cref{thm:G-theory-splitting}. In particular, we have an equivalence
    \[
        \G(R) \cong \G(\F_p) \oplus \G(\F_p[C_p]) \cong \G(\underline{\F_p}) .
    \]
    Thus there is no $\G$-theory obstruction to a Morita equivalence between $R$ and $\underline{\F_p}$. Despite this, it seems like $R$ and $\underline{\F_p}$ are most likely not Morita equivalent.
\end{example}

\subsection{\texorpdfstring{Towards $K$-theory}{Towards K-theory}}

In \cite{Gre92}, Greenlees proves that a version of the Greenlees--May splitting holds for the $K_0$-groups of some Green functors.  In this section, we make a few remarks on extending this sort of computation to more Green functors and higher algebraic $K$-groups.  Similar to $\G$-theory, we organize these results around the desired behavior.

\begin{definition}[Greenlees--May splitting]\label{def:K-theory-splitting}
        Let $R$ be a $G$-Green functor. If there is an equivalence 
        \[ 
            K(R) \cong \bigoplus_p K(\Phi^{H_p}(R)_\theta[W_G H_p]) 
        \] 
        of $K$-theory spectra, then we say $R$ satisfies the \emph{$K$-theory Greenlees--May splitting}.
\end{definition}

\begin{remark}
    In the next section we show that not every $C_2$-Green functor satisfies the $K$-theory Greenlees--May splitting.
\end{remark}

One might hope to form a $K$-theory spectral sequence via a filtration on the exact category of finite free $R$-modules with $t$-th filtered piece the subcategory of free $R$-modules $F_i$ with $i \geq t$ (recalling that the subcategory of free modules is sufficient to compute all higher $K$-groups except $K_0$). The issue with this approach is that it is unclear what the corresponding Verdier quotient is.

In spite of this, we are still able to establish a few cases of Greenlees--May splitting for $K$-theory. We begin with the following well-known result.

\begin{lemma}\label{lem:twisted-group-ring-Morita-invariance}
    Let $G$ act faithfully on a field $\F$. Then the inclusion $\F^G \rightarrow \F_\theta[G]$ is a Morita equivalence.
\end{lemma}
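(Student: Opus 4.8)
The plan is to identify $\F_\theta[G]$ with a matrix ring over its subring $\F^G$. Since $G$ acts faithfully on $\F$, Artin's lemma (invoked repeatedly above) shows that $\F^G \subset \F$ is a $G$-Galois extension; in particular $\dim_{\F^G}\F = |G|$. First I would write down the $\F^G$-algebra map
\[
    \Psi\colon \F_\theta[G] \longrightarrow \mathrm{End}_{\F^G}(\F), \qquad a\cdot g \longmapsto \bigl(x \mapsto a\cdot g(x)\bigr),
\]
extended $\F^G$-linearly. Checking that $\Psi$ is a unital ring homomorphism is routine; the only nonformal point is multiplicativity, where the twist in the product of \cref{definition: twisted group ring} is exactly what is needed for $\Psi(ag)\circ\Psi(bh)$ to agree with $\Psi\bigl(a\,\theta(g)(b)\,gh\bigr)=\Psi\bigl((ag)(bh)\bigr)$. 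Note that $\Psi$ carries the canonical subring inclusion $\F^G\hookrightarrow\F_\theta[G]$, $c\mapsto c\cdot e$, to the inclusion of $\F^G$ as the scalar endomorphisms of $\F$.

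The key step is to show $\Psi$ is an isomorphism, which is where the (mild) work lies. Injectivity is Dedekind's lemma on the linear independence of distinct field automorphisms: if $x\mapsto\sum_{g}a_g\,g(x)$ vanishes identically on $\F$ with $a_g\in\F$, then passing to a relation with the fewest nonzero coefficients forces all $a_g=0$. For surjectivity I would count dimensions over $\F^G$: the source is free of rank $|G|$ over $\F$, hence has $\F^G$-dimension $|G|^2$, while $\mathrm{End}_{\F^G}(\F)\cong M_{|G|}(\F^G)$ likewise has $\F^G$-dimension $|G|^2$, and an injective $\F^G$-linear map between finite-dimensional vector spaces of equal dimension is an isomorphism. (Equivalently, one may cite the classical fact that the crossed product attached to a Galois extension with trivial cocycle is a split matrix algebra.)

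Finally, $\F\cong(\F^G)^{|G|}$ is a finitely generated free, hence faithfully projective, $\F^G$-module, so the standard Morita context with bimodule $\F$ exhibits $\F^G$ and $\mathrm{End}_{\F^G}(\F)\cong\F_\theta[G]$ as Morita equivalent rings; under this identification the inclusion of the lemma becomes the usual scalar-matrix inclusion $\F^G\hookrightarrow M_{|G|}(\F^G)$. Reading Morita equivalence through \cref{def:Morita-equiv-of-Green-functors} for the trivial group then gives the statement. I expect no real obstacle here: the entire argument is classical, and the only care needed is the multiplicativity check for $\Psi$, the appeal to Dedekind independence, and the bookkeeping matching the given inclusion with the scalar matrices.
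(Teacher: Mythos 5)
Your proof is correct and follows essentially the same route as the paper: both establish the Morita equivalence by showing that the map $\F_\theta[G]\to\mathrm{End}_{\F^G}(\F)$ encoding the left $\F_\theta[G]$-module structure on $\F$ is an isomorphism. The only difference is that the paper cites Curtis--Reiner (Example 28.3) for this isomorphism, whereas you give the self-contained argument (Dedekind independence plus a dimension count).
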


\begin{proof}
    We are in the situation of \cite[Example 28.3]{CR81}. In particular, the map $\F_\theta[G] \rightarrow \mathrm{End}_{\F^G} \F$ describing the left $\F_\theta[G]$ module structure on $\F$ is an isomorphism.
\end{proof}

In particular, the inclusion of \cref{lem:twisted-group-ring-Morita-invariance} induces an equivalence on $\G$-theory and $K$-theory spectra.  Of course, since every module over a field is free then relevant $\G$- and $K$-theory spectra are identical.

\begin{proposition}\label{prop:Greenlees--May splitting for C_p in char p with nontrivial action}
    Let $k$ be a $C_p$-Green meadow of characteristic $p$ such that $C_p$ acts nontrivially on $k(C_p/e)$. Then $k$ satisfies the Greenlees--May splitting for $K$-theory.
\end{proposition}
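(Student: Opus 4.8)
The plan is to reduce the statement to a clean equivalence of module categories. First I would unwind \cref{def:K-theory-splitting} for $G = C_p$: the two subgroups are $e$ and $C_p$, so the right-hand side is $K(\Phi^e(k)_\theta[C_p]) \oplus K(\Phi^{C_p}(k)) = K(L_\theta[C_p]) \oplus K(\Phi^{C_p}(k))$, where $L := k(C_p/e)$. Since $C_p$ is simple, the nontrivial action on the field $L$ is faithful, so by Artin's lemma $L/L^{C_p}$ is a $C_p$-Galois (in particular separable) extension, whence the Galois trace $L \to L^{C_p}$ is nonzero. The double coset formula identifies $\Res^{C_p}_e \circ \Tr^{C_p}_e \colon L \to L$ with this trace composed with the inclusion $L^{C_p} \hookrightarrow L$, so $\Tr^{C_p}_e \colon L \to k(C_p/C_p)$ is nonzero; as its image is an ideal of the field $k(C_p/C_p)$ (\cref{remark: Frobenius reciprocity remark}), the transfer is surjective. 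Therefore $\Phi^{C_p}(k) = k(C_p/C_p)/\mathrm{im}(\Tr^{C_p}_e) = 0$, and the right-hand side of the splitting collapses to $K(L_\theta[C_p])$. So it suffices to produce an equivalence $K(k) \simeq K(L_\theta[C_p])$.

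Next I would show that evaluation at $C_p/e$ gives an equivalence of abelian categories $k\text{-}\Mod \simeq L_\theta[C_p]\text{-}\Mod$, with inverse $\FP$ (which lands in $k$-modules by \cref{lemma: fixed point of twisted module is k-module}, and is right adjoint to $\mathrm{ev}_{C_p/e}$). The counit is visibly the identity. For the unit $\eta_M \colon M \to \FP(M(C_p/e))$, note it is the identity on level $C_p/e$, so its kernel and cokernel are brutally truncated $k$-modules; but the surjectivity of the transfer forces every brutally truncated $k$-module $X$ to vanish: picking $a \in L$ with $\Tr^{C_p}_e(a) = 1$, Frobenius reciprocity gives $m = \Tr^{C_p}_e(a)\cdot m = \Tr^{C_p}_e(a\cdot \Res^{C_p}_e m) = \Tr^{C_p}_e(0) = 0$ for any $m \in X(C_p/C_p)$. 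Hence $\eta_M$ is always an isomorphism, establishing the equivalence (equivalently: this is the observation $\mathcal{C}_0 = 0$, combined with \cref{prop:C_0-identification,prop:C_0-cofiber-identification}). An exact equivalence of abelian categories identifies the exact subcategories of finitely generated projective objects, so $K(k) \simeq K(L_\theta[C_p])$, which matches the right-hand side of the splitting computed above. If desired one can simplify further using \cref{lem:twisted-group-ring-Morita-invariance}, which identifies $K(L_\theta[C_p])$ with $K(L^{C_p})$.

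I do not expect a serious obstacle here: every step is a short consequence of material already developed in the excerpt. The one point needing genuine input is the surjectivity of the transfer, which is exactly the Artin's-lemma observation flagged after \cref{thm:fields-are-FF}; once that is in hand, the vanishing of brutally truncated modules and the category equivalence are purely formal. I would deliberately route through the honest module-category equivalence rather than through $\G$-theory (e.g.\ \cref{prop:Greenlees--May-for-G-theory-when-transfers-surjective}), since \cref{def:K-theory-splitting} concerns $K$-theory and $K(k)$ and $\G(k)$ need not agree for a general Green functor; the equivalence $k\text{-}\Mod \simeq L_\theta[C_p]\text{-}\Mod$ handles the $K$-theory statement directly and requires no Noetherian hypotheses.
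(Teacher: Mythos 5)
Your proof is correct and arrives at the same key step as the paper: an honest equivalence $k\text{-}\Mod \simeq k(C_p/e)_\theta[C_p]\text{-}\Mod$ via the fixed-point/evaluation adjunction, after observing that the geometric fixed points vanish. Where you differ is in how you show the adjunction unit $M \to \FP(M(C_p/e))$ is an isomorphism. The paper first invokes \cref{lem:twisted-group-ring-Morita-invariance} to say $M(C_p/e)$ is ($C_p$-equivariantly) a sum of copies of $k(C_p/e)$, and then uses Frobenius reciprocity and the double coset formula to argue that every $k$-module is already a fixed-point Mackey functor, i.e.\ $\FP$ is essentially surjective. You instead observe directly that the unit is the identity at level $C_p/e$, so its kernel and cokernel are brutally truncated, and that \emph{every} brutally truncated $k$-module vanishes because of the Frobenius reciprocity computation $m = \Tr(a)\cdot m = \Tr(a\cdot \Res\, m) = 0$. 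Your route is a bit cleaner and more self-contained: it bypasses the Morita-invariance input and the double-coset step, and makes transparent that the single driving fact is surjectivity of the transfer. It also avoids any tacit Noetherian or $\G$-theory detour, as you rightly flag. Both arguments are correct; yours is the tighter packaging of the same underlying idea.
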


\begin{proof}
    The transfer is surjective, so that the $C_p$-geometric fixed points vanish, and the Greenlees--May splitting amounts to showing
    \[
        K(k) \simeq K(k(C_p/e)_\theta[C_p]) .
    \]
    We will show that the category of $k$-modules is equivalent to the category of $k(C_p/e)_\theta[C_p]$-modules. 
    
    If $M$ is an $k$-module, we see from \cref{lem:twisted-group-ring-Morita-invariance} that $M(C_p/e)$ is ($C_p$-equivariantly) a direct sum of copies of $k(C_p/e)$. Frobenius reciprocity implies that the restriction in $M$ is injective, and the double coset formula implies furthermore that $M$ is a fixed-point Mackey functor. Thus the right adjoint $\FP$ from $k(C_p/e)_\theta[C_p]$-modules to $k$-modules is essentially surjective. With this, both the adjunction unit and the adjunction counit for $\FP$ are clearly isomorphisms, so that $\FP$ is an equivalence of categories.
\end{proof}

The next result could also be obtained using a recent result of Bouc--Dell'Amborgio--Martos \cite[Theorem A]{BoucDellAmbrogioMartos}.

\begin{proposition}
    Let $k$ be a $G$-Green meadow of characteristic either zero or coprime to $|G|$. Then $k$ satisfies the Greenlees--May splitting for $K$-theory.
\end{proposition}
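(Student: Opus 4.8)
The plan is to show that the characteristic hypothesis forces every transfer in $k$ to be surjective, and then to run the argument of \cref{prop:Greenlees--May splitting for C_p in char p with nontrivial action} for arbitrary $G$: every $k$-module turns out to be a fixed-point Mackey functor, so $k\text{-}\Mod$ is equivalent to $k(G/e)_\theta[G]\text{-}\Mod$, while all the other Brauer quotients of $k$ vanish, so the displayed splitting has a single nonzero summand.

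First I would note that $|G|$, and hence $|H|$ for every $H \leq G$, is a unit in $k(G/G)$, and therefore a unit in every $k(G/H)$ since the restriction maps are ring homomorphisms. For a proper subgroup $K < H$, Frobenius reciprocity shows that the image of $\Tr_K^H \colon k(G/K) \to k(G/H)$ is an ideal of the field $k(G/H)$, so it is either $0$ or all of $k(G/H)$. Taking $K = e$, the double coset formula gives $\Res_e^H \Tr_e^H(1) = \sum_{h \in H} c_h(1) = |H| \neq 0$, so $\Tr_e^H(1) \neq 0$ and hence $\Tr_e^H$ is surjective; since $\Tr_e^H = \Tr_K^H \circ \Tr_e^K$, every transfer $\Tr_K^H$ with $K < H$ is surjective. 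In particular the Brauer quotient $\Phi^H(k) = k(G/H)/\sum_{K < H}\mathrm{im}(\Tr_K^H)$ vanishes for every nontrivial $H$, while $\Phi^e(k) = k(G/e)$ with its $W_G(e) = G$-action; so the right-hand side of the Greenlees--May splitting collapses to the single summand $K(k(G/e)_\theta[G])$.

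It therefore remains to produce an equivalence $K(k) \simeq K(k(G/e)_\theta[G])$, which I would get from an equivalence of abelian categories $k\text{-}\Mod \simeq k(G/e)_\theta[G]\text{-}\Mod$. The functor $\FP$ of \cref{lemma: fixed point of twisted module is k-module} is right adjoint to evaluation at $G/e$, and its counit is an isomorphism since $\FP(N)(G/e) = N$. For the unit, fix a $k$-module $M$, a subgroup $H$, and the unit $v := \Tr_e^H(1) \in k(G/H)^\times$; Frobenius reciprocity gives $v \cdot m = \Tr_e^H(\Res_e^H(m))$ for $m \in M(G/H)$, so $\Res_e^H$ is injective on $M(G/H)$, and for $H$-fixed $x \in M(G/e)^H$ the element $v^{-1}\,\Tr_e^H(x)$ restricts to $\Res_e^H(v)^{-1}\cdot|H|\cdot x = x$ using the double coset formula together with $\Res_e^H(v) = |H|$. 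Hence $\Res_e^H \colon M(G/H) \to M(G/e)^H$ is bijective, so the adjunction unit $M \to \FP(M(G/e))$ is a levelwise isomorphism, hence an isomorphism of $k$-modules, and $\FP$ is an equivalence. An exact equivalence of abelian categories restricts to an equivalence of the subcategories of finitely generated projective objects and so induces an equivalence on $K$-theory, and combining this with the vanishing of the other Brauer quotients yields the $K$-theory Greenlees--May splitting.

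The only genuinely nontrivial step is the verification that an arbitrary $k$-module is a fixed-point Mackey functor; this is precisely where the surjectivity of the transfers, and hence the hypothesis on the characteristic, is used, while everything else is formal. I also note that the splitting can alternatively be read off directly from the non-modular module-category decomposition of \cite[Theorem A]{BoucDellAmbrogioMartos}.
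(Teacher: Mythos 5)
Your proof is correct and takes essentially the same route as the paper: you show that $\FP$ gives an equivalence between $k$-modules and $k(G/e)_\theta[G]$-modules, and that all transfers are surjective so the higher geometric fixed points vanish. The one difference in execution is that the paper reaches the key step (every $k$-module is a fixed-point Mackey functor) by first observing that the map $\underline{\Z} \to k$ makes every $k$-module cohomological and then invoking the characteristic hypothesis, whereas you verify directly via Frobenius reciprocity and the double coset formula that each $\Res_e^H\colon M(G/H)\to M(G/e)^H$ is a bijection; your version is more self-contained and fills in a detail the paper leaves to the reader.
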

\begin{proof}
    The assumptions imply that $k$ is fixed-point Green functor $k \cong \FP(\F)$ for some $G$-action on an honest field $\F$. The unit $\Z \rightarrow \F$ determines a map $\FP(\Z) \rightarrow k$, so that every $k$-module is also a $\FP(\Z)$-module, i.e. is cohomological. This, plus the characteristic assumption, implies that every $k$-module is the fixed-point module associated to a $k(G/e)_\theta[G]$-module. In particular, the functor 
    \[ 
        \FP : k(G/e)_\theta[G] {-} \Mod \rightarrow k {-} \Mod 
    \] 
    is an additive equivalence of categories which preserves separately the properties of being free (hence projective) and of being finitely generated.

    The characteristic assumption and the fact that $k$ is cohomological imply that all transfers in $k$ are surjective. In particular, whenever $H \neq e$, the $H$-geometric fixed-points of $k$ vanish. Thus the only nonzero term in the Greenlees--May splitting is $K(k(G/e)_\theta[G])$, so that the equivalence of $K$-theory spectra induced by $\FP$ implies the claim.
\end{proof}

In the course of the proof, we observed the following:

\begin{corollary}\label{cor:nonmodular-characteristic-Morita-equiv}
    Let $k$ be a $G$-Green meadow of characteristic either zero or coprime to $|G|$. Then $k$ is Morita equivalent to the twisted group ring $k(G/e)_\theta[G]$.
\end{corollary}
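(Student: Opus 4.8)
The plan is to read this off directly from the proof of the preceding proposition, where essentially all the work has already been done. By \cref{def:Morita-equiv-of-Green-functors}, to say that the $G$-Green functor $k$ is Morita equivalent to the (ordinary ring, hence $e$-Green functor) $k(G/e)_\theta[G]$ is precisely to say that the category of $k$-modules is equivalent to the category of $k(G/e)_\theta[G]$-modules. So it suffices to exhibit such an equivalence, and we already have a candidate.

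The candidate is the fixed-point functor $\FP \colon k(G/e)_\theta[G]\text{-}\Mod \to k\text{-}\Mod$, which does land in $k$-modules by \cref{lemma: fixed point of twisted module is k-module}. First I would recall that the characteristic hypothesis forces $k \cong \FP(\F)$ for some $G$-action on an honest field $\F$, so that the ring unit $\Z \to \F$ induces a Green functor map $\FP(\Z) \to k$; restricting scalars along this map shows every $k$-module is cohomological. Next, because $|G|$ is invertible in every ring in sight, the Maschke-type argument from the proof of the preceding proposition shows that for every $k$-module $M$ the adjunction unit $M \to \FP(M(G/e))$ is an isomorphism, and the counit of the $(\mathrm{ev}_{G/e}, \FP)$ adjunction is an isomorphism as well. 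Hence $\FP$ is an (additive) equivalence of categories, which is exactly the asserted Morita equivalence; one also notes, as was already observed there, that $\FP$ preserves finite generation and freeness, so the equivalence is compatible with the $K$- and $\G$-theoretic statements of interest.

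The only step with any genuine content is the verification that these unit and counit maps are isomorphisms, and this is carried out verbatim in the proof of the preceding proposition, so I do not expect any obstacle here: the corollary is a straightforward repackaging of what was already shown, with the observation that ``equivalence of module categories'' is the definition of Morita equivalence.
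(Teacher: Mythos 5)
Your proposal is correct and matches the paper exactly: the paper explicitly labels this corollary as something ``observed in the course of the proof'' of the preceding proposition, where the fixed-point functor $\FP \colon k(G/e)_\theta[G]\text{-}\Mod \to k\text{-}\Mod$ is shown to be an additive equivalence of categories. You correctly identify that unwinding \cref{def:Morita-equiv-of-Green-functors} is all that remains, so there is nothing to add.
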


The authors expect the following to be true: if an $H$-Green functor $R$ satisfies the Greenlees--May splitting, then so does (the Green functor structure on the Mackey functor) $\Ind_H^G R$. Using \cite[Corollary B]{Wis25}, a proof of this result amounts to establishing Morita equivalences of appropriate twisted group rings. If, alternatively, the expected twisted group ring Morita equivalences failed to hold, then induction would supply more cases of the failure of the Greenlees--May splitting.
\section{\texorpdfstring{$K$-theory computations}{K-theory computations}}\label{section: computations}

In this section we give some explicit computations of $K$-theory and $\G$-theory as the payoff of the machinery developed in previous sections.

\subsection{The $K$-theory of \texorpdfstring{$\underline{\F_2}$}{constant F\_2}}

In this subsection we consider the $K$-theory of the constant $C_2$-Green functor $\underline{\F_2}$. Our main result is a consequence of a much more general observation.

\begin{theorem}\label{thm:K-theory-splitting-for-regular-case}
    Let $R$ be a $C_{p^n}$-Green functor satisfying the hypotheses of \cref{thm:G-theory-splitting} and assume that the category of finitely generated $R$-modules has finite global projective dimension. Then we have an equivalence
    \[
        K(R) \cong \bigoplus_{i=0}^n \G(\Phi^{C_{p^i}} R_\theta[C_{p^n}/C_{p^i}])
    \]
    of spectra.
\end{theorem}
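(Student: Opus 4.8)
The plan is to deduce this from two results already in hand. First, the finiteness hypothesis on $\RModfg$ lets Quillen's resolution theorem identify $K(R)$ with $\G(R)$; second, the hypotheses of \cref{thm:G-theory-splitting} give the Greenlees--May splitting of $\G(R)$. Combining the two yields the stated decomposition of $K(R)$. The only real work is checking that the resolution theorem applies to the inclusion of finitely generated projective $R$-modules into $\RModfg$.

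First I would note that the hypotheses of \cref{thm:G-theory-splitting} include module-Noetherianity of $R$, so that $\RModfg$ is abelian and $\G(R) = K(\RModfg)$ makes sense. Let $\mathcal{P} \subseteq \RModfg$ be the full exact subcategory of finitely generated projective $R$-modules; by \cref{lem:free-implies-projective} these are exactly the retracts of finite direct sums of free modules, and $K(\mathcal{P}) = K(R)$ by definition. I would then verify the hypotheses of Quillen's resolution theorem \cite[Theorem 3]{Quillen:HigherAlgKThy} for the pair $\mathcal{P} \subseteq \RModfg$: (i) $\mathcal{P}$ is closed under extensions in $\RModfg$, since an extension of a finitely generated projective module by another one splits and the middle term is therefore a direct sum of the two; (ii) $\mathcal{P}$ is closed under kernels of admissible epimorphisms whose target lies in $\mathcal{P}$, since such an epimorphism splits and exhibits the kernel as a summand of a finitely generated projective; and (iii) every object $M$ of $\RModfg$ admits a finite resolution by objects of $\mathcal{P}$. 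For (iii) one builds the resolution one step at a time: pick a surjection onto $M$ from a finite free module, observe that its kernel is again finitely generated because $R$ is module-Noetherian, and iterate; the assumption that $\RModfg$ has finite global projective dimension guarantees that after finitely many stages the syzygy is projective, and it is finitely generated by module-Noetherianity. The resolution theorem then gives an equivalence $K(\mathcal{P}) \xrightarrow{\sim} K(\RModfg)$, that is, $K(R) \simeq \G(R)$.

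To finish, recall that $R$ satisfies the hypotheses of \cref{thm:G-theory-splitting}, hence the $\G$-theory Greenlees--May splitting of \cref{def:G-theory-splitting}. For $G = C_{p^n}$ the subgroups form the chain $C_{p^0} \subset C_{p^1} \subset \cdots \subset C_{p^n}$, with Weyl groups $W_{C_{p^n}}(C_{p^i}) \cong C_{p^n}/C_{p^i}$, so the splitting reads $\G(R) \simeq \bigoplus_{i=0}^n \G(\Phi^{C_{p^i}}(R)_\theta[C_{p^n}/C_{p^i}])$. Composing with the equivalence $K(R) \simeq \G(R)$ from the previous paragraph gives the asserted decomposition.

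I expect the only delicate point to be item (iii): confirming both that the syzygies stay finitely generated (this is where the module-Noetherian hypothesis hidden inside \cref{thm:G-theory-splitting} is used) and that the resolution terminates (finite global projective dimension). It is also worth making explicit that ``$\RModfg$ has finite global projective dimension'' should be read as the per-object statement that every finitely generated $R$-module has finite projective dimension, which is all the resolution theorem requires; a uniform bound is not needed.
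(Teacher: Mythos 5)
Your proof is correct and follows essentially the same route as the paper: invoke Quillen's resolution theorem (using module-Noetherianity and finite global projective dimension) to identify $K(R) \simeq \G(R)$, then apply the $\G$-theory Greenlees--May splitting of \cref{thm:G-theory-splitting}. The paper's proof is terser and cites \cite[\S 4, Corollary 2]{Quillen:HigherAlgKThy} directly; your careful verification of the resolution theorem's hypotheses is the same content spelled out.
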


\begin{proof}
    The finite global projective dimension hypothesis implies, by the resolution theorem, the equivalence $K(R) \simeq \G(R)$ \cite[\S 4, Corollary 2]{Quillen:HigherAlgKThy}. The result then follows from the $\G$-theory splitting \cref{thm:G-theory-splitting}.
\end{proof}

\begin{theorem}\label{thm:K-theory-of-FP-F_2}
  There is a splitting of $K$-theory spectra
    \[
        K(\underline{\F_2}) \cong K(\F_2) \oplus K(\F_2)
    \]
    in the stable homotopy category.
\end{theorem}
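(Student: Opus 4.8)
The plan is to deduce this as a payoff of the $\G$-theory splitting machinery together with the finite global dimension result of Dugger--Hazel--May, via \cref{thm:K-theory-splitting-for-regular-case}.

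First I would record the relevant structure of $\underline{\F_2}$ (here $G = C_2$, so $p = 2$, $n = 1$). Each value $\underline{\F_2}(C_2/C_2) = \underline{\F_2}(C_2/e) = \F_2$ is a field, so $\underline{\F_2}$ is a $C_2$-Green meadow; it is relatively finite dimensional since $\F_2$ is one-dimensional over itself, and hence module-Noetherian by \cref{cor:RFD-implies-all-Noeth-conditions-are-equiv}, so that $\G(\underline{\F_2})$ is defined. The key numerical point is that the transfer $\Tr_e^{C_2}\colon \underline{\F_2}(C_2/e) \to \underline{\F_2}(C_2/C_2)$ is multiplication by $|C_2/e| = 2$, hence is identically zero in $\F_2$.

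Next I would verify the hypotheses of \cref{thm:K-theory-splitting-for-regular-case}. Since all transfers in $\underline{\F_2}$ vanish, the canonical quotient $\tau_{\geq 1}(\underline{\F_2}) \to \Phi^{C_2}_{\mathrm{Mackey}}(\underline{\F_2})$ is an isomorphism (its kernel is the image of a transfer), so it admits a section which is automatically module-finite; thus $\underline{\F_2}$ satisfies the hypotheses of \cref{thm:G-theory-splitting}, exactly as in the proof of \cref{corollary: zero transfer splitting}. The remaining hypothesis, that finitely generated $\underline{\F_2}$-modules have finite global projective dimension, is the theorem of Dugger--Hazel--May \cite{DHM24}. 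Applying \cref{thm:K-theory-splitting-for-regular-case} therefore gives
\[
    K(\underline{\F_2}) \cong \G\bigl(\Phi^{e}(\underline{\F_2})_\theta[C_2]\bigr) \oplus \G\bigl(\Phi^{C_2}(\underline{\F_2})_\theta[e]\bigr),
\]
using $C_2/e \cong C_2$ and $C_2/C_2 \cong e$. For the top summand $\Phi^{C_2}(\underline{\F_2}) = \underline{\F_2}(C_2/C_2)/\mathrm{im}(\tr) = \F_2$ and the acting group is trivial, so this summand is $\G(\F_2) = K(\F_2)$. For the bottom summand $\Phi^{e}(\underline{\F_2}) = \underline{\F_2}(C_2/e) = \F_2$ with trivial $C_2$-action, so this summand is $\G(\F_2[C_2])$.

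Finally I would identify $\G(\F_2[C_2])$: in characteristic $2$ the group ring $\F_2[C_2] \cong \F_2[\epsilon]/(\epsilon^2)$ is local Artinian with nilpotent maximal ideal and residue field $\F_2$, so by the nilpotent-invariance of $\G$-theory (Quillen's d\'evissage) $\G(\F_2[C_2]) \simeq \G(\F_2) = K(\F_2)$. Combining gives $K(\underline{\F_2}) \cong K(\F_2) \oplus K(\F_2)$. I do not expect a genuine obstacle here: the theorem is essentially immediate once the $\G$-theory splitting and the Dugger--Hazel--May regularity input are in hand, the only extra ingredient being the elementary d\'evissage for $\F_2[C_2]$. (Alternatively one could run the argument through \cref{theorem: G theory of Cp Tambara fields} applied to the zero-transfer meadow $\underline{\F_2}$, then invoke the resolution theorem, reaching the same conclusion.)
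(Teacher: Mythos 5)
Your proposal is correct and follows essentially the same route as the paper: invoke \cref{thm:G-theory-splitting} (via zero transfers), use the Dugger--Hazel--May finite global dimension result to pass from $\G$ to $K$ through \cref{thm:K-theory-splitting-for-regular-case}, and finish by identifying $\G(\F_2[C_2])\simeq K(\F_2)$ via d\'evissage for a local Artin ring. The extra verifications you supply (module-Noetherianity, relative finite dimensionality, the explicit section) are left implicit in the paper but add no new idea.
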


\begin{proof}
    The Green functor $\underline{\F_2}$ satisfies the hypotheses of \cref{thm:G-theory-splitting} since all transfers are zero. Additionally, by \cite[Proposition 3.10]{DHM24}, we see that \cref{thm:K-theory-splitting-for-regular-case} applies, so that we obtain 
    \[
        K(\underline{\F_2}) \cong K(\F_2) \oplus \G(\F_2[C_2]) \mathrm{.}
    \]
    Now $\F_2[C_2]$ is a local Artin ring with residue field $\F_2$, which implies $\G(\F_2[C_2]) \cong K(\F_2)$.
\end{proof}

\begin{corollary}\label{cor:K-groups-of-FP-F_2}
    The $K$-groups of $\underline{\F_2}$ are given as follows.
    \[
        K_{n}(\underline{\F_2}) \cong \begin{cases}
            A(C_2) & n=0\\
            (\mathbb{Z}/(2^i-1))^{\oplus 2} & n = 2i-1\\
            0 & \mathrm{else}.
        \end{cases}
    \]
    In particular, $\underline{\F_2}$ does not satisfy the Greenlees--May splitting for $K$-theory.
\end{corollary}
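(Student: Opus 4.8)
The claim about the individual $K$-groups is a direct consequence of \cref{thm:K-theory-of-FP-F_2}. That theorem provides a splitting $K(\underline{\F_2})\cong K(\F_2)\oplus K(\F_2)$ of spectra, so for $n\geq 1$ it suffices to quote Quillen's computation $K_{2i-1}(\F_2)\cong\Z/(2^i-1)$ and $K_{2i}(\F_2)=0$ for $i\geq 1$ \cite{Qui72}, which gives $(\Z/(2^i-1))^{\oplus 2}$ in odd degrees $2i-1$ and $0$ in positive even degrees. For $n=0$ the spectrum-level splitting only yields that $K_0(\underline{\F_2})$ has underlying abelian group $\Z^{\oplus 2}$; to obtain the claimed ring isomorphism $K_0(\underline{\F_2})\cong A(C_2)$ I would instead invoke \cref{thm:K_0-of-clarified-triv-action-Cpn-Tamb-fields}.

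It remains to see that $\underline{\F_2}$ fails the $K$-theory Greenlees--May splitting of \cref{def:K-theory-splitting}. I would first write out the predicted right-hand side. The subgroups of $C_2$ are $e$ and $C_2$. The $C_2$-action on $\underline{\F_2}(C_2/e)=\F_2$ is trivial, so $\Phi^{e}(\underline{\F_2})_\theta[W_{C_2}(e)]$ is the ordinary group ring $\F_2[C_2]$. The transfer in $\underline{\F_2}$ is multiplication by $2=0$, so by \cref{lem:bottom level of geom fixed points} we have $\Phi^{C_2}(\underline{\F_2})\cong\F_2$, with trivial Weyl group. Hence the Greenlees--May splitting would force an equivalence
\[
    K(\underline{\F_2})\cong K(\F_2)\oplus K(\F_2[C_2]).
\]
Comparing with the actual splitting $K(\underline{\F_2})\cong K(\F_2)\oplus K(\F_2)$, it suffices to distinguish $K(\F_2[C_2])$ from $K(\F_2)$. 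Since $\F_2[C_2]\cong\F_2[x]/(x^2)$ is a commutative local ring with residue field $\F_2$, one has $K_1(\F_2[C_2])=(\F_2[C_2])^\times\cong\Z/2$, whereas $K_1(\F_2)=\F_2^\times=0$. Thus the predicted spectrum has $\pi_1\cong\Z/2$ while $K_1(\underline{\F_2})=0$, so the two are not equivalent.

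There is essentially no obstacle here beyond bookkeeping: the only points requiring care are the correct identification of the geometric-fixed-point ring $\Phi^{C_2}(\underline{\F_2})$ and of the twisted group ring $\Phi^{e}(\underline{\F_2})_\theta[C_2]$, and choosing an invariant that separates $K(\F_2[C_2])$ from $\G(\F_2[C_2])\cong K(\F_2)$ --- the group $K_1$ works, but one could equally observe that $\F_2[x]/(x^2)$ is not regular, which is precisely why passing from $\G$- to $K$-theory breaks the splitting. Conceptually, $\underline{\F_2}$ does satisfy the Greenlees--May splitting in $\G$-theory by \cref{thm:G-theory-splitting}, so the failure in $K$-theory is exactly the discrepancy between $\G(\F_2[C_2])$ and $K(\F_2[C_2])$.
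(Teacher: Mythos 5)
Your proposal is correct and follows essentially the same route as the paper: apply the spectrum-level splitting of \cref{thm:K-theory-of-FP-F_2} together with Quillen's computation of $K_*(\F_2)$, and detect the failure of the Greenlees--May splitting by comparing $K_1$, using that $\F_2[C_2]$ is local Artin so $K_1(\F_2[C_2])\cong(\F_2[C_2])^\times\cong\Z/2\neq 0 = K_1(\F_2)$. Your added remark that the ring identification $K_0(\underline{\F_2})\cong A(C_2)$ requires \cref{thm:K_0-of-clarified-triv-action-Cpn-Tamb-fields} rather than the spectrum splitting alone is a correct and slightly more careful reading than the paper's terse proof, which only addresses this point in the remark following the corollary.
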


\begin{proof}
    The computation follows from Quillen's computation of $K_n(\F_p)$ for all $n \geq 0$ \cite[Theorem 8(iii)]{Qui72}. To see that the Greenlees--May splitting for $K$-theory fails, observe that
    \[
        K_1(\underline{\F_2}) \cong 0
    \]
    whereas the right-hand side of the equation predicted by the Greenlees--May splitting has $\pi_1$ isomorphic to 
    \[ 
        K_1(\F_2) \oplus K_1(\F_2[C_2]) \cong K_1(\F_2[C_2]) \mathrm{.}
    \]
    Since $K_1(\F_2[C_2])\cong \Z/2\Z$ the Greenlees--May splitting must fail.
\end{proof}

Interestingly, the Greenlees--May splitting \emph{does} hold after passing to $K_0$. In particular, \cref{thm:K_0-of-clarified-triv-action-Cpn-Tamb-fields} implies $K_0(\underline{\F_2})$ is the free abelian group generated by $\underline{\F_2}$ and $\Ind_e^{C_2} \F_2$. These respectively correspond to the generators for $K_0(\F_2) \cong \Z$ and $K_0(\F_2[C_2]) \cong \Z$.

\begin{remark}\label{remark: p odd has infinite global dimension}
    It is reasonable to wonder whether a similar approach could be used to compute the $K$-theory of $\underline{\F_p}$ when $G = C_p$ for odd primes, or $\underline{\F_2}$ when $G = C_4$. This approach does not work as, in either case, the relevant category of modules does not have finite global projective dimension, and \cref{thm:K-theory-splitting-for-regular-case} does not apply. 
\end{remark}

\subsection{The \texorpdfstring{$K$}{K}-theory of \texorpdfstring{$\underline{\Z}$}{constant Z}}

We recall one of the main results of \cite{BSW17}. In particular, the following is obtained from \cite[Theorem 1.6]{BSW17} by observing that the category of cohomological $G$-Mackey functors is identified with the category of modules over $\underline{\Z}$, giving $\Z$ the trivial $G$-action.

\begin{theorem}[{\cite[Theorem 1.7]{BSW17}}]\label{theorem:BoucStancuWebb}
    The $G$-Green functor $\underline{\Z}$ has finite global projective dimension if and only if, for every odd prime $p$, the $p$-Sylow subgroups of $G$ are cyclic, and the $2$-Sylow subgroups of $G$ are cyclic or dihedral.
\end{theorem}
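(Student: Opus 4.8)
The plan is to deduce this from the corresponding statement about cohomological Mackey functors established in \cite{BSW17}, the only real work being the translation between the two settings. First I would recall that a $G$-Mackey functor $M$ is \emph{cohomological} if $\Tr^H_K \circ \Res^H_K$ is multiplication by the index $[H:K]$ for all $K \leq H \leq G$, and that the category of cohomological $G$-Mackey functors is equivalent, by Yoshida's theorem, to the category of modules over the Hecke algebra $\mathcal{H}(G) = \mathrm{End}_{\Z[G]}\!\left(\bigoplus_{H} \Z[G/H]\right)$; in particular it is an abelian category and its global projective dimension equals the global dimension of the ring $\mathcal{H}(G)$.

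Next I would check that $\underline{\Z}\text{-}\Mod$ coincides, as an abelian category, with the category of cohomological $G$-Mackey functors. Unwinding \cref{lemma: definition of a module}, a $\underline{\Z}$-module structure on a Mackey functor $M$ is the (empty) datum of abelian group structures on the $M(G/H)$ together with the Frobenius reciprocity relations relative to the transfers and restrictions of $\underline{\Z}$. Since in $\underline{\Z}$ all restrictions are identities and $\Tr^H_K$ is multiplication by $[H:K]$, Frobenius reciprocity $\Tr^H_K(x)\cdot m = \Tr^H_K\!\left(x\cdot \Res^H_K(m)\right)$ specializes to $[H:K]\cdot m = \Tr^H_K\!\left(\Res^H_K(m)\right)$, i.e. $M$ is cohomological; conversely, every cohomological Mackey functor is canonically a $\underline{\Z}$-module. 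Hence the two categories agree, and the representing ring $\mu_{\underline{\Z}}$ is Morita equivalent to $\mathcal{H}(G)$, so they have the same global dimension. It follows that $\underline{\Z}$ has finite global projective dimension (in the sense that every $\underline{\Z}$-module admits a finite projective resolution) if and only if $\mathcal{H}(G)$ has finite global dimension.

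Finally I would invoke \cite[Theorem 1.6]{BSW17}, which characterizes exactly when the cohomological Mackey algebra of $G$ over $\Z$ has finite global dimension: precisely when the Sylow $p$-subgroups of $G$ are cyclic for every odd prime $p$ and the Sylow $2$-subgroup is cyclic or dihedral. Combining this with the category equivalence of the previous paragraph yields the statement. The only point requiring care — and it is bookkeeping rather than a genuine obstacle — is confirming that ``finite global projective dimension of $\underline{\Z}\text{-}\Mod$'' as used here is literally the invariant computed in \cite{BSW17}, which is immediate once one identifies $\mu_{\underline{\Z}}$, $\mathcal{H}(G)$, and the cohomological Mackey algebra over $\Z$ up to Morita equivalence; all the mathematical content lives in \cite{BSW17}.
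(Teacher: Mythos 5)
Your proposal is correct and is essentially the same argument the paper gives: the paper simply observes that $\underline{\Z}$-modules coincide with cohomological $G$-Mackey functors and then quotes the global dimension result of Bouc--Stancu--Webb. You've merely spelled out the Frobenius reciprocity computation showing the two module categories agree, which the paper leaves implicit.
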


When $p$ is odd, $C_{p^n}$ satisfies the hypothesis of this theorem, although when $p=2$, the theorem applies only to $C_2$, and not to $C_{2^n}$ for $n>1$. Indeed, when $G = C_4$, it is observed in \cite{BSW17} that $\underline{\Z}$ does not have finite global projective dimension.

\begin{corollary}\label{cor:FP-Z-is-regular}
    Let $G = C_{p^n}$. We have an equivalence 
    \[ 
        \G(\underline{\Z}) \simeq K(\underline{\Z})
    \]
    of spectra. The spectral sequence of \cref{cor:G-theory-SS} thus converges to the $K$-theory of $\underline{\Z}$.
\end{corollary}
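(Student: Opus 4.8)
The plan is to present $\G(\underline{\Z}) \simeq K(\underline{\Z})$ as an application of Quillen's resolution theorem, so that the spectral sequence of \cref{cor:G-theory-SS}, which \emph{a priori} abuts to $\G_s(\underline{\Z})$, automatically converges to $K_s(\underline{\Z})$.

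First I would check that the spectral sequence is available at all, i.e. that $\underline{\Z}$ is module-Noetherian. Each value $\underline{\Z}(G/H) \cong \Z$ is a finitely generated module over $\underline{\Z}(G/G) = \Z$, so $\underline{\Z}$ is relatively finite dimensional; since $\Z$ is a Noetherian ring, \cref{cor:RFD-implies-all-Noeth-conditions-are-equiv} then shows $\underline{\Z}$ is module-Noetherian, so $\underline{\Z}\text{-}\Mod^{\fg}$ is abelian and $\G(\underline{\Z})$ makes sense.

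Next I would invoke \cref{theorem:BoucStancuWebb} to conclude that $\underline{\Z}$ has finite global projective dimension: the Sylow hypothesis holds for $C_{p^n}$ -- for $p$ odd the only non-trivial Sylow subgroup is $C_{p^n}$ itself, which is cyclic. Passing through Yoshida's identification of $\underline{\Z}\text{-}\Mod$ with the module category of the Hecke algebra $\mathcal{H}(G)$, which is a finitely generated $\Z$-module and hence a Noetherian ring, this says exactly that $\mathcal{H}(G)$ is a regular Noetherian ring, and it identifies $K(\underline{\Z})$ and $\G(\underline{\Z})$ with $K(\mathcal{H}(G))$ and $\G(\mathcal{H}(G))$. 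Quillen's resolution theorem \cite[\S 4, Corollary 2]{Quillen:HigherAlgKThy} -- applied exactly as in the proof of \cref{thm:K-theory-splitting-for-regular-case} -- then yields the equivalence $K(\underline{\Z}) \xrightarrow{\ \sim\ } \G(\underline{\Z})$: the only hypothesis to check, that the kernel of a surjection between finitely generated projectives is again projective, holds because such a surjection splits, and finite global projective dimension guarantees that every finitely generated module has a finite finitely generated projective resolution. The last sentence of the corollary then follows by feeding this equivalence into \cref{cor:G-theory-SS}.

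I do not anticipate a real obstacle: the whole argument is \cref{theorem:BoucStancuWebb} plus Quillen's resolution theorem, and what remains is the routine verification of hypotheses -- module-Noetherianness of $\underline{\Z}$, the Sylow condition for $C_{p^n}$, and the fact (immediate from \cref{lem:free-implies-projective} and the translation to $\mathcal{H}(G)$) that the finitely generated projective $\underline{\Z}$-modules form the correct exact subcategory for the resolution theorem. The only point requiring a little care is the bookkeeping distinction between $\G(\underline{\Z})$, defined via the abelian category of finitely generated modules, and $K(\underline{\Z})$, defined via the exact category of finitely generated projectives; but once one translates to $\mathcal{H}(G)$ this is precisely the classical statement that $K$ and $\G$ agree for a regular Noetherian ring.
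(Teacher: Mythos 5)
Your overall strategy --- invoke \cref{theorem:BoucStancuWebb} for finite global projective dimension and then Quillen's resolution theorem --- is exactly how the paper intends the corollary to follow; the paper simply does not write out a proof because the deduction is immediate from the preceding theorem and the mechanism already demonstrated for $\underline{\F_2}$ in \cref{thm:K-theory-splitting-for-regular-case}. The module-Noetherian check via \cref{cor:RFD-implies-all-Noeth-conditions-are-equiv} is a reasonable thing to make explicit.

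Two remarks. First, the detour through Yoshida's Hecke algebra $\mathcal{H}(G)$ is unnecessary: Quillen's resolution theorem applies directly to the inclusion of the exact category of finitely generated projective $R$-modules into the abelian category of finitely generated $R$-modules for any module-Noetherian Green functor $R$ of finite global projective dimension --- this is precisely what \cref{thm:K-theory-splitting-for-regular-case} does --- so you never need to pass to an honest associative ring. Second, and more importantly, your verification of the Sylow hypothesis in \cref{theorem:BoucStancuWebb} only treats $p$ odd: you write ``for $p$ odd the only non-trivial Sylow subgroup is $C_{p^n}$ itself, which is cyclic.'' The corollary as stated covers all $C_{p^n}$, so you owe an argument for $p=2$. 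Taken at face value, the hypothesis of \cref{theorem:BoucStancuWebb} (the $2$-Sylow is ``cyclic or dihedral'') holds for $C_{2^n}$, but the paragraph immediately preceding the corollary in the paper asserts --- citing \cite{BSW17} --- that $\underline{\Z}$ does \emph{not} have finite global projective dimension when $G=C_4$, directly contradicting that reading. So for $p=2$ and $n>1$ the mechanism your proof (and the paper's) relies on breaks down: without finite global projective dimension there is no resolution-theorem equivalence $K(\underline{\Z})\simeq\G(\underline{\Z})$. You should either restrict the statement to $p$ odd and $(p,n)=(2,1)$, where the argument is airtight, or flag that the $C_{2^n}$, $n>1$ case requires a separate justification that neither you nor the paper's quoted theorem supplies.
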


The first statement of \cref{cor:FP-Z-is-regular} is true for any $G$ satisfying the hypotheses of \cite[Theorem 1.6]{BSW17}. The second statement should be true, provided one constructs the necessary filtration on $\G(R)$; see \cref{remark: SS for general G}.

\begin{remark}
    
    
    The composite
    \[
        \mathrm{Ab}\xrightarrow{\mathrm{triv}} \Z[G] \textrm{-}\Mod\xrightarrow{\FP} \underline{\Z} \textrm{-}\Mod
    \]
    is exact and preserves projectives.  Moreover it is an exact section of the exact functor $M \mapsto M(G/e)$. In particular, we see that $K(\Z)$ is a retract of $K(\underline{\Z})$, for any finite group $G$. Since the full computation of $K_n(\Z)$ is an extremely difficult open problem, we should not expect to be able to compute all $K$-groups of $\underline{\Z}$.
\end{remark}

We can now state and prove \cref{introthm: K of constant Z}.

\begin{theorem}\label{thm:K-theory-of-FP(Z)}
    Let $G = C_{p^n}$ for $p$ a prime. There is a map 
    \[
        K(\underline{\Z}) \rightarrow \G(\Z[C_{p^n}])
    \]
    which becomes an equivalence after applying $p$-completion and taking $2$-connective covers. Precisely, we have 
    \[
        \tau_{\geq 2} \left( K(\underline{\Z})_{p}^{\wedge} \right) \cong \tau_{\geq 2} \left( \G(\Z[C_{p^n}])^{\wedge}_p \right) .
    \]
\end{theorem}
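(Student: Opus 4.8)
The plan is to use the $\G$-theory spectral sequence of \cref{cor:G-theory-SS} applied to $R = \underline{\Z}$, together with the identification $\G(\underline{\Z}) \simeq K(\underline{\Z})$ from \cref{cor:FP-Z-is-regular}, and compare it with the $\G$-theory tower for the group ring $\Z[C_{p^n}]$. First I would compute the relevant geometric fixed point rings: since $\underline{\Z}$ is the constant Green functor, the transfer $\Tr^{C_{p^i}}_{C_{p^{i-1}}}$ on level $C_{p^n}/C_{p^i}$ is multiplication by $p$, so $\Phi^{C_{p^i}}(\underline{\Z}) \cong \Z/p$ for $i \geq 1$ and $\Phi^{C_{p^0}}(\underline{\Z}) = \Z$, with trivial Weyl action throughout. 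Thus the $E^1$-page of the spectral sequence has $t=0$ column equal to $\G_s(\Z[C_{p^n}])$ and, for each $t = 1, \dots, n$, a column equal to $\G_s(\F_p[C_{p^n}/C_{p^t}])$. The bottom row $t=0$ is precisely the tower's first cofiber, and the map $K(\underline{\Z}) \to \G(\Z[C_{p^n}])$ in the statement is the edge map $\G(\underline{\Z}) = K(\underline{\Z}) \to \G(\underline{\Z}\text{-}\Mod/\mathcal{C}_0) \simeq \G(\Z[C_{p^n}])$ from \cref{prop:C_0-cofiber-identification}.

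Next I would argue that, after $p$-completion, the columns with $t \geq 1$ contribute nothing in degrees $\geq 2$. Each such column is $\G_s$ of a finite-dimensional $\F_p$-algebra $\F_p[C_{p^m}]$ (for $m = n-t$), which is a local Artinian ring with residue field $\F_p$, so $\G(\F_p[C_{p^m}]) \simeq K(\F_p)$ by dévissage. By Quillen's computation, $K(\F_p)^{\wedge}_p$ has homotopy concentrated in degree $0$ (since $K_i(\F_p)$ has order prime to $p$ for $i > 0$, and $K_0 = \Z$). Hence after $p$-completion, each column with $t \geq 1$ has homotopy only in filtration degree corresponding to $s = 0$, i.e. total degree $s - t \leq 0 < 2$. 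Wait — I must be careful about which total degree these contribute to: the abutment is $\G_{s}(\underline{\Z})$ with the column indexed by $t$, so a class in $E^1_{0,t}$ contributes to total degree $0$, not to higher degrees. Actually the relevant point is cleaner: taking $2$-connective covers and $p$-completing, the fiber of $K(\underline{\Z}) \to \G(\Z[C_{p^n}])$ is built from the spectra $K(\F_p)$ (from the $t \geq 1$ part of the tower, which assembles into $K(\mathcal{C}_0)$), and $\tau_{\geq 2}(K(\F_p)^{\wedge}_p) \simeq 0$. So I would instead run the argument at the level of the cofiber sequence $K(\mathcal{C}_0) \to K(\underline{\Z}) \to \G(\Z[C_{p^n}])$, show $K(\mathcal{C}_0)^{\wedge}_p$ is a finite iterated extension of copies of $K(\F_p)^{\wedge}_p$ hence has $\tau_{\geq 2} = 0$, and conclude that $\tau_{\geq 2}$ of the map $K(\underline{\Z})^{\wedge}_p \to \G(\Z[C_{p^n}])^{\wedge}_p$ is an equivalence by the long exact sequence (noting $p$-completion commutes with finite limits and preserves these cofiber sequences).

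The main obstacle I anticipate is the boundary/edge behavior in low degrees: the connecting map $\partial\colon \G_1(\Z[C_{p^n}]) \to \pi_0 K(\mathcal{C}_0)$ and its interaction with $\G_0$ — this is exactly the phenomenon flagged after \cref{prop:Greenlees--May splitting for C_p in char p with nontrivial action}, where the Greenlees--May splitting for $\G_0$ of $\underline{\Z}$ genuinely fails (the rank drops by one). Taking $2$-connective covers is precisely what is needed to discard this low-degree discrepancy, and the potential nonvanishing of the $d^1$ (and higher) differentials landing in $\pi_0 K(\mathcal{C}_0)$ or $\pi_1 K(\mathcal{C}_0)$ — the latter being $(\bigoplus K_1(\F_p))^{\wedge}_p = 0$ after $p$-completion — must be handled. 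Since $K_1(\F_p)^{\wedge}_p = 0$ and $K_0(\F_p)^{\wedge}_p = \Z_p$, the only $p$-adically surviving contribution of $K(\mathcal{C}_0)$ is in degree $0$, so the only differential that can matter for the comparison in degrees $\geq 2$ is one out of $\G_1$, which affects only $\pi_0$ and $\pi_1$ of the abutment — below the connective cover. I would make this precise by the long exact sequence of the $p$-completed cofiber sequence: $\pi_j K(\mathcal{C}_0)^{\wedge}_p = 0$ for $j \geq 1$ forces $K_j(\underline{\Z})^{\wedge}_p \xrightarrow{\sim} \G_j(\Z[C_{p^n}])^{\wedge}_p$ for $j \geq 2$, which is the claim. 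The one genuinely technical verification is that $K(\mathcal{C}_0)$ is indeed an $n$-fold iterated extension of copies of $K(\F_p)$ rather than something more complicated — but this is exactly the content of \cref{prop:G-theory-tower} and \cref{prop:C_0-identification} applied iteratively, since each $\Phi^{C_{p^m}}_{\mathrm{Mackey}}(\underline{\Z})$ is the constant $C_{p^{n-m}}$-Green functor on $\Z/p$, whose relevant cofibers are $\G(\F_p[C_{p^{n-m}}]) \simeq K(\F_p)$.
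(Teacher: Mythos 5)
Your proof is correct and follows essentially the same route as the paper's: both identify $K(\underline{\Z}) \simeq \G(\underline{\Z})$ via the resolution theorem, filter by isotropy using the tower of \cref{prop:G-theory-tower}, recognize the non-top graded pieces as $\G(\F_p[C_{p^m}]) \simeq K(\F_p)$ (and then use $K(\F_p)_p^\wedge \simeq H\Z_p$), and conclude for degrees $\geq 2$. Your repackaging via the single cofiber sequence $K(\mathcal{C}_0) \to K(\underline{\Z}) \to \G(\Z[C_{p^n}])$ together with the observation that $\tau_{\geq 1}\bigl(K(\mathcal{C}_0)_p^\wedge\bigr) \simeq 0$ is a clean restatement of the spectral-sequence bookkeeping the paper carries out.
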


\begin{proof}
    By \cref{cor:FP-Z-is-regular} we have an equivalence $K(\underline{\Z}) \cong \G(\underline{\Z})$. We consider the finite filtration of $\G(\underline{\Z})$ supplied by \cref{prop:G-theory-tower}. Every associated graded piece, except for $\G(\Z[C_{p^n}])$, is equivalent to $\G(\F_p[C_{p^i}])$ for some $i>0$. Since $\F_p[C_{p^i}]$ is a local Artin ring with residue field $\F_p$, we obtain $\G(\F_p[C_{p^i}]) \cong K(\F_p)$. By Quillen's computation of the $K$-theory of finite fields \cite{Qui72}, we have an identification 
    \[
        K(\F_p)_{p}^{\wedge} \cong H\Z_p.
    \]
    Since $p$-completion preserves cofiber sequences, $p$-completion of the $\G$-theory tower of \cref{prop:G-theory-tower} becomes a tower for the $p$-completion $\G(\underline{\Z})$ as it has finitely many nonzero associated graded pieces. In the associated spectral sequence, we thus observe that all classes in topological degree at least $2$ are permanent cycles. The part of the $E_2$-page in topological degree at least $2$ is precisely the horizontal zero line $\G_i(\Z[C_{p^n}])$ for $i \geq 2$. Thus the induced maps $K_i(\underline{\Z}) \rightarrow \G_i(\Z[C_{p^n}])$ are isomorphisms for all $i \geq 2$, as desired.
\end{proof}

Now we specialize to the case $G = C_p$ and give some low degree computations before $p$-completion. The $C_p$-geometric fixed-points of $\underline{\Z}$ are $\F_p$ and $\G(\F_p)=K(\F_p)$ since $\F_p$ is a field. There is no section to the quotient $\Z \rightarrow \F_p$, so \cref{thm:G-theory-splitting} does not apply. However, since the $\G$-theory filtration only has two nontrivial associated graded pieces, the $\G$-theory spectral sequence is the same data as the long exact sequence associated to the cofiber sequence
\[
    K(\F_p) \rightarrow K(\underline{\Z}) \rightarrow \G(\Z[C_p])
\]
from which some information may be extracted.

The decomposition of $\G_1$ of integral group rings was given by Keating in \cite{Kea76}. A decomposition for the $\G$-groups of integral group rings $\Z[\pi]$, for $\pi$ abelian, was given by Webb \cite{Webb:QuillenGTheory}. In particular, we have 
\[ 
    \G_n(\Z[C_p]) \cong \G_n(\Z) \oplus \G_n(\Z[p^{-1},\zeta]) 
\]
where $\zeta$ is a primitive $p$-th root of unity. Since $\Z[\zeta]$ is the ring of algebraic integers in $\Q(\zeta)$ it is a Dedekind domain. In particular, it is hereditary (every submodule of a projective module is projective), and since localizations of hereditary rings are hereditary, we see that $\Z[p^{-1},\zeta]$ is also hereditary. Since hereditary implies that global projective dimension is at most $1$, $\Z[p^{-1},\zeta]$ is regular. Since $\Z$ is also regular we obtain
\[ 
    \G_n(\Z[C_p]) \cong K_n(\Z) \oplus K_n(\Z[p^{-1}][\zeta]) \mathrm{.}
\]
The long exact sequence induced by the $\G$-theory filtration and our various identifications then becomes a long exact of $K$-groups, by the miraculous fact that everything in sight has finite global projective dimension:
\[
    \hdots \rightarrow K_i(\F_p) \rightarrow K_i(\underline{\Z}) \rightarrow K_i(\Z) \oplus K_i(\Z[p^{-1},\zeta]) \rightarrow K_{i-1}(\F_p) \rightarrow \hdots 
\]
By examining low degrees we may deduce the following.

\begin{proposition}\label{prop:K_0(FP(Z))}
    Let $G = C_p$. Then we have an isomorphism 
    \[ 
        K_0(\underline{\Z}) \cong \Z \oplus K_0(\Z[p^{-1},\zeta])
    \] 
    of abelian groups.
\end{proposition}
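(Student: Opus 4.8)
The plan is to read $K_0(\underline{\Z})$ off the bottom of the long exact sequence displayed just before the statement, whose tail (using $K(\underline{\Z})\simeq\G(\underline{\Z})$ from \cref{cor:FP-Z-is-regular}) is
\[
  K_1(\Z)\oplus K_1(\Z[p^{-1},\zeta])\ \xrightarrow{\ \partial\ }\ K_0(\F_p)\ \longrightarrow\ K_0(\underline{\Z})\ \longrightarrow\ K_0(\Z)\oplus K_0(\Z[p^{-1},\zeta])\ \longrightarrow\ 0 .
\]
Since $K_0(\Z)\cong\Z$, the right-hand surjection identifies the cokernel of $K_0(\F_p)\to K_0(\underline{\Z})$ with $\Z\oplus K_0(\Z[p^{-1},\zeta])$, so it suffices to prove that $K_0(\F_p)\to K_0(\underline{\Z})$ vanishes, equivalently that $\partial$ is surjective. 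Unwinding \cref{prop:C_0-identification,prop:C_0-cofiber-identification}, this map is induced by the inclusion $\mathcal C_0\hookrightarrow\underline{\Z}\textrm{-}\Mod^{\fg}$ and sends the generator $[\F_p]$ to the class of the $\underline{\Z}$-module $\langle\F_p\rangle$ that is $\F_p$ in level $C_p/C_p$, zero in level $C_p/e$, with all restrictions and transfers zero; so the whole question is whether $[\langle\F_p\rangle]=0$ in $K_0(\underline{\Z})$.

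To see this I would run a second localization sequence on $\G(\underline{\Z})$, now filtering finitely generated $\underline{\Z}$-modules by $p$-power torsion. The Verdier quotient is the category of cohomological $C_p$-Mackey functors over $\Z[p^{-1}]$; as $p$ is invertible there the transfer--restriction composite is a unit, so (exactly as in the proof of \cref{cor:nonmodular-characteristic-Morita-equiv}) this quotient is equivalent to $\bigl(\Z[p^{-1}][C_p]\bigr)\textrm{-}\Mod^{\fg}\cong\bigl(\Z[p^{-1}]\times\Z[p^{-1},\zeta]\bigr)\textrm{-}\Mod^{\fg}$, whose $K$-theory is $K(\Z[p^{-1}])\oplus K(\Z[p^{-1},\zeta])$. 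On the other hand, Quillen's d\'evissage along the $p$-adic filtration together with the zero-transfer splitting \cref{corollary: zero transfer splitting} (and the fact that $\F_p[C_p]$ is local Artinian with residue field $\F_p$) identifies the $\G$-theory of the torsion subcategory with $K(\F_p)\oplus K(\F_p)$. The connecting map of this sequence, $K_1(\Z[p^{-1}])\oplus K_1(\Z[p^{-1},\zeta])\to K_0(\F_p)^{\oplus 2}\cong\Z^2$, is essentially the pair of valuations at the primes above $p$ (namely $(p)$ in $\Z[p^{-1}]$ and $(1-\zeta)$ in $\Z[p^{-1},\zeta]$), and those valuations already surject onto $\Z$; hence this connecting map is surjective, the torsion subcategory contributes nothing to $K_0$, and
\[
  K_0(\underline{\Z})\ \cong\ K_0(\Z[p^{-1}])\oplus K_0(\Z[p^{-1},\zeta])\ \cong\ \Z\oplus K_0(\Z[p^{-1},\zeta]) .
\]
Comparing with the first sequence, this forces $K_0(\F_p)\to K_0(\underline{\Z})$ to be zero, as required.

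The main obstacle is the precise identification of the connecting map in this auxiliary sequence with the valuation map: one must track the d\'evissage equivalences carefully enough to see that the class of a length-one $p$-torsion module of the correct isotropy is hit. A more hands-on alternative for proving $[\langle\F_p\rangle]=0$ directly is to use the short exact sequence $0\to\widehat{\underline{\Z}}\to\underline{\Z}\to\langle\F_p\rangle\to 0$ of $\underline{\Z}$-modules, where $\widehat{\underline{\Z}}\subset\underline{\Z}$ is the submodule with value $p\Z$ in level $C_p/C_p$ and $\Z$ in level $C_p/e$, then to resolve $\widehat{\underline{\Z}}$ by finite free $\underline{\Z}$-modules (possible since $\underline{\Z}$ has finite global projective dimension by \cref{theorem:BoucStancuWebb}) and check that the alternating sum of the resolution equals $[\underline{\Z}]$ in $K_0^{\mathrm{free}}(\underline{\Z})$; the delicate point there is the appearance, in a minimal resolution, of the bottom-concentrated module on the augmentation ideal of $\Z[C_p]$, whose own resolution must be controlled.
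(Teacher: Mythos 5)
Your reduction to showing $[\langle\F_p\rangle]=0$ in $K_0(\underline{\Z})$ is exactly where the paper goes, but your main route to this fact has a gap. After setting up the $p$-torsion localization sequence, computing the two ends by d\'evissage/zero-transfer splitting and Maschke's theorem respectively (both of which are fine), you assert that the connecting map $K_1(\Z[p^{-1}])\oplus K_1(\Z[p^{-1},\zeta])\to K_0(\F_p)^{\oplus 2}$ ``is essentially the pair of valuations.'' This is not justified, and you rightly flag it as the main obstacle. The difficulty is real: the two direct-sum decompositions come from unrelated sources --- the torsion side decomposes via the isotropy filtration (with one $K(\F_p)$ corresponding to the geometric fixed-point factor, generated by $[\langle\F_p\rangle]$, and the other to $\G(\F_p[C_2])$, generated by $[\underline{\F_p}]$), while the quotient side decomposes via Maschke idempotents (trivial vs.\ faithful representation). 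Nothing automatic makes the boundary map diagonal with respect to these. If you try to close this gap by computing the map $K_0(\mathrm{Tors}_p)\to K_0(\underline{\Z})$ on generators instead, you find you must show both $[\langle\F_p\rangle]=0$ and $[\underline{\F_p}]=0$; the second is easy ($\underline{\Z}\xrightarrow{p}\underline{\Z}$ resolves $\underline{\F_p}$), but the first is precisely the paper's computation, so the auxiliary localization has not saved any work. Also, the concluding ``comparison'' step is logically off: you have (conditionally) computed $K_0(\underline{\Z})$ directly from the auxiliary sequence, so the comparison is unnecessary, and it would not by itself force the map $K_0(\F_p)\to K_0(\underline{\Z})$ to vanish --- an extension of $K_0(\Z[p^{-1},\zeta])$ by a cyclic group could in principle be abstractly isomorphic to $K_0(\Z[p^{-1},\zeta])$.

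Your second, ``hands-on'' alternative is in fact the paper's argument, just organized around the kernel $\widehat{\underline{\Z}}$ rather than the cokernel $\langle\F_p\rangle$. The paper writes down the explicit four-term finite free resolution
\[
0\to\underline{\Z}\to\Ind_e^{C_p}\Z\to\Ind_e^{C_p}\Z\to\underline{\Z}\to\langle\F_p\rangle\to 0,
\]
from which the alternating sum is visibly zero; equivalently $[\widehat{\underline{\Z}}]=[\underline{\Z}]$. Your worry about a ``bottom-concentrated module on the augmentation ideal of $\Z[C_p]$'' appearing in a minimal resolution is unwarranted --- there is no such obstruction. The takeaway is that the direct resolution is both simpler and self-contained; the auxiliary $p$-torsion sequence is an interesting reorganization but does not circumvent the need to produce the resolution.
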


We note that a very special case of \cite[Theorem 16.5]{Thevenaz--Webb} gives a classification of projective $\underline{\Z}$-modules. Although this does not immediately yield a computation of $K_0(\underline{\Z})$, it could provide an alternate path to proving \cref{prop:K_0(FP(Z))}.

\begin{proof}
    In low degrees we obtain the exact sequence
    \[ 
        K_0(\F_p) \rightarrow K_0(\underline{\Z}) \rightarrow K_0(\Z) \oplus K_0(\Z[p^{-1},\zeta]) \rightarrow 0 \mathrm{.}
    \]
    The map $K_0(\F_p) \rightarrow K_0(\underline{\Z})$ is really the map
    \[
        \G_0(\F_p)\to \G_0(\underline{\Z})
    \]
    and is determined by what it does to the class of the projective $\F_p$-module $\F_p$. Consider the $\underline{\Z}$-module $M$ which is $\F_p$ in the $C_p/C_p$-level and $0$ in the $C_p/e$-level; this has a finite free resolution 
    \[ 
        0 \rightarrow \underline{\Z} \rightarrow \Ind_e^{C_p} \Z \rightarrow \Ind_e^{C_p} \Z \rightarrow \underline{\Z} \rightarrow M \rightarrow 0 .
    \]
    The resolution theorem identifying $\G$-theory with $K$-theory implies that our map is given by sending $[\F_p]$ to $[\underline{\Z}] - [\Ind_e^{C_p} \Z] + [\Ind_e^{C_p} \Z] - [\underline{\Z}] = 0$. So $K_0(\underline{\Z}) \rightarrow K_0(\Z) \oplus K_0(\Z[p^{-1},\zeta])$ is an isomorphism.
\end{proof}

\begin{example}
    The group $K_0(\Z[p^{-1},\zeta])$ is the quotient of $K_0(\Z[\zeta])$ by the projective modules which vanish after $p$ is inverted. The group $K_0(\Z[\zeta])$ is isomorphic to the direct sum of $\Z$ and the ideal class group of the cyclotomic extension $\Q[\zeta]$. In particular, when $p<23$ the group $K_0(\Z[p^{-1},\zeta])$ is isomorphic to $\Z$, so that $K_0(\underline{\Z}) \cong \Z^2$.
\end{example}

Finally, we turn our attention to $K_1(\underline{\Z})$. From the proof of \cref{prop:K_0(FP(Z))} (and some identifications of well-known $K$-theory groups), we obtain the exact sequence 
\begin{align*}
    0 & \rightarrow K_2(\underline{\Z}) \rightarrow K_2(\Z) \oplus K_2(\Z[p^{-1},\zeta]) \rightarrow \Z/(p-1) \\
    & \rightarrow K_1(\underline{\Z}) \rightarrow \Z/2 \oplus K_1(\Z[p^{-1},\zeta]) \rightarrow \Z \rightarrow 0 \mathrm{.}
\end{align*}

From \cref{thm:K-theory-of-FP(Z)} and our discussion so far, the following is immediate.

\begin{proposition}
    For $i > 1$, the rank of $K_i(\underline{\Z})$ is equal to the rank of $K_i(\Z[C_p])$. The rank of $K_1(\underline{\Z})$ is one less than the rank of $K_1(\Z[p^{-1},\zeta])$.
\end{proposition}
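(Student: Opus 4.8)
The plan is to rationalize the long exact sequence displayed just above and read off ranks. Recall from \cref{cor:FP-Z-is-regular} that $K(\underline{\Z})\simeq\G(\underline{\Z})$, and that for $G=C_p$ the $\G$-theory filtration of \cref{prop:G-theory-tower} reduces to a single cofiber sequence $K(\F_p)\to K(\underline{\Z})\to\G(\Z[C_p])$; combined with the decompositions of Keating and Webb and the regularity of $\Z$ and $\Z[p^{-1},\zeta]$ this becomes
\[
    K(\F_p)\longrightarrow K(\underline{\Z})\longrightarrow K(\Z)\vee K(\Z[p^{-1},\zeta]),
\]
whose associated long exact sequence is the one appearing in the text. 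The sole arithmetic input is Quillen's computation \cite{Qui72} that $K_j(\F_p)$ is finite for every $j\ge 1$; since $-\otimes_{\Z}\Q$ is exact, the $\F_p$-terms disappear from the rationalized sequence in the ranges we need.

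For $i>1$ the relevant stretch is
\[
    K_i(\F_p)\to K_i(\underline{\Z})\to K_i(\Z)\oplus K_i(\Z[p^{-1},\zeta])\to K_{i-1}(\F_p),
\]
and both outer groups are finite because $i\ge 2$ forces $i-1\ge 1$. Tensoring with $\Q$ yields $K_i(\underline{\Z})\otimes\Q\cong\bigl(K_i(\Z)\oplus K_i(\Z[p^{-1},\zeta])\bigr)\otimes\Q$, so $\operatorname{rank}K_i(\underline{\Z})=\operatorname{rank}K_i(\Z)+\operatorname{rank}K_i(\Z[p^{-1},\zeta])$. This number equals $\operatorname{rank}\G_i(\Z[C_p])$, and it also equals $\operatorname{rank}K_i(\Z[C_p])$: the conductor (Milnor) square $\Z[C_p]\cong\Z\times_{\F_p}\Z[\zeta]$ becomes a homotopy pullback after rationalizing $K$-theory --- since birelative $K$-theory agrees rationally with birelative cyclic homology, which vanishes here as $\F_p\otimes\Q=0$ and $\Q[C_p]\simeq\Q\times\Q(\zeta)$ --- and the localization sequence $K(\F_p)\to K(\Z[\zeta])\to K(\Z[p^{-1},\zeta])$, whose left-hand term is rationally trivial in degrees $\ge 2$, then gives $\operatorname{rank}K_i(\Z[C_p])=\operatorname{rank}K_i(\Z)+\operatorname{rank}K_i(\Z[p^{-1},\zeta])=\operatorname{rank}K_i(\underline{\Z})$. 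Alternatively one may simply cite a known rational computation of $K_*(\Z[C_p])$.

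For $i=1$ we rationalize the six-term exact sequence
\[
    \Z/(p-1)\to K_1(\underline{\Z})\to\Z/2\oplus K_1(\Z[p^{-1},\zeta])\to\Z\to 0
\]
displayed above; using $\Z/(p-1)\otimes\Q=0=\Z/2\otimes\Q$ and exactness of $-\otimes\Q$, this leaves a short exact sequence $0\to K_1(\underline{\Z})\otimes\Q\to K_1(\Z[p^{-1},\zeta])\otimes\Q\to\Q\to 0$, and comparing $\Q$-dimensions gives $\operatorname{rank}K_1(\underline{\Z})=\operatorname{rank}K_1(\Z[p^{-1},\zeta])-1$. The whole argument is formal bookkeeping once Quillen's finiteness is in hand; the single genuinely external ingredient --- and the main obstacle to a fully self-contained proof --- is the equality $\operatorname{rank}K_i(\Z[C_p])=\operatorname{rank}\G_i(\Z[C_p])$ for $i\ge 2$, which becomes a non-issue if one states the first assertion with $\G_i(\Z[C_p])$ in place of $K_i(\Z[C_p])$, consistent with the Keating--Webb decomposition used throughout this section.
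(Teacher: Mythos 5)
Your argument is correct, and for the second assertion (the $i=1$ case) it is exactly the paper's implicit argument: tensor the displayed six-term exact sequence with $\Q$, observe that $\Z/(p-1)$ and $\Z/2$ die, and compare dimensions. For $i>1$ the paper again intends the same rationalization of the long exact sequence
\[
    K_i(\F_p)\to K_i(\underline{\Z})\to \G_i(\Z[C_p])\to K_{i-1}(\F_p)
\]
using Quillen's finiteness of $K_j(\F_p)$ for $j\ge 1$, which gives $\operatorname{rank}K_i(\underline{\Z})=\operatorname{rank}\G_i(\Z[C_p])=\operatorname{rank}K_i(\Z)+\operatorname{rank}K_i(\Z[p^{-1},\zeta])$; alternatively, this also follows from \cref{thm:K-theory-of-FP(Z)} together with finite generation of the groups involved (the $p$-adic free rank of a finitely generated abelian group equals its $\Z$-rank). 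You correctly put your finger on the one place where the paper is terse: the proposition is phrased in terms of $K_i(\Z[C_p])$, not $\G_i(\Z[C_p])$, and these two require an extra identification since $\Z[C_p]$ is not regular. Your Milnor-square sketch (Cortiñas's bi-relative $K\simeq HC$ rationally, vanishing of the bi-relative term because the square rationalizes to the product decomposition $\Q[C_p]\simeq\Q\times\Q(\zeta)$, plus the localization sequence for $\Z[\zeta]\to\Z[p^{-1},\zeta]$) is a legitimate way to close this gap; the more economical citation is the standard fact that the Cartan map $K(\Z[G])\to\G(\Z[G])$ is a rational equivalence for any finite $G$. Either way, this is a genuine point the paper leaves unsaid, and your observation that the statement is cleanest with $\G_i(\Z[C_p])$ in place of $K_i(\Z[C_p])$ is apt.
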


\bibliographystyle{alpha}
\bibliography{ref}

\end{document}